\numberwithin{equation}{section}
\DeclareRobustCommand{\SkipTocEntry}[5]{}
\theoremstyle{plain}
\newtheorem{theorem}{Theorem}[section]
\newtheorem*{theorem*}{Theorem}
\newtheorem{lemma}[theorem]{Lemma}
\newtheorem{corollary}[theorem]{Corollary}
\newtheorem{proposition}[theorem]{Proposition}
\theoremstyle{definition}
\newtheorem{definition}[theorem]{Definition}
\newtheorem{example}[theorem]{Example}
\newtheorem{remark}[theorem]{Remark}
\newtheorem{conjecture}[theorem]{Conjecture}
\theoremstyle{remark}
\DeclareMathOperator{\re}{Re}
\DeclareMathOperator{\im}{Im}
\DeclareMathOperator{\Li}{Li_2}
\DeclareMathOperator{\Arg}{Arg}
\DeclareMathOperator{\Res}{Res}
\DeclareMathOperator{\sign}{sign}
\DeclareMathOperator{\esssupp}{ess\,supp}
\newcommand{\D}{d}
\newcommand{\dom}{\mathcal{D}}
\newcommand{\pr}{\mathbf{P}}
\newcommand{\ex}{\mathbf{E}}
\newcommand{\C}{\mathbf{C}}
\newcommand{\R}{\mathbf{R}}
\newcommand{\fourier}{\mathcal{F}}
\newcommand{\laplace}{\mathcal{L}}
\newcommand{\ind}{\mathbf{1}}
\newcommand{\sub}{\subseteq}
\newcommand{\ph}{\varphi}
\newcommand{\ro}{\varrho}
\newcommand{\eps}{\varepsilon}
\newcommand{\tscalar}[1]{\langle #1 \rangle}
\newcommand{\set}[1]{\left\{ #1 \right\}}
\newcommand{\tset}[1]{\{ #1 \}}
\newcommand{\abs}[1]{\left| #1 \right|}
\newcommand{\expr}[1]{\left( #1 \right)}
\newcommand{\itref}[1]{\ref{#1}}
\newcommand{\hl}{{(0, \infty)}}
\newcommand{\rogers}{{\mathcal{R}}}
\newcommand{\cbf}{{\mathcal{CBF}}}
\newcommand{\thet}{\vartheta}
\newcommand{\ua}{\uparrow}
\newcommand{\da}{\downarrow}
\newcommand{\ra}{\rightarrow}
\newcommand{\la}{\leftarrow}
\newcommand{\pvint}{\mathrm{PV}\!\!\int}
\newcommand{\ignore}[1]{}
\newcommand{\formula}[2][nolabel]%
{%
 \ifthenelse{\equal{#1}{nolabel}}%
 {\begin{align*} #2 \end{align*}}%
 {%
  \ifthenelse{\equal{#1}{}}%
  {\begin{align} #2 \end{align}}%
  {\begin{align} \label{#1} \begin{aligned} #2 \end{aligned} \end{align}}%
 }%
}
\begin{document}

%
%

\title[Rogers functions]{Rogers functions and fluctuation theory}
\author{Mateusz Kwa{\'s}nicki}
\thanks{Work supported by NCN grant no. 2011/03/D/ST1/00311 and the Foundation for Polish Science}
\address{Mateusz Kwa{\'s}nicki \\ Institute of Mathematics and Computer Science \\ Wroc{\l}aw University of Technology \\ ul. Wybrze{\.z}e Wyspia{\'n}\-skiego 27 \\ 50-370 Wroc{\l}aw, Poland}
\email{mateusz.kwasnicki@pwr.wroc.pl}
\date{\today}
\keywords{Complete Bernstein function; Nevanlinna--Pick function; Wiener--Hopf factorisation; L\'evy process; fluctuation theory}
\begin{abstract}
Extending earlier work by Rogers, Wiener--Hopf factorisation is studied for a class of functions closely related to Nevanlinna--Pick functions and complete Bernstein functions. The name \emph{Rogers functions} is proposed for this class. Under mild additional condition, for a Rogers function $f$, the Wiener--Hopf factors of $f(\xi) + \tau$, as well as their ratios, are proved to be complete Bernstein functions in both $\xi$ and $\tau$. This result has a natural interpretation in fluctuation theory of L\'evy processes: for a L\'evy process $X_t$ with completely monotone jumps, under mild additional condition, the Laplace exponents $\kappa^\ua(\tau; \xi)$, $\kappa^\da(\tau; \xi)$ of ladder processes are complete Bernstein functions of both $\xi$ and $\tau$. Integral representation for these Wiener--Hopf factors is studied, and a semi-explicit expression for the space-only Laplace transform of the supremum and the infimum of $X_t$ follows.
\end{abstract}
\maketitle


%
%

\section{Introduction and statement of the results}
\label{sec:intro}

The main subject of this paper is the Wiener--Hopf factorisation a class of Nevanlinna--Pick-type functions which, to the author's knowledge, first appeared in the Rogers's article~\cite{bib:r83}. Functions of this type will be called \emph{Rogers functions}, and the set of Rogers functions will be denoted by $\rogers$. In~\cite{bib:jf12} a very closely related notion is called \emph{asymmetric complete Bernstein function}, because if $g$ is a complete Bernstein function ($g \in \cbf$ in short) then $g(\xi^2)$ is a \emph{symmetric} Rogers function.

Importance of complete Bernstein functions (known also as operator monotone functions) in various areas of mathematics (see~\cite{bib:ssv10}) suggests that also Rogers functions can find numerous applications; for example, one can expect that some properties of the operators \smash{$g(-\tfrac{\D^2}{\D x^2})$} for $g \in \cbf$ can be extended to asymmetric operators \smash{$f(-i\tfrac{\D}{\D x})$} for $f \in \rogers$. Our primary motivation for their study originates in the fluctuation theory of L\'evy processes, and two possible applications are given in Theorems~\ref{th:kappa:cbf} and~\ref{th:kappa:inv}. Nevertheless, some knowledge of L\'evy processes and their fluctuation theory is required only in the last section of this article.

A function $f$ holomorphic in $\C_\ra = \{ \xi \in \C : \re \xi > 0 \}$ is a Rogers function if $\re (f(\xi) / \xi) \ge 0$ for all $\xi \in \C_\ra$. Equivalently, $-f(-i \xi) / \xi$ is a Nevanlinna--Pick function, that is, it maps $\C_\ua = \{ \xi \in \C : \im \xi > 0 \}$ into the closure of this set.

For readers familiar with L\'evy processes, suppose that $X_t$ is a L\'evy process and $\Psi$ is the L\'evy--Khintchine exponent of $X_t$. Then $\Psi(\xi) = f(\xi)$ for $\xi > 0$ for some $f \in \rogers$ if and only if the L\'evy measure of $X_t$ is absolutely continuous with respect to the Lebesgue measure, and its density function $\nu(x)$, as well as $\nu(-x)$, are completely monotone on $(0, \infty)$. Conversely, every Rogers function corresponds, up to an additive constant, to such a L\'evy process.

The class of Rogers functions is clearly isomorphic to the class of Nevanlinna--Pick functions, and many properties of the former are direct analogues of known results for the latter. Nevertheless, the Wiener--Hopf theory developed in~\cite{bib:r83} and extended below appears to be specific to Rogers functions, and has no clearly available counterpart for Nevanlinna--Pick functions. For this reason, it seems appropriate to coin a separate name, \emph{Rogers functions}, for the class considered herein.

The main goal of this article is to develop the theory of Rogers functions, and in particular their Wiener--Hopf factorisation. An application to fluctuation theory of L\'evy processes presented in the following two theorems is a concise summary of main results. The description of intermediate results is given later in the introduction.

A Rogers function $f$ is said to be \emph{balanced} if the set $\{\xi \in \C_\ra : f(\xi) \in [0,  \infty)\}$ is contained in $\{\xi : |\Arg \xi| < \tfrac{\pi}{2} - \eps\}$ for some $\eps > 0$ (see Definition~\ref{def:rogers:reg} and Lemma~\ref{lem:rogers:reg}). For example, the L\'evy--Khintchine exponents of all strictly stable L\'evy processes $X_t$ which are not monotone in $t$ (that is, neither $X_t$ nor $-X_t$ is a subordinator) are balanced Rogers functions. A more general condition that $f \in \rogers$ is \emph{nearly balanced} is discussed in detail in Section~\ref{sec:wh} (Definition~\ref{def:rogers:nearly}).

\begin{theorem}
\label{th:kappa:cbf}
If $X_t$ is a L\'evy process whose L\'evy--Khintchine exponent $\Psi$ is a \emph{nearly balanced} Rogers function, then the Laplace exponents $\kappa^\ua(\tau; \xi)$, $\kappa^\da(\tau; \xi)$ of the ascending and descending ladder processes for $X_t$ are complete Bernstein functions in both $\xi$ and $\tau$. Furthermore, if $0 \le \tau_1 \le \tau_2$ and $0 \le \xi_1 \le \xi_2$, then
\formula{
 \frac{\kappa^\ua(\tau_1; \xi)}{\kappa^\ua(\tau_2; \xi)} \, , && \frac{\kappa^\da(\tau_1; \xi)}{\kappa^\da(\tau_2; \xi)} \, , && \frac{\kappa^\ua(\tau; \xi_1)}{\kappa^\ua(\tau; \xi_2)} \, , && \frac{\kappa^\da(\tau; \xi_1)}{\kappa^\da(\tau; \xi_2)}
}
are complete Bernstein functions of $\xi$ and $\tau$, respectively.
\end{theorem}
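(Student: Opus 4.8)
The plan is to obtain Theorem~\ref{th:kappa:cbf} as an application of the analytic Wiener--Hopf factorisation of Rogers functions, which is the core content of Section~\ref{sec:wh}, combined with the classical probabilistic Wiener--Hopf factorisation for L\'evy processes. By the correspondence recalled above, the L\'evy--Khintchine exponent $\Psi$ extends to a Rogers function $f$ with $f|_{(0,\infty)} = \Psi|_{(0,\infty)}$, and for every $\tau \ge 0$ the function $\xi \mapsto f(\xi) + \tau$ is again a Rogers function, since $\re\bigl((f(\xi)+\tau)/\xi\bigr) = \re(f(\xi)/\xi) + \tau\,\re(\xi)/|\xi|^2 \ge 0$ on $\C_\ra$. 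The hypothesis that $\Psi$ is nearly balanced is imposed precisely so that the results of Section~\ref{sec:wh} apply to this two-parameter family, uniformly in $\tau \ge 0$.

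First I would invoke the probabilistic Wiener--Hopf factorisation: with a fixed normalisation of the local times at the supremum and at the infimum, one has, for $\tau > 0$ and $\xi \in \R$,
\[ \tau + \Psi(\xi) = \kappa^\ua(\tau; -i\xi)\,\kappa^\da(\tau; i\xi), \]
where $\kappa^\ua(\tau; \cdot)$ and $\kappa^\da(\tau; \cdot)$ are the Laplace exponents of the ascending and descending ladder processes of $X_t$, hence Bernstein functions extending holomorphically to $\C_\ra$. After the rotation $\xi \mapsto -i\xi$ identifying the upper half-plane $\C_\ua$ of the classical factorisation with the half-plane $\C_\ra$ used here, this is exactly a splitting of the Rogers function $\xi \mapsto f(\xi) + \tau$ into a ``$+$'' factor and a ``$-$'' factor. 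The next step is to match it with the analytic Wiener--Hopf factorisation $f(\xi) + \tau = f^+(\tau;\xi)\,f^-(\tau;\xi)$ built in Section~\ref{sec:wh}: both factorisations are unique within the relevant classes up to a multiplicative positive constant, so $\kappa^\ua(\tau; \cdot)$ and $\kappa^\da(\tau; \cdot)$ agree, after the rotation and up to such constants, with $f^+(\tau; \cdot)$ and $f^-(\tau; \cdot)$.

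Once this identification is in place, the conclusions follow directly from the main results of Section~\ref{sec:wh}, which state that, for a nearly balanced Rogers function, the Wiener--Hopf factors $f^\pm(\tau;\xi)$ are complete Bernstein functions of $\xi$ for each fixed $\tau \ge 0$ and of $\tau$ for each fixed $\xi \ge 0$, and that for $0 \le \tau_1 \le \tau_2$ and $0 \le \xi_1 \le \xi_2$ the ratios $f^\pm(\tau_1;\xi)/f^\pm(\tau_2;\xi)$ and $f^\pm(\tau;\xi_1)/f^\pm(\tau;\xi_2)$ are complete Bernstein functions of $\xi$ and of $\tau$ respectively. The multiplicative constants coming from the choice of local-time normalisation are harmless, since being a complete Bernstein function is preserved under multiplication by a positive constant, and the four ratios in the statement do not depend on the normalisation at all.

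The genuinely hard work lives entirely in Section~\ref{sec:wh}: constructing $f^\pm(\tau;\cdot)$ for a nearly balanced Rogers function through an exponential/Cauchy-type integral of $\log(f(\xi)+\tau)$, analysing the associated phase function, and proving the delicate fact that the factors and their ratios are Nevanlinna--Pick functions with the correct sign (that is, complete Bernstein functions) jointly in the two variables. Within the proof of Theorem~\ref{th:kappa:cbf} itself, the only non-routine points are the bookkeeping in the identification step --- matching the drift, killing rate and local-time scalings between the probabilistic exponents $\kappa^{\ua},\kappa^{\da}$ and the analytically defined factors $f^\pm$, and keeping track of the rotation between $\C_\ua$ and $\C_\ra$ --- together with verifying that ``nearly balanced'' for $\Psi$ is precisely the regularity required of the family $\{f(\cdot)+\tau : \tau \ge 0\}$, which is where Definition~\ref{def:rogers:nearly} enters.
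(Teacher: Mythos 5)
Your high-level strategy coincides with the paper's: extend $\Psi$ to a Rogers function $f$, compare the probabilistic Wiener--Hopf factorisation of $\tau+\Psi(\xi)$ with the analytically constructed one, and then import the complete-Bernstein properties of the analytic factors. However, there is a genuine gap at the normalisation-matching step, which you mention but wave away as "bookkeeping".

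The issue is this: for each fixed $\tau>0$, Wiener--Hopf uniqueness gives $\kappa^\ua(\tau;\xi) = c(\tau)\,f^\ua(\tau;\xi)$ for some $c(\tau)>0$, but a priori this constant is an arbitrary positive function of $\tau$. This is harmless for the assertion that $\kappa^\ua(\tau;\cdot)$ is complete Bernstein in $\xi$ for each fixed $\tau$, and harmless for the four ratios (where the constants cancel). But it is not harmless for the central claim that $\kappa^\ua(\tau;\xi)$ is a complete Bernstein function of $\tau$ for fixed $\xi$ --- if $c(\tau)$ is arbitrary, multiplying by it destroys this property. Moreover, the naive normalisation $f^\ua(\tau;1)=f^\da(\tau;1)$ of Theorem~\ref{th:rogers:wh} does \emph{not} yield a function that is complete Bernstein in $\tau$; the paper has to introduce a different, limit-based normalisation (Definition~\ref{def:rogers:kappa}, whose well-posedness rests on Lemma~\ref{lem:rogers:xwh:limit}) and then prove it is complete Bernstein in $\tau$ via a pointwise limit of complete Bernstein functions (Corollary~\ref{cor:rogers:xwh:cbf} using Proposition~\ref{prop:cbf:limit}). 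The identification with the probabilistic $\kappa^\ua$ is then a genuine argument, not bookkeeping: one shows from the Fristedt--Pecherski--Rogozin formula that $\kappa^\ua(\tau;\xi)/\kappa^\ua(1;\xi)\to 1$ as $\xi\nearrow\infty$, and from Definition~\ref{def:rogers:kappa} that $\kappa_f^\ua(\tau;\xi)/\kappa_f^\ua(1;\xi)\to 1$ as well, which forces $c(\tau)$ to be constant. Your proposal also places the needed complete-Bernstein-in-both-variables results in Section~\ref{sec:wh}; in fact they require the whole extended-factorisation machinery of Sections~\ref{sec:xwh} and~\ref{sec:norm} (Lemmas~\ref{lem:rogers:xwh:cbf} and~\ref{lem:rogers:xwh:nearly}, then Corollaries~\ref{cor:rogers:xwh:cbf} and~\ref{cor:rogers:xwh:xcbf}). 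Without isolating the $\tau$-dependent normalisation as a real obstacle and supplying the limiting argument that resolves it, the proof is incomplete.
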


\begin{remark}
For a detailed discussion of the Wiener--Hopf factors $\kappa^\ua(\tau; \xi)$ and $\kappa^\da(\tau; \xi)$ (often denoted $\kappa(\tau; \xi)$ and $\hat{\kappa}(\tau; \xi)$ in literature) in the present context, see Section~\ref{sec:ft}. General information on Wiener--Hopf factorisation for L\'evy processes can be found in~\cite{bib:bnmr01, bib:b96, bib:d07, bib:f74, bib:k06, bib:s99}. In analytical terms, $\kappa^\ua(\tau; \xi)$ and $\kappa^\da(\tau; \xi)$ are holomorphic functions of $\xi$ in the region $\C_\ra = \{ \xi \in \C : \re \xi > 0\}$ and continuous on the boundary, which satisfy $\kappa^\ua(\tau; -i \xi) \kappa^\da(\tau; i \xi) = \Psi(\xi) + \tau$ and are normalised so that $\kappa^\ua(\tau_1, \xi) / \kappa^\ua(\tau_2; \xi)$ and $\kappa^\da(\tau_1, \xi) / \kappa^\da(\tau_2; \xi)$ converge to $1$ as $\xi \nearrow \infty$ for all $\tau_1, \tau_2 \ge 0$.

The functions $\kappa^\ua(\tau; \xi)$ and $\kappa^\da(\tau; \xi)$ are Laplace exponents of ladder processes, and are closely related to the distribution of the supremum and infimum functionals for $X_t$. Theorem~\ref{th:kappa:cbf} and its extensions can therefore yield detailed description of these important objects. The first step in this direction is contained in Theorem~\ref{th:kappa:inv}. Related problems were recently studied by many authors, mostly for strictly stable L\'evy processes; see~\cite{bib:bdp08, bib:dr11, bib:ds10, bib:gj10, bib:gj12, bib:hk11, bib:ku10, bib:ku11, bib:m13} and the references therein.
\end{remark}

\begin{remark}
It is relatively simple to show that if $\Psi$ is a Rogers function and $X_t$ is monotone in $t$, then the assertion of Theorem~\ref{th:kappa:cbf} is also true. In a sense, in this case $\Psi$ is a completely imbalanced Rogers function. Therefore, it is natural to conjecture that Theorem~\ref{th:kappa:cbf} holds for general Rogers functions $\Psi$, without assuming that $\Psi$ is nearly balanced. For $X_t$ with one-sided jumps, this conjecture is shown to be true in~\cite{bib:bkkk14}.

The class of functions defined by the assertion of Theorem~\ref{th:kappa:cbf} could be called \emph{bivariate} complete Bernstein functions. The representation and properties of such functions may be of independent interest.
\end{remark}

\begin{remark}
Parts of Theorem~\ref{th:kappa:cbf} seem to be new even for symmetric processes $X_t$, studied in~\cite{bib:kmr12, bib:kmr13}. These articles suggest possible applications of Theorem~\ref{th:kappa:cbf}. A closely related concept is the generalised eigenfunction expansion of the generator and transition operators of $X_t$ killed upon leaving a half-line, developed in~\cite{bib:k11} in the symmetric setting. See Section~\ref{sec:ft} for further discussion.
\end{remark}

\begin{theorem}
\label{th:kappa:inv}
If $X_t$ is a L\'evy process whose L\'evy--Khintchine exponent $\Psi$ is a \emph{balanced} Rogers function, then the Laplace transforms of the supremum and infimum functionals
\formula{
 X^\ua_t & = \sup \tset{X_s : s \in [0, t]} , & X^\da_t & = \inf \tset{X_s : s \in [0, t]}
}
have Laplace transforms
\formula[eq:kappa:inv]{
 \ex \exp(-\xi X^\ua_t) & = \frac{1}{\pi} \int_0^\infty \Psi_{r\ua}(\xi) \, \frac{\xi \re \zeta(r)}{\xi^2 - 2 \xi \im \zeta(r) + r^2} \, \frac{\lambda'(r)}{\lambda(r)} \, e^{-t \lambda(r)} \D r \\
 \ex \exp(\xi X^\da_t) & = \frac{1}{\pi} \int_0^\infty \Psi_{r\da}(\xi) \, \frac{\xi \re \zeta(r)}{\xi^2 + 2 \xi \im \zeta(r) + r^2} \, \frac{\lambda'(r)}{\lambda(r)} \, e^{-t \lambda(r)} \D r .
}
for $t, \xi > 0$. In the above formulae, $\zeta(r) \in \C_\ra$ is the unique point such that $|\zeta(r)| = r$ and $\Psi(\zeta(r))$ is a real number (where $\Psi$ is the holomorphic extension to $\C_\ra$ of the L\'evy--Khintchine exponent), $\lambda(r) = \Psi(\zeta(r))$, and
\formula[eq:kappa:inv:ratio]{
 \Psi_{r\ua}(\xi) & = \lim_{\eps \searrow 0} \exp \Biggl( \frac{1}{\pi} \int_0^\infty \re \Biggl( \expr{\frac{\zeta'(s)}{\xi + i \zeta(s)} - \frac{\zeta'(s)}{\eps + i \zeta(s)}} \times \\
 & \hspace*{9em} \times \log \frac{(\zeta(s) - \zeta(r)) (\zeta(s) + \overline{\zeta(r)})}{\lambda(s) - \lambda(r)} \Biggr) \D s \Biggr) , \\
 \Psi_{r\da}(\xi) & = \lim_{\eps \searrow 0} \exp \Biggl( \frac{1}{\pi} \int_0^\infty \re \Biggl( \expr{\frac{\zeta'(s)}{\xi - i \zeta(s)} - \frac{\zeta'(s)}{\eps - i \zeta(s)}} \times \\
 & \hspace*{9em} \times \log \frac{(\zeta(s) - \zeta(r)) (\zeta(s) + \overline{\zeta(r)})}{\lambda(s) - \lambda(r)} \Biggr) \D s \Biggr) .
}
\end{theorem}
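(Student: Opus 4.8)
The plan is to combine the probabilistic Wiener--Hopf identity with the integral representation of the Wiener--Hopf factors obtained earlier, and then to recover the time-dependent transforms by inverting the Laplace transform in the time variable. The starting point is the Pecherskii--Rogozin--Fristedt identity: with $\kappa^\ua$, $\kappa^\da$ normalised as after Theorem~\ref{th:kappa:cbf}, for $\tau, \xi > 0$,
\formula{
 \int_0^\infty e^{-\tau t} \, \ex \exp(-\xi X^\ua_t) \, \D t = \frac{1}{\tau} \cdot \frac{\kappa^\ua(\tau; 0)}{\kappa^\ua(\tau; \xi)} \, , && \int_0^\infty e^{-\tau t} \, \ex \exp(\xi X^\da_t) \, \D t = \frac{1}{\tau} \cdot \frac{\kappa^\da(\tau; 0)}{\kappa^\da(\tau; \xi)} \, .
}
(The ratios on the right are normalisation-independent, so they are pinned down by the process.) It therefore suffices to show that each right-hand side is the Laplace transform in $t$ of the corresponding right-hand side of~\eqref{eq:kappa:inv}; I treat the ascending case, the descending one being entirely analogous, with $+i\zeta(s)$ replaced by $-i\zeta(s)$ (interchanging the roles of the two half-planes) and the change of sign of $\xi$ reflecting that $X^\da_t \le 0$.

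By Theorem~\ref{th:kappa:cbf}, $\tau \mapsto \kappa^\ua(\tau; \xi)$ is a complete Bernstein function, hence extends to a holomorphic, zero-free function on $\C \setminus (-\infty, 0]$, and so does $\kappa^\ua(\tau; 0)/(\tau \kappa^\ua(\tau; \xi))$, which moreover is $O(1/\abs{\tau})$ as $\abs{\tau} \to \infty$ and integrable near $0$. Here the \emph{balanced} assumption is used twice: to ensure the curve $\zeta$ and the increasing bijection $r \mapsto \lambda(r)$ of $\hl$ onto itself are globally well-behaved (so that the integrals in~\eqref{eq:kappa:inv} converge absolutely), and to ensure $\kappa^\ua(0; 0) = 0$, so that there is no contribution from the pole at $\tau = 0$. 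Applying the Bromwich inversion formula and collapsing the contour onto the two sides of the cut $(-\infty, 0]$, one obtains $\ex \exp(-\xi X^\ua_t)$ as $\frac{1}{\pi} \int_0^\infty e^{-ts}$ times the density of the jump of $\kappa^\ua(\tau; 0)/(\tau\kappa^\ua(\tau; \xi))$ across $\tau = -s$; after the substitution $s = \lambda(r)$, $\D s = \lambda'(r) \D r$, everything reduces to evaluating this jump at $\tau = -\lambda(r) \pm i0$.

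The jump is computed from the integral representation of $\kappa^\ua(\tau; \xi)$ for $\tau > 0$, which is of the form $\exp$ of a suitably regularised integral, against the Poisson-type kernel $\re(\zeta'(s)/(\xi + i\zeta(s)))$, of $\log(\lambda(s) + \tau)$. Two things happen on continuing $\tau$ to $-\lambda(r) \pm i0$. First, $\log(\lambda(s) + \tau) = \log(\lambda(s) - \lambda(r))$ acquires imaginary part $\pm i\pi$ exactly for $s < r$; this, together with the elementary identity $\abs{\xi + i\zeta(r)}^2 = \xi^2 - 2\xi\im\zeta(r) + r^2$, is precisely what produces the kernel $\xi \re\zeta(r)/(\xi^2 - 2\xi\im\zeta(r) + r^2)$. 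Second --- and this is the delicate point --- for $\tau < 0$ the function $\Psi(\cdot) + \tau$ has a zero at $\zeta(r) \in \C_\ra$, unlike $\Psi(\cdot) + \tau$ for $\tau \ge 0$, so the genuine Wiener--Hopf factors $\kappa^\ua(-\lambda(r); \cdot)$, $\kappa^\da(-\lambda(r); \cdot)$ of $\Psi(\cdot) - \lambda(r)$ differ from the naive analytic continuation of the $\tau > 0$ formula by elementary factors carrying this zero (and, in the product, the conjugate data $\overline{\zeta(r)}$); incorporating them replaces $\log(\lambda(s) - \lambda(r))$ by $\log\bigl((\zeta(s) - \zeta(r))(\zeta(s) + \overline{\zeta(r)})/(\lambda(s) - \lambda(r))\bigr)$, which is in addition regular at $s = r$. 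Carrying out the resulting integrations and combining with the factor $1/(-s) = -1/\lambda(r)$ and the Jacobian $\lambda'(r)\,\D r$ yields~\eqref{eq:kappa:inv}, with the amplitude equal to $\Psi_{r\ua}(\xi)$ of~\eqref{eq:kappa:inv:ratio}, the $\eps \searrow 0$ limit being the regularisation that makes the integral converge.

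The main obstacle is exactly this last identification: one must establish the explicit integral representation of $\kappa^\ua(\tau; \xi)$ and control its analytic continuation in $\tau$ well enough to justify the Bromwich manipulation (decay at infinity and near the branch points, absolute convergence of the $r$-integral, Fubini, the $\eps \searrow 0$ limit, and the absence of an extra term at $\tau = 0$), and --- most delicately --- carry out the Wiener--Hopf factorisation of $\Psi(\cdot) - \lambda(r)$ with its zero at $\zeta(r)$ and the correct branches of the logarithm, so as to match the continued factor with the closed form~\eqref{eq:kappa:inv:ratio}.
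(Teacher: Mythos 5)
Your outline and the paper's proof both hinge on the same key computation: the boundary behaviour of $\kappa^\ua(\tau;\xi)$ along the cut $\tau\in(-\infty,0)$, identified via the Wiener--Hopf factorisation of $\Psi(\cdot)-\lambda(r)$, where the zero at $\zeta(r)$ and its reflection $-\overline{\zeta(r)}$ are factored out, replacing $\log(\lambda(s)+\tau)$ by the regular quantity $\log\bigl((\zeta(s)-\zeta(r))(\zeta(s)+\overline{\zeta(r)})/(\lambda(s)-\lambda(r))\bigr)$ --- this is precisely Lemma~\ref{lem:rogers:xwh:boundary}\ref{it:rogers:xwh:a}, and also Lemma~\ref{lem:rogers:curve} applied to $f_{[\zeta(r)]}$ to produce~\eqref{eq:kappa:inv:ratio}. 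Where you diverge from the paper is in the inversion device: you propose a Bromwich contour argument, collapsing the contour onto the two sides of $(-\infty,0]$, whereas the paper never invokes Bromwich. Instead it observes that $\kappa_f^\ua(\sigma;\eps)/\kappa_f^\ua(\sigma;\xi)$ ($0<\eps<\xi$) is a complete Bernstein function of $\sigma$ whose linear coefficient $c_1$ vanishes (Lemma~\ref{lem:rogers:xwh:cbf}), reads off the Stieltjes representing measure from the boundary limits (Theorem~\ref{th:cbf}\ref{it:cbf:b} together with Lemma~\ref{lem:rogers:xwh:boundary}), shows the constant term $\kappa_f^\ua(0;\eps)/\kappa_f^\ua(0;\xi)\to 0$ as $\eps\searrow 0$ using the balancedness condition and the divergence of $\log\lambda(r)$ at $0$, and then simply inserts $1/(\sigma+\lambda(r))=\int_0^\infty e^{-\sigma t-\lambda(r)t}\,\D t$ and uses uniqueness of the Laplace transform. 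This Stieltjes route buys you exactly what you flag as your ``main obstacle'': it sidesteps the need to justify closing a Bromwich contour on a function that decays only like $O(1/\abs{\tau})$, and it makes the role of $\eps$ precise --- $\eps>0$ is needed not merely to regularise an integral, but to ensure the \emph{ratio} $\kappa_f^\ua(\sigma;\eps)/\kappa_f^\ua(\sigma;\xi)$ is an honest complete Bernstein function to which the representation theorem applies; the $\eps\searrow 0$ limit is then taken at the very end by dominated convergence. Your description of the mechanism is accurate and the argument would go through once those technical points are discharged, but the paper's route is the cleaner way to make the inversion rigorous.
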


\begin{remark}
For the application of Theorem~\ref{th:kappa:inv} to strictly stable L\'evy processes, see Theorem~\ref{th:kappa:stable}. In this case the limit in~\eqref{eq:kappa:inv:ratio} can be calculated by substituting $\eps = 0$ in the integral, and adding an extra term corresponding to the Dirac measure $c \delta_0(\D s)$ for an appropriate constant $c$. Similar procedure is applicable whenever $\arg \zeta(r)$ has a limit as $r \searrow 0$.
\end{remark}

\begin{remark}
Over the last decade, symmetric L\'evy processes (in particular those whose L\'evy--Khintchine exponent $\Psi$ is a Rogers function, that is, $\Psi(\xi) = g(\xi^2)$ for some $g \in \cbf$) attracted much attention; see, e.g.,~\cite{bib:bbkrsv09, bib:bgr13, bib:bgr14, bib:g13, bib:ksv12, bib:ksv13, bib:ksv14, bib:kkms10, bib:k11, bib:k12a, bib:k12, bib:ssv10} and the references therein for recent results in potential theory and spectral theory of these processes. There are, however, very few papers where similar problems are studied for asymmetric processes, see~\cite{bib:p14, bib:y12}. In the present paper the theory of Rogers functions is developed and applied to fluctuation theory of L\'evy processes. One can expect that the results of this article may be applied also to potential theory and spectral theory, thus stimulating its development in the asymmetric setting.
\end{remark}

The structure of the article is as follows. To facilitate reading, in Section~\ref{sec:ex} main ideas of the article are presented for the power-type Rogers functions, which correspond to strictly stable L\'evy processes. This part also contains a brief discussion of various parametrisations of strictly stable L\'evy processes, introduction to their fluctuation theory, statement of Theorem~\ref{th:kappa:inv} for power-type Rogers functions (Theorem~\ref{th:kappa:stable}) and discussion of some special cases.

Preliminary results, including classical theorems on harmonic and holomorphic functions, are collected in Section~\ref{sec:pre}. In Section~\ref{sec:cbf}, the notion of complete Bernstein functions is briefly recalled.

Section~\ref{sec:rogers} contains equivalent definitions of Rogers functions. Theorem~\ref{th:rogers} identifies the class of L\'evy--Khintchine exponents of L\'evy processes with completely monotone jumps with Stieltjes-type transforms, functions with a Stieltjes-type exponential representation and a class of Nevanlinna--Pick-type functions. At the end of this section, a few examples of Rogers functions are given.

Basic properties of Rogers functions are studied in Section~\ref{sec:basic}. First, non-zero, non-constant, non-degenerate, bounded and unbounded Rogers functions are defined. Next, the class $\rogers$ is proved to be closed under various transformations, for example, if $f \in \rogers$ and $f$ is not the zero function, then $\xi^2 / f(\xi)$, $\xi^2 f(1/\xi)$ and $1 / f(1/\xi)$ are Rogers functions. Finally, some estimates are studied. For example, it is proved that for $f \in \rogers$ and $\xi > 0$,
\formula{
 \frac{1}{\sqrt{2}} \, \frac{|\xi|^2}{1 + |\xi|^2} \le \frac{|f(\xi)|}{|f(1)|} \le \sqrt{2} \, (1 + |\xi|^2)
}
(Proposition~\ref{prop:rogers:bound}), and an upper bound for $|\xi f'(\xi) / f(\xi)|$ is found (Proposition~\ref{prop:rogers:prime}).

The line $\gamma_f$ along which a Rogers function $f$ takes real values is studied in Section~\ref{sec:real}. The key step is to show that if $\zeta \in \C_\ra$ and $f(\zeta) > 0$, then
\formula{
 f_{[\zeta]}(\xi) & = \frac{(\xi - \zeta) (\xi + \bar{\zeta})}{f(\xi) - f(\zeta)}
}
is a Rogers functions (Lemma~\ref{lem:rogers:quot}). Then $\gamma_f$ is defined so that $\gamma_f \cap \C_\ra$ is the set of those $\zeta \in \C)\ra$, for which $f(\zeta) > 0$. It is proved that $\gamma_f$ is a system of simple analytic curves (typically a single curve), and that for each $r > 0$, $\gamma_f \cap \C_\ra$ contains at most one point $\zeta$ with $|\zeta| = r$.

Next three sections discuss the Wiener--Hopf factorisation of Rogers functions. In Section~\ref{sec:wh} a short proof of the Rogers's result is provided: $f \in \rogers$ if and only if $f(\xi) = f^\ua(-i \xi) f^\da(i \xi)$ for complete Bernstein functions $f^\ua$, $f^\da$ (Theorem~\ref{th:rogers:wh}). After proving some simple properties of Wiener--Hopf factors $f^\ua$, $f^\da$, the notion of a balanced Rogers function is introduced (Definition~\ref{def:rogers:reg} and Lemma~\ref{lem:rogers:reg}). Various formulae for the Wiener--Hopf factors are provided in Lemmas~\ref{lem:rogers:curve} and~\ref{lem:rogers:wh:alt}, Corollaries~\ref{cor:rogers:wh} and~\ref{cor:rogers:wh:alt}, and Remark~\ref{rem:rogers:substitution}. The most classical expression is
\formula{
 f^\ua(\xi) & = c_\ua \exp \expr{\frac{1}{2 \pi} \int_{-\infty}^\infty \expr{\frac{1}{\xi + i r} - \frac{1}{1 + i r}} \log f(r) \D r} , \\
 f^\da(\xi) & = c_\da \exp \expr{\frac{1}{2 \pi} \int_{-\infty}^\infty \expr{\frac{1}{\xi - i r} - \frac{1}{1 - i r}} \log f(r) \D r} .
}
In the remaining part of this section, a technical lemma is proved, and nearly balanced Rogers functions are discussed.

The functions $\kappa^\ua(\tau; \xi)$ and $\kappa^\da(\tau; \xi)$ in Theorem~\ref{th:kappa:cbf} are, for a fixed $\tau$, the Wiener--Hopf factors of $f(\xi) + \tau$. The Wiener--Hopf factors are defined up to multiplication by a positive constant, which here may depend on $\tau$. Section~\ref{sec:xwh} contains those results on the Wiener--Hopf factors of $f(\xi) + \tau$ which do not depend on the choice of the $\tau$-dependent constant. This is the most technical part of the article, where holomorphic extensions to $\tau \in \C \setminus (-\infty, 0)$ of the Wiener--Hopf factors are studied (Lemma~\ref{lem:rogers:xwh}). Noteworthy, the boundary limits of these Wiener--Hopf factors are expressed using the Wiener--Hopf factors of $f_{[\zeta]}$ (Lemma~\ref{lem:rogers:xwh:boundary}). The results are first proved for balanced Rogers functions, and then extended to nearly balanced ones.

In Section~\ref{sec:norm} the proper choice of the $\tau$-dependent constant is done, which leads to an analytical construction of $\kappa^\ua(\tau; \xi)$ and $\kappa^\da(\tau; \xi)$. Main results follow now quickly from the theory developed in Section~\ref{sec:xwh}. A few examples are discussed at the end of Section~\ref{sec:norm}.

Finally, Section~\ref{sec:ft} starts with a short discussion of fluctuation theory of L\'evy processes. In particular, it is proved that the choice of $\tau$-dependent constant from Section~\ref{sec:norm} coincides with the classical probabilistic definition. Theorem~\ref{th:kappa:cbf} follows immediately. Nest, the proofs of Theorem~\ref{th:kappa:inv} and its version for power-type Rogers functions (Theorem~\ref{th:kappa:stable}) are provided. In the final part, inversion of the Laplace transform in~\eqref{eq:kappa:inv} is discussed, and connection to generalised eigenfunction expansion is given. The formula for the conjectured generalised eigenfunctions for power-type Rogers functions is provided (Lemma~\ref{lem:ee:stable}).

The following notation is used throughout the article. The right, left, upper and lower complex half-planes are denoted by $\C_\ra$, $\C_\la$, $\C_\ua$ and $\C_\da$, respectively. Their closures are denoted by $\overline{\C}_\ra$ etc. By $i \R$ we denote the imaginary axis. By $(z_1, z_2)$ and $[z_1, z_2]$ we denote line segments in the complex plane. In a similar manner, for $z \in \C \setminus \{0\}$, $(0, z \infty)$ denotes the ray with initial point $0$, which contains $z$. The complex plane $\C$ is commonly identified with $\R^2$. The Fourier transform of an integrable function is defined by $\fourier f(\xi) = \int_{-\infty}^\infty e^{-i \xi x} f(x) \D x$, and the Laplace transform is denoted by $\laplace f(\xi) = \int_0^\infty e^{-\xi x} f(x) \D x$. Finally, $f(0^+) = \lim_{\xi \searrow 0} f(\xi)$ and $f(\infty^-) = \lim_{\xi \nearrow \infty} f(\xi)$.

%
%

\section{Power-type Rogers functions}
\label{sec:ex}

The main part of the article is rather technical. For this reason, in this section key ideas are discussed for a still technical, but at least explicit example: $f(\xi) = a \xi^\alpha$ with $\alpha \in (0, 2]$ and $a \in \C \setminus \{0\}$ satisfying
\formula[]{
\label{eq:stable:a}
 |\Arg a| & \le \tfrac{\alpha \pi}{2} && \text{and} & |\Arg a| & \le (2 - \alpha) \tfrac{\pi}{2} .
}
This example is motivated by its probabilistic counterpart. Namely, $f(\xi) = a \xi^\alpha$ corresponds to the L\'evy--Khintchine exponent $\Psi$ of a strictly stable L\'evy process (in this section, $f$ and $\Psi$ are clearly distinguished), which is defined by
\formula[]{
\label{eq:stable:psi}
 \text{$\Psi(\xi) = a \xi^\alpha$ for $\xi > 0$,} && \text{$\Psi(\xi) = \bar{a} (-\xi)^\alpha$ for $\xi < 0$.}
}

\subsection{Parametrisations}

For $\alpha \ne 1$ it is customary parametrise strictly stable L\'evy processes by $(\alpha, \beta, k)$, where $\alpha \in (0, 2]$ is the \emph{stability index}, $\beta \in [-1, 1]$ is the \emph{skewness parameter} ($\beta = 0$ if $\alpha = 2$), and $k > 0$ is the \emph{scale parameter}. These parameters are related to $a$ through the formulae
\formula[]{
\label{eq:stable:a:beta}
 a & = k^\alpha (1 - i \beta \tan \tfrac{\alpha \pi}{2}) , & k & = (\re a)^{1/\alpha} , & \beta & = -\frac{\tan(\Arg a)}{\tan \tfrac{\alpha \pi}{2}} \, .
}
A different parametrisation is required for $\alpha = 1$ (the skewness parameter is then nonzero for stable L\'evy processes which are not strictly stable). Another common choice for parameters is $(\alpha, \ro, k)$, where $\alpha$ and $k$ are as above and $\ro = \pr(X_t > 0)$ is the positivity parameter. Then
\formula[]{
\label{eq:stable:a:ro}
 a & = k^\alpha (1 - i \tan((2 \ro - 1) \tfrac{\alpha \pi}{2})) , & \ro & = \tfrac{1}{2} - \tfrac{1}{\alpha \pi} \Arg a ,
}
and $\ro \in [0, 1]$ if $\alpha \in (0, 1]$, $\ro \in [1 - \tfrac{1}{\alpha}, \tfrac{1}{\alpha}]$ if $\alpha \in (1, 2]$. For $\alpha \ne 1$,
\formula[eq:stable:ro]{
 \ro & = \tfrac{1}{2} + \tfrac{1}{\alpha \pi} \arctan(\beta \tan \tfrac{\alpha \pi}{2}) .
}
If $|\Arg a| = \tfrac{\alpha \pi}{2}$ (and necessarily $\alpha \in (0, 1]$), then $X_t$ is monotone in $t$ (that is, $X_t$ or $-X_t$ is a subordinator). This corresponds to $\ro \in \{0, 1\}$, or to $|\beta| = 1$ (when $\alpha \ne 1$). If $|\Arg a| = (2 - \alpha) \tfrac{\pi}{2}$ and $\alpha \in (1, 2)$, then $X_t$ has one-sided jumps, but it is not monotone in $t$. This corresponds to $\ro \in \{1 - \tfrac{1}{\alpha}, \tfrac{1}{\alpha}\}$, or to $|\beta = 1|$. When $\alpha = 2$, then necessarily $a \in (0, \infty)$ and $X_t$ is the Brownian motion.

Yet another parametrisation is based on separation of negative and positive jumps. For $\alpha \in (0, 1)$, $\Psi(\xi) = c_\ua (-i \xi)^\alpha + c_\da (i \xi)^\alpha$ for some $c_\ua, c_\da \ge 0$. Then $a = e^{-i \alpha \pi/2} c_\ua + e^{i \alpha \pi/2} c_\da$. If $\alpha \in (1, 2)$, then $f(\xi) = -c_\ua (-i \xi)^\alpha - c_\da (i \xi)^\alpha$ for some $c_\ua, c_\da \ge 0$. In this case $a = e^{i \pi - i \alpha \pi/2} c_\ua + e^{-i \pi + i \alpha \pi/2} c_\da$. Furthermore,
\formula{
 \beta & = \frac{c_\ua - c_\da}{c_\ua + c_\da} \, , & \ro & = \frac{1}{2} + \frac{1}{\alpha \pi} \arctan \expr{\frac{c_\ua - c_\da}{c_\ua + c_\da} \, \tan \frac{\alpha \pi}{2}} ;
}
here $\alpha \in (0, 1) \cup (1, 2)$. When $\alpha = 1$, $a = c - i b$ for $c \ge 0$ and $b \in \R$. If $b = 0$, then $X_t$ is the Cauchy process. If $c = 0$, then $X_t$ is the (deterministic) process of uniform motion with velocity $b$.

\subsection{Wiener--Hopf factorisation}

For each $\tau > 0$, the Wiener--Hopf factors $\kappa^\ua(\tau; \xi)$ and $\kappa^\da(\tau; \xi)$ are holomorphic functions of $\xi \in \C_\ra$ such that $\kappa^\ua(\tau; -i \xi) \kappa^\da(\tau; i \xi) = \Psi(\xi) + \tau$ for $\xi \in \R$. Wiener--Hopf theory states that this condition defines $\kappa^\ua(\tau; \xi)$ and $\kappa^\da(\tau; \xi)$ uniquely up to a positive constant, which may depend on $\tau$:
\formula[eq:stable:kappa:pre]{
 \kappa^\ua(\tau; \xi) & = c_\ua(\tau) \exp\expr{-\frac{1}{2 \pi i} \int_{-\infty}^\infty \expr{\frac{1}{i \xi - z} - \frac{1}{i - z}} \log(\Psi(z) + \tau) \D z} , \\
 \kappa^\da(\tau; \xi) & = c_\da(\tau) \exp\expr{-\frac{1}{2 \pi i} \int_{-\infty}^\infty \expr{\frac{1}{i \xi + z} - \frac{1}{i + z}} \log(\Psi(z) + \tau) \D z}
}
for $\tau > 0$ and $\xi \in \C_\ra$. In probability, however, there is a natural choice of $c_\ua(\tau)$, which is unique up to a positive constant $c$. A fundamental result in fluctuation theory of L\'evy processes (often referred to as \emph{Fristedt formula}, but already present in works of Pecherski and Rogozin; see~\cite{bib:f74, bib:pr69, bib:r66}) states that
\formula[eq:stable:f]{
 \kappa^\ua(\tau; \xi) & = c \exp\expr{\int_0^\infty \int_{(0, \infty)} \frac{e^{-t} - e^{-\tau t - \xi x}}{t} \, \pr(X_t \in \D x) \D t} , \\
 \kappa^\da(\tau; \xi) & = \frac{1}{c} \, \exp\expr{\int_0^\infty \int_{(-\infty, 0)} \frac{e^{-t} - e^{-\tau t + \xi x}}{t} \, \pr(X_t \in \D x) \D t} ;
}
Using the fact that the Fourier transform of $\pr(X_t \in \D x)$ is $e^{-t \Psi(\xi)}$, one can derive an expression resembling the Baxter--Donsker formula (see~\cite{bib:bd57}),
\formula[eq:stable:wh]{
 \kappa^\ua(\tau; \xi) & = c \exp\expr{-\frac{1}{2 \pi i} \int_{-\infty}^\infty \expr{\frac{\log(\Psi(z) + \tau)}{i \xi - z} - \frac{\log \Psi(z)}{i - z}} \D z} , \\
 \kappa^\da(\tau; \xi) & = \frac{1}{c} \, \exp\expr{-\frac{1}{2 \pi i} \int_{-\infty}^\infty \expr{\frac{\log(\Psi(z) + \tau)}{i \xi + z} - \frac{\log \Psi(z)}{i + z}} \D z}
}
for all $\tau > 0$ and $\xi \in \C_\ra$. An alternative argument is provided in Corollary~\ref{cor:rogers:kappa:a} (see also Section~\ref{sec:ft}), so we omit the details. 

Since $\Psi(k \xi) = k^\alpha \Psi(\xi)$ for $k > 0$ and $\xi \in \R$, the process $X_{k t}$ has the same law as $k^{1/\alpha} X_t$. By~\eqref{eq:stable:f} one can prove that
\formula[eq:stable:scaling]{
 \kappa^\ua(\tau; \xi) & = \tau^\ro \kappa^\ua(1; \tau^{-1/\alpha} \xi) = \xi^{\ro \alpha} \kappa^\ua(\xi^{-\alpha} \tau; 1) , \\
 \kappa^\da(\tau; \xi) & = \tau^{1 - \ro} \kappa^\da(1; \tau^{-1/\alpha} \xi) = \xi^{(1 - \ro) \alpha} \kappa^\da(\xi^{-\alpha} \tau; 1) ,
}
where $\ro$ is the positivity parameter. (This property also follows from~\eqref{eq:stable:wh}, for a similar calculation see the proof of Theorem~\ref{th:kappa:stable} in Section~\ref{sec:ft}.)

\subsection{Main ideas}

Theorem~\ref{th:kappa:cbf} implies that if $|\Arg a| < \tfrac{\alpha \pi}{2}$ (see~\eqref{eq:stable:a}), then $\kappa^\ua(\tau; \xi)$ and $\kappa^\da(\tau; \xi)$ are complete Bernstein functions of both $\tau$ and $\xi$. In particular, this means that for each $\tau > 0$, $\kappa^\ua(\tau; \xi)$ and $\kappa^\da(\tau; \xi)$ extend to holomorphic functions of $\xi \in \C \setminus (-\infty, 0]$, and in a similar manner, for each $\xi > 0$, these functions extend to holomorphic functions of $\tau \in \C \setminus (-\infty, 0]$. These properties, however, do not follow clearly neither from~\eqref{eq:stable:wh}, nor from the probabilistic definition~\eqref{eq:stable:f}.

The key idea consists in appropriate deformations of the integration contour in~\eqref{eq:stable:wh}. Below only $\kappa^\ua(\tau; \xi)$ is considered; the argument for $\kappa^\da(\tau; \xi)$ is very similar.

First, observe that the integrand in~\eqref{eq:stable:wh} extends to a holomorphic function of $z$ in $\C_\la$ and in $\C_\ra$. Indeed, $\Psi(z) = a z^\alpha$ for $z > 0$ and $\Psi(z) = \bar{a} (-z)^\alpha$ for $z < 0$. Moreover, due to assumptions~\eqref{eq:stable:a}, one has $|\Arg a| \le (2 - \alpha) \tfrac{\pi}{2}$ and $|\Arg z^\alpha| < \tfrac{\alpha \pi}{2}$, so that $a z^\alpha$ extends to a holomorphic function in $\C_\ra$, and taking values in $\C \setminus (-\infty, 0]$. In a similar manner, $a (-z)^\alpha$ extends to a holomorphic function in $\C_\la$, also taking values in $\C \setminus (-\infty, 0]$.

Using the above extensions, Cauchy's theorem and an appropriate limiting procedure, one can prove that for $\thet \in (-\tfrac{\pi}{2}, \tfrac{\pi}{2})$, the integral in the first part of~\eqref{eq:stable:wh} is equal to
\formula{
 & \int_{-e^{-i \thet} \infty}^0 \expr{\frac{\log(\bar{a} (-z)^\alpha + \tau)}{i \xi - z} - \frac{\log(\bar{a} (-z)^\alpha)}{i - z}} \D z \\
 & \hspace*{5em} + \int_0^{e^{i \thet} \infty} \expr{\frac{\log(a z^\alpha + \tau)}{i \xi - z} - \frac{\log(a z^\alpha)}{i - z}} \D z .
}
By considering different values of $\thet$, this can be used to prove that $\kappa^\ua(\tau; \xi)$ is a complete Bernstein function of $\tau$ and $\xi$.

When $|\Arg a| < (2 - \alpha) \tfrac{\alpha \pi}{2}$, taking $\thet \searrow -\tfrac{\pi}{2}$ and writing $z = -i r$ gives
\formula{
 \kappa^\ua(\tau; \xi) & = c \exp \biggl( -\frac{1}{2 \pi i} \int_0^\infty \expr{\frac{\log(\bar{a} e^{i \alpha \pi / 2} r^\alpha + \tau)}{\xi + r} - \frac{\log(\bar{a} e^{i \alpha \pi / 2} r^\alpha)}{1 + r}} \D r \\
 & \hspace*{5em} + \frac{1}{2 \pi i} \int_0^\infty \expr{\frac{\log(a e^{-i \alpha \pi / 2} r^\alpha + \tau)}{\xi + r} - \frac{\log(a e^{-i \alpha \pi / 2} r^\alpha)}{1 + r}} \D r \biggr) .
}
Recall that $\im \log z = \Arg z$. Hence,
\formula{
 \kappa^\ua(\tau; \xi) & = c \exp\expr{-\frac{1}{\pi} \int_0^\infty \expr{\frac{\Arg(\bar{a} e^{i \alpha \pi / 2} r^\alpha + \tau)}{\xi + r} - \frac{\Arg(\bar{a} e^{i \alpha \pi / 2} r^\alpha)}{1 + r}} \D r} .
}
Since $\Arg(\bar{a} e^{i \alpha \pi / 2} r^\alpha + \tau) \in [0, \pi)$, the right-hand side of the above expression is virtually the exponential representation of a complete Bernstein function (see Theorem~\ref{th:cbf}\ref{it:cbf:c} below), and therefore $\kappa^\ua(\tau; \xi)$ is a complete Bernstein function of $\xi$ (we omit the details). With little effort, the above argument can be extended to the case $|\Arg a| = (2 - \alpha) \tfrac{\alpha \pi}{2}$.

To prove that $\kappa^\ua(\tau; \xi)$ is a complete Bernstein function of $\tau$, one takes $\thet = -\tfrac{1}{\alpha} \Arg a$, so that $a z^\alpha$ is real-valued for $z \in (0, e^{i \thet} \infty)$. Then $a (e^{i \thet} r)^\alpha = \bar{a} (-e^{-i \thet} r)^\alpha = |a| r^\alpha$ for $r > 0$, so that
\formula{
 \kappa^\ua(\tau; \xi) & = c \exp \biggl( -\frac{e^{-i \thet}}{2 \pi i} \int_0^\infty \expr{\frac{\log(|a| r^\alpha + \tau)}{i \xi + e^{-i \thet} r} - \frac{\log(|a| r^\alpha)}{i + e^{-i \thet} r}} \D r \\
 & \hspace*{5em} - \frac{e^{i \thet}}{2 \pi i} \int_0^\infty \expr{\frac{\log(|a| r^\alpha + \tau)}{i \xi - e^{i \thet} r} - \frac{\log(|a| r^\alpha)}{i - e^{i \thet} r}} \D r \biggr) \displaybreak[0] \\
 & = c \exp \biggl( \frac{1}{\pi} \int_0^\infty \biggl( \re\expr{\frac{1}{e^{i \thet} \xi - i r}} \log(|a| r^\alpha + \tau) \\
 & \hspace*{11em} - \re\expr{\frac{1}{e^{-i \thet} - i r}} \log(|a| r^\alpha) \biggr) \D r \biggr) .
}
Since $\re(1 / (e^{i \thet} \xi - i r)) = (\xi \cos \thet) / |e^{i \thet} \xi - i r|^2$ is an integrable function of $r \in (0, \infty)$, for some constant $\tilde{c} > 0$,
\formula{
 \kappa^\ua(\tau; \xi) & = \tilde{c} \exp\expr{\frac{1}{\pi} \int_0^\infty \frac{\xi \cos \thet}{\xi^2 - 2 \xi r \sin \thet + r^2} \, \log(|a| r^\alpha + \tau) \D r}
}
(the parenthesised expression in the right-hand side is often called \emph{Darling's integral}, see~\cite{bib:b73, bib:d56, bib:d87, bib:gj10, bib:h69, bib:ku10a, bib:ku13}). Furthermore, the weight $\tfrac{1}{\pi} (\xi \cos \thet) / (\xi^2 - 2 \xi r \sin \thet + r^2)$ is nonnegative and its integral over $r \in (0, \infty)$ is $1$. Therefore, $\kappa^\ua(\tau; \xi)$ is a weighted geometric integral average of $|a| r^\alpha + \tau$ with respect to $r \in (0, \infty)$. Since $|a| r^\alpha + \tau$ is a complete Bernstein function of $\tau$ for every $r > 0$, and since a geometric average of complete Bernstein functions is again a complete Bernstein function, one concludes that $\kappa^\ua(\tau; \xi)$ is a complete Bernstein function of $\tau$.

The proof that $\kappa^\ua(\tau; \xi)$ is a complete Bernstein function of $\xi$ provided above extends almost verbatim to any Rogers function $f$. It is essentially contained in~\cite{bib:r83}; a different argument for strictly stable L\'evy processes was given later in~\cite{bib:gj12}.

Unfortunately, similar direct extension of the proof that $\kappa^\ua(\tau; \xi)$ is a complete Bernstein function of $\tau$ fails for two reasons. First, the integral over the two rays $(-e^{-i \thet} \infty, 0)$ and $(0, e^{i \thet} \infty)$ needs to be replaced by an integral over an appropriate (typically unbounded) contour $\gamma_f$, over which $f$ takes real values. Handling this integral requires some regularity assumption on the Rogers function $f$ (namely, that $f$ is a balanced Rogers function). Furthermore, the integral in the last step of the argument is no longer a weighted geometric integral average: in general, the weight function fails to be nonnegative. Instead, integration by parts is used to transform the expression again to the exponential representation of a complete Bernstein function.

\subsection{Supremum and infimum functionals}

The supremum and infimum functionals
\formula{
 X^\ua_t & = \sup \tset{X_s : s \in [0, t]} , & X^\da_t & = \inf \tset{X_s : s \in [0, t]}
}
are described by the Wiener--Hopf factors:
\formula[]{
\label{eq:stable:pr}
 \int_0^\infty \ex \exp(-\xi X^\ua_t) \sigma e^{-\sigma t} \D t & = \frac{\kappa^\ua(\sigma; 0)}{\kappa^\ua(\sigma; \xi)} \, , & \int_0^\infty \ex \exp(\xi X^\da_t) \sigma e^{-\sigma t} \D t & = \frac{\kappa^\da(\sigma; 0)}{\kappa^\da(\sigma; \xi)};
}
see~\cite{bib:bd57,bib:r66,bib:f74}. The Mellin transforms of $X^\ua_t$ and $X^\da_t$ are well-known. For completeness, a simple derivation is included below. Only $X^\ua_t$ is considered, the calculations for $X^\da_t$ are very similar.

By Fubini,
\formula{
 \int_0^\infty \frac{\kappa^\ua(\sigma; 0)}{\kappa^\ua(\sigma; \xi)} \, \xi^{s-1} \D \xi & = \int_0^\infty \int_0^\infty \ex \exp(-\xi X^\ua_t) \sigma e^{-\sigma t} \xi^{s-1} \D t \D \xi \displaybreak[0] \\
 & = \Gamma(s) \int_0^\infty \ex (X^\ua_t)^{-s} \sigma e^{-\sigma t} \D t
}
for $s > 0$ and $\sigma > 0$. Since $X^\ua_t$ has the same distribution as $t^{1/\alpha} X_1^\ua$,
\formula{
 \int_0^\infty \frac{\kappa^\ua(\sigma; 0)}{\kappa^\ua(\sigma; \xi)} \, \xi^{s-1} \D \xi & = \Gamma(s) \ex (X_1^\ua)^{-s} \int_0^\infty t^{-s/\alpha} \sigma e^{-\sigma t} \D t = \Gamma(s) \Gamma(1 - \tfrac{s}{\alpha}) \sigma^{s/\alpha} \, \ex (X_1^\ua)^s .
}
This gives the formula for the Mellin transform of $X^\ua_t$ (see~\cite{bib:b73, bib:d56, bib:h69, bib:ku13}),
\formula[eq:stable:mellin]{
 \ex (X^\ua_t)^s & = \frac{(\tfrac{t}{\sigma})^{s/\alpha}}{\Gamma(s) \Gamma(1 - \tfrac{s}{\alpha})} \int_0^\infty \frac{\kappa^\ua(\sigma; 0)}{\kappa^\ua(\sigma; \xi)} \, \xi^{s-1} \D \xi
}
for $s, t, \sigma > 0$. Clearly, formula~\eqref{eq:stable:mellin} extends to $s \in \C$ such that $\ex (X^\ua_t)^{\re s}$ is finite. Noteworthy, the integral can be expressed using the double gamma function, see~\cite{bib:ku11, bib:ku13}. Hence, at least formally, inversion of the Mellin transform in~\eqref{eq:stable:mellin} can be written down, which provides an integral formula for the density of $X^\ua_t$.

The Wiener--Hopf factorisation of Rogers functions developed in this article provides an integral formula for the Laplace transform of $X^\ua_t$ for a much more general class of L\'evy processes than just the strictly stable processes (see Theorem~\ref{th:kappa:inv}). The following result is a version specialised to this context.

\begin{theorem}
\label{th:kappa:stable}
Let $X_t$ be a strictly stable L\'evy process with stability index $\alpha \in (0, 2]$, positivity parameter $\ro \in (0, 1)$ and scale parameter $k > 0$. Then for $t, \xi > 0$,
\formula[eq:kappa:stable:inv]{
 \ex \exp(-\xi X^\ua_t) & = \frac{\alpha}{\pi} \int_0^\infty \frac{u^{-(2 - \alpha) \ro} \sin(\ro \pi)}{1 + 2 u \cos(\ro \pi) + u^2} \exp\expr{I(u) + J(u) - \frac{k^\alpha t \xi^\alpha u^\alpha}{\cos((2 \ro - 1) \tfrac{\alpha \pi}{2})}} \D u ,
}
where
\formula[eq:kappa:stable:ints]{
 I(u) & = \frac{1}{\pi} \int_0^\infty \frac{\sin(\ro \pi)}{1 + 2 v \cos(\ro \pi) + v^2} \, \log \frac{(v - u) \sqrt{u^2 - 2 u v \cos(2 \ro \pi) + v^2}}{v^\alpha - u^\alpha} \, \D v , \\
 J(u) & = \frac{1}{\pi} \int_0^\infty \frac{1 + v \cos(\ro \pi)}{1 + 2 v \cos(\ro \pi) + v^2} \, \frac{1}{v} \Arg(u - v \cos(2 \ro \pi) + i v \sin(2 \ro \pi)) 
\D v .
}
A similar formula for $\exp(\xi X^\da_t)$ is obtained by replacing $\ro$ by $1 - \ro$.
\end{theorem}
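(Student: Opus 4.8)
The plan is to derive~\eqref{eq:kappa:stable:inv} by inserting the explicit data of a strictly stable process into the general formula~\eqref{eq:kappa:inv}; this is legitimate because a strictly stable process with $\ro \in (0, 1)$ is not monotone in $t$, so that $\Psi$ is a \emph{balanced} Rogers function. Here $\Psi(\xi) = a \xi^\alpha$ with $a = k^\alpha (1 - i \tan((2 \ro - 1) \tfrac{\alpha \pi}{2}))$, hence $|a| = k^\alpha / \cos((2 \ro - 1) \tfrac{\alpha \pi}{2})$ and $\Arg a = -(2 \ro - 1) \tfrac{\alpha \pi}{2}$. I would first identify the objects appearing in Theorem~\ref{th:kappa:inv}: the condition $\Arg a + \alpha \Arg \zeta = 0$ forces $\zeta(r) = r e^{i \thet_0}$ with $\thet_0 = (2 \ro - 1) \tfrac{\pi}{2}$, which lies in $(-\tfrac{\pi}{2}, \tfrac{\pi}{2})$ exactly because $\ro \in (0, 1)$; consequently $\zeta'(r) = e^{i \thet_0}$, $\lambda(r) = \Psi(\zeta(r)) = |a| r^\alpha$ and $\lambda'(r) / \lambda(r) = \alpha / r$. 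I would also record the elementary identities $\cos \thet_0 = \sin(\ro \pi)$, $\sin \thet_0 = -\cos(\ro \pi)$, $i \zeta(s) = s e^{i \ro \pi}$, $e^{2 i \thet_0} = - e^{2 i \ro \pi}$, and the normalising integral $\tfrac{1}{\pi} \int_0^\infty \tfrac{\sin(\ro \pi)}{1 + 2 w \cos(\ro \pi) + w^2} \D w = \ro$, which follows from completing the square together with $\int_0^\infty \tfrac{\D w}{(w + \cos \phi)^2 + \sin^2 \phi} = \tfrac{\phi}{\sin \phi}$ for $\phi \in (0, \pi)$.

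The heart of the argument is the evaluation of $\Psi_{r \ua}(\xi)$ from~\eqref{eq:kappa:inv:ratio}. Using the above, $(\zeta(s) - \zeta(r))(\zeta(s) + \overline{\zeta(r)}) = (s - r)(r - s e^{2 i \ro \pi})$ and $\lambda(s) - \lambda(r) = |a| (s^\alpha - r^\alpha)$; since $\tfrac{s - r}{s^\alpha - r^\alpha} > 0$, the logarithm in~\eqref{eq:kappa:inv:ratio} equals $\log \tfrac{(s - r) |r - s e^{2 i \ro \pi}|}{|a| (s^\alpha - r^\alpha)} + i \Arg(r - s e^{2 i \ro \pi})$, so its real and imaginary parts separate cleanly. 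The delicate point is the limit $\eps \searrow 0$: I would write $\tfrac{\zeta'(s)}{\eps + i \zeta(s)} = \expr{\tfrac{\zeta'(s)}{\eps + i \zeta(s)} - \tfrac{1}{i s}} + \tfrac{1}{i s}$, observe that $\tfrac{1}{i s} = \tfrac{\zeta'(s)}{i \zeta(s)}$ is independent of $\eps$ and purely imaginary, while $\re \tfrac{\zeta'(s)}{\eps + i \zeta(s)} = \tfrac{\eps \sin(\ro \pi)}{\eps^2 + 2 \eps s \cos(\ro \pi) + s^2}$ has total mass $\ro \pi$ over $s \in (0, \infty)$ and converges to $\ro \pi \, \delta_0(\D s)$ as $\eps \searrow 0$, the remaining imaginary-part correction contributing nothing because $\Arg(r - s e^{2 i \ro \pi})$ vanishes to first order at $s = 0$. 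Since the logarithm at $s = 0$ has modulus $r^{2 - \alpha} / |a|$ and argument $0$, this yields
\[
 \Psi_{r \ua}(\xi) = \exp\expr{ \frac{1}{\pi} \int_0^\infty \re\!\left[ \expr{\frac{\zeta'(s)}{\xi + i \zeta(s)} - \frac{1}{i s}} \log \frac{(s - r)(r - s e^{2 i \ro \pi})}{|a| (s^\alpha - r^\alpha)} \right] \D s - \ro \bigl( (2 - \alpha) \log r - \log |a| \bigr) } .
\]

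Next I would scale. Substituting $r = \xi u$ and $s = \xi v$ makes the integrand homogeneous: after taking the real part against $\log = W + i V$, the expression $\expr{\tfrac{\zeta'(s)}{\xi + i \zeta(s)} - \tfrac{1}{i s}} \D s$ becomes $\expr{ \tfrac{\sin(\ro \pi)}{1 + 2 v \cos(\ro \pi) + v^2} \, W(v) - \tfrac{1 + v \cos(\ro \pi)}{v (1 + 2 v \cos(\ro \pi) + v^2)} \, V(v) } \D v$, where $W(v) = \log \tfrac{(v - u) \sqrt{u^2 - 2 u v \cos(2 \ro \pi) + v^2}}{v^\alpha - u^\alpha}$ is exactly the integrand of $I(u)$, and $V(v) = \Arg(u - v e^{2 i \ro \pi}) = -\Arg(u - v \cos(2 \ro \pi) + i v \sin(2 \ro \pi))$ by conjugation, so the $V$-term reproduces $J(u)$ (the two sign changes cancelling). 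The constant part of $W$ contributes $\ro ((2 - \alpha) \log \xi - \log |a|)$, which cancels the $\ro \log |a|$ of the $\eps$-correction and combines with $-\ro (2 - \alpha) \log r = -\ro (2 - \alpha) \log(\xi u)$ to leave precisely $\log u^{-(2 - \alpha) \ro}$; hence $\Psi_{r \ua}(\xi) = u^{-(2 - \alpha) \ro} \exp(I(u) + J(u))$ with $u = r / \xi$. Inserting this into~\eqref{eq:kappa:inv} together with $\re \zeta(r) = r \sin(\ro \pi)$, $\im \zeta(r) = - r \cos(\ro \pi)$, $\lambda'(r)/\lambda(r) = \alpha / r$ and $e^{-t \lambda(r)} = \exp(-k^\alpha t r^\alpha / \cos((2 \ro - 1) \tfrac{\alpha \pi}{2}))$, and substituting $r = \xi u$ once more, the prefactor collapses to $\tfrac{\alpha}{\pi} \, \tfrac{u^{-(2 - \alpha) \ro} \sin(\ro \pi)}{1 + 2 u \cos(\ro \pi) + u^2}$ and the $\xi$-dependence of the exponential becomes $k^\alpha t \xi^\alpha u^\alpha / \cos((2 \ro - 1) \tfrac{\alpha \pi}{2})$, which is exactly~\eqref{eq:kappa:stable:inv}. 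The formula for $\ex \exp(\xi X^\da_t)$ then follows by applying the result just obtained to the dual process $-X_t$, which is strictly stable with the same index $\alpha$ and scale $k$ and with positivity parameter $1 - \ro$.

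I expect the main obstacle to be the $\eps \searrow 0$ limit in~\eqref{eq:kappa:inv:ratio}: one must verify that the $\eps$-dependent term contributes only the Dirac mass $\ro \pi \, \delta_0$, which requires controlling the correction $\tfrac{1}{s} + \im \tfrac{\zeta'(s)}{\eps + i \zeta(s)}$ near the origin and establishing the uniform integrability needed to pass to the limit, all while keeping careful track of the principal branch of the logarithm. Once this is in place the rest is routine bookkeeping with elementary trigonometric identities and two standard rational integrals.
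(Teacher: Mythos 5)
Your proof is correct and follows essentially the same route as the paper: insert the strictly stable data ($\zeta(r) = r e^{i\thet_0}$, $\lambda(r) = |a| r^\alpha$) into Theorem~\ref{th:kappa:inv}, split the logarithm into modulus and argument, dispose of the $\eps$-limit by decomposing $\re \tfrac{\zeta'(s)}{\eps + i\zeta(s)}$ into a Dirac mass at the origin of total mass $\ro\pi$ plus a dominated-convergence tail, and scale by $r = \xi u$, $s = \xi v$ to extract the $u^{-(2-\alpha)\ro}$ factor and reduce to $I(u)$, $J(u)$. The bookkeeping — the normalising integral $\tfrac{1}{\pi}\int_0^\infty \tfrac{\eps\cos\thet_0}{|\eps + ise^{i\thet_0}|^2}\D s = \ro$, the identities $\cos\thet_0 = \sin(\ro\pi)$, $\sin\thet_0 = -\cos(\ro\pi)$, $e^{2i\thet_0} = -e^{2i\ro\pi}$, and the cancellation of the $\ro\log|a|$ terms — agrees with the paper's computation step for step.
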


\begin{remark}
When $\alpha \in (0, 1)$ and $s > 0$, then $\exp(-s \xi^\alpha)$ is the Laplace transform of the positive strictly stable distribution. This can be used to invert the Laplace transform in~\eqref{eq:kappa:stable:inv}. Similar method fails for $\alpha \in (1, 2)$, as then $\exp(-s \xi^2)$ fails to be the Laplace transform for $s > 0$.

When $\alpha = 2$, necessarily $\ro = \tfrac{1}{2}$. Therefore, $I(u) = J(u) = 0$ and~\eqref{eq:kappa:stable:inv} takes form
\formula{
 \ex \exp(-\xi X^\ua_t) & = \frac{2}{\pi} \int_0^\infty \frac{1}{1 + u^2} \, e^{-k^2 t \xi^2 u^2} \D u ,
}
which agrees with $\pr(X^\ua_t > x) = 2 \pr(X_t > x)$ for $x \ge 0$. Finally, when $\alpha = 1$, the statement of the theorem simplifies to
\formula[eq:kappa:1stable:inv]{
 \ex \exp(-\xi X^\ua_t) & = \frac{1}{\pi} \int_0^\infty \frac{u^{-\ro} \sin(\ro \pi)}{1 + 2 u \cos(\ro \pi) + u^2} \exp\expr{I(u) + J(u) - \frac{k t \xi u}{\cos((2 \ro - 1) \tfrac{\pi}{2})}} \D u ,
}
where
\formula[eq:kappa:1stable:ints]{
 I(u) & = \frac{1}{2 \pi} \int_0^\infty \frac{\sin(\ro \pi)}{1 + 2 v \cos(\ro \pi) + v^2} \, \log(u^2 - 2 u v \cos(2 \ro \pi) + v^2) \D v , \\
 J(u) & = \frac{1}{\pi} \int_0^\infty \frac{1 + v \cos(\ro \pi)}{1 + 2 v \cos(\ro \pi) + v^2} \, \frac{1}{v} \, \Arg(u - v \cos(2 \ro \pi) + i v \sin(2 \ro \pi)) \D v .
}
In particular, with $u = x \cos((2 \ro - 1) \tfrac{\pi}{2}) / (k t)$,
\formula[eq:kappa:1stable:inv2]{
 \pr(X^\ua_t \in \D x) & = \frac{1}{\pi} \frac{u^{-\ro} \sin(\ro \pi)}{1 + 2 u \cos(\ro \pi) + u^2} \, e^{I(u) + J(u)} \D x .
}
\end{remark}

%
%

\section{Preliminaries}
\label{sec:pre}

In this short section several classical results are recalled.

\begin{theorem}[Bernstein theorem]
\label{th:bernstein}
A function $f : (0, \infty) \to \R$ is completely monotone if and only if $f$ is the Laplace transform of a Radon measure $\mu$ on $[0, \infty)$, that is, for $x > 0$,
\formula[eq:bernstein]{
 f(x) & = \int_{[0, \infty)} e^{-s x} \mu(\D s) ,
}
and the integral is convergent for every $x > 0$. Furthermore, if $-1 < \alpha \le \beta$, then $\int_0^\infty x^\alpha \min(1, x^{\beta - \alpha}) f(x) \D x$ is finite if and only if $\int_0^\infty s^{-1 - \beta} \min(1, s^{\beta - \alpha}) \mu(\D s)$ is finite.
\end{theorem}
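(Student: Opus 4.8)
The plan is to prove the statement in two stages: first the classical Bernstein--Widder equivalence, then the integrability refinement in the second sentence.

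For the classical part I would only sketch the argument, since it is entirely standard. The implication from~\eqref{eq:bernstein} to complete monotonicity is routine: for $x$ bounded below by a positive constant the functions $s \mapsto s^n e^{-s x}$ are dominated by a fixed $\mu$-integrable function (a constant multiple of $e^{-s x / 2}$, integrable because $f(x/2) < \infty$), so one may differentiate under the integral sign to get $(-1)^n f^{(n)}(x) = \int_{[0, \infty)} s^n e^{-s x} \mu(\D s) \ge 0$. For the converse, given a completely monotone $f$, I would invoke the Post--Widder inversion formula together with Helly's selection theorem — using the monotonicity of the derivatives $(-1)^k f^{(k)}$ to keep the approximating measures tight — to produce the representing measure $\mu$; full details are in~\cite{bib:ssv10}, so I would simply cite it.

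For the refinement, assume~\eqref{eq:bernstein}. Since every integrand in sight is nonnegative, Tonelli's theorem gives
\formula{
 \int_0^\infty x^\alpha \min(1, x^{\beta - \alpha}) f(x) \D x & = \int_{[0, \infty)} g(s) \, \mu(\D s) , & g(s) & = \int_0^\infty x^\alpha \min(1, x^{\beta - \alpha}) e^{-s x} \D x ,
}
and the task reduces to estimating $g$. The substitution $x = t / s$ gives $g(s) = s^{-1 - \alpha} h(s)$ with $h(s) = \int_0^\infty t^\alpha \min(1, (t/s)^{\beta - \alpha}) e^{-t} \D t$, a finite, continuous, positive function on $(0, \infty)$. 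As $s \searrow 0$ the integrand increases pointwise to $t^\alpha e^{-t}$, so by monotone convergence $h(s) \to \Gamma(\alpha + 1)$; as $s \nearrow \infty$, splitting the integral at $t = s$ and using $\beta \ge \alpha > -1$ yields $h \equiv \Gamma(\alpha + 1)$ when $\beta = \alpha$ and $h(s) \sim \Gamma(\beta + 1) s^{\alpha - \beta}$ when $\beta > \alpha$. Either way $g(s)$ is comparable to $s^{-1 - \beta} \min(1, s^{\beta - \alpha})$ at both ends of $(0, \infty)$; since the ratio of the two is continuous and strictly positive on $(0, \infty)$ with finite positive limits at $0$ and $\infty$, it is squeezed between two positive constants $c_1, c_2$ depending only on $\alpha$ and $\beta$. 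Hence $\int_{[0, \infty)} g(s) \, \mu(\D s)$ is finite precisely when $\int_0^\infty s^{-1 - \beta} \min(1, s^{\beta - \alpha}) \, \mu(\D s)$ is, and a possible atom of $\mu$ at the origin makes both quantities infinite (the left one because $\int_0^\infty x^\alpha \min(1, x^{\beta - \alpha}) \D x = \infty$ as $\alpha > -1$, the right one because $s^{-1 - \beta} \min(1, s^{\beta - \alpha}) \to \infty$ as $s \searrow 0$). This is the second assertion.

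The main obstacle I anticipate is the uniform two-sided comparison of $g(s)$ with $s^{-1 - \beta} \min(1, s^{\beta - \alpha})$ on all of $(0, \infty)$: one must keep the regimes $\beta = \alpha$ and $\beta > \alpha$ separate in the large-$s$ analysis, and it is exactly the hypothesis $-1 < \alpha \le \beta$ that makes the relevant Gamma integrals converge and forces the comparison function to match $g$ at both endpoints.
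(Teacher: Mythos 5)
The paper states Theorem~\ref{th:bernstein} in the preliminaries section as a classical result and offers no proof, so there is nothing internal to compare your argument against. Your argument is correct. The first sentence is the classical Bernstein--Widder theorem and citing~\cite{bib:ssv10} is entirely appropriate. The refinement is handled cleanly: Tonelli reduces the equivalence to a pointwise two-sided estimate of $g(s) = \int_0^\infty x^\alpha \min(1, x^{\beta - \alpha}) e^{-s x} \D x$ by $s^{-1 - \beta} \min(1, s^{\beta - \alpha})$; the scaling substitution $x = t/s$ correctly isolates the power $s^{-1-\alpha}$; the asymptotics $h(s) \to \Gamma(\alpha+1)$ as $s \searrow 0$ (monotone convergence) and $h(s) \sim \Gamma(\beta+1) s^{\alpha-\beta}$ as $s \nearrow \infty$ (splitting at $t = s$) are right, and it is precisely $\alpha > -1$ and $\beta > -1$ that make these Gamma integrals finite. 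Since $g/C$ (with $C(s) = s^{-1-\beta}\min(1, s^{\beta-\alpha})$) is continuous, positive, and has finite positive limits at $0$ and $\infty$, the two-sided bound follows, and you correctly track the degenerate case $\beta = \alpha$ separately where $h$ is constant. You also anticipated the one delicate point — whether an atom $\mu(\{0\}) > 0$ breaks the equivalence — and resolved it correctly: $g(0) = \infty$ because the integrand $x^\alpha$ fails to be integrable at $\infty$ for $\alpha > -1$, while $s^{-1-\alpha} \to \infty$ as $s \searrow 0$, so both sides become infinite together. This is a complete and correct proof.
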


\begin{theorem}[Harmonic conjugate]
\label{th:holomorphic}
If $g : D \to \R$ is a harmonic function defined on a connected and simply connected open set $D \sub \C$, then there is a harmonic function $f : D \to \R$ such that $f + i g$ is holomorphic in $D$. The function $f$ is unique up to addition by a real constant.
\end{theorem}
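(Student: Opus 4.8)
The plan is to construct $f$ by integration, using simple connectivity only at the one point where it is genuinely needed. Since a harmonic function is automatically smooth (being locally the real part of a holomorphic function), I may differentiate $g$ freely. The Cauchy--Riemann equations dictate that if $f + i g$ is to be holomorphic, then $f_x = g_y$ and $f_y = -g_x$; this identifies the natural candidate for $\D f$, namely the $1$-form $\omega = g_y \, \D x - g_x \, \D y$ on $D$.

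First I would verify that $\omega$ is closed: its exterior derivative equals $\bigl(\partial_x(-g_x) - \partial_y g_y\bigr)\, \D x \wedge \D y = -(g_{xx} + g_{yy})\, \D x \wedge \D y = 0$ by harmonicity of $g$. Because $D$ is connected and simply connected, every closed $1$-form on $D$ is exact; fixing a base point $z_0 \in D$ and setting $f(z) = \int_{z_0}^z \omega$ along any path in $D$ gives a well-defined function with $f_x = g_y$ and $f_y = -g_x$. Then $f + i g$ has continuous partial derivatives satisfying the Cauchy--Riemann equations, hence is holomorphic in $D$, and $f$, as the real part of a holomorphic function, is harmonic. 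For uniqueness, if $f_1$ and $f_2$ both work, then $(f_1 - f_2) + i \cdot 0$ is a real-valued holomorphic function on the connected set $D$, hence constant; equivalently, the Cauchy--Riemann equations for $f_1$ and $f_2$ against the same $g$ force $\nabla(f_1 - f_2) = 0$.

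The only real obstacle is the implication ``closed $\Rightarrow$ exact'', which is precisely where simple connectivity enters; everything else is routine verification. If one prefers to avoid differential forms, the same step can be packaged analytically: the function $g_x - i g_y$ is holomorphic on $D$ (its real and imaginary parts satisfy the Cauchy--Riemann equations exactly because $g$ is harmonic and $C^2$), so on the simply connected domain $D$ it admits a holomorphic antiderivative $H$; one checks that $\re H$ differs from $g$ by a real constant, and then $f = -\im H$ (adjusted by a constant) has the required property, with uniqueness as above.
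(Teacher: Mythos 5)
Your proof is correct and complete; both routes you sketch (the closed-form/exact-form argument for $\omega = g_y\,\D x - g_x\,\D y$, and the holomorphic-antiderivative argument for $g_x - i g_y$) are standard and sound, and you correctly isolate simple connectivity as the only non-local ingredient. The paper states this as a classical preliminary (Theorem~\ref{th:holomorphic}) without supplying a proof, so there is nothing in the source to compare against; your argument is exactly the textbook proof one would expect to fill that gap.
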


\begin{theorem}[Herglotz theorem]
\label{th:harmonic:positive}
A function $f : \C_\ra \to [0, \infty)$ is harmonic if and only if there are $c \ge 0$ and a nonnegative Radon measure $m$ on $\R$ satisfying the integrability condition $\int_{\R} s^{-2} \min(1, s^2) m(\D s) < \infty$, such that
\formula[eq:harmonic:positive]{
 f(x + i y) & = c x + \frac{1}{\pi} \int_{\R} \frac{x}{x^2 + (y - s)^2} \, m(\D s)
}
for all $x > 0$ and $y \in \R$. Furthermore, in this case $m$ is the vague limit of measures $f(x + i s) \D s$ as $x \searrow 0$, and $c = \lim_{x \nearrow \infty} (f(x) / x)$. If $m$ is absolutely continuous, then its density function is equal to the limit of $f(x + i s)$ as $x \searrow 0$ for almost every $s \in \R$.
\end{theorem}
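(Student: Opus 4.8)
The plan is to prove the two implications separately and then deal with the three supplementary claims. The \emph{sufficiency} direction is routine: $(x,y)\mapsto cx$ is harmonic, each Poisson kernel $(x,y)\mapsto x/(x^2+(y-s)^2)$ is harmonic in $\C_\ra$, and on every compact subset of $\C_\ra$ this kernel together with its first two partial derivatives in $x,y$ is bounded by a constant multiple of $s^{-2}\min(1,s^2)$; hence the integrability hypothesis permits differentiation under the integral sign, so the right-hand side of~\eqref{eq:harmonic:positive} is harmonic, and it is visibly nonnegative.

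For \emph{necessity} I would pass to the unit disk $\mathbb{D}$ via the Cayley transform $\phi(w)=(1-w)/(1+w)$, which maps $\mathbb{D}$ conformally onto $\C_\ra$ with $\phi(0)=1$, $\phi(-1)=\infty$, and carries $\partial\mathbb{D}\setminus\{-1\}$ onto the imaginary axis. Then $g=f\circ\phi$ is nonnegative and harmonic on $\mathbb{D}$, so by the classical Herglotz--Riesz theorem — which follows from the mean value property, since the measures $g(re^{i\theta})\,\D\theta$ on $[0,2\pi)$ have total mass $2\pi g(0)$ for every $r<1$ and hence a weak-$*$ limit as $r\nearrow1$ — there is a finite nonnegative Borel measure $\mu$ on $\partial\mathbb{D}$ with $g(w)=\int_{\partial\mathbb{D}}(1-|w|^2)|w-\eta|^{-2}\,\mu(\D\eta)$. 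I would then split off the atom of $\mu$ at $\eta=-1$: since $(1-|w|^2)/|1+w|^2=\re\phi(w)$, this atom contributes exactly $c\,x$ with $c=\mu(\{-1\})$. For the remainder, parametrising $\partial\mathbb{D}\setminus\{-1\}$ by $s\in\R$ through $\phi(\eta(s))=is$ and invoking conformal invariance of harmonic measure turns the disk kernel into the half-plane Poisson kernel $\pi^{-1}x/(x^2+(y-s)^2)$ up to a smooth positive conformal factor; pushing $\mu$ restricted to $\partial\mathbb{D}\setminus\{-1\}$ forward and absorbing that factor gives a nonnegative Radon measure $m$ on $\R$, finite on compact sets because $\mu$ is finite, for which~\eqref{eq:harmonic:positive} holds. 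Evaluating that identity at $z=1$ yields $\pi^{-1}\int_\R(1+s^2)^{-1}m(\D s)=f(1)-c<\infty$, and since $(1+s^2)^{-1}\asymp s^{-2}\min(1,s^2)$ this is precisely the stated integrability condition.

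The \emph{supplementary claims} then follow from~\eqref{eq:harmonic:positive}. First, $f(x)/x=c+\pi^{-1}\int_\R(x^2+s^2)^{-1}m(\D s)$, and for $x\ge1$ the integrand is dominated by $(1+s^2)^{-1}\in L^1(m)$ and tends to $0$, so $c=\lim_{x\nearrow\infty}f(x)/x$ by dominated convergence. Next, the vague convergence of the measures $f(x+is)\,\D s$ to $m$ as $x\searrow0$ is obtained by testing against $\psi\in C_c(\R)$: the linear term contributes $cx\int\psi\to0$, while the Poisson term equals $\int_\R(P_x*\psi)\,\D m$, with $P_x$ the half-plane Poisson kernel, and converges to $\int\psi\,\D m$ by the approximate-identity property of $P_x$, the limit being justified as in the sufficiency step. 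Finally, if $m(\D s)=h(s)\,\D s$ then $f(x+iy)=cx+(P_x*h)(y)$, and the classical Fatou-type theorem on nontangential (in particular vertical) limits of Poisson integrals gives $(P_x*h)(y)\to h(y)$ at every Lebesgue point of $h$, hence $f(x+is)\to h(s)$ for a.e.\ $s$.

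The only genuinely nontrivial ingredient is the disk Herglotz--Riesz theorem, that is, the weak-$*$ compactness argument producing $\mu$; everything afterwards is a careful but routine transport of the boundary measure through $\phi$, whose one delicate point is recognising the atom of $\mu$ at the preimage of $\infty$ as the source of the linear term $cx$ and checking that the conformal factor converts finiteness of $\mu$ into exactly the required integrability of $m$.
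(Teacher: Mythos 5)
The paper does not actually prove Theorem~\ref{th:harmonic:positive}: it is stated in Section~\ref{sec:pre} (``Preliminaries'') as one of several ``classical results'' recalled without proof, so there is no in-paper argument to compare against. Your proof is correct and is the standard one: transfer to the disc via the Cayley map $\phi(w)=(1-w)/(1+w)$, invoke the disc Herglotz--Riesz theorem to get a finite boundary measure $\mu$, split off the atom of $\mu$ at $-1=\phi^{-1}(\infty)$ (which, because $(1-|w|^2)/|1+w|^2=\re\phi(w)=x$, produces exactly the linear term $cx$), and push the remainder of $\mu$ forward to $\R$, absorbing the conformal factor $1+s^2$ (which is what turns the disc kernel into $x(1+s^2)/(x^2+(y-s)^2)$) into the definition of $m$; finiteness of $\mu$ then becomes exactly $\int(1+s^2)^{-1}m(\D s)<\infty$, equivalent to the stated condition since $(1+s^2)^{-1}\asymp s^{-2}\min(1,s^2)$. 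The three supplementary claims are handled correctly, the only delicate points being the domination needed in the vague-convergence step (since $m$ may be infinite, one must dominate $P_x*\psi$ by an $m$-integrable function uniformly in small $x$, which works because $\psi$ has compact support) and the Fatou-type boundary limit for a possibly non-$L^1$ density $h$ (which follows by splitting $h$ into a compactly supported $L^1$ piece and a tail whose Poisson integral vanishes continuously on the relevant boundary interval); you gesture at these rather than spell them out, but the ideas are sound. Two cosmetic remarks: the weak-$*$ compactness argument a priori yields only a subsequential limit of $g(re^{i\theta})\D\theta$ (full convergence follows after one proves uniqueness of the representing measure), and the disc Poisson formula carries a normalising $1/(2\pi)$ which you have silently absorbed into $\mu$; neither affects correctness.
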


\begin{theorem}[Poisson representation and nontangential limits]
\label{th:harmonic:bounded}
A bounded function $f : \C_\ra \to \C$ is harmonic if and only if there is a bounded function $g$ on $\R$, such that
\formula[eq:harmonic:bounded]{
 f(x + i y) & = \frac{1}{\pi} \int_{-\infty}^\infty \frac{x}{x^2 + (y - s)^2} \, g(s) \D s
}
for all $x > 0$ and $y \in \R$. Furthermore, in this case for every $\alpha \in (0, \tfrac{\pi}{2})$,
\formula[eq:harmonic:limit]{
 g(s) & = \lim_{\substack{\xi \searrow 0 \\ |\Arg \xi| < \alpha}} f(s + \xi)
}
for almost all $s \in \R$.
\end{theorem}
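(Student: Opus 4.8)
The plan is to reduce the representation to the Herglotz theorem (Theorem~\ref{th:harmonic:positive}) and then to derive the nontangential limit by a standard approximate-identity estimate. For the easy implication, suppose $f$ is given by~\eqref{eq:harmonic:bounded} with a bounded $g$. The Poisson kernel $P(x, y; s) = \tfrac{1}{\pi} x / (x^2 + (y - s)^2)$ is harmonic in $(x, y) \in \C_\ra$ for each fixed $s$, and $P$ together with its partial derivatives of order at most $2$ in $(x, y)$ is dominated, locally uniformly in $(x, y)$, by an integrable function of $s$; hence differentiation under the integral sign shows that $f$ is harmonic. Since $\tfrac{1}{\pi} \int_{-\infty}^\infty x / (x^2 + (y - s)^2) \, \D s = 1$, we also obtain $|f(x + i y)| \le \|g\|_\infty$, so $f$ is bounded.

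For the converse, write $f = u + i v$ with $u, v$ real, bounded and harmonic, set $M = \|u\|_\infty$, and let $w = M - u$, a nonnegative bounded harmonic function. By the Herglotz theorem,
\[
 w(x + i y) = c x + \frac{1}{\pi} \int_{\R} \frac{x}{x^2 + (y - s)^2} \, m(\D s)
\]
for some $c \ge 0$ and a nonnegative Radon measure $m$; boundedness of $w$ forces $c = \lim_{x \nearrow \infty} w(x) / x = 0$. Since $0 \le w(x + i s) \le 2 M$ and, by the same theorem, $m$ is the vague limit of the measures $w(x + i s) \, \D s$ as $x \searrow 0$, we get $\int_\R \phi(s) \, m(\D s) \le 2 M \int_\R \phi(s) \, \D s$ for every nonnegative $\phi \in C_c(\R)$; hence $m$ is absolutely continuous with $m(\D s) = w_0(s) \, \D s$, where $0 \le w_0 \le 2 M$ almost everywhere. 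Using $\tfrac{1}{\pi} \int_\R x / (x^2 + (y - s)^2) \, \D s = 1$ once more,
\[
 u(x + i y) = M - \frac{1}{\pi} \int_{\R} \frac{x}{x^2 + (y - s)^2} \, w_0(s) \, \D s = \frac{1}{\pi} \int_{\R} \frac{x}{x^2 + (y - s)^2} \, u_0(s) \, \D s ,
\]
where $u_0 = M - w_0$ satisfies $|u_0| \le M$. Handling $v$ in the same way and setting $g = u_0 + i v_0$, which is bounded, gives~\eqref{eq:harmonic:bounded}.

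It remains to prove~\eqref{eq:harmonic:limit}. By the Lebesgue differentiation theorem, almost every $s \in \R$ is a Lebesgue point of $g$; fix such an $s$ and, by translating, assume $s = 0$. For $\xi = x + i y \in \C_\ra$ with $|\Arg \xi| < \alpha$ one has $|y| \le x \tan \alpha$, and
\[
 f(\xi) - g(0) = \frac{1}{\pi} \int_{\R} \frac{x}{x^2 + (y - s')^2} \, \bigl( g(s') - g(0) \bigr) \, \D s' .
\]
The elementary fact behind nontangential convergence is that $|y| \le x \tan \alpha$ implies $x^2 + (y - s')^2 \ge c_\alpha (x + |s'|)^2$ for all $s'$, with $c_\alpha > 0$ depending only on $\alpha$. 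Split the integral at $|s'| = \delta$. On $\{ |s'| \ge \delta \}$, once $x \tan \alpha \le \delta / 2$, the integrand is bounded in modulus by $C x \|g\|_\infty (s')^{-2}$ with $C$ absolute, so this contribution tends to $0$ as $x \searrow 0$ for fixed $\delta$. On $\{ |s'| < \delta \}$, the factor $x / (x^2 + (y - s')^2)$ is at most $C_\alpha x / (x + |s'|)^2$, and integrating by parts against $\Phi(r) = \int_{-r}^{r} |g(s') - g(0)| \, \D s'$, which is $o(r)$ by the Lebesgue-point property, bounds this contribution by a constant depending only on $\alpha$ times $\sup_{0 < r \le \delta} \Phi(r) / r$. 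Letting first $\delta \searrow 0$ and then $x \searrow 0$ gives $f(\xi) \to g(0)$ as $\xi \to 0$ with $|\Arg \xi| < \alpha$, which is~\eqref{eq:harmonic:limit}.

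The steps that need care are: in the Herglotz argument, extracting from the boundedness of $w$ both that $c = 0$ and that $m$ has an $L^\infty$ density (and not merely that $m$ is finite); and, in the last step, the uniformity in the whole approach region of the estimate $x / (x^2 + (y - s')^2) \le C_\alpha x / (x + |s'|)^2$, which is precisely what promotes the radial Fatou-type limit to a nontangential one. The remaining manipulations are routine.
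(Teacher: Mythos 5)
The paper states this result in Section~\ref{sec:pre} as a classical preliminary and gives no proof, so there is nothing to compare against; judged on its own, your argument is correct. The easy direction (harmonicity and boundedness of the Poisson integral) is routine. For the converse, reducing to the Herglotz theorem via $w = M - u$ (and likewise for $v$), extracting $c = 0$ from $w(x)/x \to 0$, and then deducing from the vague-limit characterization of $m$ and the bound $0 \le w(x+is) \le 2M$ that $m$ has density in $[0, 2M]$ is exactly the right way to get a bounded boundary function; you correctly flag these as the two places where boundedness must be used beyond mere positivity. The nontangential limit is the standard Lebesgue-point argument, and the key estimate $x/(x^2 + (y - s')^2) \le C_\alpha\, x/(x + |s'|)^2$ valid uniformly on $|\Arg \xi| < \alpha$ is indeed what promotes a radial Fatou limit to a nontangential one; your split at $|s'| = \delta$, the absolute bound on the far part once $x\tan\alpha \le \delta/2$, and the integration-by-parts bound of the near part by $C_\alpha \sup_{0 < r \le \delta} \Phi(r)/r$ are all sound. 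The phrase ``letting first $\delta \searrow 0$ and then $x \searrow 0$'' reads as the usual two-$\varepsilon$ choice (first pick $\delta$ small using the Lebesgue-point property, then pick $x$ small for that fixed $\delta$), which is the right order; if read literally as a double limit it would be backwards, so you may want to rephrase, but this is cosmetic and the proof stands.
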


The following proposition is a direct consequence of Taylor series expansion.

\begin{proposition}
\label{prop:harmonic:quot}
If $f$ is a real-analytic function in an open set $D \sub C$ and $f$ is zero on $D \cap \R$, then $f(\xi) / \im \xi$ extends to a real-analytic function on $D$. 
\end{proposition}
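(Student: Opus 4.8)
The plan is to prove the statement locally, near each point of $D \cap \R$, and then glue the pieces together, since away from the real axis there is nothing to do: on the open set $D \setminus \R$ the function $\im \xi$ is real-analytic and nowhere zero, so $f(\xi)/\im\xi$ is already real-analytic there, and the whole content of the proposition is to produce a real-analytic extension across each point of $D \cap \R$.

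So I would fix $x_0 \in D \cap \R$ and, identifying $\C$ with $\R^2$ and writing $\xi = x + i y$, expand $f$ in a convergent power series on a polydisc $\{|x - x_0| < r,\ |y| < r\} \sub D$,
\[
 f(x + i y) = \sum_{m, n \ge 0} a_{m n} (x - x_0)^m y^n .
\]
Since $f$ vanishes on $D \cap \R$, we have $f(x + i 0) = 0$ for $|x - x_0| < r$, and by uniqueness of Taylor coefficients this forces $a_{m 0} = 0$ for every $m$. Hence every monomial in the series carries a factor $y$, and the natural candidate for the extension is
\[
 g(x + i y) = \sum_{m \ge 0,\ n \ge 1} a_{m n} (x - x_0)^m y^{n - 1} .
\]
The one point that needs a line of justification is that this series converges on the same polydisc: if $\sum_{m, n} |a_{m n}| \rho^m \sigma^n < \infty$ for some $0 < \rho, \sigma < r$, then $\sum_{m \ge 0,\, n \ge 1} |a_{m n}| \rho^m \sigma^{n - 1} \le \sigma^{-1} \sum_{m, n} |a_{m n}| \rho^m \sigma^n < \infty$, so division by $y$ term by term does not shrink the region of convergence. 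Consequently $g$ is real-analytic on that polydisc, $f(x + i y) = y\, g(x + i y)$ there, and in particular $g(\xi) = f(\xi)/\im\xi$ whenever $\im\xi \ne 0$.

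Finally I would glue: the real-analytic functions built on these polydiscs, one for each point of $D \cap \R$, together with $f(\xi)/\im\xi$ on $D \setminus \R$, all agree on the dense open set $D \setminus \R$; being continuous, they therefore agree on every overlap, so they patch to a single real-analytic function on $D$, which by construction coincides with $f(\xi)/\im\xi$ off the real axis. I do not expect a genuine obstacle in this argument; the only step requiring a moment's care is the invariance of the polydisc of convergence under the term-by-term division by $y$, which is exactly the short estimate displayed above, and the remark that $f$ being complex- (or vector-) valued changes nothing, as one may argue coefficient-wise.
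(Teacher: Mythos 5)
Your argument is correct and is exactly the Taylor-series-expansion route the paper indicates (it states the proposition as "a direct consequence of Taylor series expansion" and gives no further proof). The only point worth a moment's care — that term-by-term division by $y$ does not shrink the polydisc of absolute convergence — is precisely what you checked, and the gluing step is sound since $D \setminus \R$ is open and dense in $D$.
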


Throughout the article we commonly use the Cauchy's theorem and its extension. The basic form is stated below.

\begin{theorem}[residue theorem]
\label{th:residue}
If $f$ is a meromorphic function $f$ in a simply connected region $D \sub C$, $\gamma$ a simple contour contained in $D$, not containing any poles of $f$ and oriented counter-clockwise, then
\formula{
 \int_\gamma f(z) \D z & = \sum_w \Res(f; w) ,
}
where $\Res(f; w)$ is the residue of $f$ at $w$, and the sum spans over all poles $w$ of $f$ inside $\gamma$.
\end{theorem}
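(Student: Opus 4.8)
The plan is to run the standard argument: isolate the singularities of $f$ inside $\gamma$, subtract off their principal parts, and reduce to the pole-free version of the statement, which is Cauchy's integral theorem.

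First I would observe that only finitely many poles of $f$ lie inside $\gamma$. Since $D$ is simply connected and $\gamma \sub D$ is a simple closed contour, by the Jordan curve theorem it bounds a region $\Omega$, and $\overline{\Omega} = \Omega \cup \gamma$ is a compact subset of $D$; as $f$ is meromorphic on $D$ its poles are isolated, so only finitely many of them, say $w_1, \dots, w_n$, can belong to $\Omega$. For each $k$ write $h_k(z) = \sum_{m=1}^{N_k} a^{(k)}_{-m}(z - w_k)^{-m}$ for the principal part of the Laurent expansion of $f$ about $w_k$; this $h_k$ is a rational function holomorphic on $\C \setminus \{w_k\}$, and by definition $\Res(f; w_k) = a^{(k)}_{-1}$.

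Next I would set $g = f - \sum_{k=1}^n h_k$. Each of the singularities of $g$ at $w_1, \dots, w_n$ is removable, so $g$ extends holomorphically to an open neighbourhood of $\overline{\Omega}$; Cauchy's integral theorem then gives $\int_\gamma g(z)\,\D z = 0$, whence $\int_\gamma f(z)\,\D z = \sum_{k=1}^n \int_\gamma h_k(z)\,\D z$. It remains to evaluate each $\int_\gamma h_k$. For $m \ge 2$ the function $(z - w_k)^{-m}$ admits the global primitive $(1-m)^{-1}(z - w_k)^{1-m}$ on $\C \setminus \{w_k\}$, so its integral over the closed curve $\gamma$ vanishes; hence $\int_\gamma h_k(z)\,\D z = a^{(k)}_{-1}\int_\gamma (z - w_k)^{-1}\,\D z$, and since $\gamma$ is a simple, counter-clockwise contour with $w_k$ in its interior, $\int_\gamma (z - w_k)^{-1}\,\D z = 2\pi i$. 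Summing over $k$ gives $\int_\gamma f(z)\,\D z = 2\pi i \sum_w \Res(f; w)$.

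The routine part is the evaluation in the last step; the only genuine obstacle is the planar-topology input used along the way: the Jordan curve theorem giving a well-defined interior region $\Omega$, the fact that a simple, counter-clockwise contour has winding number exactly $1$ about each point of its interior — which is precisely what makes $\int_\gamma (z - w_k)^{-1}\,\D z$ equal to $2\pi i$ rather than to an arbitrary integer multiple of $2\pi i$ — and the form of Cauchy's theorem valid when $\gamma$ is merely piecewise-$C^1$ (or rectifiable). Since the theorem is invoked in this paper only as a standard tool, in practice I would simply cite a textbook treatment, e.g.\ via the homology or homotopy version of Cauchy's theorem, rather than reproduce these arguments here.
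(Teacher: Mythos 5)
The paper does not prove this statement: Theorem~\ref{th:residue} is listed in the Preliminaries section as a classical fact, and the author explicitly remarks that it, together with Corollary~\ref{cor:residue}, is a standard tool (``can be proved using Theorem~\ref{th:residue} and an appropriate limiting procedure''). There is therefore no in-paper proof to compare against, and your argument is exactly the textbook one: subtract the finitely many principal parts inside $\gamma$, apply Cauchy's theorem to the resulting holomorphic function, and evaluate the remaining integrals via primitives (for orders $\ge 2$) and the winding-number computation (for the order-$1$ terms). This is correct and complete at the level of detail one would expect, and you rightly flag the genuinely non-routine topological inputs (Jordan curve theorem, winding number one for a simple counter-clockwise contour, and the regularity hypotheses under which Cauchy's theorem holds).

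One small but worthwhile observation that your proof brings out: the paper's statement as printed is missing the factor $2\pi i$ on the right-hand side. Your derivation produces
\formula{
 \int_\gamma f(z) \, \D z & = 2 \pi i \sum_w \Res(f; w) ,
}
which is the correct form of the residue theorem; the version in the paper is a typographical slip (and indeed the paper uses the correct $2\pi i$ normalisation implicitly elsewhere, e.g.\ in the $\frac{1}{2\pi i}$ prefactors of \eqref{eq:stable:kappa:pre}, \eqref{eq:stable:wh} and \eqref{eq:rogers:wh:uu}).
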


We often extend the above statement to unbounded contours, such as in the following statement.

\begin{corollary}
\label{cor:residue}
If $f$ is a holomorphic function $f$ in $\C_\ra$ and $|f(z)| \le c(\eps) (1 + |z|^2)^{-1} \log(|z| + |z|^{-1})$ when $|\Arg z| < \tfrac{\pi}{2} - \eps$, then
\formula{
 \int_0^{e^{i\thet} \infty} f(z) \D z & = \int_0^\infty f(z) \D z
}
for all $\thet \in (-\tfrac{\pi}{2}, \tfrac{\pi}{2})$.
\end{corollary}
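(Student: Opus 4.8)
The plan is to apply Cauchy's theorem (the residue theorem, Theorem~\ref{th:residue}, with no poles present) on a closed ``pie--slice'' contour and then let its two circular arcs degenerate. Fix $\thet \in (-\tfrac{\pi}{2}, \tfrac{\pi}{2})$ and choose $\eps > 0$ with $\eps < \tfrac{\pi}{2} - |\thet|$, so that the closed sector bounded by the rays $(0, \infty)$ and $(0, e^{i\thet}\infty)$ is contained in $\{z : |\Arg z| < \tfrac{\pi}{2} - \eps\} \sub \C_\ra$. For $0 < \delta < R$ let $\gamma_{\delta, R}$ be the suitably oriented boundary of the sector annulus $\{z : \delta < |z| < R$, $\Arg z$ between $0$ and $\thet\}$; it consists of the segment $[\delta, R]$ of the positive real axis, the circular arc $A_R$ of radius $R$ from $R$ to $R e^{i\thet}$, the segment $[R e^{i\thet}, \delta e^{i\thet}]$ of the ray at angle $\thet$, and the circular arc $A_\delta$ of radius $\delta$ from $\delta e^{i\thet}$ back to $\delta$. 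Since $\C_\ra$ is simply connected and $f$ is holomorphic there, Cauchy's theorem gives $\int_{\gamma_{\delta, R}} f(z)\,\D z = 0$ for all such $\delta, R$.

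It then remains to control the two arcs and to check that the improper integrals along the rays converge. On $A_R$ every point has modulus $R$ and lies in the sector $|\Arg z| < \tfrac{\pi}{2} - \eps$, so the hypothesis gives $|f(z)| \le c(\eps)(1 + R^2)^{-1}\log(R + R^{-1})$; since $A_R$ has length $|\thet| R$, we obtain $\bigl| \int_{A_R} f(z)\,\D z \bigr| \le |\thet|\, c(\eps)\, R (1 + R^2)^{-1}\log(R + R^{-1})$, which tends to $0$ as $R \to \infty$. Likewise $\bigl| \int_{A_\delta} f(z)\,\D z \bigr| \le |\thet|\, c(\eps)\, \delta (1 + \delta^2)^{-1}\log(\delta + \delta^{-1}) \to 0$ as $\delta \to 0$. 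The same bound shows that along each of the two rays the integrand is $O(\log(1/|z|))$ near $0$ and $O(|z|^{-2}\log|z|)$ near $\infty$, hence absolutely integrable against $|\D z|$; so $\int_0^\infty f(z)\,\D z$ and $\int_0^{e^{i\thet}\infty} f(z)\,\D z$ are well defined.

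Letting $\delta \searrow 0$ and $R \nearrow \infty$ in the identity $\int_{\gamma_{\delta, R}} f(z)\,\D z = 0$, and keeping track of the orientation — the positive--axis segment is traversed from $0$ to $\infty$, while the ray at angle $\thet$ is traversed from $\infty$ to $0$ — one obtains $\int_0^\infty f(z)\,\D z - \int_0^{e^{i\thet}\infty} f(z)\,\D z = 0$, which is the claim. There is essentially no obstacle here: the only point requiring a moment's care is to fix $\eps$ in terms of $\thet$ at the very start, so that the hypothesised estimate is available \emph{uniformly} on both arcs and simultaneously controls the tails of the ray integrals; this is possible precisely because $\thet$ is bounded away from $\pm\tfrac{\pi}{2}$, and the circular arcs never approach the imaginary axis.
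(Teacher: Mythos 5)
Your proof is correct and is precisely the ``appropriate limiting procedure'' the paper gestures at: Cauchy's theorem on a pie-slice contour between the two rays, with the arc contributions killed by the growth hypothesis and the tail/origin behaviour making the improper ray integrals absolutely convergent. Nothing to add.
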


This result, as well as similar ones used below, can be proved using Theorem~\ref{th:residue} and an appropriate limiting procedure.

%
%

\section{Complete Bernstein functions}
\label{sec:cbf}

The class of Rogers functions is closely related to the notion of complete Bernstein functions: on one hand, $f(\xi)$ is a complete Bernstein function if and only if $f(\xi^2)$ is a \emph{symmetric} Rogers function (see Section~\ref{sec:basic}); on the other hand, $f(\xi)$ is a Rogers function if and only if it admits a Wiener--Hopf type factorisation into the product $f(\xi) = f^\ua(-i \xi) f^\da(i \xi)$ of two complete Bernstein functions $f^\ua$, $f^\da$ (see Section~\ref{sec:wh}). In the present section equivalent definitions of complete Bernstein functions are briefly discussed. The primary reference for complete Bernstein functions and related notions is~\cite{bib:ssv10}.

\begin{theorem}[{see~\cite[Section~2.5]{bib:k11} and~\cite[Chapter~6]{bib:ssv10}}]
\label{th:cbf}
Let $f$ be a holomorphic function in $\C \setminus (-\infty, 0]$. The following conditions are equivalent:
\begin{enumerate}[label=\rm (\alph*)]
\item\label{it:cbf:a}
There are constants $c_0, c_1 \ge 0$, and a completely monotone function $\tilde{\nu}$ on $(0, \infty)$ satisfying the integrability condition $\int_0^\infty \min(1, z) \tilde{\nu}(z) \D z < \infty$, such that
\formula[eq:cbf:a]{
 f(\xi) = c_0 + c_1 \xi + \int_0^\infty (1 - e^{-\xi z}) \tilde{\nu}(z) \D z
}
for all $\xi \in \C_\ra$.
\item\label{it:cbf:b}
There are constants $c_0, c_1 \ge 0$ and a Radon measure $\tilde{\mu}$ on $(0, \infty)$ satisfying the integrability condition $\int_{(0, \infty)} s^{-2} \min(1, s) \tilde{\mu}(\D s) < \infty$, such that
\formula[eq:cbf:b]{
 f(\xi) & = c_0 + c_1 \xi + \frac{1}{\pi} \int_{(0, \infty)} \frac{\xi}{\xi + s} \, \frac{\tilde{\mu}(\D s)}{s}
}
for all $\xi \in \C \setminus (-\infty, 0]$.
\item\label{it:cbf:c}
There is a constant $\tilde{c} \ge 0$ and a function $\tilde{\ph} : (0, \infty) \to [0, \pi]$ such that
\formula[eq:cbf:c]{
 f(\xi) & = \tilde{c} \, \exp\expr{\frac{1}{\pi} \int_0^\infty \expr{\frac{\xi}{\xi + s} - \frac{1}{1 + s}} \frac{\tilde{\ph}(s) \D s}{s}}
}
for all $\xi \in \C \setminus (-\infty, 0]$.
\item\label{it:cbf:d}
$f(\xi) \in \overline{\C}_{\ua}$ for all $\xi \in \C_\ua$, and $f(\xi) \in [0, \infty)$ for $\xi \in (0, \infty)$.
\item\label{it:cbf:aa}
There are constants $c_0, c_1 \ge 0$, and a function $\nu$, such that $\nu$ is completely monotone on $(0, \infty)$, $\nu(-x) = \nu(x)$, $\int_{-\infty}^\infty \min(1, z^2) \nu(z) \D z < \infty$, and
\formula[eq:cbf:aa]{
 f(\xi^2) = c_0 + c_1 \xi^2 + \int_{-\infty}^\infty (1 - e^{i \xi z} + i \xi (1 - e^{-|z|}) \sign z) \nu(z) \D z
}
for all $\xi \in (0, \infty)$.
\item\label{it:cbf:bb}
There are constants $c_0, c_1 \ge 0$, and a symmetric Radon measure $\mu$ on $\R \setminus \{0\}$ satisfying the integrability condition $\int_{\R \setminus \{0\}} |s|^{-3} \min(1, s^2) \mu(\D s) < \infty$, such that
\formula[eq:cbf:bb]{
 f(\xi^2) & = c_0 + c_1 \xi^2 + \frac{1}{\pi} \int_{\R \setminus \{0\}} \expr{\frac{\xi}{\xi + i s} + \frac{i \xi \sign s}{1 + |s|}} \frac{\mu(\D s)}{|s|}
}
for all $\xi \in \C_\ra$.
\item\label{it:cbf:cc}
There is a constant $c \ge 0$ and a function $\ph : \R \to [0, \pi]$ such that $\ph(-s) = \ph(s)$ for all $s > 0$, and
\formula[eq:cbf:cc]{
 f(\xi^2) & = c \exp\expr{\frac{1}{\pi} \int_{-\infty}^\infty \expr{\frac{\xi}{\xi + i s} - \frac{1}{1 + |s|}} \frac{\ph(s)}{|s|} \, \D s}
}
for all $\xi \in \C_\ra$.
\item\label{it:cbf:dd}
For all $\xi \in \C_\ra$, $f(\xi^2) / \xi \in \overline{\C}_\ra$, and $f(\xi) \in [0, \infty)$ for $\xi \in (0, \infty)$.
\end{enumerate}
The constants $c_0$ and $c_1$ in representations~\eqref{eq:cbf:a}, \eqref{eq:cbf:b}, \eqref{eq:cbf:aa} and~\eqref{eq:cbf:bb} are equal, and they are uniquely determined by the function $f$,
\formula[eq:cbf:const]{
 c_0 & = \lim_{\xi \searrow 0} f(\xi), & c_1 & = \lim_{\xi \nearrow \infty} \frac{f(\xi)}{\xi} \, .
}
Furthermore,
\formula[]{
\label{eq:cbf:nu}
 \tilde{\nu}(z) & = \frac{1}{\pi} \int_{(0, \infty)} e^{-s z} \tilde{\mu}(\D s), & \nu(z) & = \frac{1}{2 \pi} \int_{(0, \infty)} \frac{\exp(-s \sqrt{|z|})}{\sqrt{s}} \, \tilde{\mu}(\D s)
}
for $z > 0$ and $z \in \R$, respectively, and if $0 < a < b$, then
\formula[eq:cbf:mu:alt]{
 \mu([\sqrt{a}, \sqrt{b}]) & = \int_{[a, b]} \frac{1}{\sqrt{4 s}} \, \tilde{\mu}(\D s) .
}
Moreover, also $\tilde{\mu}$ is uniquely determined by the function $f$,
\formula[eq:cbf:mu]{
 \pi c_0 \delta_0(\D s) + \frac{\tilde{\mu}(\D s)}{s} & = \lim_{t \searrow 0} \expr{-\im \frac{f(-s + i t)}{-s + i t} \, \D s} ,
}
with the vague limit of measures on the right-hand side. Finally, $\tilde{c} = f(1)$ in~\ref{it:cbf:c}, and if $f$ is not identically zero, $\tilde{\ph}(s)$ and $\ph(s) = \tilde{\ph}(s^2)$ are given uniquely by the formula
\formula[eq:cbf:ph]{
 \tilde{\ph}(s) & = \lim_{t \searrow 0} \Arg f(-s + i t) ,
}
for almost all $s > 0$.
\end{theorem}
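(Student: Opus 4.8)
The plan is to assemble this (mostly classical) statement from the Herglotz and Poisson representations (Theorems~\ref{th:harmonic:positive} and~\ref{th:harmonic:bounded}), Bernstein's theorem (Theorem~\ref{th:bernstein}), and the substitution $\xi\mapsto\xi^2$; the case $f\equiv0$ is trivial, so assume $f\not\equiv0$, and take the Stieltjes representation~\eqref{eq:cbf:b} as the hub. The implication \ref{it:cbf:b}$\Rightarrow$\ref{it:cbf:d} is immediate, since for $\xi\in\C_\ua$ and $s>0$ one has $\Arg\tfrac{\xi}{\xi+s}=\Arg\xi-\Arg(\xi+s)\in(0,\Arg\xi)\subseteq(0,\pi)$, so every term of~\eqref{eq:cbf:b} lies in $\overline{\C}_\ua$, and the right-hand side is real and nonnegative on $(0,\infty)$. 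For \ref{it:cbf:d}$\Rightarrow$\ref{it:cbf:b} I would apply the Herglotz representation (Theorem~\ref{th:harmonic:positive}, after rotating to $\C_\ua$) to the nonnegative harmonic function $\im f$ on $\C_\ua$; as $f$ is holomorphic on $\C\setminus(-\infty,0]$ and real on $(0,\infty)$, the representing measure is carried by $(-\infty,0]$, and splitting off its atom at the origin and writing $s=-t$ yields~\eqref{eq:cbf:b} with the atom giving $c_0$ and the linear term giving $c_1$, hence~\eqref{eq:cbf:const}. The uniqueness of $\tilde\mu$ and formula~\eqref{eq:cbf:mu} then come from the Stieltjes-inversion (vague-limit) part of Theorem~\ref{th:harmonic:positive} applied to $-\im(f(\xi)/\xi)$.

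For \ref{it:cbf:a}$\Leftrightarrow$\ref{it:cbf:b} I would use $\tfrac{\xi}{s(\xi+s)}=\int_0^\infty e^{-su}(1-e^{-\xi u})\,\D u$ and Fubini, which turn~\eqref{eq:cbf:b} into~\eqref{eq:cbf:a} with $\tilde\nu(u)=\tfrac1\pi\int_{(0,\infty)}e^{-su}\tilde\mu(\D s)$ (the first formula in~\eqref{eq:cbf:nu}); this $\tilde\nu$ is completely monotone by Theorem~\ref{th:bernstein}, every completely monotone $\tilde\nu$ has this form, and the equivalence of the two integrability conditions is the case $\alpha=0$, $\beta=1$ of the second assertion of Theorem~\ref{th:bernstein}. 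For the exponential representation \ref{it:cbf:c}, note that a non-zero complete Bernstein function has no zeros in $\C\setminus(-\infty,0]$ (it maps $\C_\ua$, and by reflection $\C_\da$, into themselves, and on $(0,\infty)$ it is nondecreasing by~\eqref{eq:cbf:a} and not identically zero), so $g=\log f$ is single-valued and holomorphic there. Since $f(\C_\ua)\subseteq\C_\ua$, the function $\im g=\Arg f$ is harmonic on $\C_\ua$ with values in $[0,\pi]$; Theorem~\ref{th:harmonic:bounded} represents it by its nontangential boundary values, which vanish on $(0,\infty)$ and on $(-\infty,0)$ equal $\tilde\varphi(s)=\lim_{t\searrow0}\Arg f(-s+it)$, which is~\eqref{eq:cbf:ph}. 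Recovering $g$ as a harmonic conjugate (Theorem~\ref{th:holomorphic}) up to an additive real constant, rewriting the conjugate Poisson kernel in normalised form and fixing the constant by $g(1)$ gives~\eqref{eq:cbf:c} with $\tilde c=f(1)$, and conversely the exponentiated right-hand side of~\eqref{eq:cbf:c} visibly satisfies \ref{it:cbf:d}. Thus \ref{it:cbf:a}--\ref{it:cbf:d} are equivalent.

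It remains to transfer everything to the squared conditions. Writing $w=\xi^2$, $\sigma=s^2$ and $\tfrac{\xi^2}{\xi^2+s^2}=\tfrac12\bigl(\tfrac{\xi}{\xi+is}+\tfrac{\xi}{\xi-is}\bigr)$, then extending the measure symmetrically to $\R\setminus\{0\}$, turns~\eqref{eq:cbf:b} into~\eqref{eq:cbf:bb}; the change of variables $\sigma=s^2$ is~\eqref{eq:cbf:mu:alt}, and the same substitution turns~\eqref{eq:cbf:c} into~\eqref{eq:cbf:cc} with $\varphi(s)=\tilde\varphi(s^2)$ (after absorbing the difference of normalising terms into the constant). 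Condition \ref{it:cbf:aa} follows from~\eqref{eq:cbf:a} by the subordination identity $1-e^{-\xi^2z}=\int_{\R}(1-e^{i\xi y})(4\pi z)^{-1/2}e^{-y^2/(4z)}\,\D y$ together with $\int_0^\infty(4\pi z)^{-1/2}e^{-y^2/(4z)}e^{-sz}\,\D z=(4s)^{-1/2}e^{-|y|\sqrt{s}}$, which combined with the first formula in~\eqref{eq:cbf:nu} yields the second (the odd compensator term integrates to zero against the symmetric $\nu$). Finally, \ref{it:cbf:bb}$\Rightarrow$\ref{it:cbf:dd} is read off directly from the signs of the real parts of the terms of~\eqref{eq:cbf:bb} divided by $\xi$, and the converse closes the loop through the representation just obtained (cf.~\cite[Chapter~6]{bib:ssv10}); the equality of $c_0$ and $c_1$ across~\eqref{eq:cbf:a}, \eqref{eq:cbf:b}, \eqref{eq:cbf:aa}, \eqref{eq:cbf:bb} is immediate from this bookkeeping.

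The part I expect to be hardest is the careful \ref{it:cbf:d}$\Rightarrow$\ref{it:cbf:b}: localising the Herglotz measure of $\im f$ to $(-\infty,0]$, correctly peeling off the atom at the origin, and matching the mass at infinity with $c_1$ --- together with the parallel analysis for $\log f$, where one must first establish that $f$ is zero-free so that $\log f$ is genuinely holomorphic on $\C\setminus(-\infty,0]$ and $\Arg f$ is a bounded harmonic function on $\C_\ua$. The rest is routine change of variables and identification of boundary limits; these classical facts are in~\cite[Section~2.5]{bib:k11} and~\cite[Chapter~6]{bib:ssv10}.
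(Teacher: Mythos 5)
Your sketch is correct, and it essentially reconstructs (from Herglotz, Poisson, Bernstein, harmonic conjugates, subordination, and symmetrisation) the chain of implications that the paper in fact does not carry out at all: its proof of Theorem~\ref{th:cbf} is almost entirely a pointer to the literature. The paper disposes of \ref{it:cbf:a}--\ref{it:cbf:d} by citing \cite[Theorems~6.2, 6.17, Corollary~6.3]{bib:ssv10}; it gets \ref{it:cbf:aa} and the formula for $\nu$ from \cite[Corollary~2.16, Proposition~2.18]{bib:k11}; \ref{it:cbf:b}$\Leftrightarrow$\ref{it:cbf:bb} and \ref{it:cbf:c}$\Leftrightarrow$\ref{it:cbf:cc} are dispatched in one line by ``symmetrising the integrals''; and the one genuinely nontrivial remaining implication, \ref{it:cbf:dd}$\Leftrightarrow$\ref{it:cbf:bb}, is explicitly deferred to Theorem~\ref{th:rogers}, where it is proved in the more general Rogers-function setting by applying Herglotz to the nonnegative harmonic function $\re(f(\xi^2)/\xi)$ on $\C_\ra$. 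So the ingredients are the same, but the paper buys brevity by citing and deferring, whereas you expand them into a self-contained argument, which is useful but more work.

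One point in your sketch deserves sharpening. You close the equivalence by saying that ``\ref{it:cbf:bb}$\Rightarrow$\ref{it:cbf:dd} is read off directly \dots{} and the converse closes the loop through the representation just obtained.'' This glosses over the fact that \ref{it:cbf:dd}$\Rightarrow$\ref{it:cbf:d} is \emph{not} immediate: condition \ref{it:cbf:dd} only gives $\Arg f(\xi^2)\in[\Arg\xi-\tfrac{\pi}{2},\Arg\xi+\tfrac{\pi}{2}]$ for $\xi\in\C_\ra$, and for $\eta=\xi^2\in\C_\ua$ with $\Arg\xi$ small this allows $\Arg f(\eta)$ to be slightly negative, so one cannot simply push \ref{it:cbf:dd} back to \ref{it:cbf:d}. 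The correct route, which is what the paper takes via Theorem~\ref{th:rogers}, is to apply the Herglotz representation once more, this time to $\re(f(\xi^2)/\xi)$ on $\C_\ra$, and then use $f\ge 0$ on $(0,\infty)$ to see that the measure in that representation is symmetric and the drift and compensator terms take the form required by~\eqref{eq:cbf:bb}. The rest of your argument --- in particular the reduction of the integrability conditions via Theorem~\ref{th:bernstein} with $\alpha=0,\beta=1$, the zero-freeness argument for $f$ before taking $\log f$, and the subordination computation for \ref{it:cbf:aa} --- is sound. (Your subordination computation actually produces $\nu(z)=\tfrac{1}{2\pi}\int_{(0,\infty)} s^{-1/2} e^{-|z|\sqrt{s}}\,\tilde\mu(\D s)$, with $|z|\sqrt{s}$ rather than $s\sqrt{|z|}$ in the exponent; this is the correct form and the discrepancy with~\eqref{eq:cbf:nu} appears to be a typo in the paper, not an error on your side.)
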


Formula~\eqref{eq:cbf:b} in~\ref{it:cbf:b} is referred to as \emph{Stieltjes representation} of~$f$, while~\eqref{eq:cbf:c} in~\ref{it:cbf:c} is the \emph{exponential representation} of~$f$.

\begin{proof}
Equivalence of~\ref{it:cbf:a}, \ref{it:cbf:b}, \ref{it:cbf:c} and~\ref{it:cbf:d} is standard, see~\cite[Theorems~6.2 and~6.17]{bib:ssv10}. Formulae~\eqref{eq:cbf:const} and~\eqref{eq:cbf:mu}, as well as the expressions for $\tilde{\nu}$ and $\tilde{\ph}$, are parts of these results, see~\cite[Corollary~6.3]{bib:ssv10}. Conditions~\ref{it:cbf:aa}, \ref{it:cbf:bb}, \ref{it:cbf:cc} and~\ref{it:cbf:dd} are prepared for the definition of Rogers functions. Condition~\ref{it:cbf:aa} and the formula for $\nu$ were given explicitly in~\cite[Corollary~2.16 and Proposition~2.18]{bib:k11}, but similar results are already contained in~\cite{bib:r83}. Equivalence of~\ref{it:cbf:b} and~\ref{it:cbf:bb}, as well as of~\ref{it:cbf:c} and~\ref{it:cbf:cc}, is proved by symmetrizing the integrals in~\eqref{eq:cbf:bb} and~\eqref{eq:cbf:cc}. Finally, the Nevanlinna--Pick-type condition~\ref{it:cbf:dd} is equivalent to~\ref{it:cbf:bb} in greater generality, as will be proved in Theorem~\ref{th:rogers} below.
\end{proof}

\begin{definition}[{see~\cite[Definition~6.1]{bib:ssv10}}]
A function $f$ satisfying any of the equivalent conditions of Theorem~\ref{th:cbf} is a \emph{complete Bernstein function}. The class of complete Bernstein functions is denoted by $\cbf$.
\end{definition}

\begin{remark}
\label{rem:cbf}
In~\ref{it:cbf:a}, $f(\xi)$ is given as the Laplace exponent of a subordinator (a nondecreasing L\'evy process) with completely monotone density of the L\'evy measure, killed at the rate $c_0$. According to~\ref{it:cbf:aa}, $f(\xi^2)$ is the L\'evy--Khintchine exponent of a symmetric L\'evy process with completely monotone density of the L\'evy measure on $(0, \infty)$, killed at the rate $c_0$.

Conditions~\ref{it:cbf:b} and~\ref{it:cbf:bb} are related to the representation theorem for the nonnegative harmonic functions $\im f(\xi)$ and $\im f(\xi^2)$ (Theorem~\ref{th:harmonic:positive}), while~\ref{it:cbf:c} and~\ref{it:cbf:cc} correspond to the representation theorem for the bounded harmonic functions $\Arg f(\xi)$ and $\Arg f(\xi^2)$ (Theorem~\ref{th:harmonic:bounded}).
\end{remark}

The following two properties of complete Bernstein functions are well-known.

\begin{proposition}
\label{prop:cbf:arg}
For a nonzero function $f$ holomorphic in $\C \setminus (-\infty, 0]$, the following conditions are equivalent:
\begin{enumerate}[label=\rm (\alph*)]
\item $f \in \cbf$;
\item $0 \le \Arg f(\xi) \le \Arg \xi$ for $\xi \in \C_\ua$;
\item $-\Arg \xi \le \Arg f(\xi) \le 0$ for $\xi \in \C_\da$.
\end{enumerate}
\end{proposition}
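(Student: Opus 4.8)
The plan is to derive Proposition~\ref{prop:cbf:arg} from the Nevanlinna--Pick-type characterisation~\ref{it:cbf:d} in Theorem~\ref{th:cbf}, namely that a nonzero $f$ holomorphic in $\C \setminus (-\infty, 0]$ belongs to $\cbf$ if and only if $f(\C_\ua) \sub \overline{\C}_\ua$ and $f((0, \infty)) \sub [0, \infty)$. The key observation is that a nonzero holomorphic function on a domain is an open map, so on the connected open set $\C_\ua$ the values of $f$ avoid $0$ unless $f$ is the zero function; hence $\Arg f(\xi)$ is well-defined and continuous on $\C_\ua$, taking values in $[0, \pi]$ precisely because $f(\C_\ua) \sub \overline{\C}_\ua \setminus \{0\}$.

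First I would prove the implication (a)$\Rightarrow$(b). Assume $f \in \cbf$. The upper bound $\Arg f(\xi) \le \Arg \xi$ for $\xi \in \C_\ua$ can be read off the exponential representation~\eqref{eq:cbf:c}: writing $\tilde c = f(1) > 0$ and $\tilde\ph(s) \in [0, \pi]$, one has
\formula{
 \Arg f(\xi) & = \frac{1}{\pi} \int_0^\infty \im \expr{\frac{\xi}{\xi + s}} \frac{\tilde\ph(s) \D s}{s} ,
}
and since $\im(\xi/(\xi+s)) = s \im \xi / |\xi + s|^2 \ge 0$ for $\im \xi > 0$, this is a nonnegative quantity; moreover, because each factor $\xi/(\xi+s)$ maps $\C_\ua$ into $\C_\ua$ with argument at most $\Arg\xi$, and the representation expresses $f$ (up to the positive constant $\tilde c$) as a ``geometric average'' of such factors with total weight controlled by the normalisation, one concludes $0 \le \Arg f(\xi) \le \Arg \xi$. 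An alternative, perhaps cleaner, route is: $f(\C_\ua) \sub \overline{\C}_\ua$ gives $\Arg f(\xi) \in [0, \pi]$ directly, and $f(\xi)/\xi \in \cbf$ as well (a standard closure property, or one checks $f(\xi)/\xi$ maps $\C_\ua$ to $\overline{\C}_\ua$ using $f(\xi) \in \overline{\C}_\ua$, $\xi \in \C_\ua$, so that $f(\xi)/\xi$ has argument in $[-\Arg\xi, \pi - \Arg\xi] \cap$ the appropriate range), whence $\Arg(f(\xi)/\xi) \ge 0$, i.e.\ $\Arg f(\xi) \ge \Arg \xi$ is ruled out and in fact $\Arg f(\xi) \le \Arg \xi$ ... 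I would settle this cleanly by noting $f(\xi)/\xi$ is a Stieltjes function, hence maps $\C_\ua$ into $\overline{\C}_\da$, giving $\Arg(f(\xi)/\xi) \le 0$ and thus $\Arg f(\xi) \le \Arg \xi$; combined with $\Arg f(\xi) \ge 0$ from~\ref{it:cbf:d}, this is exactly~(b).

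For the converse (b)$\Rightarrow$(a), I would invoke~\ref{it:cbf:d}: the hypothesis $0 \le \Arg f(\xi) \le \Arg\xi$ on $\C_\ua$ in particular forces $\Arg f(\xi) \in [0, \pi)$, i.e.\ $f(\C_\ua) \sub \overline{\C}_\ua$, and letting $\xi \searrow 0$ along the positive axis (or using that $\Arg\xi \to 0$) forces $f((0,\infty)) \sub [0, \infty)$ by continuity. Hence $f \in \cbf$. The equivalence (a)$\Leftrightarrow$(c) is obtained by the same argument applied to $\bar{f(\bar\xi)}$, which lies in $\cbf$ whenever $f$ does and which swaps the roles of $\C_\ua$ and $\C_\da$; equivalently, one uses the Schwarz reflection $\Arg f(\bar\xi) = -\Arg f(\xi)$ valid because $f$ is real on $(0,\infty)$.

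I expect the main obstacle to be the justification of the two-sided bound $\Arg f(\xi) \le \Arg\xi$ with full rigour directly from an integral representation, since one must handle the normalisation term and the behaviour at the endpoints $s = 0, \infty$; the cleanest fix is to avoid computing with~\eqref{eq:cbf:c} and instead use the closure of $\cbf$ under the map $f \mapsto f(\xi)/\xi$ into the Stieltjes class (equivalently, that $\xi/f(\xi) \in \cbf$ and hence $f(\xi)/\xi = 1/(\xi/f(\xi)) \cdot \ldots$), reducing everything to the already-established Nevanlinna--Pick-type dichotomy~\ref{it:cbf:d} and its Stieltjes counterpart. The rest is routine continuity and reflection arguments.
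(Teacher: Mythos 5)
Your proposal is correct and is essentially the same argument as the paper's: both reduce (b) to the Nevanlinna--Pick characterisation (d) together with the closure property that $\xi/f(\xi)\in\cbf$ (equivalently, that $f(\xi)/\xi$ is a Stieltjes function, hence maps $\C_\ua$ into $\overline{\C}_\da$), and both obtain the converse and the case of $\C_\da$ by continuity and reflection. The only slip is the passing claim that ``$f(\xi)/\xi\in\cbf$'' (it is Stieltjes, not a complete Bernstein function), which you yourself correct a few lines later, so the argument stands.
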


\begin{proof}
By Theorem~\ref{th:cbf}\itref{it:cbf:d}, if $f \in \cbf$, then $\Arg f(\xi) \ge 0$ for $\xi \in \C_\ua$. Furthermore, $\xi / f(\xi)$ is a complete Bernstein function, and hence, again by Theorem~\ref{th:cbf}\itref{it:cbf:d}, $\Arg \xi - \Arg f(\xi) \ge 0$ for $\xi \in \C_\ua$. Conversely, if $0 \le \Arg f(\xi) \le \Arg \xi$ for $\xi \in \C_\ua$, then, by continuity, $f(\xi) \in [0, \infty)$ for all $\xi \in (0, \infty)$, and hence, by Theorem~\ref{th:cbf}\itref{it:cbf:d}, $f \in \cbf$. A similar argument works for $\C_\da$.
\end{proof}

\begin{proposition}[{\cite[Corollary~7.6]{bib:ssv10}}]
\label{prop:cbf:limit}
If $f_n$ is a sequence of complete Bernstein functions and $f(\xi) = \lim_{n \to \infty} f_n(\xi)$ exists for every $\xi \in (0, \infty)$, then either $f$ is infinite on $(0, \infty)$, or $f$ extends to a complete Bernstein function. In the latter case, the convergence extends to all $\xi \in \C \setminus (-\infty, 0]$, and
\formula{
 & \lim_{n \to \infty} (c_{0,n} \delta_0(\D s) + c_{1,n} \delta_\infty(\D s) + s^{-2} \min(1, s) \tilde{\mu}_n(\D s)) \\
 & \hspace*{5em} = c_0 \delta_0(\D s) + c_1 \delta_\infty(\D s) + s^{-2} \min(1, s) \tilde{\mu}(\D s)
}
in the sense of vague convergence of measures on $[0, \infty]$; here $c_0$, $c_1$ and $\tilde{\mu}$ are the parameters in the Stieltjes representation~\eqref{eq:cbf:b} of $f$ in Theorem~\ref{th:cbf}\ref{it:cbf:b}, and $c_{0,n}$, $c_{1,n}$ and $\tilde{\mu}_n$ correspond in a similar way to $f_n$.
\end{proposition}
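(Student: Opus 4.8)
The plan is to reduce the statement to the Stieltjes representation of Theorem~\ref{th:cbf}\ref{it:cbf:b} together with vague (equivalently weak, as $[0,\infty]$ is compact) sequential compactness of bounded families of finite measures on $[0,\infty]$. First I would settle the dichotomy. Each $f_n \in \cbf$ is nondecreasing on $(0,\infty)$ (immediate from~\eqref{eq:cbf:a}, since $\partial_\xi(1-e^{-\xi z}) \ge 0$), while the ratio $f_n(\xi)/\xi = c_{1,n} + c_{0,n}/\xi + \tfrac{1}{\pi}\int_{(0,\infty)}(\xi+s)^{-1}s^{-1}\,\tilde\mu_n(\D s)$ is nonincreasing in $\xi > 0$, by~\eqref{eq:cbf:b}. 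Hence, if $f(\xi_0) < \infty$ for one $\xi_0 > 0$, then with $K := \sup_n f_n(\xi_0) < \infty$ we get $f_n(\xi) \le K$ for $\xi \le \xi_0$ and $f_n(\xi) \le (\xi/\xi_0)K$ for $\xi \ge \xi_0$, so $f$ is finite on all of $(0,\infty)$. Thus either $f \equiv \infty$ on $(0,\infty)$ — in which case there is nothing more to prove — or $f$ is finite throughout $(0,\infty)$, which I assume henceforth; then $M := \sup_n f_n(1) < \infty$.

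Next I would compactify the representation~\eqref{eq:cbf:b}. For each $n$ let $\sigma_n$ be the measure on $[0,\infty]$ with $\sigma_n(\{0\}) = c_{0,n}$, $\sigma_n(\{\infty\}) = c_{1,n}$, and $\sigma_n(\D s) = \tfrac{1}{\pi}\,\tfrac{1}{s(1+s)}\,\tilde\mu_n(\D s)$ on $(0,\infty)$, where $c_{0,n},c_{1,n},\tilde\mu_n$ are the data of $f_n$; since $\tfrac{1}{s(1+s)} \le s^{-2}\min(1,s)$, the measure $\sigma_n$ is finite. A direct computation rewrites~\eqref{eq:cbf:b} as $f_n(\xi) = \int_{[0,\infty]}\tfrac{\xi(1+s)}{\xi+s}\,\sigma_n(\D s)$ for every $\xi \in \C \setminus (-\infty,0]$, and, crucially, for each such fixed $\xi$ the kernel $s \mapsto \tfrac{\xi(1+s)}{\xi+s}$ is bounded and continuous on $[0,\infty]$ — it equals $1$ at $s = 0$ and tends to $\xi$ as $s \to \infty$, and $\xi+s \ne 0$ throughout. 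Evaluating at $\xi = 1$ gives $\sigma_n([0,\infty]) = f_n(1) \le M$, so the total masses are uniformly bounded.

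Now I would pass to the limit. By sequential compactness of bounded families of finite measures on the compact metric space $[0,\infty]$, some subsequence satisfies $\sigma_{n_k} \to \sigma$ vaguely, with $\sigma$ finite. Since the kernel above lies in $C([0,\infty])$ for each fixed $\xi \in \C \setminus (-\infty,0]$, we obtain $f_{n_k}(\xi) \to \int_{[0,\infty]}\tfrac{\xi(1+s)}{\xi+s}\,\sigma(\D s) =: F(\xi)$, and $F = f$ on $(0,\infty)$ by hypothesis. Writing $\sigma = c_0\delta_0 + c_1\delta_\infty + \sigma'$ with $\sigma'$ carried by $(0,\infty)$ and putting $\tilde\mu(\D s) := \pi s(1+s)\,\sigma'(\D s)$ — a Radon measure on $(0,\infty)$ with $\int_{(0,\infty)}s^{-2}\min(1,s)\,\tilde\mu(\D s) = \pi\int_{(0,\infty)}s^{-1}\min(1,s)(1+s)\,\sigma'(\D s) \le 2\pi\,\sigma'((0,\infty)) < \infty$ — formula~\eqref{eq:cbf:b} identifies $F$ as the complete Bernstein function with parameters $c_0,c_1,\tilde\mu$. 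So $f$ extends to $F \in \cbf$, and $f_{n_k} \to F$ on all of $\C \setminus (-\infty,0]$. Finally, since $c_0,c_1,\tilde\mu$ — equivalently $\sigma$ itself — are uniquely determined by $F = \lim_n f_n$ via Theorem~\ref{th:cbf}, the subsequence is irrelevant: the standard sub-subsequence argument promotes $\sigma_{n_k} \to \sigma$ to $\sigma_n \to \sigma$ vaguely on $[0,\infty]$ and the convergence $f_n \to F$ to all of $\C \setminus (-\infty,0]$, which is the assertion (the form of the limit measure stated in the proposition being an equivalent repackaging of $\sigma_n \to \sigma$ in terms of $c_{0,n},c_{1,n},\tilde\mu_n$).

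I expect the main obstacle to be the second step: arranging the two-point compactification of~\eqref{eq:cbf:b} so that the representing kernel is genuinely continuous up to $s = 0$ \emph{and} $s = \infty$, while simultaneously controlling the total mass $\sigma_n([0,\infty]) = f_n(1)$. This is precisely where \emph{completeness} of the Bernstein functions is used: the factor $s^{-1}$ in~\eqref{eq:cbf:b} is what tames the kernel as $s \searrow 0$, whereas for a general Bernstein function the endpoint $s = 0$ would carry an uncontrolled drift-type contribution. Once this is set up, the dichotomy is elementary and the identification of the vague limit is a soft consequence of the uniqueness in Theorem~\ref{th:cbf}; an alternative route to ``the convergence extends to $\C \setminus (-\infty,0]$'' via Vitali's theorem and normal families is available, but it yields less than the measure-theoretic argument.
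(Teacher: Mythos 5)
Your proof is correct and is essentially the standard argument for this result. Note that the paper does not contain a proof of its own here---it cites \cite[Corollary~7.6]{bib:ssv10}---so there is no internal argument to compare against; but your plan (rewrite the Stieltjes representation as integration of a kernel that extends continuously to the two-point compactification $[0,\infty]$, exploit the uniform mass bound $\sigma_n([0,\infty]) = f_n(1)$, extract a weak-$*$ convergent subsequence by Banach--Alaoglu, identify the limit by the uniqueness part of Theorem~\ref{th:cbf}, and promote to the full sequence) is exactly the argument given in that reference, whose representing measure is precisely your $\sigma_n$.

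The one place to be careful is your closing remark that the form stated in the proposition is ``an equivalent repackaging'' of $\sigma_n \to \sigma$. If you carry the conversion out you will hit a normalisation mismatch. On $(0,\infty)$ the density of the proposition's measure $s^{-2}\min(1,s)\tilde\mu_n(\D s)$ with respect to your $\sigma_n(\D s) = \tfrac{1}{\pi s(1+s)}\tilde\mu_n(\D s)$ is $g(s) = \pi(1+s)\min(1,s)/s$, which extends continuously to $[0,\infty]$ with $g(0^+) = g(\infty^-) = \pi$, \emph{not} $1$. Since the atoms $c_{0,n}\delta_0$, $c_{1,n}\delta_\infty$ enter both parametrisations with coefficient $1$, you cannot pass between the two statements by a continuous change of density on $[0,\infty]$, and indeed the stated convergence fails by a factor $\pi$ whenever mass escapes to an endpoint: take $f_n(\xi) = (1+\tfrac1n)\xi/(\xi+\tfrac1n) \to 1$, so $c_{0,n} = 0$ but $c_0 = 1$, while $s^{-2}\min(1,s)\tilde\mu_n(\D s) = \pi(1+\tfrac1n)\delta_{1/n} \to \pi\delta_0 \ne c_0\delta_0$. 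The resolution is that the weight in the proposition should carry a $\tfrac{1}{\pi}$ (consistent with the $\tfrac{1}{\pi}$ in \eqref{eq:cbf:b}); with $\tfrac{1}{\pi}s^{-2}\min(1,s)$ in place of $s^{-2}\min(1,s)$ the density ratio relative to $\sigma_n$ tends to $1$ at both endpoints and your ``repackaging'' goes through cleanly. So this is a normalisation slip in the proposition's statement rather than a gap in your argument---but it is exactly the kind of thing the phrase ``equivalent repackaging'' is obliged to check, and checking it would have revealed the slip.
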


The last result of this section is required for the study of \emph{nearly} balanced Rogers functions.

\begin{proposition}
\label{prop:cbf:quot}
\begin{enumerate}[label=\rm (\alph*)]
\item\label{it:cbf:quot:a}
If $r > 0$, $f \in \cbf$, $f$ extends to a holomorphic function in $\C \setminus [-r, 0]$ and $f(\xi) \ge 0$ for $\xi \in (-\infty, -r)$, then $(\xi + r) f(\xi)$ is a complete Bernstein function of $\xi$.
\item\label{it:cbf:quot:b}
If $r > 0$, $f \in \cbf$, $f$ extends to a holomorphic function in $\C \setminus [-r, 0]$ and $f(\xi) \le 0$ for $\xi \in (-\infty, -r)$, then $f(\xi) / (\xi + r)$ is a complete Bernstein function of $\xi$.
\end{enumerate}
\end{proposition}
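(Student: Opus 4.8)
The plan is to verify the Nevanlinna--Pick characterisation of Proposition~\ref{prop:cbf:arg}: writing $g(\xi) = (\xi + r) f(\xi)$ in case~\ref{it:cbf:quot:a} and $g(\xi) = f(\xi)/(\xi + r)$ in case~\ref{it:cbf:quot:b}, it suffices to show that $g$ is holomorphic and nonzero in $\C \setminus (-\infty, 0]$, that $g(\xi) \ge 0$ for $\xi > 0$, and that $0 \le \Arg g(\xi) \le \Arg \xi$ for $\xi \in \C_\ua$. Holomorphy on $\C \setminus (-\infty, 0] \subseteq \C \setminus [-r, 0]$, nonvanishing, and positivity on $(0, \infty)$ are immediate (a nonzero $f \in \cbf$ has no zeros in $\C \setminus (-\infty, 0]$), and if $f$ is constant the claim is trivial---in case~\ref{it:cbf:quot:b} the sign hypothesis forces $f \equiv 0$---so I assume $f$ non-constant. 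Then by the open mapping theorem and Theorem~\ref{th:cbf}\ref{it:cbf:d} one has $f(\C_\ua) \subseteq \C_\ua$, so $\Arg f$ is a \emph{bounded} harmonic function on $\C_\ua$ with values in $(0, \pi)$, and likewise $\Arg(\xi + r) \in (0, \pi)$ there. Fixing $\Arg g$ on $\C_\ua$ as $\Arg(\xi + r) + \Arg f$ in case~\ref{it:cbf:quot:a} and as $\Arg f - \Arg(\xi + r)$ in case~\ref{it:cbf:quot:b}, both $\Arg g$ and $\Arg \xi - \Arg g$ are then bounded harmonic on $\C_\ua$, equal to the imaginary parts of holomorphic branches of $\log g$ and $\log(\xi/g)$, and $\Arg g$ is the branch vanishing on $(0, \infty)$ used in Proposition~\ref{prop:cbf:arg}.

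The heart of the matter is the computation of the a.e.\ nontangential boundary limits. By hypothesis $f$ extends holomorphically across $(-\infty, -r)$ and is real there, nonnegative in case~\ref{it:cbf:quot:a} and nonpositive in case~\ref{it:cbf:quot:b}; since a zero of $f$ on $(-\infty, -r)$ is a local extremum of a real-analytic function of constant sign, hence of even order, $\Arg f$ has a.e.\ boundary limit $0$ (resp.\ $\pi$) on $(-\infty, -r)$. On $(0, \infty)$ this boundary limit is $0$, and on $(-r, 0)$ it lies in $[0, \pi]$ (it is the function $\tilde{\ph}$ of Theorem~\ref{th:cbf}, see~\eqref{eq:cbf:ph}). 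Combining with the elementary boundary limits of $\Arg(\xi + r)$ ($0$ on $(-r, \infty)$, $\pi$ on $(-\infty, -r)$) and of $\Arg \xi$ (given by $\pi \ind_{(-\infty, 0)}$), one checks in both cases that the boundary function $h$ of $\Arg g$ satisfies $0 \le h \le \pi$ a.e.\ and $h = 0$ a.e.\ on $(0, \infty)$, so that the boundary function $\pi \ind_{(-\infty, 0)} - h$ of $\Arg \xi - \Arg g$ is likewise nonnegative a.e.

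To conclude I would invoke the Poisson representation of bounded harmonic functions on a half-plane (the counterpart for $\C_\ua$ of Theorem~\ref{th:harmonic:bounded}): $\Arg g$ and $\Arg \xi - \Arg g$ equal the Poisson integrals of $h$ and of $\pi \ind_{(-\infty, 0)} - h$, and since the Poisson kernel is a probability density and both boundary functions are nonnegative a.e., we get $\Arg g \ge 0$ and $\Arg \xi - \Arg g \ge 0$ throughout $\C_\ua$. Hence $0 \le \Arg g(\xi) \le \Arg \xi$ on $\C_\ua$, and Proposition~\ref{prop:cbf:arg} yields $g \in \cbf$.

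The step I expect to require the most care is identifying the a.e.\ boundary limit of $\Arg f$ on $(-\infty, -r)$, together with the bookkeeping of logarithm branches ensuring $\Arg g$ is precisely the normalisation of Proposition~\ref{prop:cbf:arg}; the rest is routine. A purely computational alternative is available via Theorem~\ref{th:cbf}\ref{it:cbf:b}: the extension hypothesis and formula~\eqref{eq:cbf:mu} force the representing measure $\tilde{\mu}$ to be supported in $(0, r]$, and in case~\ref{it:cbf:quot:a} also $c_1 = 0$; then the identity $(\xi + r)\xi/(\xi + s) = \xi + (r - s)\xi/(\xi + s)$ with $r - s \ge 0$ recasts $(\xi + r) f(\xi)$ in Stieltjes form, while in case~\ref{it:cbf:quot:b} the sign of $f$ on $(-\infty, -r)$ forces the additional integrability $\int_{(0, r]} s^{-1}(r - s)^{-1} \tilde{\mu}(\D s) < \infty$, after which $\xi/((\xi + r)(\xi + s)) = (r - s)^{-1}(\xi/(\xi + s) - \xi/(\xi + r))$ together with the fact that $d - c/(\xi + r) \in \cbf$ when $0 \le c \le r d$ reassembles $f(\xi)/(\xi + r)$ as a complete Bernstein function.
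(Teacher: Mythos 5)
Your main proof is correct but takes a genuinely different route from the paper. The paper works entirely inside the Stieltjes representation of Theorem~\ref{th:cbf}\ref{it:cbf:b}: it first observes (via~\eqref{eq:cbf:mu}) that the extension hypothesis forces $\supp\tilde\mu\sub(0,r]$, then in case~\ref{it:cbf:quot:a} argues $c_1=0$ from the sign of $f(\xi)/\xi$ as $\xi\to-\infty$ and rewrites $(\xi+r)f(\xi)$ explicitly in Stieltjes form using $\xi(\xi+r)/(\xi+s)=\xi+(r-s)\xi/(\xi+s)$; in case~\ref{it:cbf:quot:b} it uses monotone convergence as $\xi\nearrow-r$ to extract the inequality $c_1 r\ge c_0+\tfrac{1}{\pi}\int_{(0,r]}\tfrac{r\,\tilde\mu(\D s)}{s(r-s)}$ (which also kills the atom at $r$), then partial-fractions $\xi/((\xi+s)(\xi+r))$ to reassemble $f(\xi)/(\xi+r)$. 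Your primary argument instead appeals to the Nevanlinna--Pick characterisation of Proposition~\ref{prop:cbf:arg}: you treat $\Arg g$ and $\Arg\xi-\Arg g$ as bounded harmonic functions on $\C_\ua$, compute their a.e.~nontangential boundary values on the three intervals $(-\infty,-r)$, $(-r,0)$, $(0,\infty)$ (using real-analyticity of $f$ across $(-\infty,-r)$ for the first piece and~\eqref{eq:cbf:ph} for the second), and invoke the Poisson representation to deduce $0\le\Arg g\le\Arg\xi$ throughout $\C_\ua$. The bookkeeping is correct in both cases. The paper's route has the advantage of producing the new Stieltjes data explicitly (for example, the atom of the representing measure of $f(\xi)/(\xi+r)$ at $s=r$ is identified in closed form), which matters elsewhere in the text; your route is shorter and more conceptual if one only needs membership in $\cbf$. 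Your closing paragraph sketching the ``purely computational alternative'' is, in substance, the paper's proof.
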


\begin{proof}
By Theorem~\ref{th:cbf}\ref{it:cbf:b}, with the notation of~\eqref{eq:cbf:b}, in both parts of the proposition,
\formula{
 f(\xi) & = c_0 + c_1 \xi + \frac{1}{\pi} \int_{(0, r]} \frac{\xi}{\xi + s} \, \frac{\tilde{\mu}(\D s)}{s}
}
for all $\xi \in \C \setminus [-r, 0]$; indeed, $\tilde{\mu}((r, \infty)) = 0$ by~\eqref{eq:cbf:mu}. If $f(\xi) \ge 0$ for $\xi \in (-\infty, -r)$, then $0 \le c_1 = \lim_{\xi \searrow -\infty} (f(\xi) / \xi) \le 0$, that is, $c_1 = 0$. Hence,
\formula{
 (\xi + r) f(\xi) & = c_0 (\xi + r) + \frac{1}{\pi} \int_{(0, r]} \frac{\xi (\xi + r)}{\xi + s} \, \frac{\tilde{\mu}(\D s)}{s} \\
 & = c_0 r + \expr{c_0 + \frac{1}{\pi} \int_{(0, r]} \frac{\tilde{\mu}(\D s)}{s}} \xi + \frac{1}{\pi} \int_{(0, r]} \frac{\xi}{\xi + s} \, \frac{(r - s) \tilde{\mu}(\D s)}{s}
}
is a complete Bernstein function of $\xi$. If $f(\xi) \le 0$ for $\xi \in (-\infty, -r)$, then, by monotone convergence,
\formula{
 0 & \ge \lim_{\xi \nearrow -r} f(\xi) = c_0 - c_1 r + \frac{1}{\pi} \int_{(0, r]} \frac{r}{r - s} \, \frac{\tilde{\mu}(\D s)}{s} \, ;
}
in particular, $\tilde{\mu}(\{r\}) = 0$. Furthermore,
\formula{
 \frac{f(\xi)}{\xi + r} & = c_0 \, \frac{1}{\xi + r} + c_1 \, \frac{\xi}{\xi + r} + \frac{1}{\pi} \int_{(0, r]} \frac{\xi}{(\xi + s) (\xi + r)} \, \frac{\tilde{\mu}(\D s)}{s} \displaybreak[0] \\
 & = \frac{c_0}{r} + \expr{c_1 - \frac{c_0}{r} - \frac{1}{\pi} \int_{(0, r]} \frac{\tilde{\mu}(\D s)}{s (r - s)}} \frac{\xi}{\xi + r} + \frac{1}{\pi} \int_{(0, r]} \frac{\xi}{\xi + s} \, \frac{\tilde{\mu}(\D s)}{s (r - s)} \, .
}
Since the parenthesised expression is nonnegative, $f(\xi) / (\xi + r)$ is a complete Bernstein function of $\xi$.
\end{proof}

%
%

\section{Definition of Rogers functions}
\label{sec:rogers}

In this section the notion of Rogers functions is formally introduced. In Theorem~\ref{th:rogers} the equivalence of conditions~\ref{it:cbf:aa}, \ref{it:cbf:bb}, \ref{it:cbf:cc} and~\ref{it:cbf:dd} in Theorem~\ref{th:cbf} is generalised to the asymmetric case. Major part of the proof of this result is a simplified version of the argument given in the Rogers's paper~\cite{bib:r83}. At the end of this section, some examples of Rogers functions are provided.

\begin{theorem}
\label{th:rogers}
Let $f$ be a holomorphic function in the right complex half-plane $\C_\ra$. The following conditions are equivalent:
\begin{enumerate}[label=\rm (\alph*)]
\item\label{it:rogers:a}
There are constants $c_0 \ge 0$, $c_1 \in \R$ and $c_2 \ge 0$, and a function $\nu$ on $\R \setminus \{0\}$ satisfying the integrability condition $\int_{-\infty}^\infty \min(1, z^2) \nu_\pm(z) \D z < \infty$, such that $\nu(z)$ and $\nu(-z)$ are completely monotone on $(0, \infty)$, and
\formula[eq:rogers:a]{
 f(\xi) & = c_0 - i c_1 \xi + c_2 \xi^2 + \int_{-\infty}^\infty (1 - e^{i \xi z} + i \xi (1 - e^{-|z|}) \sign z) \nu(z) \D z
}
for all $\xi \in (0, \infty)$.
\item\label{it:rogers:b}
There are constants $c_0 \ge 0$, $c_1 \in \R$ and $c_2 \ge 0$, and a Radon measure $\mu$ on $\R \setminus \{0\}$ satisfying the integrability condition $\int_{\R \setminus \{0\}} |s|^{-3} \min(1, s^2) \mu(\D s) < \infty$, such that
\formula[eq:rogers:b]{
 f(\xi) & = c_0 - i c_1 \xi + c_2 \xi^2 + \frac{1}{\pi} \int_{\R \setminus \{0\}} \expr{\frac{\xi}{\xi + i s} + \frac{i \xi \sign s}{1 + |s|}} \frac{\mu(\D s)}{|s|}
}
for all $\xi \in \C_\ra$; equivalently, for a fixed $r > 0$,
\formula[eq:rogers:bb]{
 f(\xi) & = c_0 - i \tilde{c}_1 \xi + c_2 \xi^2 + \frac{1}{\pi} \int_{\R \setminus \{0\}} \expr{\frac{\xi}{\xi + i s} + \frac{i \xi s}{r^2 + s^2}} \frac{\mu(\D s)}{|s|}
}
for the same $c_0$, $c_2$ and $\mu$, and for some $\tilde{c}_1 \in \R$.
\item\label{it:rogers:c}
There is a constant $c \ge 0$ and a function $\ph : \R \to [0, \pi]$, such that
\formula[eq:rogers:c]{
 f(\xi) & = c \exp\expr{\frac{1}{\pi} \int_{-\infty}^\infty \expr{\frac{\xi}{\xi + i s} - \frac{1}{1 + |s|}} \frac{\ph(s)}{|s|} \, \D s}
}
for all $\xi \in \C_\ra$.
\item\label{it:rogers:d}
For all $\xi \in \C_\ra$, $f(\xi) / \xi \in \overline{\C}_\ra$.
\end{enumerate}
The constants $c_0$, $c_1$, $\tilde{c}_1$ and $c_2$ in representations~\eqref{eq:rogers:a}, \eqref{eq:rogers:b} and~\eqref{eq:rogers:bb} are equal, and they are uniquely determined by the function $f$,
\formula[eq:rogers:const]{
 c_0 & = \lim_{\xi \searrow 0} f(\xi), & c_1 & = \lim_{\xi \nearrow \infty} \frac{-\im f(\xi)}{\xi} \, , & c_2 & = \lim_{\xi \nearrow \infty} \frac{f(\xi)}{\xi^2} \, , & \tilde{c}_1 & = \im f(r) \, .
}
Furthermore, for $z > 0$,
\formula[eq:rogers:nu]{
 \nu(z) & = \frac{1}{\pi} \int_{(0, \infty)} e^{-s z} \mu(\D s) , & \nu(-z) = \frac{1}{\pi} \int_{(-\infty, 0)} e^{s z} \mu(\D s) ,
}
and also $\mu$ is uniquely determined by the function $f$,
\formula[eq:rogers:mu]{
 \pi c_0 \delta_0(\D s) + \mu(\D s) & = \lim_{t \searrow 0} \expr{\re \frac{f(t + i s)}{t + i s} \, \D s} ,
}
with the vague limit of measures in the right-hand side. If $f$ is not identically zero, then $\ph$ is uniquely determined by the formula
\formula[eq:rogers:ph]{
 \ph(s) & = -\sign s \, \lim_{t \searrow 0} \Arg f(t - i s)
}
for almost all $s \in \R$.
\end{theorem}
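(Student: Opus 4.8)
The plan is to prove Theorem~\ref{th:rogers} by reducing it, via the substitution $\xi \mapsto \xi^2$, to the already-established symmetric case of Theorem~\ref{th:cbf}, and then treating the genuinely asymmetric content directly. The cycle of implications I would carry out is \ref{it:rogers:a} $\Rightarrow$ \ref{it:rogers:b} $\Rightarrow$ \ref{it:rogers:c} $\Rightarrow$ \ref{it:rogers:d} $\Rightarrow$ \ref{it:rogers:a}, with the parameter identifications~\eqref{eq:rogers:const}--\eqref{eq:rogers:ph} extracted along the way. First I would observe that \ref{it:rogers:a} $\Rightarrow$ \ref{it:rogers:b} is essentially a computation: in~\eqref{eq:rogers:a} one uses Bernstein's theorem (Theorem~\ref{th:bernstein}) to write $\nu(z) = \frac1\pi\int_{(0,\infty)} e^{-sz}\mu(\D s)$ for $z>0$ and $\nu(-z) = \frac1\pi\int_{(-\infty,0)} e^{sz}\mu(\D s)$ for $z>0$, interchanges the order of integration (the integrability conditions on $\nu_\pm$ translate into those on $\mu$ exactly as in Theorem~\ref{th:bernstein}), and performs the elementary integral $\int_0^\infty (1-e^{i\xi z}+i\xi(1-e^{-|z|})\sign z)e^{\mp sz}\D z$, which produces the kernel $\frac{\xi}{\xi+is} + \frac{i\xi\sign s}{1+|s|}$ up to affine terms absorbed into $c_0, c_1, c_2$; this also yields~\eqref{eq:rogers:nu}. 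The equivalence of~\eqref{eq:rogers:b} and~\eqref{eq:rogers:bb} is just a change of the normalization point, shifting $c_1$ to $\tilde c_1$.

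Next, for \ref{it:rogers:b} $\Rightarrow$ \ref{it:rogers:c} and \ref{it:rogers:c} $\Rightarrow$ \ref{it:rogers:d}, the idea is to pass through the half-plane $\C_\ra$ rather than the slit plane. Given~\eqref{eq:rogers:b}, the function $g(\xi) := f(\xi)/\xi$ has $\re g(\xi) = c_1\cdot 0 + \ldots$; more precisely, a direct computation shows $\re(f(\xi)/\xi) \ge 0$ on $\C_\ra$ because each kernel term $\re\bigl(\frac{1}{\xi+is}\bigr) = \re\bigl(\frac{\bar\xi - is}{|\xi+is|^2}\bigr) = \frac{\re\xi}{|\xi+is|^2}\ge 0$, the affine parts contribute $\re(c_0/\xi) \ge 0$, $\re(-ic_1) = 0$, $\re(c_2\xi) \ge 0$, and the remaining imaginary correction terms are purely imaginary. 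This gives \ref{it:rogers:d} directly from \ref{it:rogers:b}, so in fact I would organize the equivalences as \ref{it:rogers:b} $\Leftrightarrow$ \ref{it:rogers:c} and \ref{it:rogers:b} $\Rightarrow$ \ref{it:rogers:d} $\Rightarrow$ \ref{it:rogers:b}. For \ref{it:rogers:b} $\Leftrightarrow$ \ref{it:rogers:c}: write $\re(f(\xi)/\xi)$ as a nonnegative harmonic function on $\C_\ra$, apply the Herglotz representation (Theorem~\ref{th:harmonic:positive}) to identify its boundary measure as $\pi c_0\delta_0(\D s) + \mu(\D s)$ (giving~\eqref{eq:rogers:mu}), while $\Arg f(\xi)$ — once one checks it is a bounded harmonic function on $\C_\ra$, bounded because $\re(f/\xi)\ge 0$ confines $\Arg f$ to an interval of length $\pi$ around $\Arg\xi$ — has Poisson representation (Theorem~\ref{th:harmonic:bounded}) with boundary function of the form $\ph(s)/|s|\cdot(\text{something})$; reconstructing $\log f$ from its harmonic real and imaginary parts via Theorem~\ref{th:holomorphic} gives~\eqref{eq:rogers:c} and~\eqref{eq:rogers:ph}.

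The implication \ref{it:rogers:d} $\Rightarrow$ \ref{it:rogers:b} is where the real work lies, and it is the step I expect to be the main obstacle. From $\re(f(\xi)/\xi)\ge 0$ one knows $\im(f(\xi)/\xi)$ is a harmonic function on $\C_\ra$ whose harmonic conjugate (up to sign) is the nonnegative harmonic function $\re(f(\xi)/\xi)$; applying Herglotz (Theorem~\ref{th:harmonic:positive}) to the latter yields $\re(f(\xi)/\xi) = c_2\re\xi + \frac1\pi\int_\R \frac{\re\xi}{|\xi|^2 + \ldots}\,m(\D s)$ with $\int s^{-2}\min(1,s^2)m(\D s)<\infty$; integrating up (Theorem~\ref{th:holomorphic}) and using Proposition~\ref{prop:harmonic:quot} to control the behavior near the real axis where $f(\xi)/\xi$ need not vanish, one recovers a representation of $f(\xi)/\xi$, hence of $f(\xi)$, of the form~\eqref{eq:rogers:b} with $\mu = m$ restricted appropriately and $c_0$ the atom at $s=0$. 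The subtle points are: (i) justifying that the affine term $c_0/\xi$ (from a possible atom of $m$ at $0$) really has $c_0 = \lim_{\xi\searrow 0}f(\xi)\ge 0$; (ii) pinning down $c_1\in\R$ as the missing real constant of integration from Theorem~\ref{th:holomorphic}, via $c_1 = -\lim_{\xi\nearrow\infty}\im f(\xi)/\xi$ and checking this limit exists and is finite from the integrability of $\mu$; and (iii) verifying the integrability condition $\int |s|^{-3}\min(1,s^2)\mu(\D s)<\infty$ matches the one coming from Herglotz after the coordinate bookkeeping. I would also need Theorem~\ref{th:bernstein}'s second clause to confirm that the $\mu$ produced this way has $\nu_\pm$ satisfying $\int\min(1,z^2)\nu_\pm(z)\D z<\infty$, which closes the loop back to \ref{it:rogers:a}. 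Throughout, I would lean on the observation that restricting to $\xi = \xi^2$ with $\xi>0$ recovers exactly the symmetric statements of Theorem~\ref{th:cbf}, so the uniqueness assertions~\eqref{eq:rogers:const} can be read off by comparing with the already-proven symmetric uniqueness, plus the single new fact that $c_1 = \im f(r) - (\text{integral term at } r)$ is determined once $\mu$ is.
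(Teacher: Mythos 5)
Your plan reproduces the paper's proof: \ref{it:rogers:a} $\Leftrightarrow$ \ref{it:rogers:b} via Bernstein's theorem, \ref{it:rogers:b} $\Leftrightarrow$ \ref{it:rogers:d} via the Herglotz representation (Theorem~\ref{th:harmonic:positive}) of the nonnegative harmonic function $\re(f(\xi)/\xi)$ combined with the harmonic-conjugate theorem (Theorem~\ref{th:holomorphic}), and \ref{it:rogers:d} $\Leftrightarrow$ \ref{it:rogers:c} via the bounded-harmonic representation (Theorem~\ref{th:harmonic:bounded}) of $\Arg(f(\xi)/\xi)$; the difficulties you flag in \ref{it:rogers:d} $\Rightarrow$ \ref{it:rogers:b} — identifying the atom at $0$, the constant of integration, and the integrability of $\mu$ — are all resolved automatically by Herglotz, so this step is lighter than you anticipate. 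Two small misidentifications do not change the scheme: the substitution $\xi \mapsto \xi^2$ links only the \emph{symmetric} Rogers functions to Theorem~\ref{th:cbf}, so it cannot shortcut the genuinely asymmetric parts of the argument (which you do carry out directly anyway, so nothing is lost), and Proposition~\ref{prop:harmonic:quot} plays no role in this proof — it is invoked only later, in Definition~\ref{def:rogers:curve}, to construct the curve $\gamma_f$.
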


Formula~\eqref{eq:cbf:b} in~\ref{it:cbf:b} will be referred to as \emph{Stieltjes representation} of~$f$, while~\eqref{eq:cbf:c} in~\ref{it:cbf:c} is the \emph{exponential representation} of~$f$.

Equivalence of~\ref{it:rogers:a}, \ref{it:rogers:b} and~\ref{it:rogers:c} was given in~\cite{bib:r83}, see formulae~(14)--(18) therein. For completeness, a simple proof is provided. Condition~\ref{it:rogers:d} is related to the Nevanlinna--Pick functions (it is discussed also in~\cite{bib:jf12}).

\begin{proof}
By Bernstein's representation theorem (Theorem~\ref{th:bernstein}), the formula $\nu_\pm(z) = \int_{(0, \infty)} e^{-z s} \mu_\pm(\D s)$ defines a one to one correspondence between completely monotone functions $\nu_\pm$ on $(0, \infty)$ satisfying $\int_0^\infty \min(1, z^2) \nu_\pm(z) \D z < \infty$, and Radon measures $\mu_\pm$ on $(0, \infty)$ satisfying $\int_{(0, \infty)} |s|^{-3} \min(1, s^2) \mu_\pm(\D s) < \infty$. By taking $\mu(E) = \mu_+(E \cap (0, \infty)) + \mu_-(-E \cap (0, \infty))$, and $\nu(z) = \nu_+(z) \ind_{(0, \infty)}(z) + \nu_-(-z) \ind_{(-\infty, 0)}(z)$ and a short calculation, one obtains equivalence of~\ref{it:rogers:a} and the first part of~\ref{it:rogers:b} (we omit the details). The equivalence of both formulations of~\ref{it:rogers:b} follows by comparing the right-hand sides.

By a direct verification, \ref{it:rogers:b} implies~\ref{it:rogers:d}. Conversely, by the representation theorem for nonnegative harmonic functions (Theorem~\ref{th:harmonic:positive}), for any holomorphic function $f$ in $\C_\ra$ satisfying $\re (f(\xi) / \xi) \ge 0$, one has
\formula[eq:rogers:harmonic1]{
 \re \frac{f(\xi)}{\xi} & = c_2 \re \xi + \frac{1}{\pi} \int_{\R} \frac{\re \xi}{(\re \xi)^2 + (s + \im \xi)^2} \, m(\D s) \\
 & = c_2 \re \xi + \frac{1}{\pi} \int_{\R} \re \expr{\frac{1}{\xi + i s} + \frac{i \sign s}{1 + |s|}} m(\D s)
}
for some $c_2 \ge 0$ and a Radon measure $m$ on $\R$ satisfying the integrability condition $\int_{\R} s^{-2} \min(1, s^2) m(\D s) < \infty$ (note that $m(E)$ above is equal to $m(-E)$ in Theorem~\ref{th:harmonic:positive}). By Theorem~\ref{th:holomorphic}, it follows that for some $c_1 \in \R$,
\formula{
 \frac{f(\xi)}{\xi} & = -c_1 i + c_2 \xi + \frac{1}{\pi} \int_{\R} \expr{\frac{1}{\xi + i s} + \frac{i \sign s}{1 + |s|}} m(\D s) .
}
This is equivalent to~\eqref{eq:rogers:b} after taking $c_0 = \pi m(\{0\})$ and $\mu(\D s) = \ind_{\R \setminus \{0\}}(s) |s|^{-1} m(\D s)$. Hence, \ref{it:rogers:b} and~\ref{it:rogers:d} are equivalent.

If $f(\xi) = 0$ for some $\xi \in \C_\ra$ and~\ref{it:rogers:d} holds, then, by the maximum principle, $f$ is identically zero. If $f$ is nonzero and~\ref{it:rogers:d} holds, then $\Arg (f(\xi) / \xi) = \im (\log (f(\xi) / \xi))$ is a bounded harmonic function in $\C_\ra$, taking values in $[-\frac{\pi}{2}, \frac{\pi}{2}]$. By the representation theorem (Theorem~\ref{th:harmonic:bounded}),
\formula[eq:rogers:harmonic2]{
 \im \expr{\log \frac{f(\xi)}{\xi}} & = \frac{1}{\pi} \int_{-\infty}^\infty \frac{\re \xi}{(\re \xi)^2 + (s + \im \xi)^2} \, g(s) \D s
}
for some $g : \R \to [-\frac{\pi}{2}, \frac{\pi}{2}]$ (again, $g(s)$ is equal to $g(-s)$ in Theorem~\ref{th:harmonic:bounded}). Since $\im (\log \xi)$ is given by a similar integral with $g(s)$ replaced by $-\frac{\pi}{2} \sign s$, one obtains
\formula[eq:rogers:harmonic3]{
 \im(\log f(\xi)) & = -\frac{1}{\pi} \int_{-\infty}^\infty \frac{\re \xi}{(\re \xi)^2 + (s + \im \xi)^2} \expr{\frac{\pi}{2} \sign s - g(s)} \D s \\
 & = \frac{1}{\pi} \int_{-\infty}^\infty \im \expr{\frac{\xi}{\xi + i s} - \frac{1}{1 + |s|}} \expr{\frac{\pi}{2} - g(s) \sign s} \frac{\D s}{|s|} .
}
Representation~\eqref{eq:rogers:c} follows by Theorem~\ref{th:holomorphic}, with $\ph(s) = \frac{\pi}{2} - g(s) \sign s$. Conversely, if~\ref{it:rogers:c} holds, then, by the above argument, the boundary limits of the harmonic function $\Arg (f(\xi) / \xi)$ are in $[-\frac{\pi}{2}, \frac{\pi}{2}]$, which proves~\ref{it:rogers:d}.

Uniqueness properties and formulae for $c_2$ and $\mu$ in~\eqref{eq:rogers:const} and~\eqref{eq:rogers:mu} follow from the representation theorem for the harmonic function $\re (f(\xi) / \xi)$ in~\eqref{eq:rogers:harmonic1}. In a similar manner, formula~\eqref{eq:rogers:ph} is a consequence of the representation theorem for bounded harmonic function in~\eqref{eq:rogers:harmonic2}. Formulae for $c_0$ and $c_1$ are simple corollaries of~\eqref{eq:rogers:b} and dominated convergence.
\end{proof}

\begin{definition}
\label{def:rogers}
A function satisfying any of the equivalent conditions of Theorem~\ref{th:rogers} is said to be a \emph{Rogers function}. The class of Rogers functions is denoted by $\rogers$.
\end{definition}

\begin{remark}
\label{rem:rogers}
Condition~\ref{it:rogers:a} states that $f \in \rogers$ if and only if $f$ is the L\'evy--Khintchine exponent of a L\'evy process, whose L\'evy measure has a density function $\nu$ such that $\nu(z)$ and $\nu(-z)$ are completely monotone on $(0, \infty)$, killed at the rate $c_0$. Such a L\'evy process (with no killing) is said to have \emph{completely monotone jumps}.

More precisely, the L\'evy--Khintchine exponent $\Psi(\xi)$ of the L\'evy process described above, is equal to $f(\xi)$ for $\xi > 0$ and to $\overline{f(-\xi)}$ for $\xi < 0$. This L\'evy--Khintchine exponent extends to a holomorphic function on $\C \setminus i \R$, given by $f(\xi)$ when $\re \xi > 0$, and by $\overline{f(-\bar{\xi})}$ when $\re \xi < 0$. For this reason, the domain of definition of a Rogers function $f$ is automatically extended to $\C \setminus i \R$ by the formula $f(\xi) = \overline{f(-\bar{\xi})}$ for $\xi \in \C_\la$. Note that if $f \in \rogers$ corresponds to the L\'evy process $X_t$, then also $\overline{f(\bar{\xi})} = f(-\xi)$ is a Rogers function, corresponding to the dual process $\hat{X}_t = -X_t$.

As in Remark~\ref{rem:cbf}, conditions~\ref{it:rogers:b} and~\ref{it:rogers:c} are consequences of representation theorems for harmonic functions. Condition~\ref{it:rogers:d} turns out to be very convenient in applications.
\end{remark}

\begin{remark}
\label{rem:rogers:symmetric}
By comparing Theorems~\ref{th:cbf} and~\ref{th:rogers}, there is a one to one correspondence between \emph{symmetric} Rogers functions $f$ (satisfying $f(\bar{\xi}) = \overline{f(\xi)}$, or $f(\xi) = f(-\xi)$) and complete Bernstein functions $g$, given by the formula $f(\xi) = g(\xi^2)$. Therefore, L\'evy processes with completely monotone jumps can be thought of as asymmetric analogues of L\'evy processes with L\'evy--Khintchine exponent $g(\xi^2)$ for a complete Bernstein function $g$. The latter class have been studied in~\cite{bib:k11, bib:k12, bib:kmr12}, but also e.g.\ in~\cite{bib:bbkrsv09, bib:bgr14, bib:g13, bib:ksv12, bib:ksv13, bib:ksv14} in different contexts.

The following are equivalent conditions for a Rogers function $f$ to be symmetric: $f$ is real-valued on $(0, \infty)$; $c_1 = 0$ and $\mu$ is a symmetric measure; $c_1 = 0$ and $\nu(z) = \nu(-z)$ for $z > 0$; $c_1 = 0$ and $\ph(s) = \ph(-s)$ for $s > 0$.
\end{remark}

\begin{remark}
\label{rem:rogers:cbf}
Another link between Rogers functions and complete Bernstein functions can be stated as follows. If $f$ is a complete Bernstein function, then $f(-i \xi)$ and $f(i \xi)$ are Rogers functions of $\xi$ (by Theorem~\ref{th:cbf}\ref{it:cbf:b} and Theorem~\ref{th:rogers}\ref{it:rogers:b}).

Conversely, if $f$ is a Rogers function, $f$ extends to a holomorphic function in $\C \setminus (-i \infty, 0]$, and this extension satisfies $f(i \xi) \ge 0$ for $\xi > 0$, then $f(i \xi)$ defines a complete Bernstein function of $\xi$. In a similar manner, if $f \in \rogers$, $f$ extends to a holomorphic function in $\C \setminus [0, i \infty)$ and $f(-i \xi) \ge 0$ for $\xi \in (0, \infty)$, then $f(-i \xi)$ is a complete Bernstein function of $\xi$. This also follows easily from Theorem~\ref{th:rogers}\ref{it:rogers:b} and Theorem~\ref{th:cbf}\ref{it:cbf:b} (we omit the details).

Rogers functions of the form $f(-i \xi)$ or $f(i \xi)$ for $f \in \cbf$ can be thought of as completely asymmetric Rogers functions. The corresponding L\'evy processes are nondecreasing or nonincreasing, respectively. This classes should be distinguished from the larger families of Rogers functions corresponding to spectrally positive or spectrally negative L\'evy processes (i.e. processes without negative of positive jumps, respectively).
\end{remark}

\begin{example}
\label{ex:rogers}
The following functions of $\xi$ belong to $\rogers$:
\begin{enumerate}[label=\rm (\alph*)]
\item $f(\xi) = \frac{1}{2} \xi^2 - i b \xi$ for $\xi \in \C_\ra \cup \C_\la$, the L\'evy--Khintchine exponent of the Brownian motion with drift; here $b \in \R$;
\item $f(\xi) = a \xi^\alpha$ for $\xi \in \C_\ra$ and $f(\xi) = \bar{a} (-\xi)^\alpha$ for $\xi \in \C_\la$, the L\'evy--Khintchine exponent of a strictly stable L\'evy process, discussed in detail in Section~\ref{sec:ex}; here $\alpha \in (0, 2]$ and $a$ satisfies~\eqref{eq:stable:a};
\item $f(\xi) = -i b \xi$ for $\xi \in \C_\ra \cup \C_\la$, the L\'evy--Khintchine exponent of the (deterministic) process of uniform motion with velocity $b$; here $b \in \R$;
\item $f(\xi) = a \xi^\alpha - i b \xi$ for $\xi \in \C_\ra$ and $f(\xi) = \bar{a} (-\xi)^\alpha - i b \xi$ for $\xi \in \C_\la$, the L\'evy--Khintchine exponent of a stable (but not necessarily strictly stable) L\'evy process; here $\alpha \in (0, 2]$, $a$ satisfies~\eqref{eq:stable:a} and $b \in \R$;
\item\label{ex:rogers:e} $f(\xi) = \xi / (\xi - a i) - i b \xi$ for $\xi \in \C_\ra \cup \C_\la$, the L\'evy--Khintchine exponent of the classical risk process (the Cram\'er--Lundberg model) with exponentially distributed claims; here $a, b > 0$;
\item any linear combination with nonnegative coefficients of the examples given above.
\end{enumerate}
\end{example}

Noteworthy, the if the measure $\mu$ in Theorem~\ref{th:rogers}\ref{it:rogers:b} is purely atomic, with atoms forming a discrete subset of $\R$, then $f$ is meromorphic, and it is the L\'evy--Khintchine exponent of a meromorphic L\'evy process, studied in detail in~\cite{bib:kkp13}.

%
%

\section{Basic properties of Rogers functions}
\label{sec:basic}

Significance of complete Bernstein functions is partially due to a great number of properties of various type they have. In this section some of these properties are generalised to the asymmetric setting of Rogers functions. We begin with distinguishing nonzero, nondegenerate, bounded and unbounded Rogers functions. Next, various operations on the class of Rogers functions are studied (Propositions~\ref{prop:rogers:prop} and~\ref{prop:rogers:conv}). Finally, we provide estimates of Rogers functions and related notions (Propositions~\ref{prop:rogers:bound}, \ref{prop:rogers:prime} and~\ref{prop:rogers:imag}).

\subsection{Simple subclasses of Rogers functions}

Clearly, $\rogers$ is a convex cone of functions, and if $f \in \rogers$, then $f(a \xi)$ is a Rogers function for arbitrary $a > 0$. By Theorem~\ref{th:rogers}\ref{it:rogers:d}, any $f \in \rogers$ maps $\C_\ra$ into $\C \setminus (-\infty, 0)$.

\begin{proposition}
\label{prop:rogers:trivial}
If $f \in \rogers$ is not identically $0$, then $f(\xi) \ne 0$ for all $\xi \in \C_\ra$. If $f \in \rogers$ is not of the form $f(\xi) = -c_1 i \xi$ for some $c_1 \in \R$, then $f(\xi) / \xi \in \C_\ra$ for all $\xi \in \C_\ra$, that is, $f(\xi) / \xi$ maps $\C_\ra$ into $\C_\ra$.
\end{proposition}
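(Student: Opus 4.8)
The plan is to use the maximum principle together with the characterisation in Theorem~\ref{th:rogers}\ref{it:rogers:d}. The whole statement rests on the observation that $f(\xi)/\xi$ maps $\C_\ra$ into the \emph{closed} right half-plane $\overline{\C}_\ra$, and we want to promote ``closed'' to ``open'' except in degenerate cases. This is exactly the situation where the maximum principle (equivalently, the open mapping theorem) applies: a nonconstant holomorphic function whose values lie in a closed half-plane cannot touch the boundary of that half-plane.

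First I would dispose of the first assertion. Suppose $f \in \rogers$ is not identically zero but $f(\xi_0) = 0$ for some $\xi_0 \in \C_\ra$. Then $g(\xi) = f(\xi)/\xi$ is holomorphic in $\C_\ra$ (since $\xi \ne 0$ there), satisfies $\re g(\xi) \ge 0$ by Theorem~\ref{th:rogers}\ref{it:rogers:d}, and has $g(\xi_0) = 0$, a boundary point of $\overline{\C}_\ra$. Compose with a conformal map sending $\overline{\C}_\ra$ to the closed unit disc, e.g. $h(\xi) = (1 - g(\xi))/(1 + g(\xi))$ (well-defined since $g$ never equals $-1$, its values having nonnegative real part); then $h$ is holomorphic in $\C_\ra$, $|h| \le 1$, and $|h(\xi_0)| = 1$. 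By the maximum principle $h$ is constant, hence $g$ is constant, hence $g \equiv 0$ and $f \equiv 0$, contradicting the hypothesis. So $f(\xi) \ne 0$ throughout $\C_\ra$.

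For the second assertion, suppose $f \in \rogers$ and $f(\xi_0)/\xi_0 \in i\R$ (a boundary point of $\overline{\C}_\ra$) for some $\xi_0 \in \C_\ra$; write $f(\xi_0)/\xi_0 = -c_1 i$ for a real constant $c_1$ (the sign chosen to match the statement). Applying the same maximum-principle argument to $g(\xi) = f(\xi)/\xi$ — this time note $g(\xi) \ne -c_1 i + 1$ is not quite what is needed, so instead apply the argument directly: $\re(g(\xi) + c_1 i) \ge 0$ for all $\xi \in \C_\ra$ and this quantity vanishes at $\xi_0$; since $\re$ of a holomorphic function is harmonic, by the minimum principle for harmonic functions it is identically zero, so $g(\xi) + c_1 i$ is purely imaginary throughout $\C_\ra$, hence (being holomorphic with constant real part) constant, so $g(\xi) = -c_1 i$ and $f(\xi) = -c_1 i \xi$ for all $\xi \in \C_\ra$. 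Thus if $f$ is not of this form, then $f(\xi)/\xi \in \C_\ra$ for every $\xi \in \C_\ra$, as claimed.

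I expect no serious obstacle here; the only point requiring a little care is the bookkeeping in the second part — one must make sure the constant $c_1$ arising this way is real (it is, because $\re(f(\xi_0)/\xi_0) = 0$ forces $f(\xi_0)/\xi_0$ to be purely imaginary, and then the harmonic minimum principle pins down the whole real part) and that the resulting form $f(\xi) = -c_1 i \xi$ indeed matches the exceptional family already flagged in the statement, which it does (it is the $c_0 = c_2 = 0$, $\nu \equiv 0$ case of Theorem~\ref{th:rogers}\ref{it:rogers:a}, the uniform-motion example). One could alternatively phrase both parts uniformly via the open mapping theorem: a nonconstant holomorphic map is open, so its image cannot meet the boundary of a closed half-plane containing it; I would likely present it that way for brevity.
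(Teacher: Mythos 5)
Your proposal is correct and rests on the same key tool as the paper's one-line proof: the minimum principle for the nonnegative harmonic function $\re(f(\xi)/\xi)$, which forces it to be either identically zero (giving the degenerate family $f(\xi)=-c_1 i\xi$) or everywhere positive. The only cosmetic difference is that you handle the first assertion via a conformal map to the unit disc and the maximum modulus principle, while the paper deduces both assertions at once from the harmonic dichotomy; these are interchangeable, and your own closing remark about using the open mapping theorem uniformly is exactly the streamlining the paper performs.
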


\begin{proof}
Since $\re (f(\xi) / \xi)$ is nonnegative and harmonic in $\C_\ra$, it is either identically equal to $0$ or everywhere positive. In the former case, $f(\xi) = -c_1 i \xi$. The result follows.
\end{proof}

If $f(\xi) = -c_1 i \xi$, the Rogers function $f$ is said to be \emph{degenerate}; otherwise, $f$ is \emph{nondegenerate}. If $f \in \rogers$ is not constantly zero, $f$ is said to be \emph{nonzero}. If $f \in \rogers$ is a bounded function on $(0, \infty)$, then $f$ is said to be \emph{bounded}. Note that a bounded Rogers function may fail to be a bounded function on $\C_\ra$.

\begin{proposition}
\label{prop:rogers:bounded}
Every bounded Rogers function $f$ has the form
\formula[eq:rogers:bounded]{
 f(\xi) & = c_0 + \frac{1}{\pi} \int_{\R \setminus \{0\}} \frac{\xi}{\xi + i s} \, \frac{\mu(\D s)}{|s|} \, ,
}
where $c_0 \ge 0$ and $\mu(\D s) / |s|$ is a finite measure. In this case $f(\infty^-) = \lim_{\xi \nearrow \infty} f(\xi)$ exists and it is a real number. If $f \in \rogers$ is not bounded, then $\lim_{\xi \nearrow \infty} |f(\xi)| = \infty$. Furthermore, with the notation of Theorem~\ref{th:rogers}\ref{it:rogers:b}, one of the following conditions is satisfied: $c_2 > 0$, or $\mu(\D s) / |s|$ is an infinite measure, or $\mu(\D s) / |s|$ is a finite measure and $f(\xi) + i c \xi$ is given by the right-hand side of~\eqref{eq:rogers:bounded} for some $c \in \R \setminus \{0\}$.
\end{proposition}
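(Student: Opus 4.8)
The plan is to read the whole statement off the Stieltjes-type representation~\eqref{eq:rogers:b} of Theorem~\ref{th:rogers}\ref{it:rogers:b}, inspected along the positive real axis of $\C_\ra$.

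First, assuming $f$ is bounded on $(0,\infty)$, I would look at the real part of~\eqref{eq:rogers:b} for $\xi = x > 0$, namely
\[
 \re f(x) = c_0 + c_2 x^2 + \frac{1}{\pi} \int_{\R \setminus \{0\}} \frac{x^2}{x^2 + s^2} \, \frac{\mu(\D s)}{|s|} ,
\]
a sum of nonnegative terms, nondecreasing in $x$, bounded by $\sup_{x>0}|f(x)|$. Comparison with $c_2 x^2$ gives $c_2 = 0$, and then monotone convergence as $x \nearrow \infty$ yields $\tfrac1\pi\int_{\R\setminus\{0\}}\tfrac{\mu(\D s)}{|s|} = \lim_{x\to\infty}\re f(x) - c_0 < \infty$, so $\mu(\D s)/|s|$ is a finite measure. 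Once that is known, the last term of~\eqref{eq:rogers:b} splits into two convergent integrals, and the part coming from $\tfrac{i\xi\sign s}{1+|s|}$ is just $i d\,\xi$ with $d = \tfrac1\pi\int_{\R\setminus\{0\}}\tfrac{\sign s}{1+|s|}\tfrac{\mu(\D s)}{|s|}\in\R$, so that for all $\xi\in\C_\ra$
\[
 f(\xi) = c_0 - i(c_1 - d)\xi + \frac1\pi \int_{\R\setminus\{0\}} \frac{\xi}{\xi+is} \, \frac{\mu(\D s)}{|s|} .
\]
Taking imaginary parts at $\xi = x > 0$, the integral contributes $-\tfrac1\pi\int_{\R\setminus\{0\}}\tfrac{xs}{x^2+s^2}\tfrac{\mu(\D s)}{|s|}$, which is bounded in $x$ because $\tfrac{|xs|}{x^2+s^2}\le\tfrac12$; hence boundedness of $\im f$ forces the coefficient $c_1 - d$ of the linear term to vanish, and this is precisely~\eqref{eq:rogers:bounded}. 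Existence of $f(\infty^-)$ then follows from~\eqref{eq:rogers:bounded} by dominated convergence, since $\bigl|\tfrac{x}{x+is}\bigr|\le 1$ and $\tfrac{x}{x+is}\to1$; the limit equals $c_0 + \tfrac1\pi\int_{\R\setminus\{0\}}\tfrac{\mu(\D s)}{|s|}\in[0,\infty)$.

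For the unbounded case I would run the same analysis. As above $\re f(x)$ is nondecreasing, so it has a limit $L\in[0,\infty]$. If $L = \infty$ then $|f(x)|\ge\re f(x)\to\infty$. If $L < \infty$, then exactly as before $c_2 = 0$, $\mu(\D s)/|s|$ is finite, and $f(\xi) = c_0 - ic\xi + \tfrac1\pi\int_{\R\setminus\{0\}}\tfrac{\xi}{\xi+is}\tfrac{\mu(\D s)}{|s|}$ with $c = c_1 - d$; here $c\ne0$, since $c = 0$ would make $f$ equal to the bounded function~\eqref{eq:rogers:bounded}; and then $\im f(x) = -cx - \tfrac1\pi\int_{\R\setminus\{0\}}\tfrac{xs}{x^2+s^2}\tfrac{\mu(\D s)}{|s|}$ with a bounded integral term, so $|\im f(x)|\to\infty$. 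In every case $\lim_{x\to\infty}|f(x)| = \infty$. The trichotomy at the end of the statement is just this case split: $c_2 > 0$, or $c_2 = 0$ with $\mu(\D s)/|s|$ infinite (together these are $L = \infty$), or $c_2 = 0$ with $\mu(\D s)/|s|$ finite, in which case unboundedness forces $c = c_1 - d\ne0$ and $f(\xi) + ic\xi$ coincides with the right-hand side of~\eqref{eq:rogers:bounded}.

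Everything here is routine manipulation of~\eqref{eq:rogers:b}, so I do not anticipate a genuine obstacle. The two mildly delicate points are: recognising that finiteness of $\mu(\D s)/|s|$ is exactly what lets the $\tfrac{i\xi\sign s}{1+|s|}$ correction in~\eqref{eq:rogers:b} be absorbed into a single term $i(c_1 - d)\xi$, reducing the entire question to whether $c_2 = 0$, whether $\mu(\D s)/|s|$ is finite, and whether the residual real coefficient $c_1 - d$ vanishes; and using the uniform bound $\tfrac{|xs|}{x^2+s^2}\le\tfrac12$ to see that the Cauchy-type integral in $\im f(x)$ stays bounded, so that along $(0,\infty)$ the linear term alone is responsible for any unboundedness.
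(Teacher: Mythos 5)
Your argument is correct and follows essentially the same route as the paper: read off the real part of the Stieltjes representation~\eqref{eq:rogers:b} on the positive axis to force $c_2 = 0$ and finiteness of $\mu(\D s)/|s|$, absorb the $\tfrac{i\xi\sign s}{1+|s|}$ correction into a real linear coefficient once the measure is finite, and then use the uniform bound $\tfrac{|xs|}{x^2+s^2}\le\tfrac12$ to see that boundedness of $\im f$ kills the residual linear term. The paper's proof is just a terser statement of the same chain; your version spells out the absorption step and the $|f(\xi)|\to\infty$ claim in the unbounded case, which the paper leaves implicit.
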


\begin{proof}
The statement follows easily from Theorem~\ref{th:rogers}: if $f \in \rogers$ and $\re f(\xi)$ is a bounded function of $\xi \in (0, \infty)$, then, with the notation of Theorem~\ref{th:rogers}\ref{it:rogers:b}, the function
\formula{
 \re f(\xi) & = c_0 + c_2 \xi^2 + \frac{1}{\pi} \int_{-\R \setminus \{0\}} \frac{\xi^2}{\xi^2 + s^2} \, \frac{\mu(\D s)}{|s|}
}
remains bounded as $\xi \nearrow \infty$, so that $c_2 = 0$ and the measure $\mu(\D s) / |s|$ is finite. Therefore, for some $c \in \R$, $f(\xi) + i c \xi$ is given by the right-hand side of~\eqref{eq:rogers:bounded}. If $c = 0$, then $f$ is bounded; otherwise, $f$ is not bounded.
\end{proof}

\subsection{Operations on Rogers functions}

The class of Rogers functions is closed under various operations, described in the following results.

\begin{proposition}
\label{prop:rogers:prop}
If $f, g \in \rogers$, then the following functions of $\xi$ belong to $\rogers$:
\begin{enumerate}[label=\rm (\alph*)]
\item\label{it:rogers:prop:a} $\xi^2 f(1 / \xi)$;
\item\label{it:rogers:prop:b} $\xi^2 / f(\xi)$ and $1 / f(1 / \xi)$ if $f$ is nonzero;
\item\label{it:rogers:prop:c} $\xi^{1 - \alpha} f(\xi^\alpha)$ if $\alpha \in [-1, 1]$;
\item\label{it:rogers:prop:d} $g(\xi) f(\xi / g(\xi))$ if $g$ is nonzero;
\item\label{it:rogers:prop:e} $(f(\xi))^\alpha (g(\xi))^{1 - \alpha}$ if $\alpha \in [0, 1]$;
\item\label{it:rogers:prop:f} $((f(\xi))^\alpha + (g(\xi))^\alpha)^{1 / \alpha}$ if $\alpha \in [-1, 1]$.
\end{enumerate}
\end{proposition}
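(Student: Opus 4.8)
The plan is to exploit throughout the Nevanlinna--Pick description of $\rogers$ from Theorem~\ref{th:rogers}\ref{it:rogers:d}: a function $h$ holomorphic on $\C_\ra$ lies in $\rogers$ precisely when $h(\xi)/\xi \in \overline{\C}_\ra$ for every $\xi \in \C_\ra$. Writing $N(\xi) = f(\xi)/\xi$ and $M(\xi) = g(\xi)/\xi$, these are by hypothesis holomorphic maps $\C_\ra \to \overline{\C}_\ra$, and by Proposition~\ref{prop:rogers:trivial} they map into the open half-plane $\C_\ra$ unless $f$, resp.\ $g$, is degenerate. For each of the six transformations the task splits into (i) checking that the new function is holomorphic on $\C_\ra$, and (ii) rewriting ``the new function divided by $\xi$'' as an expression built from $N$ and $M$ that visibly takes values in $\overline{\C}_\ra$. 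I will repeatedly invoke two elementary facts: $\C_\ra$ is invariant under $\xi \mapsto 1/\xi$ and under $\xi \mapsto \xi^\alpha$ whenever $|\alpha| \le 1$ (since then $|\Arg \xi^\alpha| = |\alpha|\,|\Arg \xi| < \tfrac{\pi}{2}$); and, for $w \in \overline{\C}_\ra \setminus \{0\}$, one has $1/w \in \overline{\C}_\ra$, while for real $\beta$ the principal power $w^\beta$ lies in the sector $S_{|\beta|} = \{z : |\Arg z| \le |\beta|\tfrac{\pi}{2}\}$, which for $|\beta| \le 1$ is a convex cone contained in $\overline{\C}_\ra$.

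Granting the (routine) branch bookkeeping discussed below, the computations are then one line each. For \ref{it:rogers:prop:a}, $\bigl(\xi^2 f(1/\xi)\bigr)/\xi = N(1/\xi)$ with $1/\xi \in \C_\ra$. For \ref{it:rogers:prop:b}, $f$ nonzero gives $f \ne 0$ on $\C_\ra$ (Proposition~\ref{prop:rogers:trivial}), so $\xi^2/f(\xi)$ is holomorphic there and $\bigl(\xi^2/f(\xi)\bigr)/\xi = 1/N(\xi) \in \overline{\C}_\ra$; applying \ref{it:rogers:prop:a} to this new Rogers function yields $1/f(1/\xi) \in \rogers$. For \ref{it:rogers:prop:c}, $\xi^\alpha \in \C_\ra$ for $|\alpha| \le 1$, so $f(\xi^\alpha)$ is holomorphic and $\bigl(\xi^{1-\alpha} f(\xi^\alpha)\bigr)/\xi = N(\xi^\alpha) \in \overline{\C}_\ra$. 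For \ref{it:rogers:prop:d}, assuming $g$ nondegenerate (the degenerate case, where $\xi/g(\xi)$ is constant on $i\R$, is either vacuous or an immediate check on a linear function), $\eta := \xi/g(\xi) = 1/M(\xi) \in \C_\ra$, the composition $f(\xi/g(\xi))$ is holomorphic, and $\bigl(g(\xi) f(\xi/g(\xi))\bigr)/\xi = M(\xi)\,f(\eta) = M(\xi)\,\eta\,N(\eta) = N(\eta) \in \overline{\C}_\ra$.

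For the two mean-type constructions the point is convexity of arguments. In \ref{it:rogers:prop:e}, $f, g$ nonzero force $\Arg f(\xi), \Arg g(\xi) \in (-\pi, \pi)$ (since $\Arg f(\xi) = \Arg N(\xi) + \Arg \xi$ with the two terms in $[-\tfrac\pi2, \tfrac\pi2]$ and $(-\tfrac\pi2, \tfrac\pi2)$), so $f(\xi)^\alpha g(\xi)^{1-\alpha}$ is holomorphic on $\C_\ra$; moreover $\bigl(f(\xi)^\alpha g(\xi)^{1-\alpha}\bigr)/\xi = N(\xi)^\alpha M(\xi)^{1-\alpha}$, whose argument equals $\alpha \Arg N(\xi) + (1-\alpha)\Arg M(\xi) \in [-\tfrac\pi2, \tfrac\pi2]$. (One may instead read \ref{it:rogers:prop:e} straight off the exponential representation of Theorem~\ref{th:rogers}\ref{it:rogers:c}, noting that a convex combination of $[0,\pi]$-valued functions is again $[0,\pi]$-valued.) In \ref{it:rogers:prop:f}, $\bigl((f(\xi))^\alpha + (g(\xi))^\alpha\bigr)/\xi^\alpha = N(\xi)^\alpha + M(\xi)^\alpha$; since $N(\xi)^\alpha, M(\xi)^\alpha \in S_{|\alpha|}$ and $S_{|\alpha|}$ is a convex cone, the sum again lies in $S_{|\alpha|}$, and it is nonzero (no two nonzero elements of a cone of half-angle $\le \tfrac\pi2$ are antipodal, and $N(\xi) \ne 0$ for $f$ nonzero, the borderline $|\alpha| = 1$ being handled by observing that $N + M = (f+g)/\xi$ is nonzero unless $f$, $g$ are degenerate). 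Hence $\bigl(N(\xi)^\alpha + M(\xi)^\alpha\bigr)^{1/\alpha}$ has argument of absolute value at most $\tfrac{1}{|\alpha|}\cdot|\alpha|\tfrac\pi2 = \tfrac\pi2$, i.e.\ it lies in $\overline{\C}_\ra$, which is \ref{it:rogers:prop:f}.

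The only genuinely delicate point — and the one requiring care — is the branch bookkeeping: one must justify the holomorphy of the various compositions and identities such as $f(\xi)^\alpha/\xi^\alpha = (f(\xi)/\xi)^\alpha$. This is handled uniformly: all relevant quantities ($\xi$, $f(\xi)$, $g(\xi)$ and their ratios) have principal arguments in $(-\pi,\pi)$, and a difference such as $\Arg f(\xi) - \Arg \xi$ a priori lies in $(-\tfrac{3\pi}{2}, \tfrac{3\pi}{2})$ but is already known — from $f(\xi)/\xi \in \overline{\C}_\ra$ — to be congruent modulo $2\pi$ to a number in $[-\tfrac\pi2,\tfrac\pi2]$, so no spurious $2\pi$-shift occurs and the principal powers compose as expected; the few degenerate sub-cases of $f$ or $g$ are then dispatched by inspection.
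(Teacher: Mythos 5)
Your proof is correct and takes exactly the paper's approach: the paper likewise reduces every item to the Nevanlinna--Pick criterion of Theorem~\ref{th:rogers}\ref{it:rogers:d}, stating that all six statements ``follow directly'' from it and working out only item~\ref{it:rogers:prop:c} as a sample via $\re\bigl(\xi^{-1}\,\xi^{1-\alpha} f(\xi^\alpha)\bigr) = \re\bigl(f(\xi^\alpha)/\xi^\alpha\bigr) \ge 0$. You simply carry out all six reductions explicitly and make the branch bookkeeping that the paper leaves implicit.
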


\begin{proof}
All statements follow directly from the condition given in Theorem~\ref{th:rogers}\ref{it:rogers:d}. For example, $\re(\xi^{-1} (\xi^{1 - \alpha} f(\xi^\alpha))) = \re(f(\xi^\alpha) / \xi^\alpha) \ge 0$ proves~\ref{it:rogers:prop:c}.
\end{proof}

Note that the statement~\ref{it:rogers:prop:d} contains as a special case the statement~\ref{it:rogers:prop:c}, which in turn generalises~\ref{it:rogers:prop:a}.

\begin{proposition}
\label{prop:rogers:conv}
If $f \in \rogers$ and $g \in \cbf$, then $g(f(\xi))$ is a Rogers function of $\xi$.
\end{proposition}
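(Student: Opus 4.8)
The plan is to use the Nevanlinna--Pick-type characterisation from Theorem~\ref{th:rogers}\ref{it:rogers:d}: it suffices to show that $g(f(\xi))/\xi \in \overline{\C}_\ra$ for all $\xi \in \C_\ra$. First I would dispose of the trivial case in which $f$ is identically zero (then $g(f(\xi)) = g(0) = c_0 \ge 0$ is constant, hence a Rogers function) or $g$ is identically zero. So assume $f$ and $g$ are nonzero. By Proposition~\ref{prop:rogers:trivial}, either $f$ is degenerate, i.e.\ $f(\xi) = -c_1 i \xi$ for some $c_1 \in \R$, or $f(\xi)/\xi \in \C_\ra$ for all $\xi \in \C_\ra$. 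I will handle the generic (nondegenerate) case first and return to the degenerate one at the end.

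Fix $\xi \in \C_\ua$ (the case $\xi \in \C_\da$ is symmetric by the reflection $f(-\bar\xi) = \overline{f(\xi)}$, and $(0,\infty)$ follows by continuity). The key observation is an \emph{argument-tracking} estimate. Since $f \in \rogers$ is nondegenerate, $w := f(\xi)/\xi \in \C_\ra$, so $|\Arg w| < \tfrac{\pi}{2}$; writing $\xi = |\xi| e^{i\Arg\xi}$ with $\Arg\xi \in (0,\pi)$, we get $f(\xi) = w\xi$, hence
\formula{
 \Arg f(\xi) & = \Arg w + \Arg \xi \in \expr{\Arg\xi - \tfrac{\pi}{2},\ \Arg\xi + \tfrac{\pi}{2}} \cap (-\pi, \pi].
}
In particular, when $\xi \in \C_\ua$ we have $\Arg f(\xi) \in (-\tfrac{\pi}{2}, \pi)$, so either $f(\xi) \in \C_\ua$ or $f(\xi)$ lies in the right half-plane. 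In both sub-cases I want to conclude $0 \le \Arg g(f(\xi)) \le \Arg f(\xi)$ whenever $f(\xi) \in \C_\ua$, using Proposition~\ref{prop:cbf:arg}, and $\Arg g(f(\xi)) \in [0,\tfrac{\pi}{2})$ when $f(\xi)$ is in the open right half-plane (since then $g$ maps it into the right half-plane too: indeed $g(\C_\ra) \subseteq \C \setminus (-\infty,0]$ and $g$ maps the positive axis into itself, so by the reflection principle and Proposition~\ref{prop:cbf:arg} applied on $\C_\ua$ and $\C_\da$, $g(\C_\ra)\subseteq\overline{\C}_\ra$). Combining: $\Arg g(f(\xi)) \le \max(0, \Arg f(\xi))$ and $\Arg g(f(\xi)) \ge 0$ when $f(\xi)\in\overline{\C}_\ua$. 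Then
\formula{
 \Arg\expr{\frac{g(f(\xi))}{\xi}} & = \Arg g(f(\xi)) - \Arg\xi \le \max(0,\Arg f(\xi)) - \Arg\xi \le \tfrac{\pi}{2},
}
using $\Arg f(\xi) < \Arg\xi + \tfrac{\pi}{2}$ and $\Arg\xi > 0$; and likewise $\Arg(g(f(\xi))/\xi) \ge -\Arg\xi > -\pi$, but I need the sharper lower bound $\ge -\tfrac{\pi}{2}$, which should come from the symmetric argument that $\Arg g(f(\xi)) \ge \min(0,\Arg f(\xi))$ together with $\Arg f(\xi) > \Arg\xi - \tfrac{\pi}{2}$. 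This gives $g(f(\xi))/\xi \in \overline{\C}_\ra$, as required.

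The main obstacle is making the argument-chasing genuinely rigorous at the boundary cases, namely when $f(\xi)$ lands on the negative imaginary axis or on $(0,\infty)$ (where $\Arg$ is continuous but the half-plane membership is borderline) and when $\xi \to (0,\infty)$ or $\xi \to i\R$. A cleaner route, which I would actually prefer, avoids inequalities altogether: note that $g \in \cbf$ means $g(\eta)/\eta$ is also in $\cbf$, hence $g(f(\xi)) = \frac{g(f(\xi))}{f(\xi)} \cdot f(\xi)$, and the composition $\eta \mapsto g(f(\xi))$ factors as "first apply a map $\C_\ra \to \overline{\C}_\ua$-on-$\C_\ua$-type (namely $f$, rescaled), then apply $g$". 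Concretely, set $h(\xi) = g(f(\xi))/f(\xi)$ when $f(\xi) \ne 0$; one checks $h$ extends holomorphically across the zero set of $f$ (which is empty in $\C_\ra$ by Proposition~\ref{prop:rogers:trivial}), and then $g(f(\xi))/\xi = h(\xi)\cdot (f(\xi)/\xi)$. It remains to verify $\re(g(f(\xi))/\xi) \ge 0$; rather than multiplying two things in $\overline{\C}_\ra$ (whose product need not be in $\overline{\C}_\ra$), I would instead directly verify the Nevanlinna--Pick condition via $\Arg$-additivity as above, which is why the boundedness of $\Arg g$ on half-planes (Proposition~\ref{prop:cbf:arg}) is the crucial input. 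Finally, for the degenerate case $f(\xi) = -c_1 i\xi$: if $c_1 = 0$ then $f \equiv 0$, already handled; if $c_1 > 0$ then $f(\xi) = c_1 e^{-i\pi/2}\xi$ maps $\C_\ra$ conformally onto $\C_\da$, and $g$ maps $\C_\da$ into $\overline{\C}_\da$ with $-\Arg\eta \le \Arg g(\eta) \le 0$ there (Proposition~\ref{prop:cbf:arg}), so $\Arg(g(f(\xi))/\xi) = \Arg g(f(\xi)) + \tfrac{\pi}{2}$, which lies in $[\tfrac\pi2 - \Arg f(\xi) - \text{stuff},\,\tfrac\pi2]$; tracking this gives membership in $\overline{\C}_\ra$. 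The case $c_1 < 0$ is the mirror image. This completes the verification of Theorem~\ref{th:rogers}\ref{it:rogers:d} for $g \circ f$, so $g(f(\xi)) \in \rogers$.
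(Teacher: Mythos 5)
Your proof relies on the same two ingredients as the paper's---the Nevanlinna--Pick characterisation Theorem~\ref{th:rogers}\ref{it:rogers:d} and the argument sandwich of Proposition~\ref{prop:cbf:arg}---and the conclusion is ultimately reachable along your route, so this is essentially the intended argument. However, your organisation introduces some incorrect intermediate assertions. You claim that ``$\Arg g(f(\xi)) \in [0, \tfrac{\pi}{2})$ when $f(\xi)$ is in the open right half-plane,'' which is false when $f(\xi)$ lies in the lower-right quadrant: there Proposition~\ref{prop:cbf:arg} applied on $\C_\da$ gives $\Arg f(\xi) \le \Arg g(f(\xi)) \le 0$, so $\Arg g(f(\xi))$ is non-positive. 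Your ``Combining'' sentence quietly asserts the correct upper bound $\Arg g(f(\xi)) \le \max(0, \Arg f(\xi))$ and you later supply the matching lower bound $\Arg g(f(\xi)) \ge \min(0, \Arg f(\xi))$, so the gap is patched, but the intermediate claim contradicts it. Your separate treatment of the degenerate case also contains a sign slip: $\Arg(g(f(\xi))/\xi) = \Arg g(f(\xi)) - \Arg\xi$, not $\Arg g(f(\xi)) + \tfrac{\pi}{2}$. The paper avoids all of this by splitting not on the location of $\xi$ but on the sign of $\im f(\xi)$: if $\im f(\xi) \ge 0$ then $0 \le \Arg g(f(\xi)) \le \Arg f(\xi)$ immediately sandwiches $\Arg(g(f(\xi))/\xi)$ between $-\Arg\xi \ge -\tfrac{\pi}{2}$ and $\Arg(f(\xi)/\xi) \le \tfrac{\pi}{2}$, and symmetrically when $\im f(\xi) \le 0$; this covers the degenerate $f(\xi) = -i c_1 \xi$ without any extra case, since $\im f(\xi)$ then has constant sign on $\C_\ra$.
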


\begin{proof}
With no loss of generality one may assume that both $g$ and $f$ are nonzero. For $\xi \in \C_\ra$ with $\im f(\xi) \ge 0$ one has $0 \le \Arg g(f(\xi)) \le \Arg f(\xi)$ (by Proposition~\ref{prop:cbf:arg}), and therefore
\formula{
 -\tfrac{\pi}{2} & \le \Arg (g(f(\xi)) / \xi) \le \Arg(f(\xi) / \xi) \le \tfrac{\pi}{2} .
}
Hence, $\Arg(g(f(\xi)) / \xi) \in [-\tfrac{\pi}{2}, \tfrac{\pi}{2}]$. By a similar argument $\Arg(g(f(\xi)) / \xi) \in [-\tfrac{\pi}{2}, \tfrac{\pi}{2}]$ when $\xi \in \C_\ra$ and $\im f(\xi) \le 0$. The result follows by Theorem~\ref{th:rogers}\ref{it:rogers:d}.
\end{proof}

\begin{proposition}
\label{prop:rogers:bounded:prop}
If $f$ is a bounded Rogers function and $c \ge f(\infty^-)$, then $c - f(1 / \xi)$ is a (bounded) Rogers function of $\xi$.
\end{proposition}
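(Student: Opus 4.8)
The plan is to combine the explicit representation of bounded Rogers functions from Proposition~\ref{prop:rogers:bounded} with the Nevanlinna--Pick-type criterion of Theorem~\ref{th:rogers}\ref{it:rogers:d}.

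By Proposition~\ref{prop:rogers:bounded} we may write $f(\xi) = c_0 + \tfrac{1}{\pi} \int_{\R \setminus \{0\}} \tfrac{\xi}{\xi + i s} \, \tfrac{\mu(\D s)}{|s|}$, where $c_0 \ge 0$ and $\mu(\D s)/|s|$ is a finite (nonnegative) measure on $\R \setminus \{0\}$; letting $\xi \nearrow \infty$ and using dominated convergence gives $f(\infty^-) = c_0 + \tfrac{1}{\pi} \int_{\R \setminus \{0\}} \tfrac{\mu(\D s)}{|s|}$. Substituting $1/\xi$ for $\xi$ and using the elementary identities $\tfrac{1/\xi}{1/\xi + i s} = \tfrac{1}{1 + i s \xi} = 1 - \tfrac{i s \xi}{1 + i s \xi} = 1 - \tfrac{\xi}{\xi - i/s}$, followed by the change of variable $t = -1/s$ (a homeomorphism of $\R \setminus \{0\}$ onto itself), I obtain
\formula{
 c - f(1/\xi) & = \bigl(c - f(\infty^-)\bigr) + \frac{1}{\pi} \int_{\R \setminus \{0\}} \frac{\xi}{\xi + i t} \, \tilde\nu(\D t) ,
}
where $\tilde\nu$ is the image of the finite measure $\mu(\D s)/|s|$ under $s \mapsto -1/s$, hence again a finite nonnegative measure.

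Now $c - f(\infty^-) \ge 0$ by hypothesis --- this is the only place the assumption $c \ge f(\infty^-)$ enters --- and for $\xi \in \C_\ra$ one has $\re(1/\xi) \ge 0$ and $\re\bigl(1/(\xi + i t)\bigr) = (\re \xi)/|\xi + i t|^2 \ge 0$ for every $t \in \R$. Dividing the displayed identity by $\xi$ and taking real parts therefore gives $\re\bigl((c - f(1/\xi))/\xi\bigr) \ge 0$ for all $\xi \in \C_\ra$; since $c - f(1/\xi)$ is holomorphic in $\C_\ra$, Theorem~\ref{th:rogers}\ref{it:rogers:d} shows it is a Rogers function. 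Finally, for $\xi \in (0, \infty)$ we have $|\xi/(\xi + i t)| \le 1$, so $|c - f(1/\xi)| \le (c - f(\infty^-)) + \tfrac{1}{\pi} \tilde\nu(\R \setminus \{0\}) < \infty$, which proves boundedness. (Equivalently, the displayed formula already exhibits $c - f(1/\xi)$ in the form~\eqref{eq:rogers:bounded} after writing $\tilde\nu(\D t) = \tilde\mu(\D t)/|t|$ with $\tilde\mu(\D t) = |t| \tilde\nu(\D t)$, which is Radon and satisfies $\int_{\R\setminus\{0\}} |t|^{-3}\min(1,t^2)\,\tilde\mu(\D t) \le \tilde\nu(\R\setminus\{0\}) < \infty$.)

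I do not expect a genuine obstacle here: the argument is essentially a one-line substitution once Proposition~\ref{prop:rogers:bounded} is available. The only points needing minor care are the algebraic manipulation turning $\tfrac{1}{1 + i s \xi}$ into a term of Stieltjes type and the verification that the image measure $\tilde\nu$ inherits finiteness (equivalently, that $\tilde\mu$ inherits the integrability condition of Theorem~\ref{th:rogers}\ref{it:rogers:b}), both immediate.
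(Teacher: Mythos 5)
Your proof is correct and takes essentially the same route as the paper's: both start from the explicit representation in Proposition~\ref{prop:rogers:bounded}, substitute $1/\xi$, rewrite $\tfrac{1}{1+is\xi}$ as a constant minus a Stieltjes-type term, and perform the change of variable $s=-1/t$. The only cosmetic difference is the closing step, where you appeal to the Nevanlinna--Pick criterion of Theorem~\ref{th:rogers}\ref{it:rogers:d} (and note as an alternative the direct identification with~\eqref{eq:rogers:bounded}), while the paper simply invokes Proposition~\ref{prop:rogers:bounded} once more.
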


\begin{proof}
By Proposition~\ref{prop:rogers:bounded} and dominated convergence, with the notation of~\eqref{eq:rogers:bounded},
\formula{
 c & \ge c_0 + \frac{1}{\pi} \int_{\R \setminus \{0\}} \frac{\mu(\D s)}{|s|} \, ,
}
and
\formula{
 c - f(1 / \xi) & = c - c_0 - \frac{1}{\pi} \int_{-\R \setminus \{0\}} \frac{1}{1 + i \xi s} \, \frac{\mu(\D s)}{|s|} \displaybreak[0] \\
 & = \expr{c - c_0 - \frac{1}{\pi} \int_{\R \setminus \{0\}} \frac{\mu(\D s)}{|s|}} + \frac{1}{\pi} \int_{\R \setminus \{0\}} \frac{i \xi s}{1 + i \xi s} \, \frac{\mu(\D s)}{|s|} \, .
}
Substituting $s = -1 / t$ gives
\formula{
 c - f(1 / \xi) & = \expr{c - c_0 - \frac{1}{\pi} \int_{\R \setminus \{0\}} \frac{\mu(\D s)}{|s|}} + \frac{1}{\pi} \int_{\R \setminus \{0\}} \frac{\xi}{\xi + i t} \, \frac{\tilde{\mu}(\D t)}{|t|} \, ,
}
where $\int_{[a, b]} |t|^{-1} \tilde{\mu}(\D t) = \int_{[-1/a, -1/b]} |s|^{-1} \mu(\D s)$ if $0 < a < b$ or $a < b < 0$. The result follows by Proposition~\ref{prop:rogers:bounded}.
\end{proof}

\subsection{Estimates of Rogers functions}

Application of dominated convergence for integrals involving Rogers functions often requires estimates contained in the results stated below. The first one is typically applied for $r = 1$ or $r = |\xi|$.

\begin{proposition}
\label{prop:rogers:bound}
If $f \in \rogers$, $r > 0$ and $\xi \in \C_\ra$, then
\formula[eq:rogers:bound]{
 \frac{1}{\sqrt{2}} \, \frac{|\xi|^2}{r^2 + |\xi|^2} \expr{\frac{\re \xi}{|\xi|}} |f(r)| \le |f(\xi)| & \le \sqrt{2} \, \frac{r^2 + |\xi|^2}{r^2} \expr{\frac{|\xi|}{\re \xi}} |f(r)| .
}
\end{proposition}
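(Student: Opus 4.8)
The plan is to reduce the two‑sided estimate~\eqref{eq:rogers:bound} to a single Schwarz‑lemma bound, combined with the transformation $f\mapsto\xi^{2}/f$ already available from Proposition~\ref{prop:rogers:prop}\ref{it:rogers:prop:b}. First I would clear away the trivial cases. If $f$ is identically zero, all three quantities in~\eqref{eq:rogers:bound} vanish. If $f$ is degenerate, $f(\xi)=-ic_{1}\xi$ (see Proposition~\ref{prop:rogers:trivial}), then $\abs{f(\xi)}/\abs{f(r)}=\abs{\xi}/r$ and both inequalities in~\eqref{eq:rogers:bound} reduce to $r\re\xi\le\sqrt{2}\,(r^{2}+\abs{\xi}^{2})$, which is immediate from $r\re\xi\le r\abs{\xi}\le\tfrac12(r^{2}+\abs{\xi}^{2})$. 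So the substance of the proof concerns a nonzero, nondegenerate $f$.

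For such $f$, Proposition~\ref{prop:rogers:trivial} gives that $h(\xi):=f(\xi)/\xi$ is holomorphic on $\C_\ra$ and maps $\C_\ra$ into $\C_\ra$. The key step is then to conjugate $h$ by Cayley transforms $\Phi_{w}(\zeta)=(\zeta-w)/(\zeta+w)$, each of which maps $\C_\ra$ conformally onto the open unit disc sending $w$ to $0$: the composition $\Phi_{h(r)}\circ h\circ\Phi_{r}^{-1}$ is a holomorphic self‑map of the disc fixing $0$, so by the Schwarz lemma it does not increase moduli, and evaluating at $\Phi_{r}(\xi)$ yields
\formula{
 \abs{\frac{h(\xi)-h(r)}{h(\xi)+h(r)}} & \le \delta := \abs{\frac{\xi-r}{\xi+r}} < 1 .
}
To convert this into the desired bound I would put $t=\abs{h(\xi)}/\abs{h(r)}$ and combine $\bigl|\,\abs{h(\xi)}-\abs{h(r)}\,\bigr|\le\abs{h(\xi)-h(r)}$ with $\abs{h(\xi)+h(r)}\le\abs{h(\xi)}+\abs{h(r)}$ to get $\abs{t-1}\le\delta(t+1)$, hence $t\le(1+\delta)/(1-\delta)$. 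Now $(1+\delta)^{2}\le2(1+\delta^{2})$, while the parallelogram identity gives $1+\delta^{2}=2(\abs{\xi}^{2}+r^{2})/\abs{\xi+r}^{2}$ and $1-\delta^{2}=4r\re\xi/\abs{\xi+r}^{2}$, so $t\le2(1+\delta^{2})/(1-\delta^{2})=(\abs{\xi}^{2}+r^{2})/(r\re\xi)$. Multiplying by $\abs{\xi}/r$ gives the upper bound (in fact with $1$ in place of $\sqrt{2}$):
\formula{
 \frac{\abs{f(\xi)}}{\abs{f(r)}} & = \frac{\abs{\xi}}{r}\, t \le \frac{(r^{2}+\abs{\xi}^{2})\,\abs{\xi}}{r^{2}\,\re\xi} \le \sqrt{2}\,\frac{r^{2}+\abs{\xi}^{2}}{r^{2}}\,\frac{\abs{\xi}}{\re\xi} .
}

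Finally, the lower bound would follow at once: the upper bound, which is now established for every $f\in\rogers$, applied to the Rogers function $\xi\mapsto\xi^{2}/f(\xi)$ and rearranged, gives precisely $\abs{f(\xi)}\ge\tfrac1{\sqrt2}\,\tfrac{\abs{\xi}^{2}}{r^{2}+\abs{\xi}^{2}}\,\tfrac{\re\xi}{\abs{\xi}}\,\abs{f(r)}$. The one point that needs care is that the Schwarz lemma requires $h=f/\xi$ to take values in the \emph{open} half‑plane $\C_\ra$, which is exactly why the degenerate case, where $\re(f(\xi)/\xi)$ vanishes identically, must be separated off at the outset; everything else is routine algebra. (An alternative route would be to insert the exponential representation from Theorem~\ref{th:rogers}\ref{it:rogers:c}, use $0\le\ph\le\pi$ to pass to the positive part of the kernel, and estimate the resulting rational integral, but that computation is heavier.)
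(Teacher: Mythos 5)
Your approach is correct in substance and genuinely different from the paper's. The paper works directly from the Stieltjes-type representation~\eqref{eq:rogers:bb} of Theorem~\ref{th:rogers}\ref{it:rogers:b}: it proves the elementary algebraic inequality $(x^2+(y+s)^2)(1+x^2+y^2)\ge(1+s^2)x^2$ to bound the integrand pointwise, and then identifies the resulting bound with $\re f(1)+|\im f(1)|\le\sqrt{2}\,|f(1)|$. Your proof replaces all of this with the Schwarz--Pick lemma applied to $h=f/\xi$, which is conceptually cleaner, and — as you note — even sharpens the constant in~\eqref{eq:rogers:bound} from $\sqrt{2}$ to $1$. The reduction of the lower bound to the upper one via $\xi^2/f(\xi)$ is the same in both.

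One detail in your write-up needs repair. For $w\in\C_\ra$ not real, the map $\Phi_w(\zeta)=(\zeta-w)/(\zeta+w)$ does \emph{not} take $\C_\ra$ to the unit disc (it sends the tilted half-plane $\{\re(\zeta\bar w)>0\}$ there), so $\Phi_{h(r)}\circ h\circ\Phi_r^{-1}$ is in general not a self-map of the disc and the displayed inequality $\abs{(h(\xi)-h(r))/(h(\xi)+h(r))}\le\delta$ is false; for instance $h(\zeta)=\zeta+i\beta$ with $\beta$ large already violates it. The Cayley transform you want is $\Phi_w(\zeta)=(\zeta-w)/(\zeta+\bar w)$, and the Schwarz--Pick inequality for a holomorphic self-map $h$ of $\C_\ra$ then reads
\formula{
 \abs{\frac{h(\xi)-h(r)}{h(\xi)+\overline{h(r)}}} & \le \abs{\frac{\xi-r}{\xi+r}} =: \delta .
}
With that correction your next step goes through unchanged: $\abs{\,\abs{h(\xi)}-\abs{h(r)}\,}\le\abs{h(\xi)-h(r)}\le\delta\,\abs{h(\xi)+\overline{h(r)}}\le\delta(\abs{h(\xi)}+\abs{h(r)})$, giving $\abs{t-1}\le\delta(t+1)$ and hence $t\le(1+\delta)/(1-\delta)$, and the rest of the computation and the reduction of the lower bound are correct.
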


\begin{proof}
Since $f(r \xi)$ is a Rogers function of $\xi$, with no loss of generality one may assume that $r = 1$. Let $s \in \R$ and $\xi = x + i y$, where $x > 0$ and $y \in \R$. By a simple calculation,
\formula{
 (x^2 + (y + s)^2)(1 + x^2 + y^2) - (1 + s^2) x^2 = (x^2 + y (y + s))^2 + (y + s)^2 \ge 0 ,
}
so that
\formula[eq:rogers:aux:est]{
 |\xi + i s|^2 & = x^2 + (y + s)^2 \ge \frac{(1 + s^2) x^2}{1 + |\xi|^2} \, .
}
Furthermore,
\formula{
 \abs{\frac{\xi}{\xi + i s} + \frac{i \xi s}{1 + s^2}} & = \frac{|\xi| |1 + i \xi s|}{|\xi + i s| (1 + s^2)} \le \frac{|\xi| \sqrt{1 + |\xi|^2}}{|\xi + i s| \sqrt{1 + s^2}} \, .
}
It follows that
\formula{
 \abs{\frac{\xi}{\xi + i s} + \frac{i \xi \sign s}{1 + |s|}} & \le \frac{|\xi| (1 + |\xi|^2)}{x (1 + s^2)} \, .
}
By Theorem~\ref{th:rogers}\ref{it:rogers:b}, with the notation of~\eqref{eq:rogers:bb} (for $r = 1$),
\formula[eq:rogers:bound:aux]{
 |f(\xi)| & \le \frac{|\xi| (1 + |\xi|^2)}{x} \expr{c_0 + |\tilde{c}_1| + c_2 + \frac{1}{\pi} \int_{\R \setminus \{0\}} \frac{1}{1 + s^2} \, \frac{\mu(\D s)}{|s|}} .
}
On the other hand,
\formula{
 \frac{1}{1 + i s} + \frac{i s}{1 + s^2} & = \frac{1}{1 + s^2} ,
}
so that
\formula{
 \re f(1) + |\im f(1)| = c_0 + |\tilde{c}_1| + c_2 + \frac{1}{\pi} \int_{\R \setminus \{0\}} \frac{1}{1 + s^2} \, \frac{\mu(\D s)}{|s|} \, .
}
Since $\re f(1) + |\im f(1)| \le \sqrt{2} |f(1)|$, the upper bound for $|f(\xi)|$ follows. The lower bound is obtained from the upper bound for the Rogers function $\xi^2 / f(\xi)$ (if $f$ is nonzero).
\end{proof}

\begin{proposition}
\label{prop:rogers:prime:b}
If $f \in \rogers$ and $r > 0$, then
\formula[eq:rogers:prime:b]{
 f'(\xi) & = -i \tilde{c}_1 + 2 c_2 \xi + \frac{1}{\pi} \int_{\R \setminus \{0\}} \expr{\frac{i s}{(\xi + i s)^2} + \frac{i s}{r^2 + s^2}} \frac{\mu(\D s)}{|s|}
}
for all $\xi \in \C_\ra$, with $\tilde{c}_1$, $c_2$ and $\mu$ as in Theorem~\ref{th:rogers}\ref{it:rogers:b}.
\end{proposition}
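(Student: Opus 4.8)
The plan is to differentiate the representation~\eqref{eq:rogers:bb} of Theorem~\ref{th:rogers}\ref{it:rogers:b} term by term in $\xi$. The polynomial part $c_0 - i \tilde{c}_1 \xi + c_2 \xi^2$ contributes $-i \tilde{c}_1 + 2 c_2 \xi$ immediately, so the whole point is to differentiate the Stieltjes integral under the integral sign, noting that for each fixed $s \in \R \setminus \{0\}$ the kernel $\tfrac{\xi}{\xi + is} + \tfrac{i \xi s}{r^2 + s^2}$ is holomorphic in $\xi \in \C_\ra$, with $\partial_\xi (\tfrac{\xi}{\xi + is} + \tfrac{i \xi s}{r^2 + s^2}) = \tfrac{is}{(\xi + is)^2} + \tfrac{is}{r^2 + s^2}$.

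First I would fix a compact set $K \subset \C_\ra$ and, for $\xi, \xi + h \in K$, look at the difference quotient of the kernel; a short computation collapses it to $\bigl(\tfrac{is}{(\xi + h + is)(\xi + is)} + \tfrac{is}{r^2 + s^2}\bigr) \tfrac{1}{|s|}$, which converges pointwise in $s$ to $\bigl(\tfrac{is}{(\xi + is)^2} + \tfrac{is}{r^2 + s^2}\bigr) \tfrac{1}{|s|}$ as $h \to 0$. To pass to the limit under $\int_{\R \setminus \{0\}} \cdot\, \mu(\D s)$ by dominated convergence I need a $\mu$-integrable majorant, uniform for $h$ small. The key algebraic point is that combining the two summands over a common denominator exhibits the cancellation of their $O(1/s)$ leading behaviour at $s \to \infty$: indeed $\tfrac{is}{(\xi + is)^2} + \tfrac{is}{r^2 + s^2} = \tfrac{is(r^2 + \xi^2) - 2\xi s^2}{(\xi + is)^2 (r^2 + s^2)}$, and the same common-denominator trick applies with $(\xi + is)^2$ replaced by $(\xi + h + is)(\xi + is)$. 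Using the elementary lower bound $|\xi + is|^2 \ge c_K(1 + s^2)$ valid for $\xi \in K$ with a constant $c_K > 0$ (cf.~\eqref{eq:rogers:aux:est}), this yields
\[
 \abs{\expr{\frac{is}{(\xi + h + is)(\xi + is)} + \frac{is}{r^2 + s^2}} \frac{1}{|s|}} \le C_K \, \frac{\min(1, s^2)}{|s|^3}
\]
for all $s \in \R \setminus \{0\}$ and all small $h$, the right-hand side being $\mu$-integrable exactly by the condition $\int_{\R \setminus \{0\}} |s|^{-3} \min(1, s^2) \mu(\D s) < \infty$ built into Theorem~\ref{th:rogers}\ref{it:rogers:b}.

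Dominated convergence then shows that the Stieltjes term is holomorphic in $\C_\ra$ with $\xi$-derivative obtained by differentiating under the integral; adding the derivative of the polynomial part gives~\eqref{eq:rogers:prime:b}, and the integral on its right-hand side converges absolutely by the same estimate. The main obstacle is precisely this majorant at $s \to \infty$: each of $\tfrac{is}{(\xi + is)^2}$ and $\tfrac{is}{r^2 + s^2}$, divided by $|s|$, is only $O(|s|^{-2})$, which need not be $\mu$-integrable, so one cannot estimate the two terms separately and must use their cancellation to reach the integrable rate $O(|s|^{-3})$; near $s = 0$ nothing is delicate, since $|s|^{-1} \mu(\D s)$ is already integrable there.
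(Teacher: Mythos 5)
Your proposal is correct and follows essentially the same route as the paper's proof: differentiate the Stieltjes representation under the integral sign, with dominated convergence justified by the cancellation between the two summands in the kernel (the difference quotient collapses to $\bigl(\tfrac{is}{(\xi+h+is)(\xi+is)} + \tfrac{is}{r^2+s^2}\bigr)\tfrac{1}{|s|}$, and bringing these over a common denominator produces the extra decay needed to match the integrability condition $\int |s|^{-3}\min(1,s^2)\,\mu(\D s) < \infty$). The paper carries this out with $r = 1$ and isolates the factor $\eps$ in the numerator explicitly; yours works directly for general $r$ with the same cancellation and an equivalent majorant.
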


\begin{proof}
Formula~\eqref{eq:rogers:prime:b} follows from Theorem~\ref{th:rogers}\ref{it:rogers:b} by dominated convergence. Indeed, let $\xi \in \C_\ra$. If $|\eps| \le \frac{1}{2} \re \xi$ and $s \in \R$, then
\formula{
 \abs{\frac{\xi + \eps}{\xi + \eps + i s} - \frac{\xi}{\xi + i s} + \frac{i \eps s}{1 + s^2}} & = \abs{\frac{i \eps s (1 + \xi^2 + 2 i \xi s + \eps s + i \eps s)}{(\xi + \eps + i s) (\xi + i s) (1 + s^2)}} \displaybreak[0] \\
 & \le c \, \frac{|s| (1 + |s|)}{(1 + |s|) (1 + |s|) (1 + s^2)} \le c |s|^{-2} \min(1, |s|^3)
}
for some $c$ (depending on $\xi$).
\end{proof}

\begin{proposition}
\label{prop:rogers:prime}
If $f \in \rogers$ and $\xi \in \C_\ra$, then
\formula[eq:rogers:prime]{
 \abs{\frac{\xi f'(\xi)}{f(\xi)}} & \le 4 (1 + \sqrt{2}) \expr{1 - \frac{|\im \xi|}{|\xi|}}^{-1} \expr{\frac{|\xi|}{\re \xi}} .
}
\end{proposition}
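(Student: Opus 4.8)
The plan is to read the estimate off the exponential representation of Theorem~\ref{th:rogers}\ref{it:rogers:c}. The key observation is that, although $\abs{f}$ grows without control on $\C_\ra$, the logarithmic derivative $\xi f'(\xi)/f(\xi)$ is governed entirely by the ``phase function'' $\ph:\R\to[0,\pi]$ appearing in~\eqref{eq:rogers:c}, hence by the uniform bound $\ph\le\pi$. I would first assume that $f$ is not identically zero (otherwise the statement is empty). Then the constant $c$ in~\eqref{eq:rogers:c} is positive, so $f$ is non-vanishing on $\C_\ra$ (cf.\ Proposition~\ref{prop:rogers:trivial}), and writing $G(\xi)=\frac{1}{\pi}\int_{-\infty}^\infty\expr{\frac{\xi}{\xi+is}-\frac{1}{1+\abs{s}}}\frac{\ph(s)}{\abs{s}}\,\D s$ one has $f=c\,e^{G}$ and $f'(\xi)/f(\xi)=G'(\xi)$.

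First I would differentiate $G$ under the integral sign. This is legitimate by dominated convergence: the integrand is $O(\abs{s})$ as $s\to0$ and $O(\abs{s}^{-2})$ as $s\to\pm\infty$, while its $\xi$-derivative $\frac{is}{(\xi+is)^2}\,\frac{\ph(s)}{\abs{s}}$ has modulus $\frac{\ph(s)}{\abs{\xi+is}^2}\le\frac{\pi}{\abs{\xi+is}^2}$, which is integrable in $s$ and locally bounded in $\xi\in\C_\ra$. Since $\frac{\D}{\D\xi}\frac{\xi}{\xi+is}=\frac{is}{(\xi+is)^2}$ and $s/\abs{s}=\sign s$, this yields
\[
 \frac{\xi f'(\xi)}{f(\xi)} = \frac{1}{\pi}\int_{-\infty}^\infty \frac{i\,\xi\,\sign s}{(\xi+is)^2}\,\ph(s)\,\D s .
\]
Then, taking absolute values, using $\ph(s)\le\pi$, and evaluating the elementary integral $\int_{-\infty}^\infty\abs{\xi+is}^{-2}\,\D s=\int_{-\infty}^\infty\bigl((\re\xi)^2+(\im\xi+s)^2\bigr)^{-1}\,\D s=\pi/\re\xi$, I obtain
\[
 \abs{\frac{\xi f'(\xi)}{f(\xi)}} \le \frac{\pi\,\abs{\xi}}{\re\xi} .
\]
Finally, since $\bigl(1-\abs{\im\xi}/\abs{\xi}\bigr)^{-1}\ge1$ on $\C_\ra$ and $\pi<4(1+\sqrt2)$, the right-hand side above is dominated by the right-hand side of~\eqref{eq:rogers:prime}, which therefore holds (in fact with the better constant $\pi$ and without the factor $\bigl(1-\abs{\im\xi}/\abs{\xi}\bigr)^{-1}$).

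I do not expect a genuine obstacle in this argument: the only non-formal point is the differentiation under the integral sign, which is handled by the explicit integrable majorant above. If one instead wishes to reproduce the precise shape of~\eqref{eq:rogers:prime}, using only the Stieltjes-type data of $f$, one can start from the formula for $f'$ in Proposition~\ref{prop:rogers:prime:b}, bound $\abs{f'(\xi)}$ by the same pointwise estimate on $\abs{\xi+is}^{-2}$ used in the proof of Proposition~\ref{prop:rogers:bound} (taken with $r=\abs{\xi}$), and then divide by the lower bound for $\abs{f(\xi)}$ furnished by Proposition~\ref{prop:rogers:bound}; this route is more computational but entirely elementary, and it is exactly where the factor $\bigl(1-\abs{\im\xi}/\abs{\xi}\bigr)^{-1}$ and the constant of order $4(1+\sqrt2)$ come from. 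Either way, the substantive content is the recognition that the bounded datum $\ph$ (equivalently $\Arg f$), rather than $\abs{f}$ itself, controls $\xi f'(\xi)/f(\xi)$.
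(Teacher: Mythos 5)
Your argument is correct, and it takes a genuinely different route from the paper. The paper starts from the Stieltjes-type formula for $f'$ (Proposition~\ref{prop:rogers:prime:b}), multiplies by $\xi$, controls $|\xi f'(\xi)|$ via the lower bound $|\xi + i s|^2 \ge (1 - |\im\xi|/|\xi|)(|\xi|^2 + s^2)$, recognises the resulting expression as $\le(\sqrt2+2)(1-|\im\xi|/|\xi|)^{-1}(\re f(|\xi|)+|\im f(|\xi|)|)$, and then divides by the lower bound on $|f(\xi)|$ furnished by Proposition~\ref{prop:rogers:bound} with $r=|\xi|$; the factor $(1-|\im\xi|/|\xi|)^{-1}$ and the constant $4(1+\sqrt2)$ come precisely from combining these two estimates. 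You instead read the bound directly off the exponential representation of Theorem~\ref{th:rogers}\ref{it:rogers:c}: since $\xi f'/f = \xi G'(\xi)$ with $G$ the exponent there, the uniform bound $\ph\le\pi$ plus the exact value $\int_{-\infty}^\infty|\xi+is|^{-2}\,\D s = \pi/\re\xi$ give $|\xi f'(\xi)/f(\xi)|\le\pi\,|\xi|/\re\xi$ in one line. Your justification of the differentiation under the integral sign is sound (the derivative of the integrand is bounded by $\pi/|\xi+is|^2$, which is dominated on each compact subset of $\C_\ra$ by a fixed integrable function of $s$). What your approach buys is both economy and a strictly better estimate — your bound has a smaller constant and entirely removes the factor $(1-|\im\xi|/|\xi|)^{-1}$, so it is better by an unbounded margin as $\xi$ approaches the imaginary axis; this reflects the fact, which you single out, that the logarithmic derivative of a Rogers function is controlled by the bounded datum $\Arg f$ rather than by two-sided bounds on $|f|$ itself. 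The paper's route is more elementary in that it never differentiates an integral representation, and it produces as a by-product the useful formula~\eqref{eq:rogers:prime:b} for $f'$, but for the purposes of Proposition~\ref{prop:rogers:prime} your proof is cleaner and quantitatively sharper.
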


\begin{proof}
By Proposition~\ref{prop:rogers:prime:b}, with the notation of~\eqref{eq:rogers:prime:b} (for $r = |\xi|$, see~Theorem~\ref{th:rogers}\ref{it:rogers:b}),
\formula{
 \xi f'(\xi) & = -i \tilde{c}_1 \xi + 2 c_2 \xi^2 + \frac{1}{\pi} \int_{\R \setminus \{0\}} \expr{\frac{i \xi s}{(\xi + i s)^2} + \frac{i \xi s}{|\xi|^2 + s^2}} \frac{\mu(\D s)}{|s|} \displaybreak[0] \\
 & = -i \tilde{c}_1 \xi + 2 c_2 \xi^2 + \frac{1}{\pi} \int_{\R \setminus \{0\}} \frac{i \xi s (\xi^2 + 2 i \xi s + |\xi|^2)}{(1 + s^2) (\xi + i s)^2} \, \frac{\mu(\D s)}{|s|} \, .
}
Let $\xi = x + i y$ with $x > 0$, $y \in \R$. By a simple calculation,
\formula{
 (x^2 + (y + s)^2) |\xi| - (|\xi| - |y|) (x^2 + y^2 + s^2) = |y| (|\xi| + s \sign y)^2 \ge 0 ,
}
so that
\formula{
 |\xi + i s|^2 & = x^2 + (y + s)^2 \ge \expr{1 - \frac{|y|}{|\xi|}} (|\xi|^2 + s^2) .
}
Together with the inequality $|s| (|\xi| + |s|) \le \tfrac{1}{2} (1 + \sqrt{2}) \, (|\xi|^2 + s^2)$, this yields
\formula{
 |\xi f'(\xi)| & \le |\tilde{c}_1| |\xi| + 2 c_2 |\xi|^2 + \frac{1}{\pi} \expr{1 - \frac{|y|}{|\xi|}}^{-1} \int_{\R \setminus \{0\}} \frac{2 |\xi|^2 |s| (|\xi| + |s|)}{(|\xi|^2 + s^2)^2} \, \frac{\mu(\D s)}{|s|} \displaybreak[0] \\
 & \le (1 + \sqrt{2}) \expr{1 - \frac{|y|}{|\xi|}}^{-1} \expr{c_0 + |\tilde{c}_1| |\xi| + c_2 |\xi|^2 + \frac{1}{\pi} \int_{\R \setminus \{0\}} \frac{|\xi|^2}{|\xi|^2 + s^2} \, \frac{\mu(\D s)}{|s|}} .
}
The parenthesised expression is equal to $\re f(|\xi|) + |\im f(|\xi|)|$, which is not greater than $\sqrt{2} |f(|\xi|)|$. Therefore,
\formula{
 |\xi f'(\xi)| & \le (\sqrt{2} + 2) \expr{1 - \frac{|y|}{|\xi|}}^{-1} |f(|\xi|)| .
}
The result follows by Proposition~\ref{prop:rogers:bound} (with $r = |\xi|$).
\end{proof}

\begin{proposition}
\label{prop:rogers:imag}
If $f \in \rogers$, then
\formula{
 \int_0^\infty \frac{|\im f(\xi)|}{\xi (1 + \xi^2)} \, \D \xi < \infty .
}
\end{proposition}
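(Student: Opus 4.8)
The plan is to compute the imaginary part of $f$ on the positive half-line from the Stieltjes-type representation~\eqref{eq:rogers:b} of Theorem~\ref{th:rogers}\ref{it:rogers:b} and then estimate. For real $\xi>0$ one has $\im\bigl(\xi/(\xi+is)\bigr) = -\xi s/(\xi^2+s^2)$ and $\im\bigl(i\xi\sign s/(1+|s|)\bigr) = \xi\sign s/(1+|s|)$, so~\eqref{eq:rogers:b} yields
\[
 \im f(\xi) = -c_1 \xi + \frac{\xi}{\pi} \int_{\cop} (\sign s) \expr{\frac{1}{1+|s|} - \frac{|s|}{\xi^2+s^2}} \frac{\mu(\D s)}{|s|} .
\]
First I would bound $|\im f(\xi)|$ by the triangle inequality, divide by $\xi(1+\xi^2)$, integrate in $\xi$ over $(0,\infty)$, and swap the two integrations using Tonelli's theorem (the integrands are nonnegative). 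The contribution of $-c_1\xi$ is $|c_1|\int_0^\infty(1+\xi^2)^{-1}\D\xi = |c_1|\pi/2 < \infty$, so it remains to show that $\int_{\cop} h(|s|)\,|s|^{-1}\mu(\D s) < \infty$, where $h(t) = \int_0^\infty (1+\xi^2)^{-1}\,\bigl|(1+t)^{-1} - t(\xi^2+t^2)^{-1}\bigr|\,\D\xi$.

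The key step is the algebraic identity
\[
 \frac{1}{1+t} - \frac{t}{\xi^2+t^2} = \frac{\xi^2-t}{(1+t)(\xi^2+t^2)} ,
\]
whose numerator $\xi^2-t$ is much smaller than a separate estimate of the two terms on the left would suggest. From $|\xi^2-t| \le \xi^2 + t$ and the elementary evaluations $\int_0^\infty \frac{\xi^2}{(1+\xi^2)(\xi^2+t^2)}\D\xi = \int_0^\infty \frac{t}{(1+\xi^2)(\xi^2+t^2)}\D\xi = \tfrac{\pi}{2(1+t)}$ (partial fractions together with $\int_0^\infty(\xi^2+a^2)^{-1}\D\xi = \pi/(2a)$) one obtains $h(t) \le \pi(1+t)^{-2}$, and therefore $h(|s|) \le \pi|s|^{-2}\min(1,s^2)$. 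Hence $\tfrac1\pi\int_{\cop} h(|s|)\,|s|^{-1}\mu(\D s) \le \int_{\cop}|s|^{-3}\min(1,s^2)\,\mu(\D s)$, which is finite by the integrability condition in Theorem~\ref{th:rogers}\ref{it:rogers:b}; this gives the claim.

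The only delicate point is getting the right decay of $h(t)$ as $t\to\infty$. A term-by-term estimate (bounding $(1+|s|)^{-1}$ and $|s|(\xi^2+s^2)^{-1}$ separately) gives only $h(t) = O((1+t)^{-1})$, which is not enough: against the Stieltjes measure $|s|^{-1}\mu(\D s)$ it is not controlled by the integrability condition as $|s|\to\infty$ (for instance when $\mu(\D s) = |s|\,\D s$ on $\{|s|>1\}$). Combining the two fractions first, so that the cancellation in the numerator becomes visible, is precisely what supplies the extra factor $(1+t)^{-1}$ that makes the estimate match the integrability of $\mu$. The rest is routine — the removable value $t=1$ in the partial-fraction computations is handled by continuity in $t$, and all interchanges of integrals are legitimate by nonnegativity.
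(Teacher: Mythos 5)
Your proof is correct and follows essentially the same route as the paper: compute $\im f$ from the Stieltjes representation, combine the two fractions to expose the cancellation in the numerator, divide by $\xi(1+\xi^2)$, and apply Tonelli. The only cosmetic difference is that you work with~\eqref{eq:rogers:b} (compensator $i\xi\sign s/(1+|s|)$) while the paper uses~\eqref{eq:rogers:bb} with $r=1$ (compensator $i\xi s/(1+s^2)$); the latter makes the $|s|^{-1}$ factor cancel against $s$ immediately and yields the cleaner form $|1-\xi^2|/((1+s^2)(\xi^2+s^2))$, so the paper reaches the same $|s|^{-1}(1+s^2)^{-1}$-type bound with one fewer partial-fraction evaluation.
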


\begin{proof}
By Theorem~\ref{th:rogers}\ref{it:rogers:b}, with the notation of~\eqref{eq:rogers:bb} (for $r = 1$), for $\xi > 0$,
\formula{
 \frac{|\im f(\xi)|}{\xi} & \le \tilde{c}_1 + \frac{1}{\pi} \int_{\R \setminus \{0\}} \abs{\frac{1}{\xi^2 + s^2} - \frac{1}{1 + s^2}} \mu(\D s) \displaybreak[0] \\
 & = \tilde{c}_1 + \frac{1}{\pi} \int_{\R \setminus \{0\}} \frac{|1 - \xi^2|}{(1 + s^2)(\xi^2 + s^2)} \, \mu(\D s) .
}
Since $|1 - \xi^2| / (1 + \xi^2) \le 1$ and $\int_0^\infty |s| / (\xi^2 + s^2) \D \xi = \tfrac{\pi}{2}$, by Fubini,
\formula{
 \int_0^\infty \frac{|\im f(\xi)|}{\xi (1 + \xi^2)} \, \D \xi & \le \frac{\tilde{c}_1 \pi}{2} + \frac{1}{2} \int_{\R \setminus \{0\}} \frac{1}{|s| (1 + s^2)} \, \mu(\D s) < \infty ,
}
as desired.
\end{proof}

%
%

\section{Real values of Rogers functions}
\label{sec:real}

Description of the set of points at which a Rogers function takes real values play a crucial role in our development. The main result in this direction is Theorem~\ref{th:rogers:real}, the proof of which relies upon the generalisation of a well-known property of complete Bernstein functions, given in Lemma~\ref{lem:rogers:quot}. A modification of the proof of Lemma~\ref{lem:rogers:quot} yields a new property of complete Bernstein functions (Lemma~\ref{lem:cbf:quot}). Some examples are provided at the end of the section.

\subsection{Difference quotients}

Recall that for any $\zeta \in (0, \infty)$ and $f \in \cbf$, the difference quotient $(\xi - \zeta) / (f(\xi) - f(\zeta))$ (extended continuously at $\xi = \zeta$) is a complete Bernstein function of $\xi$.

\begin{lemma}
\label{lem:rogers:quot}
If $f \in \rogers$ and $\zeta \in \C_\ra$, then
\formula[eq:rogers:quot:asym]{
 g(\xi) & = \frac{\xi^2}{(\xi - \zeta) (\xi + \bar{\zeta})} \expr{f(\xi) - \re f(\zeta) - \frac{i \xi + \im \zeta}{\re \zeta} \, \im f(\zeta)} ,
}
defined for $\xi \in \C_\ra \setminus \{\zeta\}$ and extended continuously at $\xi = \zeta$, is a Rogers function. In particular, if $f$ is a nonconstant Rogers function, $\zeta \in \C_\ra$ and $f(\zeta) \in (0, \infty)$, then
\formula[eq:rogers:quot]{
 f_{[\zeta]}(\xi) & = \frac{(\xi - \zeta) (\xi + \bar{\zeta})}{f(\xi) - f(\zeta)} \, , && \xi \in \C_\ra \setminus \{\zeta\} ,
}
extended continuously at $\xi = \zeta$ so that $f_{[\zeta]}(\zeta) = (2 \re \zeta) / f'(\zeta)$, defines a Rogers function.
\end{lemma}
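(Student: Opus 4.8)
The plan is to verify condition~\ref{it:rogers:d} of Theorem~\ref{th:rogers} for the function $g$, i.e.\ to show that $g(\xi)/\xi \in \overline{\C}_\ra$ for all $\xi \in \C_\ra$; the second assertion about $f_{[\zeta]}$ will then follow by specialising $\zeta$ to a point with $f(\zeta) \in (0,\infty)$, in which case $\im f(\zeta) = 0$ and $\re f(\zeta) = f(\zeta)$, so that $g(\xi) = \xi^2/f_{[\zeta]}(\xi)$, and by Proposition~\ref{prop:rogers:prop}\ref{it:rogers:prop:b} the map $\xi^2/g(\xi) = f_{[\zeta]}(\xi)$ is again a Rogers function (provided $g$ is nonzero, which holds since $f$ is nonconstant). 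The value $f_{[\zeta]}(\zeta) = (2\re\zeta)/f'(\zeta)$ is just the evaluation of the continuous extension, using that $(\xi-\zeta)(\xi+\bar\zeta)$ has a simple zero at $\zeta$ with derivative $\zeta + \bar\zeta = 2\re\zeta$.

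The key computation is to understand the real part of $g(\xi)/\xi$ on the boundary and to exploit harmonicity. Write $h(\xi) = f(\xi) - \re f(\zeta) - \tfrac{i\xi + \im\zeta}{\re\zeta}\im f(\zeta)$, so that $g(\xi) = \frac{\xi^2}{(\xi-\zeta)(\xi+\bar\zeta)} h(\xi)$ and hence
\formula{
 \frac{g(\xi)}{\xi} & = \frac{\xi}{(\xi-\zeta)(\xi+\bar\zeta)} \, \frac{h(\xi)}{1} = \frac{1}{(1 - \zeta/\xi)(1 + \bar\zeta/\xi)} \cdot \frac{h(\xi)}{\xi}\,,
}
so that $\re(g(\xi)/\xi)$ is a harmonic function of $\xi \in \C_\ra$ (it is the real part of a holomorphic function, since $g$ is holomorphic on $\C_\ra$ once we check the singularities at $\zeta$ and $-\bar\zeta$ cancel — but $-\bar\zeta \notin \C_\ra$, and at $\zeta$ the numerator $h(\zeta) = f(\zeta) - \re f(\zeta) - \frac{i\zeta+\im\zeta}{\re\zeta}\im f(\zeta) = i\im f(\zeta) - \frac{i\re\zeta + i\im\zeta \cdot 0 \cdots}{}$; a short check shows $h(\zeta) = 0$, so $g$ extends holomorphically across $\zeta$). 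By the minimum principle for harmonic functions it suffices to show that the boundary limits of $\re(g(\xi)/\xi)$ as $\xi \to is$, $s \in \R$, are nonnegative, together with a growth bound near $\infty$ and near $0$ ruling out a negative minimum escaping to the boundary of $\C_\ra$ at those points. On the imaginary axis $\xi = is$ we have $|\xi - \zeta|^2 = |\xi + \bar\zeta|$ pointwise... more precisely $(\xi - \zeta)(\xi + \bar\zeta)\big|_{\xi = is} = (is - \zeta)(is + \bar\zeta)$, whose modulus is $|is-\zeta|^2$ since $\overline{is+\bar\zeta} = -is + \zeta = -(is - \zeta)$; thus $\frac{\xi^2}{(\xi-\zeta)(\xi+\bar\zeta)}\big|_{\xi=is} = \frac{-s^2}{(is-\zeta)(is+\bar\zeta)}$ has argument making $\frac{\xi}{(\xi-\zeta)(\xi+\bar\zeta)}\big|_{\xi=is}$ purely imaginary divided by... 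The cleanest route: observe $\re\bigl(\tfrac{is}{(is-\zeta)(is+\bar\zeta)}\bigr) = 0$ because the denominator has modulus $|is-\zeta|^2 \in \R$ after pairing conjugates, so on the boundary $\re(g(\xi)/\xi) = \re\bigl(\tfrac{\xi}{(\xi-\zeta)(\xi+\bar\zeta)}\bigr)\re h(\xi) - \im\bigl(\cdots\bigr)\im h(\xi)$ reduces to a multiple of $\im h(is)$, and $\im h(is) = \im f(is) - \tfrac{s}{\re\zeta}\im f(\zeta)$ — wait, this needs the boundary values of $f$. Here is where I would instead argue directly: since $f \in \rogers$, $\re(f(\xi)/\xi) \ge 0$, and $h(\xi)$ differs from $f(\xi)$ by the linear function $\re f(\zeta) + \tfrac{i\xi+\im\zeta}{\re\zeta}\im f(\zeta)$, which was chosen precisely so that $h(\zeta) = 0$ and $\im(h(\xi)/\xi)$ agrees with $\im(f(\xi)/\xi)$ minus the harmonic function $\tfrac{\im f(\zeta)}{\re\zeta}$, a constant. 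The upshot is that $h(\xi)/\xi$ is, up to the constant imaginary shift, again of the form "Rogers function over $\xi$", so $\re(h(\xi)/\xi) = \re(f(\xi)/\xi) \ge 0$.

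Granting $\re(h(\xi)/\xi) \ge 0$ on $\C_\ra$ with $h(\zeta) = 0$, the final step is the algebraic identity
\formula{
 \re\frac{g(\xi)}{\xi} & = \re\left( \frac{\xi}{(\xi-\zeta)(\xi+\bar\zeta)} \cdot \frac{h(\xi)}{1} \right)
 = \frac{|\xi|^2}{|\xi - \zeta|^2\,|\xi+\bar\zeta|^2}\cdot(\text{something})\cdot\re\frac{h(\xi)}{\xi}\,,
}
where I expect the geometric factor $\frac{\xi\,\bar\xi}{(\xi-\zeta)(\xi+\bar\zeta)}$ to have nonnegative real part — indeed $\frac{\bar\xi\,\xi}{(\xi - \zeta)(\xi+\bar\zeta)}$ can be analysed by noting $(\xi-\zeta)(\xi+\bar\zeta) = \xi^2 + \xi(\bar\zeta - \zeta) - \zeta\bar\zeta = \xi^2 - 2i\xi\im\zeta - |\zeta|^2$, and after multiplying numerator and denominator by the conjugate one checks that the real part of $\bar\xi\,\xi\,\overline{(\xi^2 - 2i\xi\im\zeta - |\zeta|^2)}$ is a sum of squares; this is exactly the computation appearing in the proof of Proposition~\ref{prop:rogers:bound} (the inequality $(x^2+(y+s)^2)(\cdots) - \cdots \ge 0$ type identity), which I would reuse. \textbf{The main obstacle} I anticipate is precisely this last sign computation: showing that multiplying the nonnegative harmonic function $\re(h(\xi)/\xi)$ by the "Möbius-like" weight coming from $\frac{\xi}{(\xi-\zeta)(\xi+\bar\zeta)}$ preserves nonnegativity of the real part. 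The slick way around it is not to compute real parts at all, but to factor: $\frac{\xi^2}{(\xi-\zeta)(\xi+\bar\zeta)}$ is itself (the reciprocal of) a Rogers function — indeed $(\xi-\zeta)(\xi+\bar\zeta)/\xi$... hmm, one checks $\frac{(\xi-\zeta)(\xi+\bar\zeta)}{\xi}$ has nonnegative real part divided by $\xi$? — and then $g(\xi) = \frac{\xi^2}{(\xi-\zeta)(\xi+\bar\zeta)}\,h(\xi)$ expresses $g$ as a product whose "argument over $\xi$" stays in $[-\tfrac\pi2,\tfrac\pi2]$ by adding the two constituent arguments, each in $[-\tfrac\pi2,\tfrac\pi2]$ with opposite... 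This is the argument I would ultimately write, modeled on the multiplicativity arguments (Proposition~\ref{prop:rogers:prop}\ref{it:rogers:prop:d}, Proposition~\ref{prop:rogers:conv}) already in the paper: reduce everything to tracking $\Arg(\,\cdot\,/\xi) \in [-\tfrac\pi2,\tfrac\pi2]$ and show the two factors have arguments that add up within the allowed range.
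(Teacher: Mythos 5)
Both routes you float run into the same essential obstruction: neither factor in the decomposition $g(\xi)/\xi = w(\xi)\,h(\xi)$, with $w(\xi) = \xi/((\xi-\zeta)(\xi+\bar\zeta))$ and $h(\xi) = f(\xi) - \re f(\zeta) - \tfrac{i\xi + \im\zeta}{\re\zeta}\,\im f(\zeta)$, stays in $\overline{\C}_\ra$, so an argument-adding strategy in the style of Propositions~\ref{prop:rogers:prop} and~\ref{prop:rogers:conv} cannot be applied. Concretely, your claim $\re(h(\xi)/\xi) = \re(f(\xi)/\xi)$ is false:
\formula{
 \frac{h(\xi)}{\xi} - \frac{f(\xi)}{\xi} & = -\frac{1}{\xi}\expr{\re f(\zeta) + \frac{\im\zeta\,\im f(\zeta)}{\re\zeta}} - \frac{i\,\im f(\zeta)}{\re\zeta} \, ,
}
and while the second term is a purely imaginary constant, the first has real part proportional to $\re\xi/|\xi|^2$ with a generically nonzero coefficient, so $h$ is not a Rogers function. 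The weight $w$ fails as well: since $(\xi-\zeta)(\xi+\bar\zeta) = \xi^2 - 2i\xi\im\zeta - |\zeta|^2$ has negative real part at real $\xi \in (0, |\zeta|)$, one has $\re w(\xi) < 0$ there. Both factors leave $\overline{\C}_\ra$; the lemma is true only because a precise algebraic cancellation brings their product back, and that cancellation is invisible to argument-counting.

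Your boundary computation, on the other hand, is correct (the weight is purely imaginary on $i\R$, $\im h(is) = \im f(is)$, and $\re(g(is)/is) = s\,\im f(is)/|is-\zeta|^2 \ge 0$), but nonnegative boundary values do not give nonnegativity of a harmonic function on $\C_\ra$ --- consider $u(\xi) = -\re\xi$. One would still need to control the linear growth coefficient at infinity and to justify Fatou-type passage to a.e.\ boundary limits, none of which your sketch addresses. The paper avoids all of this by staying at the level of the Stieltjes representation of Theorem~\ref{th:rogers}\ref{it:rogers:b}, rewritten so that the correction term carries the weight $s/|\zeta+is|^2$ (formula~\eqref{eq:rogers:b:alt}); a direct calculation then shows that subtracting the affine part of $f(\zeta)$ makes the factor $(\xi-\zeta)(\xi+\bar\zeta)$ cancel exactly (formula~\eqref{eq:rogers:quot:aux1}), after which the remaining expression is manifestly of the form~\eqref{eq:rogers:b}. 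That explicit identity is what makes the lemma work; a softer argument would have to reconstruct it in disguise.
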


\begin{proof}
By Theorem~\ref{th:rogers}\ref{it:rogers:b} and a direct calculation, with the notation of~\eqref{eq:rogers:b},
\formula[eq:rogers:b:alt]{
 f(\xi) & = c_0 + i c \xi + c_2 (\xi^2 - 2 i \xi \im \zeta) + \frac{1}{\pi} \int_{\R \setminus \{0\}} \expr{\frac{\xi}{\xi + i s} + i \xi \, \frac{s}{|\zeta + i s|^2}} \frac{\mu(\D s)}{|s|}
}
for some $c \in \R$. Observe that
\formula[eq:rogers:b:lambda]{
 f(\zeta) & = c_0 + i c \zeta + c_2 (\zeta^2 - 2 i \zeta \im \zeta) + \frac{1}{\pi} \int_{\R \setminus \{0\}} \frac{\zeta (\bar{\zeta} - i s) + i \zeta s}{|\zeta + i s|^2} \, \frac{\mu(\D s)}{|s|} \\
 & = c_0 + i c \zeta + c_2 |\zeta|^2 + \frac{1}{\pi} \int_{\R \setminus \{0\}} \frac{|\zeta|^2}{|\zeta + i s|^2} \frac{\mu(\D s)}{|s|} .
}
Hence, $\im f(\zeta) = c \re \zeta$. By~\eqref{eq:rogers:b:alt} and~\eqref{eq:rogers:b:lambda} and a short calculation,
\formula{
 f(\xi) - \re f(\zeta) - \frac{i \xi + \im \zeta}{\re \zeta} \, \im f(\zeta) & = c_2 (\xi^2 - 2 i \xi \im \zeta - |\zeta|^2) \\
 & \hspace*{-3em} + \frac{1}{\pi} \int_{\R \setminus \{0\}} \expr{\frac{\xi}{\xi + i s} + i \xi \, \frac{s}{|\zeta + i s|^2} - \frac{|\zeta|^2}{|\zeta + i s|^2}} \frac{\mu(\D s)}{|s|} \, .
}
By another simple calculation,
\formula{
 \frac{\xi}{\xi + i s} + i \xi \, \frac{s}{|\zeta + i s|^2} - \frac{|\zeta|^2}{|\zeta + i s|^2} & = \frac{\xi |\zeta + i s|^2 + (i \xi s - |\zeta|^2) (\xi + i s)}{(\xi + i s) |\zeta + i s|^2} \displaybreak[0] \displaybreak[0] \\
 & = \frac{i s (\xi^2 - 2 i \xi \re \zeta - |\zeta|^2)}{(\xi + i s) |\zeta + i s|^2} \, ,
}
and
\formula{
 (\xi^2 - 2 i \xi \im \zeta - |\zeta|^2) & = (\xi - \zeta) (\xi + \bar{\zeta}) .
}
It follows that
\formula[eq:rogers:quot:aux1]{
 \hspace*{3em} & \hspace*{-3em} f(\xi) - \re f(\zeta) - \frac{i \xi + \im \zeta}{\re \zeta} \, \im f(\zeta) \\
 & = (\xi - \zeta) (\xi + \bar{\zeta}) \expr{c_2 + \frac{1}{\pi} \int_{\R \setminus \{0\}} \frac{i s}{\xi + i s} \, \frac{\mu(\D s)}{|s| |\zeta + i s|^2}} .
}
Therefore,
\formula[eq:rogers:quot:aux2]{
 g(\xi) & = c_2 \xi^2 + \frac{1}{\pi} \int_{\R \setminus \{0\}} \frac{i s \xi^2}{\xi + i s} \, \frac{\mu(\D s)}{|s| |\zeta + i s|^2} \\
 & = c_2 \xi^2 + \frac{1}{\pi} \int_{\R \setminus \{0\}} \expr{\frac{\xi}{\xi + i s} + \frac{i \xi}{s}} \frac{|s| \mu(\D s)}{|\zeta + i s|^2} \\
 & = c_2 \xi^2 + i \xi \expr{\frac{1}{\pi} \int_{\R \setminus \{0\}} \frac{1}{s (1 + s^2)} \, \frac{|s| \mu(\D s)}{|\zeta + i s|^2}} \\
 & \qquad + \frac{1}{\pi} \int_{\R \setminus \{0\}} \expr{\frac{\xi}{\xi + i s} + \frac{i \xi s}{1 + s^2}} \frac{|s| \mu(\D s)}{|\zeta + i s|^2} .
}
By Theorem~\ref{th:rogers}\ref{it:rogers:b}, $g \in \rogers$. For the second statement of the lemma, simply note that $f_{[\zeta]}(\xi) = \xi^2 / g(\xi)$.
\end{proof}

\begin{remark}
\label{rem:rogers:quot}
If $f$ is a nonconstant Rogers function and $f(\zeta) \in (0, \infty)$ for some $\zeta \in \C_\ra$, then $f$ is nondegenerate, and from~\eqref{eq:rogers:quot:aux2} it follows that also $f_{[\zeta]}$ is nondegenerate. In particular, $f_{[\zeta]}(\xi)$ is well-defined for $\xi \in \C_\ra$. Hence $f(\zeta) \ne f(\xi)$ for all $\xi \in \C_\ra \setminus \{\zeta\}$. This means that a nonconstant $f \in \rogers$ takes \emph{real} values at unique points in $\C_\ra$ (a similar statement for nonreal values is not true in general). This result is strengthened below in Theorem~\ref{th:rogers:real}.

If $f \in \rogers$ is nonconstant and symmetric, then $f(\zeta) \in (0, \infty)$ if and only if $\zeta \in (0, \infty)$, and in this case $f_{[\zeta]}$ is symmetric for every $\zeta \in (0, \infty)$.
\end{remark}

The next result specialises Lemma~\ref{lem:rogers:quot} for Rogers functions of the form $f(-i \xi)$ or $f(i \xi)$ for a complete Bernstein function $f$ (see Remark~\ref{rem:rogers:cbf}). When $\zeta \in (0, i \infty)$ in the following lemma, the corresponding result was proved in~\cite{bib:k11}.

\begin{lemma}[{see~\cite[Lemma~2.20]{bib:k11}}]
\label{lem:cbf:quot}
If $f$ is a complete Bernstein function and $\zeta \in \C_\ua$, then for some complete Bernstein function $g$
\formula{
 \frac{f(\xi)}{(\xi - \zeta) (\xi - \bar{\zeta})} & = \frac{1}{2 i \im \zeta} \expr{\frac{f(\zeta)}{\xi - \zeta} - \frac{f(\bar{\zeta})}{\xi - \bar{\zeta}}} - \frac{g(\xi)}{\xi} \, .
}
Furthermore, the constants $c_0$ and $c_1$ in the Stieltjes representation~\eqref{eq:cbf:b} (Theorem~\ref{th:cbf}\ref{it:cbf:b}) of the complete Bernstein function $g$ are equal to $0$.
\end{lemma}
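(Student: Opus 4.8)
The plan is to read off $g$ directly from the Stieltjes representation of $f$. By Theorem~\ref{th:cbf}\ref{it:cbf:b}, write
\[
 f(\xi) = c_0 + c_1 \xi + \frac{1}{\pi} \int_{(0, \infty)} \frac{\xi}{\xi + s} \, \frac{\tilde{\mu}(\D s)}{s} ,
\]
and \emph{define} $g$ by
\[
 g(\xi) = \xi \expr{\frac{1}{2 i \im \zeta} \expr{\frac{f(\zeta)}{\xi - \zeta} - \frac{f(\bar{\zeta})}{\xi - \bar{\zeta}}} - \frac{f(\xi)}{(\xi - \zeta) (\xi - \bar{\zeta})}} ,
\]
so that the asserted identity holds by construction. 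Note that $\zeta, \bar{\zeta} \in \C \setminus (-\infty, 0]$, so $f(\zeta)$ and $f(\bar{\zeta})$ are meaningful, and $(\xi - \zeta)(\xi - \bar{\zeta}) = (\xi - \re \zeta)^2 + (\im \zeta)^2 > 0$ for $\xi \in (0, \infty)$. Everything then reduces to showing that this $g$ is a complete Bernstein function whose Stieltjes representation has $c_0 = c_1 = 0$.

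I would carry out the computation term by term. The map $h \mapsto \xi\bigl(\tfrac{1}{2i\im\zeta}(\tfrac{h(\zeta)}{\xi - \zeta} - \tfrac{h(\bar{\zeta})}{\xi - \bar{\zeta}}) - \tfrac{h(\xi)}{(\xi-\zeta)(\xi-\bar{\zeta})}\bigr)$ is linear in $h$, and it commutes with the integral over $s$ by a routine Fubini/dominated-convergence argument of the kind used in the proof of Proposition~\ref{prop:cbf:quot}. The two partial-fraction identities
\[
 \frac{1}{(\xi - \zeta)(\xi - \bar{\zeta})} = \frac{1}{2 i \im \zeta} \expr{\frac{1}{\xi - \zeta} - \frac{1}{\xi - \bar{\zeta}}} , \qquad \frac{\xi}{(\xi - \zeta)(\xi - \bar{\zeta})} = \frac{1}{2 i \im \zeta} \expr{\frac{\zeta}{\xi - \zeta} - \frac{\bar{\zeta}}{\xi - \bar{\zeta}}}
\]
show at once that the constant term $c_0$ and the linear term $c_1 \xi$ of $f$ contribute nothing to $g$. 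For the Stieltjes kernel $h_s(\xi) = \xi / (\xi + s)$ one expands $\xi / ((\xi + s)(\xi - \zeta)(\xi - \bar{\zeta}))$ into partial fractions — using $(s + \zeta)(s + \bar{\zeta}) = |s + \zeta|^2$ for $s > 0$ — to get
\[
 \frac{1}{2 i \im \zeta} \expr{\frac{\zeta / (\zeta + s)}{\xi - \zeta} - \frac{\bar{\zeta} / (\bar{\zeta} + s)}{\xi - \bar{\zeta}}} - \frac{\xi / (\xi + s)}{(\xi - \zeta)(\xi - \bar{\zeta})} = \frac{s}{|s + \zeta|^2} \, \frac{1}{\xi + s} .
\]
Multiplying by $\xi$ and integrating against $\tfrac{1}{\pi} s^{-1} \tilde{\mu}(\D s)$ then gives
\[
 g(\xi) = \frac{1}{\pi} \int_{(0, \infty)} \frac{\xi}{\xi + s} \, \frac{\tilde{\mu}(\D s)}{|s + \zeta|^2} ,
\]
i.e. $g$ has a Stieltjes representation of the form in Theorem~\ref{th:cbf}\ref{it:cbf:b}, with measure $s \, |s + \zeta|^{-2} \, \tilde{\mu}(\D s)$ and with $c_0 = c_1 = 0$.

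What remains is to verify the integrability condition of Theorem~\ref{th:cbf}\ref{it:cbf:b} for the new measure, namely $\int_{(0,\infty)} \tfrac{\min(1, s)}{s \, |s + \zeta|^2} \, \tilde{\mu}(\D s) < \infty$. On $(0, 1]$ one uses $|s + \zeta|^2 \ge (\im \zeta)^2 > 0$ and $\min(1, s)/s = 1$, so this part is at most $(\im \zeta)^{-2} \tilde{\mu}((0, 1])$, which is finite since $\int_{(0,1]} s^{-1} \tilde{\mu}(\D s) < \infty$ forces $\tilde{\mu}((0, 1]) < \infty$; on $[1, \infty)$ the integrand is comparable to $s^{-3}$, so that part is controlled by $\int_{[1,\infty)} s^{-2} \tilde{\mu}(\D s) < \infty$. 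Hence $g \in \cbf$ with vanishing $c_0$ and $c_1$, which is exactly the assertion of the lemma.

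I expect the only real effort to be the bookkeeping in the two displayed partial-fraction identities and the integrability estimate; there is no conceptual obstacle. Indeed, the argument is essentially the complete-Bernstein-function specialisation of the computation behind Lemma~\ref{lem:rogers:quot}, and one could alternatively try to deduce it from that lemma via the substitution of Remark~\ref{rem:rogers:cbf}, but the bookkeeping of the conjugation then becomes awkward, so the direct route above is cleaner.
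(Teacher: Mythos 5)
Your proof is correct, and it takes a route that is closely related to, but expositorily distinct from, the paper's. The paper deduces the identity by applying the auxiliary formula~\eqref{eq:rogers:quot:aux1} from the proof of Lemma~\ref{lem:rogers:quot} to the Rogers function $f(-i\xi)$ at the point $i\bar{\zeta}$, and then unwinds the resulting conjugations --- precisely the ``awkward bookkeeping'' route you flag at the end. You instead redo the partial-fraction calculation directly in the complete-Bernstein-function setting. Both arguments land on the identical Stieltjes representation
\[
 g(\xi) = \frac{1}{\pi} \int_{(0,\infty)} \frac{\xi}{\xi + s} \, \frac{\tilde{\mu}(\D s)}{|s + \zeta|^2},
\]
so the mathematical content is the same; the difference is whether one inherits the algebra from Lemma~\ref{lem:rogers:quot} or reproduces it. Your version is more self-contained and, as you anticipated, easier to follow in isolation; the paper's version avoids repeating a computation that has already been carried out once. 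One small simplification worth noting: the paper's integrability check is a one-liner --- since $s/|s+\zeta|^2$ is bounded on $(0,\infty)$, the measure $s|s+\zeta|^{-2}\,\tilde{\mu}(\D s)$ inherits the required integrability from $\tilde{\mu}$ directly --- whereas you split into $(0,1]$ and $[1,\infty)$; both arguments are sound, but the boundedness observation is shorter. Your treatment of the constant and linear terms via the two displayed partial-fraction identities is correct and is exactly what forces $c_0 = c_1 = 0$ in the representation of $g$.
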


Lemma~\ref{lem:cbf:quot} is only needed in Section~\ref{subsec:ee}, where generalised eigenfunction expansion is discussed.

\begin{proof}
If $\zeta \in \C_\ua$, then $i \bar{\zeta} \in \C_\ra$. By Theorem~\ref{th:cbf}\ref{it:cbf:b}, $f(-i \xi)$ is a Rogers function and 
\formula{
 f(-i \xi) & = c_0 + i c \xi + \frac{1}{\pi} \int_{(0, \infty)} \expr{\frac{\xi}{\xi + i s} + i \xi \, \frac{s}{|\zeta + i s|^2}} \frac{\tilde{\mu}(\D s)}{s}
}
for some $c \in \R$ and $c_0$, $\tilde{\mu}$ as in Theorem~\ref{th:cbf}\ref{it:cbf:b}. By~\eqref{eq:rogers:quot:aux1} from the proof of Lemma~\ref{lem:rogers:quot} for $f(-i \xi)$ and $i \bar{\zeta}$, one obtains
\formula{
 f(\xi) - \re f(\bar{\zeta}) - \frac{-\xi + \re \zeta}{\im \zeta} \, \im f(\bar{\zeta}) & = (i \xi - i \bar{\zeta}) (i \xi - i \zeta) \, \frac{1}{\pi} \int_{(0, \infty)} \frac{i s}{i \xi + i s} \, \frac{\mu(\D s)}{|s| |i \bar{\zeta} + i s|^2}
}
for $\xi \in \C \setminus (-\infty, 0]$. After simplification, this gives
\formula{
 f(\xi) - \frac{(\xi - \bar{\zeta}) f(\zeta) - (\xi - \zeta) f(\bar{\zeta})}{2 i \im \zeta} & = -(\xi - \bar{\zeta}) (\xi - \zeta) \, \frac{1}{\pi} \int_{(0, \infty)} \frac{1}{\xi + s} \, \frac{\mu(\D s)}{|\zeta + s|^2} \, .
}
Hence,
\formula{
 \frac{\xi}{2 i \im \zeta} \expr{\frac{f(\zeta)}{\xi - \zeta} - \frac{f(\bar{\zeta})}{\xi - \bar{\zeta}}} - \frac{\xi f(\xi)}{(\xi - \zeta) (\xi - \bar{\zeta})} & = \frac{1}{\pi} \int_{(0, \infty)} \frac{\xi}{\xi + s} \, \frac{s}{|\zeta + s|^2} \, \frac{\mu(\D s)}{s} \, ,
}
and because $s / |\zeta + s|^2$ is bounded, the right-hand side defines a complete Bernstein function of $\xi$.
\end{proof}

\subsection{Line of real values}

\newcommand{\putimage}{%
\begingroup%
  \makeatletter%
  \ifx\svgwidth\undefined%
    \setlength{\unitlength}{117.53bp}%
    \ifx\svgscale\undefined%
      \relax%
    \else%
      \setlength{\unitlength}{\unitlength * \real{\svgscale}}%
    \fi%
  \else%
    \setlength{\unitlength}{\svgwidth}%
  \fi%
  \global\let\svgwidth\undefined%
  \global\let\svgscale\undefined%
  \makeatother%
  \begin{picture}(1,1.5105952)%
    \put(0,0){\includegraphics[width=\unitlength]{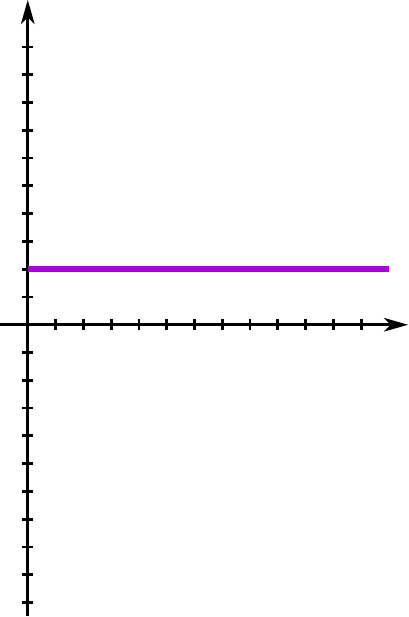}}%
    \put(0.20420318,0.68076452){\makebox(0,0)[cb]{\smash{\raisebox{-\height}{$1$}}}}%
    \put(0.34033864,0.68076452){\makebox(0,0)[cb]{\smash{\raisebox{-\height}{$2$}}}}%
    \put(0.47647409,0.68076452){\makebox(0,0)[cb]{\smash{\raisebox{-\height}{$3$}}}}%
    \put(0.61260955,0.68076452){\makebox(0,0)[cb]{\smash{\raisebox{-\height}{$4$}}}}%
    \put(0.74874500,0.68076452){\makebox(0,0)[cb]{\smash{\raisebox{-\height}{$5$}}}}%
    \put(0.88488046,0.68076452){\makebox(0,0)[cb]{\smash{\raisebox{-\height}{$6$}}}}%
    \put(0.03403386,0.85093384){\makebox(0,0)[rb]{\smash{\raisebox{-0.5\height}{$ 1$}}}}%
    \put(0.03403386,0.98706929){\makebox(0,0)[rb]{\smash{\raisebox{-0.5\height}{$ 2$}}}}%
    \put(0.03403386,1.12320475){\makebox(0,0)[rb]{\smash{\raisebox{-0.5\height}{$ 3$}}}}%
    \put(0.03403386,1.25934020){\makebox(0,0)[rb]{\smash{\raisebox{-0.5\height}{$ 4$}}}}%
    \put(0.03403386,1.39547566){\makebox(0,0)[rb]{\smash{\raisebox{-0.5\height}{$ 5$}}}}%
    \put(0.03403386,0.03412111){\makebox(0,0)[rb]{\smash{\raisebox{-0.5\height}{$-5$}}}}%
    \put(0.03403386,0.17025656){\makebox(0,0)[rb]{\smash{\raisebox{-0.5\height}{$-4$}}}}%
    \put(0.03403386,0.30639202){\makebox(0,0)[rb]{\smash{\raisebox{-0.5\height}{$-3$}}}}%
    \put(0.03403386,0.44252747){\makebox(0,0)[rb]{\smash{\raisebox{-0.5\height}{$-2$}}}}%
    \put(0.03403386,0.57866293){\makebox(0,0)[rb]{\smash{\raisebox{-0.5\height}{$-1$}}}}%
  \end{picture}%
  \global\let\svgname\undefined%
\endgroup%
}

\begin{figure}
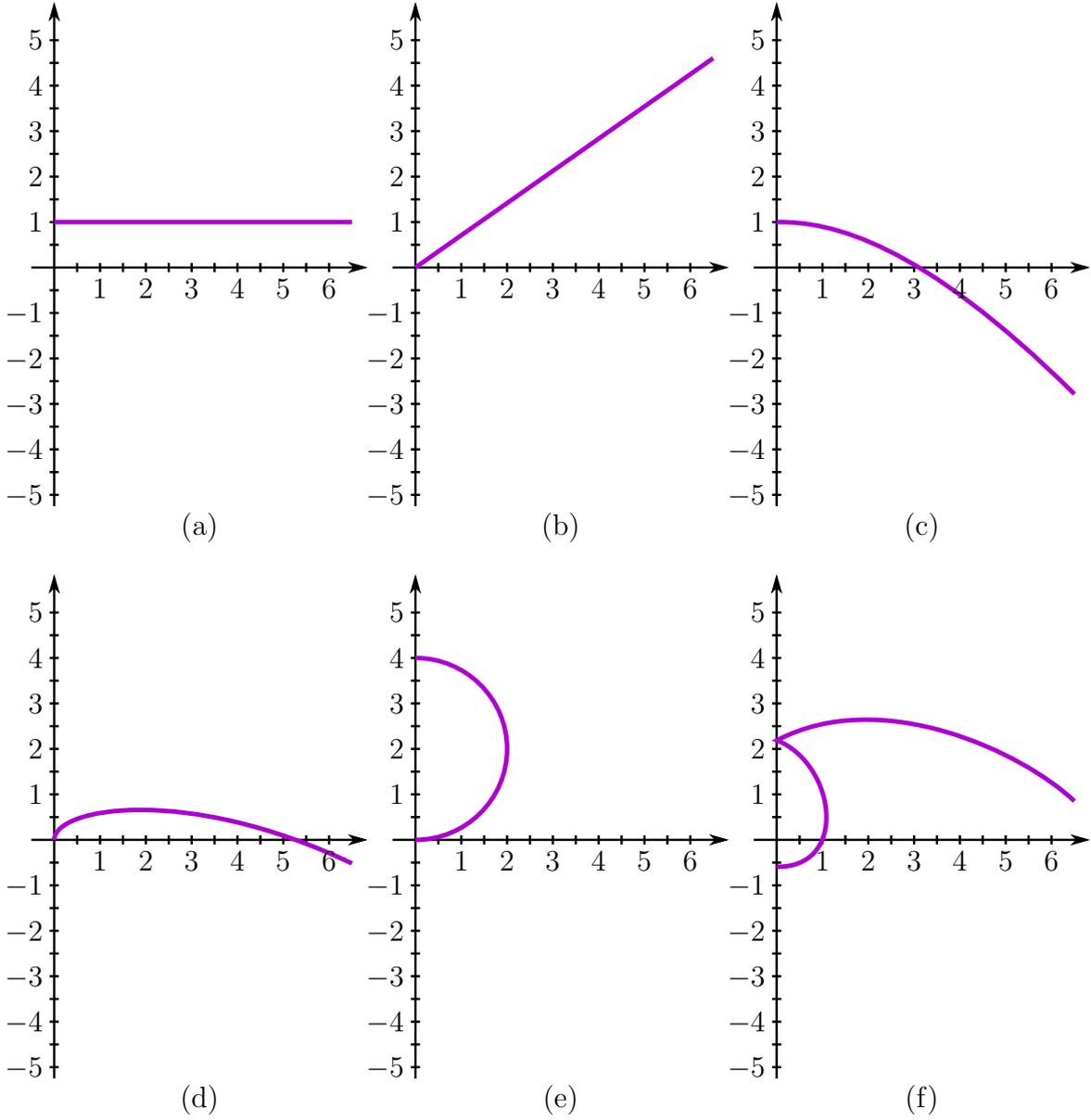

\centering
\begin{tabular}{ccc}
\def\svgwidth{0.3\textwidth}\def\svgname{bm}\putimage
&
\def\svgwidth{0.3\textwidth}\def\svgname{stable}\putimage
&
\def\svgwidth{0.3\textwidth}\def\svgname{tempered}\putimage
\\[-0.2em]
(a)
&
(b)
&
(c)
\\[1em]
\def\svgwidth{0.3\textwidth}\def\svgname{mixed}\putimage
&
\def\svgwidth{0.3\textwidth}\def\svgname{meromorphic-1}\putimage
&
\def\svgwidth{0.3\textwidth}\def\svgname{meromorphic-2}\putimage
\\[-0.2em]
(d)
&
(e)
&
(f)
\end{tabular}
\caption{Plot of $\gamma_f \cap \C_\ra$ for:
(a)~$f(\xi) = \tfrac{1}{2} \xi^2 - i \xi$;
(b)~$f(\xi) = 2 (-i \xi)^{1/2} + (i \xi)^{1/2}$;
(c)~$f(\xi) = (-i \xi + 1)^{1/2} + 3 (i \xi + 19)^{1/2}$;
(d)~$f(\xi) = 3 (-i \xi)^{1/3} + (i \xi)^{2/3}$;
(e)~$f(\xi) = \frac{\xi^2}{i \xi + 2}$;
(f)~$f(\xi) = \frac{-i \xi + 2}{-i \xi + 3} \, \frac{i \xi}{i \xi + a} \, \frac{i \xi + 15}{i \xi + 20}$ for $a \approx 0.10966$.
}
\label{fig:rogers:real}
\end{figure}

Recall that every $f \in \rogers$ is automatically extended to $\C_\la$ by the formula $f(\xi) = \overline{f(-\bar{\xi})}$ for $\xi \in \C_\la$. It can be easily checked that this extension is also given by~\eqref{eq:rogers:c} in Theorem~\ref{th:rogers}\ref{it:rogers:c}. If $s_0 \in \R$ and, with the notation of~\eqref{eq:rogers:c}, $\ph(s) = 0$ for almost all $s$ in some neighbourhood of $s_0$, then~\eqref{eq:rogers:c} defines the holomorphic extension of $f$ in the neighbourhood of $-i s_0$, and clearly $f(-i s_0) \in (0, \infty)$. This motivates the following definition.

\begin{definition}
\label{def:rogers:curve}
For a nonzero Rogers function $f$, let $\dom_f = \C \setminus (-i \esssupp \ph)$, where $\ph$ is the function in the exponential representation of $f$ (Theorem~\ref{th:rogers}\ref{it:rogers:c}) and $\esssupp$ denotes the essential support.

Suppose that $f$ is nonconstant. Since $f$ extends to a holomorphic function on $\dom_f$ and $f$ takes positive values in $\dom_f \cap i \R$, the function $h(\xi) = (\im f(\xi)) / (\re \xi)$ extends to a real-analytic function in $\dom_f$ (Proposition~\ref{prop:harmonic:quot}). Let $\gamma_f$ be the system of curves given by the equation $h(\xi) = 0$, and oriented so that positive values of $h$ lie on the left-hand side of $\gamma_f$ (cf.\ Theorem~\ref{th:rogers:real} below).
\end{definition}

\begin{remark}
The set $\dom_f$ may fail to be a maximal domain on which $f$ is holomorphic. For example, if $f(\xi) = \xi^2$, then $\dom_f = \C \setminus i \R$, despite the fact that $f$ extends to an entire function.
\end{remark}

Clearly, $\gamma_f$ is symmetric with respect to the imaginary axis. Furthermore, $\gamma_f \cap \C_\ra$ is the set of zeros (the \emph{nodal set}) of a harmonic function $\im f$ (or, equivalently, $\Arg f$), hence it is a system of piecewise analytic curves. In particular, $\gamma_f$ is piecewise analytic. In the next lemma the properties of $\gamma_f$ are studied in more detail.

\begin{theorem}
\label{th:rogers:real}
Let $f$ be a nonconstant Rogers function. 
\begin{enumerate}[label=\rm (\alph*)]
\item
\label{it:rogers:real:a}
The system of curves $\gamma_f$ intersects any centred circle in at most two points. For every $r > 0$, there is at most one $\zeta \in \gamma_f \cap \C_\ra$ such that $|\zeta| = r$, and in this case
\formula[eq:rogers:real]{
 \sign (\im f(\xi)) & = \sign \expr{\Arg \frac{\xi}{\zeta}}
}
for all $\xi \in \C_\ra$ with $|\xi| = r$. In other words, the system of curves $\gamma_f$ splits $\C_\ra$ into two sets $\{ \xi \in \C_\ra : \im f(\xi) > 0 \}$ and $\{ \xi \in \C_\ra : \im f(\xi) < 0 \}$, the first of which is located between $\gamma_f \cap \C_\ra$ and $(0, i \infty)$, and the other one between $(-i \infty, 0)$ and $\gamma_f \cap \C_\ra$ (see Figure~\ref{fig:rogers:real}).
\end{enumerate}
The point $\zeta$ defined above, if it exists, is denoted by $\zeta_f(r)$, and $|\gamma_f|$ is the set of those $r > 0$ for which the point $\zeta$ defined above exists.
\begin{enumerate}[label=\rm (\alph*),resume]
\item
\label{it:rogers:real:b}
The function $\zeta_f(r)$ can be continuously extended to $(0, \infty)$ in such a way that $\zeta_f(r) \in \{-i r, i r\}$ for every $r \in (0, \infty) \setminus |\gamma_f|$. Furthermore, $\zeta_f(r) \in \dom_f$ for every $r \in (0, \infty) \setminus \overline{|\gamma_f|}$.
\item
\label{it:rogers:real:c}
The system of curves $\gamma_f \cap \C_\ra$ consists of pairwise disjoint simple analytic curves. Furthermore, $\gamma_f \cap i \R$ is the closure of the set of endpoints of $\gamma_f \cap \C_\ra$ and it contains no interval.
\item
\label{it:rogers:real:d}
The function $\lambda_f(r) = f(\zeta_f(r))$, defined for $r \in (0, \infty) \setminus \partial |\gamma_f|$, is strictly increasing, and $\lambda_f'(r) > 0$ for each $r \in (0, \infty) \setminus \partial |\gamma_f|$. 
\end{enumerate}
\end{theorem}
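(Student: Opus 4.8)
The plan is to derive everything from the difference quotient $f_{[\zeta]}$ of Lemma~\ref{lem:rogers:quot}. Throughout, $f$ is nonconstant, so $\gamma_f\cap\C_\ra=\{\zeta\in\C_\ra:f(\zeta)\in(0,\infty)\}$: on $\C_\ra$ the defining equation $h(\xi)=0$ reads $\im f(\xi)=0$, any real boundary value of $f$ in $\C_\ra$ is nonnegative by Theorem~\ref{th:rogers}\ref{it:rogers:d}, and it cannot vanish, for otherwise $f\equiv 0$. For such a $\zeta$, Lemma~\ref{lem:rogers:quot} together with Remark~\ref{rem:rogers:quot} say that $f_{[\zeta]}$ of~\eqref{eq:rogers:quot} is a \emph{nondegenerate} Rogers function; in particular $f'(\zeta)\ne 0$ (otherwise the continuous extension $f_{[\zeta]}(\zeta)=(2\re\zeta)/f'(\zeta)$ would be infinite), and, by Proposition~\ref{prop:rogers:trivial}, $\re(f_{[\zeta]}(\xi)/\xi)>0$ for every $\xi\in\C_\ra$. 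The proof of~\ref{it:rogers:real:a} then rests on a one-line identity: if $|\xi|=|\zeta|$ then $\zeta\bar\zeta/\xi=\bar\xi$, whence
\formula{
 \frac{(\xi-\zeta)(\xi+\bar\zeta)}{\xi} & = \xi-\bar\xi+\bar\zeta-\zeta = 2i(\im\xi-\im\zeta)\in i\R .
}
Dividing by $f(\xi)-f(\zeta)$ (which is nonzero for $\xi\ne\zeta$, as $f$ takes the real value $f(\zeta)$ at the single point $\zeta$ by Remark~\ref{rem:rogers:quot}) and taking the real part, we get, for $\xi\in\C_\ra$ with $|\xi|=|\zeta|$ and $\xi\ne\zeta$,
\formula{
 0 < \re\frac{f_{[\zeta]}(\xi)}{\xi} & = \frac{2(\im\xi-\im\zeta)\,\im f(\xi)}{|f(\xi)-f(\zeta)|^2} ,
}
so that $(\im\xi-\im\zeta)\,\im f(\xi)>0$. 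Since $\xi\mapsto\im\xi=|\xi|\sin(\Arg\xi)$ is strictly increasing along a centred circle inside $\C_\ra$ (as $\Arg\xi\in(-\tfrac\pi2,\tfrac\pi2)$), this yields uniqueness of $\zeta$ on that circle and the sign identity~\eqref{eq:rogers:real} (trivial at $\xi=\zeta$); the ``at most two points'' statement follows by the reflection symmetry of $\gamma_f$.

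For~\ref{it:rogers:real:b}, if $r\notin|\gamma_f|$ then $\im f$ is zero-free, hence of constant sign, on the open arc $\{re^{i\theta}:|\theta|<\tfrac\pi2\}$; I set $\zeta_f(r)=-ir$ when that sign is positive and $\zeta_f(r)=ir$ when it is negative — the only choice consistent with the limiting form of~\eqref{eq:rogers:real}. On the open set of $r$ for which $\zeta_f(r)\in\C_\ra$ (which contains $\Int|\gamma_f|$), $\zeta_f$ is real-analytic by the implicit function theorem applied to $\im f=0$, since $|\nabla\im f|=|f'|\ne 0$ on $\gamma_f\cap\C_\ra$; continuity at $\partial|\gamma_f|$ and on the complement of $|\gamma_f|$ follows from this sign structure and the fact that the nodal system $\gamma_f$ is closed. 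On an open interval disjoint from $\overline{|\gamma_f|}$ one has $\zeta_f\equiv\mp ir$, and there the one-sided sign of $\im f$, hence of $\Arg f$, near the relevant part of $i\R$ forces $\ph$ to vanish almost everywhere there via the boundary formula~\eqref{eq:rogers:ph}, so that $\zeta_f(r)\in\dom_f$. Part~\ref{it:rogers:real:c} is then mostly function-theoretic. The set $\gamma_f\cap\C_\ra$ is the nodal set of the harmonic function $\im f$, on which $\nabla\im f\ne 0$, so it is a one-dimensional real-analytic submanifold of $\C_\ra$, relatively closed in $\C_\ra$; by~\ref{it:rogers:real:a} none of its components can be a closed loop (it would meet some centred circle twice), so each component is a simple analytic arc and distinct components are disjoint. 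If $\gamma_f$ contained an interval $J\subseteq i\R$, then $\im f$ and $\partial_x\im f$ would both vanish on $J$ (the former since $f>0$ on $\dom_f\cap i\R$, the latter by the defining equation $h=0$ and Proposition~\ref{prop:harmonic:quot}), forcing $\nabla\im f=0$ hence $f'\equiv 0$ on $J$, hence $f$ constant — a contradiction; so $\gamma_f\cap i\R$ has empty interior in $i\R$, whence $\gamma_f=\overline{\gamma_f\setminus i\R}$, and the reflection symmetry together with the arc structure identify $\gamma_f\cap i\R$ with the closure of the set of endpoints of $\gamma_f\cap\C_\ra$ (the remaining arcs escaping to infinity). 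In particular every $is\in\gamma_f\cap i\R$ has $s\in\overline{|\gamma_f|}$.

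For~\ref{it:rogers:real:d}, take first $r\in\Int|\gamma_f|$ and write $\zeta_f(r)=re^{i\theta(r)}$ with $\theta$ real-analytic. Differentiating,
\formula{
 \lambda_f'(r) & = f'(\zeta_f(r))\,\frac{d}{dr}\bigl(re^{i\theta(r)}\bigr) = \frac{1}{r}\,\zeta_f(r)f'(\zeta_f(r))\,\bigl(1+ir\theta'(r)\bigr) .
}
Here $\lambda_f(r)=f(\zeta_f(r))\in(0,\infty)$, so $\lambda_f'(r)$ is real; moreover $\re(1+ir\theta'(r))=1>0$, and from $f_{[\zeta]}(\zeta)=(2\re\zeta)/f'(\zeta)$ with $\re(f_{[\zeta]}(\zeta)/\zeta)>0$ (as above, $\zeta=\zeta_f(r)$) we get $\re(\zeta f'(\zeta))>0$. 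A real number equal, up to a positive factor, to the product of two nonzero complex numbers with positive real parts must itself be positive — the argument of such a product lies in $(-\pi,\pi)$ and cannot be $\pm\pi$ — so $\lambda_f'(r)>0$. On an open interval disjoint from $\overline{|\gamma_f|}$ one has $\zeta_f\equiv\pm ir$ and $\lambda_f(r)=f(\pm ir)$; by the last remark in the proof of~\ref{it:rogers:real:c} such $\pm ir$ are not in $\gamma_f$, so $h(\pm ir)\ne 0$, while the one-sided sign of $\im f$ established in~\ref{it:rogers:real:b} gives $h(\pm ir)<0$ (in the notation where $\zeta_f=+ir$; the other case is symmetric), and the Cauchy--Riemann equations yield $\tfrac{d}{dr}f(\pm ir)=\partial_y(\re f)(\pm ir)=-h(\pm ir)>0$. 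With the continuity of $\lambda_f$, strict monotonicity on $(0,\infty)\setminus\partial|\gamma_f|$ follows.

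I expect the quantitative content~\ref{it:rogers:real:a} and the final observation in~\ref{it:rogers:real:d} to be short once Lemma~\ref{lem:rogers:quot} is in hand. The main obstacles lie in~\ref{it:rogers:real:b} and~\ref{it:rogers:real:c}: controlling the global topology of the nodal system $\gamma_f$ near the imaginary axis, verifying that the piecewise definition of $\zeta_f$ glues to a continuous function across $\partial|\gamma_f|$, and carrying out the delicate boundary passage in~\eqref{eq:rogers:ph} required for $\zeta_f(r)\in\dom_f$; these are also the steps that feed into the analysis of $\lambda_f$ on the complement of $\overline{|\gamma_f|}$.
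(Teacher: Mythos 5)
Your treatment of part~\ref{it:rogers:real:a} is a genuinely different and rather elegant route: instead of the paper's argument via the exponential representation (which yields the angular-derivative inequality $(x\partial_y-y\partial_x)g>0$ by differentiating under the integral in~\eqref{eq:rogers:harmonic3}), you feed the one-line identity $(\xi-\zeta)(\xi+\bar\zeta)/\xi=2i(\im\xi-\im\zeta)$ on $|\xi|=|\zeta|$ directly into the nondegeneracy of $f_{[\zeta]}$, obtaining $(\im\xi-\im\zeta)\im f(\xi)>0$ in one step. I have checked the computation and it is correct. Observe, though, that the strict inequality $\re(\zeta f'(\zeta))>0$, which you extract only later for part~\ref{it:rogers:real:d}, is exactly the angular derivative $\partial_\theta(\im f)(\zeta)$ (indeed $\partial_\theta(\im f)(re^{i\theta})=\re\bigl(re^{i\theta}f'(re^{i\theta})\bigr)$, and $\re(f_{[\zeta]}(\zeta)/\zeta)=2\cos\theta_0\,\partial_\theta(\im f)(\zeta)/(r|f'(\zeta)|^2)$): this is the transversality condition that the implicit function theorem actually requires to parametrize $\gamma_f\cap\C_\ra$ by $r$. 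As written, part~\ref{it:rogers:real:b} invokes only $|\nabla\im f|=|f'|\ne 0$, which shows the nodal set is a manifold but does not by itself exclude tangency to centred circles, so the step ``$\zeta_f$ is real-analytic by the implicit function theorem'' is not justified. The fix is to import your own inequality $\re(\zeta f'(\zeta))>0$ from part~\ref{it:rogers:real:d} into part~\ref{it:rogers:real:b}. Similarly, the ``at most two points'' clause of~\ref{it:rogers:real:a} is not fully established: your argument gives at most one intersection point in $\C_\ra$ and, by symmetry, at most one in $\C_\la$, but nothing is said about $\pm ir$; ruling out extra intersections on $i\R$ again requires the strict positivity of the angular derivative up to the boundary of the arc.

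The serious gap is in part~\ref{it:rogers:real:d}. You establish $\lambda_f'(r)>0$ separately on $\Int|\gamma_f|$ and on each interval of $(0,\infty)\setminus\overline{|\gamma_f|}$, and then conclude global strict monotonicity ``with the continuity of $\lambda_f$''. But $\lambda_f$ is defined only on $(0,\infty)\setminus\partial|\gamma_f|$, which may be disconnected, and no continuous extension across $\partial|\gamma_f|$ has been established; indeed it is not obvious one exists, since $\zeta_f(r^*)$ for $r^*\in\partial|\gamma_f|$ need not lie in $\dom_f$ (as the example $f(\xi)=\xi^2/(i\xi+2)$ in Figure~\ref{fig:rogers:real}(e) suggests, $f$ can blow up at such boundary points). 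Local strict increase on each component does not give global strict increase. This is precisely the point where the paper does substantial work: it shows, using $f_{[\zeta]}$ and a maximum-principle argument, that $\tilde g(\xi)=\xi^{-1}(\xi-\zeta)(\xi+\bar\zeta)$ has positive real part on $\{\xi\in\C_\ra:|\xi|>r\}$ when $\zeta=\zeta_f(r)$, whence $f(\xi)-f(\zeta)=\tilde g(\xi)\big/\bigl(f_{[\zeta]}(\xi)/\xi\bigr)\in\C\setminus(-\infty,0]$ for all such $\xi$, and then takes careful boundary limits on the arcs $\zeta_f(r_0)=\pm ir_0$ to deduce $\lambda_f(r_0)\ge\lambda_f(r)$. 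Without an argument of this kind, strict monotonicity on the full domain $(0,\infty)\setminus\partial|\gamma_f|$ does not follow. Finally, a minor point: in part~\ref{it:rogers:real:c} you use implicitly (and in part~\ref{it:rogers:real:d} you rely on) the strict sign of $h$ on $i\R\cap\dom_f$ off the closure of $\gamma_f\cap\C_\ra$, which is a boundary-point (Hopf) lemma for the harmonic function $\im f$; the paper cites Hopf's lemma explicitly, and you should too, since a one-sided sign of a harmonic function does not by itself give a strict sign of the normal derivative without it.
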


Here $\overline{|\gamma_f|}$ and $\partial |\gamma_f|$ denote the closure and the boundary (in $(0, \infty)$) of the set $|\gamma_f|$.

\begin{proof}
By definition, $\gamma_f \cap \C_\ra$ is the nodal set of the bounded harmonic function $h(\xi) = \Arg f(\xi) = \im (\log f(\xi))$, or, equivalently, the function $g(\xi) = h(\xi) / \re \xi$. By Theorem~\ref{th:rogers}\ref{it:rogers:c} and the identity
\formula{
 \im \frac{x + i y}{x + i y + i s} = - \frac{x s}{x^2 + (s + y)^2} \, ,
}
for $x \in \R \setminus \{0\}$ and $y \in \R$ one has, with the notation of~\eqref{eq:rogers:c},
\formula[eq:rogers:real:aux1]{
 g(x + i y) & = \frac{1}{\pi} \int_{-\infty}^\infty \frac{1}{x^2 + (s + y)^2} \, (-\ph(s) \sign s) \, \D s ,
}
as in~\eqref{eq:rogers:harmonic3}. By dominated convergence and a short calculation (we omit the details),
\formula[eq:rogers:angular]{
 (x \tfrac{\partial}{\partial y} - y \tfrac{\partial}{\partial x}) g(x + i y) & = \frac{1}{\pi} \int_{-\infty}^\infty \frac{-2 x s}{(x^2 + (s + y)^2)^2} \, (-\ph(s) \sign s) \, \D s \\
 & = \frac{2}{\pi} \int_{-\infty}^\infty \frac{x |s|}{(x^2 + (s + y)^2)^2} \, \ph(s) \D s .
}
Recall that $f \in \rogers$ is nonconstant, so that $\ph$ is nonzero on a set of positive Lebesgue measure. Hence, the right-hand side is strictly positive for all $x > 0$ and $y \in \R$. It follows that for all $x > 0$, $y \in \R$,
\formula[eq:rogers:zeros]{
 (x \tfrac{\partial}{\partial y} - y \tfrac{\partial}{\partial x}) g(x + i y) & > 0 .
}
In particular, the gradient of $g$ is nonvanishing in $\C_\ra$. By the implicit function theorem, the nodal set $\{ \xi \in \C_\ra : h(\xi) = 0 \} = \gamma_f \cap \C_\ra$ consists of a family of mutually disjoint simple analytic curves. These curves necessarily start and end on the boundary of $\C_\ra$, or at complex infinity. The first statement of part~\ref{it:rogers:real:c} is proved.

The inequality~\eqref{eq:rogers:zeros} can be rephrased as follows: given $r > 0$, the function $g_r(\thet) = g(r e^{i \thet})$ has a strictly positive derivative in $\thet \in (-\frac{\pi}{2}, \frac{\pi}{2})$. Clearly, $g_r$ has at most one zero $\thet(r) \in (-\frac{\pi}{2}, \frac{\pi}{2})$, corresponding to a zero $\zeta_f(r) = r e^{i \thet(r)}$ of $g$. Furthermore, in this case $g_r(\thet) > 0$ for $\thet > \thet(r)$, and $g_r(\thet) < 0$ for $\thet < \thet(r)$. Formula~\eqref{eq:rogers:real} and part~\ref{it:rogers:real:a} follow.

The definition of $\zeta_f$ can be extended to $(0, \infty)$ so that $\zeta_f(r) = i r$ when $h_r$ is negative on $(-\frac{\pi}{2}, \frac{\pi}{2})$, and $\zeta_f(r) = -i r$ when $h_r$ is positive on $(-\frac{\pi}{2}, \frac{\pi}{2})$. By the definition, if $\zeta_f(r) = i r$ for $r$ in some interval $(r_1, r_2)$, then $\ph(-r) = 0$ for almost all $r \in (r_1, r_2)$, and therefore $(i r_1, i r_2) \sub \dom_f$. In a similar manner if $\zeta_f(r) = -i r$ for $r \in (r_1, r_2)$, then $(-i r_2, -i r_1) \sub \dom_f$. This proves part~\ref{it:rogers:real:b}.

Recall that $h$ and $g$ extend to $\dom_f$. Suppose that $r_0 > 0$ and $i r_0 \in \dom_f$. In this case $h(i r_0) = 0$, and hence $g(i r_0) = \tfrac{\partial}{\partial x} h(i r_0)$. Furthermore, if $i r_0$ is not in the closure of $\gamma_f \cap \C_\ra$, then either $\zeta_f(r) = i r$ in the neighbourhood of $r_0$, or $\zeta_f(r) \ne i r$ in the neighbourhood of $r_0$. In the former case $h(x + i y) < 0$ for $x + i y \in \C_\ra$ sufficiently close to $i r_0$, while in the latter $h(x + i y) > 0$ for $x + i y \in \C_\ra$ sufficiently close to $i r_0$. By Hopf's lemma, $g(i y) < 0$ in the former case, and $g(i y) > 0$ in the latter; in both cases, $g(i y) \ne 0$. In a similar way, $g(-i y) \ne 0$ if $r_0 > 0$ and $-i r_0 \in \dom_f$. Part~\ref{it:rogers:real:c} is proved.

It remains to establish the properties of $\lambda_f(r) = f(\zeta_f(r))$. As in the previous paragraph, if $r_0 \in (0, \infty) \setminus \overline{|\gamma_f|}$, then either $\zeta_f(r) = i r$ in a neighbourhood of $r_0$, or $\zeta_f(r) = -i r$ in a neighbourhood of $r_0$. In the former case, by Theorem~\ref{th:rogers}\ref{it:rogers:c} and dominated convergence (we omit the details),
\formula{
 \lambda_f'(r_0) & = -i f'(i r_0) = f(i r_0) \exp\expr{\frac{1}{\pi} \int_{-\infty}^\infty \frac{s}{(r_0 + s)^2} \, \frac{\ph(s) \D s}{|s|}} \, .
}
Hence, by~\eqref{eq:rogers:real:aux1}, $\lambda_f'(r_0) = \lambda_f(r_0) g(i r_0)$, which is positive by the former part of the proof. Similar argument works for the other case, that is, $\zeta_f(r) = -i r$ in a neighbourhood of $r_0$.

Let $r \in |\gamma_f|$ and $\zeta_f(r) = r e^{i \thet}$. Recall that $h(\xi) = g(\xi) \re \xi$ and $g(r e^{i \thet}) = 0$, so that $\tfrac{\partial}{\partial \thet} h(r e^{i \thet}) = (r \cos \thet) \tfrac{\partial}{\partial \thet} g(r e^{i \thet})$. Hence, by~\eqref{eq:rogers:zeros}, $\tfrac{\partial}{\partial \thet} h(r e^{i \thet}) > 0$. Since $h(\xi) = \im (\log f(\xi))$, one has
\formula{
 r \tfrac{\partial}{\partial r} (\re f)(r e^{i \thet}) & = \tfrac{\partial}{\partial \thet} (\im f)(r e^{i \thet}) = f(r e^{i \thet}) \tfrac{\partial}{\partial \thet} h(r e^{i \thet}) > 0 .
}
Differentiating both sides of $|\zeta_f(r)|^2 = r^2$ gives $\re (\overline{\zeta_f'(r)} \zeta_f(r)) = r > 0$. By the definition, $\lambda_f$ only takes real values, so that $0 = \im \lambda_f'(r) = \im (f'(\zeta_f(r)) \zeta_f'(r))$. Hence, $f'(\zeta_f(r)) = a \overline{\zeta_f'(r)}$ for some $a \in \R$, and
\formula{
 r \tfrac{\partial}{\partial r} (\re f)(r e^{i \thet}) & = \re(r e^{i \thet} f'(r e^{i \thet})) = \re(\zeta_f(r) a \overline{\zeta_f'(r)}) = a r .
}
It follows that $a > 0$, and therefore $\lambda_f'(r) = f'(\zeta_f(r)) \zeta_f'(r) = a |\zeta_f'(r)|^2 > 0$, as desired.

The above arguments prove that $\lambda_f(r)$ is an increasing function of $r$ on every interval in $(0, \infty) \setminus \partial |\gamma_f|$, but this is not sufficient if $|\gamma_f| \ne (0, \infty)$ and $|\gamma_f| \ne \varnothing$. Suppose that $r \in |\gamma_f|$ and $\zeta = \zeta_f(r)$; in particular, $f$ is nondegenerate. By Lemma~\ref{lem:rogers:quot}, $f_{[\zeta]}$ (defined in~\eqref{eq:rogers:quot}) is a Rogers function, and by Remark~\ref{rem:rogers:quot}, $f_{[\zeta]}$ is nondegenerate. By Proposition~\ref{prop:rogers:trivial}, $f_{[\zeta]}(\xi) / \xi \in \C_\ra$ for all $\xi \in \C_\ra$.

We claim that $\tilde{g}(\xi) = \xi^{-1} (\xi - \zeta) (\xi + \bar{\zeta}) \in \C_\ra$ when $\xi \in \C_\ra$ and $|\xi| > r$. Indeed, the function $\Arg \tilde{g}(\xi) = \Arg(\xi - \zeta) + \Arg(\xi + \bar{\zeta}) - \Arg \xi$ is a bounded harmonic function on the set $D_r = \{ \xi \in \C_\ra : |\xi| > r \}$, it is continuous on $\{ \xi \in \C_\ra : |\xi| \ge r \} \setminus \{\zeta\}$, and its boundary values are $\Arg \tilde{g}(\xi) = \frac{\pi}{2} \sign(\im(\xi - \zeta))$ for $\xi \in \partial D_r \setminus \{\zeta\}$ (we omit the details). The claim follows by the maximum principle.

Recall that $\zeta = \zeta_f(r)$ and take $\xi = \C_\ra$ with $|\xi| > r$. Then $f_{[\zeta]}(\xi) / \xi \in \C_\ra$, $\tilde{g}(\xi) \in \C_\ra$, and so $f(\xi) - f(\zeta) = \tilde{g}(\xi) / (f_{[\zeta]}(\xi) / \xi) \in \C \setminus (-\infty, 0]$. If $\xi \in \gamma_f$, then $f(\xi) - f(\zeta)$ is real, and hence positive. This proves that $\lambda_f$ is increasing on $|\gamma_f|$.

It follows that $\tilde{h}(\xi) = \Arg(f(\xi) - f(\zeta))$ is a bounded harmonic function in $D_r$, $\tilde{h}(\xi) > 0$ if $\im f(\xi) > 0$ and $\tilde{h}(\xi) < 0$ if $\im f(\xi) < 0$. Recall that $f(\xi) - f(\zeta) = \tilde{g}(\xi) / (f_{[\zeta]}(\xi) / \xi)$ for $\xi \in \C_\ra$, and that $\Arg \tilde{g}(\xi) = \frac{\pi}{2} \sign(\im(\xi - \zeta))$ and $\Arg (f_{[\zeta]}(\xi) / \xi) \in (-\tfrac{\pi}{2}, \tfrac{\pi}{2})$ for $\xi \in \partial D_r \setminus \{\zeta\}$. Therefore, for almost all $s > r$,
\formula[eq:rogers:real:aux2]{
 \lim_{t \searrow 0} \tilde{h}(i s + t) & \in [0, \pi] , & \lim_{t \searrow 0} \tilde{h}(-i s + t) & \in [-\pi, 0] .
}
Suppose that $r_0 \in (0, \infty) \setminus \overline{|\gamma_f|}$, $r_0 > r$ and that the above limits exist for $s = r_0$. Then either $\zeta_f(r_0) = i r_0$ or $\zeta_f(r_0) = -i r_0$. In the former case, $\im f(\xi) < 0$ and $\tilde{h}(\xi) < 0$ for $\xi \in \C_\ra$ sufficiently close to $i r_0$, and therefore $\lim_{t \searrow 0} \tilde{h}(i r_0 + t) \le 0$. Similarly, if $\zeta_f(r_0) = -i r_0$, then $\lim_{t \searrow 0} \tilde{h}(-i r_0 + t) \ge 0$. By~\eqref{eq:rogers:real:aux2},
\formula{
 \lim_{t \searrow 0} \tilde{h}(\zeta_f(r_0) + t) = 0 .
}
By continuity of $f$ at $\zeta_f(r_0)$, $f(\zeta_f(r_0)) - f(\zeta) \in [0, \infty)$ for almost all, and hence for all $r_0 \in (0, \infty) \setminus \overline{|\gamma_f|}$ such that $r_0 > r$.

A similar argument shows that $f(\zeta_f(r_0)) - f(\zeta) \in (-\infty, 0]$ for all $r_0 \in (0, \infty) \setminus \overline{|\gamma_f|}$ such that $r_0 < r$. (Alternatively, one can reuse the result for $r_0 > r$ by considering the Rogers function $1 / f(1 / \xi)$.)
\end{proof}

\begin{example}
\label{ex:rogers:real}
\begin{enumerate}[label=\rm (\alph*)]
\item For $f(\xi) = \frac{1}{2} \xi^2 - i b \xi$ (the L\'evy--Khintchine exponent of the Brownian motion with drift; $b \in \R$), one has $\dom_f = \C \setminus ((-i \infty, -2 i b] \cup [2 i b, i \infty))$, $\gamma_f = b i + \R$, $\zeta_f(r) = (r^2 - b^2)^{1/2} + b i$ and $\lambda_f(r) = \tfrac{1}{2} r^2$ for $r \ge |b|$, and $\zeta_f(r) = i r \sign b$ and $\lambda_f(r) = |b| r - \tfrac{1}{2} r^2$ for $r \in (0, b)$.
\item For $f(\xi) = a \xi^\alpha$ (the L\'evy--Khintchine exponent of a strictly stable L\'evy process; $\alpha \in (0, 2]$, $a$ satisfies~\eqref{eq:stable:a}), one has $\gamma_f = (-e^{-i \thet} \infty, 0) \cup (0, e^{i \thet} \infty)$, $\zeta_f(r) = r e^{i \thet}$ for $r > 0$ and $\lambda_f(r) = c r^\alpha$ for $r > 0$, where $\thet = -\tfrac{1}{\alpha} \Arg a$ and $c = |a|$ (see Section~\ref{sec:ex}).
\end{enumerate}
\end{example}

%
%

\section{Wiener--Hopf factorisation}
\label{sec:wh}

The proof that the Wiener--Hopf factors of a Rogers function are complete Bernstein functions was essentially given in~\cite[Theorem~2]{bib:r83}, where the condition $f \in \rogers$ is proved to be equivalent to the factorisation $f(\xi) + 1 = g^\ua(-i \xi) g^\da(i \xi)$ with $g^\ua, g^\da \in \cbf$. In this section a slightly modified statement is given in Theorem~\ref{th:rogers:wh}, and a simplified version of the proof from~\cite{bib:r83} is presented. Next we provide some examples and develop estimates for the Wiener--Hopf factors (Proposition~\ref{prop:rogers:wh:est}). After defining the notion of balanced Rogers functions, in Lemma~\ref{lem:rogers:curve} and in Corollaries~\ref{cor:rogers:wh} and~\ref{cor:rogers:wh:alt} alternative formulae for the Wiener--Hopf factors are given (Corollary~\ref{cor:rogers:wh} corresponds to the most common definition). The relation between the asymptotic behaviour of the ratio of two Rogers functions and the ratio of their Wiener--Hopf factors is discussed in Lemma~\ref{lem:rogers:wh:limit}. Finally, we introduce the notion of nearly balanced Rogers functions and study some of their properties.

\subsection{Wiener--Hopf factorisation theorem}

Thanks to Theorem~\ref{th:rogers}\ref{it:rogers:c}, the following fundamental result has a surprisingly simple proof.

\begin{theorem}[{see~\cite[Theorem~2]{bib:r83}}]
\label{th:rogers:wh}
A function $f$ is a nonzero Rogers function if and only if $f$ admits a decomposition $f(\xi) = f^\ua(-i \xi) f^\da(i \xi)$ for nonzero complete Bernstein functions $f^\ua$, $f^\da$. There is a unique \emph{normalised} choice of $f^\ua$ and $f^\da$ satisfying $f^\ua(1) = f^\da(1)$, and any other pair of factors is equal to $c f^\ua$ and $c^{-1} f^\da$ for some $c > 0$.
\end{theorem}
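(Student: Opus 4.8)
The plan is to read the factorisation off from the exponential representations --- Theorem~\ref{th:rogers}\ref{it:rogers:c} for Rogers functions and Theorem~\ref{th:cbf}\ref{it:cbf:c} for complete Bernstein functions --- and to derive the converse implication and the uniqueness statement from the corresponding facts about complete Bernstein functions, using Proposition~\ref{prop:cbf:arg} and an analysis of boundary values of the argument.

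\textbf{The factorisation is sufficient.} Suppose $f(\xi)=f^\ua(-i\xi)\,f^\da(i\xi)$ with $f^\ua,f^\da\in\cbf$ nonzero. A nonzero complete Bernstein function has no zero in $\C\setminus(-\infty,0]$ (apply Proposition~\ref{prop:rogers:trivial} to the nonzero symmetric Rogers function $f^\ua(\xi^2)$), so $f$ has no zero in $\C_\ra$. For $\xi\in\C_\ra$ we have $-i\xi\in\C_\da$ with $\Arg(-i\xi)=\Arg\xi-\tfrac{\pi}{2}\in(-\pi,0)$ and $i\xi\in\C_\ua$ with $\Arg(i\xi)=\Arg\xi+\tfrac{\pi}{2}\in(0,\pi)$; by Proposition~\ref{prop:cbf:arg}, $\Arg(-i\xi)\le\Arg f^\ua(-i\xi)\le0$ and $0\le\Arg f^\da(i\xi)\le\Arg(i\xi)$. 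All quantities lie in $(-\pi,\pi)$, so adding the two chains of inequalities gives $\Arg\xi-\tfrac{\pi}{2}\le\Arg f(\xi)\le\Arg\xi+\tfrac{\pi}{2}$, that is, $f(\xi)/\xi\in\overline{\C}_\ra$; hence $f$ is a nonzero Rogers function by Theorem~\ref{th:rogers}\ref{it:rogers:d}.

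\textbf{Construction of the factors.} Conversely, let $f$ be a nonzero Rogers function, written as in Theorem~\ref{th:rogers}\ref{it:rogers:c} with constant $c$ (necessarily $c>0$, since $c=0$ forces $f\equiv0$) and density $\ph:\R\to[0,\pi]$. Using Theorem~\ref{th:cbf}\ref{it:cbf:c}, define $f^\ua$ and $f^\da$ to be the complete Bernstein functions with multiplicative constant $\sqrt{c}$ and densities $s\mapsto\ph(s)$ and $s\mapsto\ph(-s)$ on $(0,\infty)$, respectively. Substituting $\eta=-i\xi$ into $f^\ua$ and $\eta=i\xi$ into $f^\da$ --- so that $\tfrac{-i\xi}{-i\xi+s}=\tfrac{\xi}{\xi+is}$ and $\tfrac{i\xi}{i\xi+s}=\tfrac{\xi}{\xi-is}$ --- and then changing variables $s\mapsto-s$ in the integral defining $f^\da$, one checks that $f^\ua(-i\xi)$ and $f^\da(i\xi)$ carry exactly the $s>0$ and the $s<0$ halves of the integral in Theorem~\ref{th:rogers}\ref{it:rogers:c}. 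Thus $f^\ua(-i\xi)\,f^\da(i\xi)=f(\xi)$ for $\xi\in\C_\ra$, and since the integral in \eqref{eq:cbf:c} vanishes at the normalisation point $\eta=1$, one gets $f^\ua(1)=f^\da(1)=\sqrt{c}$. For any $t>0$ the pair $(t f^\ua,\,t^{-1}f^\da)$ is again a pair of nonzero complete Bernstein factors with the same product, so it only remains to see that every pair arises this way.

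\textbf{Uniqueness.} Suppose $f^\ua(-i\xi)f^\da(i\xi)=g^\ua(-i\xi)g^\da(i\xi)$ on $\C_\ra$, with all four functions nonzero complete Bernstein, and let $\tilde\ph_{f^\ua},\tilde\ph_{f^\da},\tilde\ph_{g^\ua},\tilde\ph_{g^\da}$ denote the densities in their exponential representations. I would recover these densities from boundary values of the argument. Fix $s\in\R$ and let $t\searrow0$ along $\xi=t-is$: then $-i(t-is)=-s-it$ tends to $-s$ from below and $i(t-is)=s+it$ tends to $s$ from above, so combining \eqref{eq:cbf:ph}, continuity of complete Bernstein functions at points of $(0,\infty)$, and the Schwarz reflection identity $\overline{h(\bar\eta)}=h(\eta)$ for $h\in\cbf$, one finds that $\lim_{t\searrow0}\Arg\bigl(f^\ua(-i(t-is))\,f^\da(i(t-is))\bigr)$ exists for a.e.\ $s$ and equals $-\tilde\ph_{f^\ua}(s)$ when $s>0$ and $\tilde\ph_{f^\da}(-s)$ when $s<0$ (all boundary arguments involved, and their sums, lie in $(-\pi,\pi)$, so the argument is additive under the product without branch ambiguity; the a.e.\ existence of these limits is Theorem~\ref{th:harmonic:bounded}). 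Running the same computation with $g$ and using that the two products are the same function forces $\tilde\ph_{f^\ua}=\tilde\ph_{g^\ua}$ and $\tilde\ph_{f^\da}=\tilde\ph_{g^\da}$ a.e.\ on $(0,\infty)$. By the uniqueness part of Theorem~\ref{th:cbf}, $g^\ua=a f^\ua$ and $g^\da=b f^\da$ for some $a,b>0$, and $ab=1$ because the products agree; hence every pair of factors equals $(t f^\ua,\,t^{-1}f^\da)$ for some $t>0$, and the normalisation $f^\ua(1)=f^\da(1)$ forces $t=1$. The main work, I expect, is in this last step --- establishing the a.e.\ boundary limits and the additivity of $\Arg$ under the product --- whereas the sufficiency and the construction are essentially formal once the exponential representations are at hand.
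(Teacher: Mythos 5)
Your proposal is correct and follows essentially the same route as the paper: build $f^\ua$, $f^\da$ from the exponential representation of Theorem~\ref{th:rogers}\ref{it:rogers:c} by splitting the density $\ph$ into its restrictions to $(0,\infty)$ and $(-\infty,0)$, check $f^\ua(1)=f^\da(1)=\sqrt{c}$, and invoke uniqueness of the exponential representations (Theorems~\ref{th:cbf}\ref{it:cbf:c} and~\ref{th:rogers}\ref{it:rogers:c}) to settle uniqueness. The one place you deviate is the sufficiency direction, where the paper simply "reverses the argument" through the exponential representations while you instead combine the two-sided argument estimates of Proposition~\ref{prop:cbf:arg}; both are short and correct, and your version has the small advantage of not relying on the representation theorems at all for that direction.
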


\begin{proof}
Suppose that $f \in \rogers$ and $f$ is not identically $0$. With the notation of~\eqref{eq:rogers:c} in Theorem~\ref{th:rogers}\ref{it:rogers:c}, define
\formula[eq:rogers:wh]{
 f^\ua(\xi) & = \sqrt{c} \, \exp \expr{\frac{1}{\pi} \int_0^\infty \expr{\frac{\xi}{\xi + s} - \frac{1}{1 + s}} \, \ph(s) \D s} \\
 f^\da(\xi) & = \sqrt{c} \, \exp \expr{\frac{1}{\pi} \int_0^\infty \expr{\frac{\xi}{\xi + s} - \frac{1}{1 + s}} \, \ph(-s) \D s} .
}
By Theorem~\ref{th:cbf}\ref{it:cbf:c}, $f^\ua, f^\da \in \cbf$, and $f^\ua(1) = f^\da(1) = \sqrt{c}$. Theorem~\ref{th:rogers}\ref{it:rogers:c} gives
\formula{
 f^\ua(-i \xi) f^\da(i \xi) & = c \, \exp \expr{\frac{1}{\pi} \int_{-\infty}^\infty \expr{\frac{\xi}{\xi + i s} - \frac{1}{1 + |s|}} \, \ph(s) \D s} = f(\xi) ,
}
as desired. To prove the converse, one reverses the above argument. Uniqueness follows by uniqueness of the representations in Theorem~\ref{th:cbf}\ref{it:cbf:c} and Theorem~\ref{th:rogers}\ref{it:rogers:c}.
\end{proof}

\begin{remark}
\label{rem:rogers:wh}
For symmetric Rogers functions $f$, the Wiener--Hopf factors $f^\ua$ and $f^\da$ are equal, and therefore their normalisation is quite natural. In the general case, the condition $f^\ua(1) = f^\da(1)$ is somewhat artificial, and in many cases it may not be the most convenient one; see, for example, the extended Wiener--Hopf factorisation in Section~\ref{sec:norm}. Nevertheless, from now on, the symbols $f^\ua$, $f^\da$ always correspond to the normalised Wiener--Hopf factors.

Note, however, that the expressions $f^\ua(\xi_1) / f^\ua(\xi_2)$, $f^\da(\xi_1) / f^\da(\xi_2)$ and $f^\ua(\xi_1) f^\da(\xi_2)$ do not depend on the choice of the normalisation.
\end{remark}

\begin{remark}
\label{rem:rogers:wh:ext}
By the formula $f^\ua(\xi) = f(i \xi) / f^\da(-\xi)$, $f^\ua$ extends to a holomorphic function in $(\C \setminus (-\infty, 0]) \cup (-i \dom_f)$, where $\dom_f$ is the domain of $f$ (see Definition~\ref{def:rogers:curve}). In a similar manner, $f^\da$ extends to a holomorphic function in $(\C \setminus (-\infty, 0]) \cup (i \dom_f)$. These extensions are again given by~\eqref{eq:rogers:wh}.
\end{remark}

\begin{example}
\label{ex:rogers:wh}
\begin{enumerate}[label=\rm (\alph*)]
\item
\label{it:rogers:wh:bm}
The Wiener--Hopf factors for $f(\xi) = \frac{1}{2} \xi^2 - i b \xi$ are given by
\formula{
 f^\ua(\xi) & = \sqrt{\frac{1 + 2 b}{2}} \, \xi, & f^\da(\xi) & = \frac{\xi + b}{\sqrt{2 (1 + 2 b)}}
}
for $b \ge 0$, and by
\formula{
 f^\ua(\xi) & = \frac{\xi - b}{\sqrt{2 (1 - 2 b)}} \, , & f^\da(\xi) & = \sqrt{\frac{1 - 2 b}{2}} \, \xi
}
for $b \le 0$.
\item
\label{it:rogers:wh:st}
When $f(\xi) = a \xi^\alpha$ ($\alpha \in (0, 2]$, $a$ satisfying~\eqref{eq:stable:a}), the Wiener--Hopf factors are power functions, $f^\ua(\xi) = |a|^{1/2} \xi^{\beta_\ua}$, $f^\da(\xi) = |a|^{1/2} \xi^{\beta_\da}$, where
\formula{
 \beta_\ua & = \tfrac{\alpha}{2} - \tfrac{1}{\pi} \Arg a , & \beta_\da & = \tfrac{\alpha}{2} + \tfrac{1}{\pi} \, \Arg a .
}
Hence, $\beta_\ua = \ro \alpha$ and $\beta_\da = (1 - \ro) \alpha$; see~\eqref{eq:stable:scaling} and Section~\ref{sec:ex}.
\end{enumerate}
\end{example}

\subsection{Basic properties of Wiener--Hopf factors}

Wiener--Hopf factorisation is consistent with multiplicative operations on Rogers functions.

\begin{proposition}
\label{prop:rogers:wh:prop}
If $f$ is a nonzero Rogers function, then the Wiener--Hopf factors of
\formula{
 \frac{\xi^2}{f(\xi)} \, , && \frac{1}{f(1 / \xi)} \, , && \xi^2 f(1 / \xi)
}
are, respectively,
\formula{
 \frac{\xi}{f^\ua(\xi)} \text{ and } \frac{\xi}{f^\da(\xi)} \, , && \frac{1}{f^\da(1 / \xi)} \text{ and } \frac{1}{f^\ua(1 / \xi)} \, , && \xi f^\da(1 / \xi) \text{ and } \xi f^\ua(1 / \xi) .
}
\end{proposition}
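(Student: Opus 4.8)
The approach is to reduce the statement to the defining and uniqueness properties of the Wiener--Hopf factorisation recorded in Theorem~\ref{th:rogers:wh}: for a nonzero Rogers function $g$, a pair $(g^\ua, g^\da)$ of nonzero complete Bernstein functions with $g(\xi) = g^\ua(-i\xi) g^\da(i\xi)$ for $\xi \in \C_\ra$ is the normalised pair of Wiener--Hopf factors of $g$ if and only if $g^\ua(1) = g^\da(1)$, and any such pair is unique. First I would record two preliminary facts. By Proposition~\ref{prop:rogers:prop}\ref{it:rogers:prop:a}--\ref{it:rogers:prop:b}, the three functions $\xi^2/f(\xi)$, $1/f(1/\xi)$ and $\xi^2 f(1/\xi)$ are again nonzero Rogers functions, so each of them possesses a normalised Wiener--Hopf factorisation. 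Moreover, it is well known (see~\cite[Chapter~7]{bib:ssv10}; it also follows from Proposition~\ref{prop:cbf:arg}) that $\cbf$ is closed under each of the maps $h(\eta) \mapsto \eta/h(\eta)$, $h(\eta) \mapsto \eta\, h(1/\eta)$ and $h(\eta) \mapsto 1/h(1/\eta)$ whenever $h$ is nonzero, so the functions proposed as factors are nonzero complete Bernstein functions. In view of the converse direction of Theorem~\ref{th:rogers:wh}, it therefore only remains, in each of the three cases, to verify the factorisation identity and the normalisation $g^\ua(1) = g^\da(1)$; uniqueness then finishes the proof.

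The verifications are short computations based on the elementary identities $(-i\xi)(i\xi) = \xi^2$, $\;1/(i\xi) = -i/\xi$ and $1/(-i\xi) = i/\xi$, together with $f(\xi) = f^\ua(-i\xi) f^\da(i\xi)$ from Theorem~\ref{th:rogers:wh}. For $\xi^2/f(\xi)$ I would write
\[
 \frac{\xi^2}{f(\xi)} \;=\; \frac{(-i\xi)(i\xi)}{f^\ua(-i\xi)\,f^\da(i\xi)} \;=\; \frac{-i\xi}{f^\ua(-i\xi)}\cdot\frac{i\xi}{f^\da(i\xi)} \;=\; u(-i\xi)\,v(i\xi),
\]
where $u(\eta) = \eta/f^\ua(\eta)$ and $v(\eta) = \eta/f^\da(\eta)$. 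For $\xi^2 f(1/\xi)$, applying the factorisation to the argument $1/\xi \in \C_\ra$ and rewriting $-i/\xi = 1/(i\xi)$, $\;i/\xi = 1/(-i\xi)$, I would get
\[
 \xi^2 f(1/\xi) \;=\; \xi^2\, f^\ua\bigl(\tfrac{1}{i\xi}\bigr) f^\da\bigl(\tfrac{1}{-i\xi}\bigr) \;=\; \Bigl((-i\xi)\,f^\da\bigl(\tfrac{1}{-i\xi}\bigr)\Bigr)\Bigl((i\xi)\,f^\ua\bigl(\tfrac{1}{i\xi}\bigr)\Bigr) \;=\; l(-i\xi)\,k(i\xi),
\]
with $l(\eta) = \eta\, f^\da(1/\eta)$ and $k(\eta) = \eta\, f^\ua(1/\eta)$; note the superscripts have been interchanged. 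In the same way,
\[
 \frac{1}{f(1/\xi)} \;=\; \frac{1}{f^\ua\bigl(\tfrac{1}{i\xi}\bigr) f^\da\bigl(\tfrac{1}{-i\xi}\bigr)} \;=\; p(-i\xi)\,q(i\xi), \qquad p(\eta) = \frac{1}{f^\da(1/\eta)}, \quad q(\eta) = \frac{1}{f^\ua(1/\eta)}.
\]
In each case the normalisation is automatic from $f^\ua(1) = f^\da(1)$: indeed $u(1) = 1/f^\ua(1) = 1/f^\da(1) = v(1)$, $\;l(1) = f^\da(1) = f^\ua(1) = k(1)$ and $p(1) = 1/f^\da(1) = 1/f^\ua(1) = q(1)$. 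By the uniqueness part of Theorem~\ref{th:rogers:wh}, $(u,v)$, $(l,k)$ and $(p,q)$ are precisely the normalised Wiener--Hopf factors of $\xi^2/f(\xi)$, $\xi^2 f(1/\xi)$ and $1/f(1/\xi)$, which is exactly the claim.

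I do not expect any genuine obstacle: the content is a bookkeeping exercise once Theorems~\ref{th:rogers:wh} and~\ref{th:cbf} and Proposition~\ref{prop:rogers:prop} are in hand. The one point that must be handled with care is the interchange of the superscripts $\ua$ and $\da$ in the two cases involving $\xi \mapsto 1/\xi$. This happens because conjugating the argument of $f^\ua$ or $f^\da$ by the inversion $\eta \mapsto 1/\eta$ swaps $i$ and $-i$ (since $1/(i\xi) = -i/\xi$), so that the half-plane ``belonging'' to $f^\ua$ after the substitution is the one that belonged to $f^\da$ before; this is why the factors of $\xi^2 f(1/\xi)$ and of $1/f(1/\xi)$ appear with the labels exchanged, whereas those of $\xi^2/f(\xi)$ do not. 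One could also obtain the formula for $\xi^2/f(\xi)$ from that for $\xi^2 h(1/\xi)$ applied to the Rogers function $h(\eta) = 1/f(1/\eta)$, which provides a consistency check.
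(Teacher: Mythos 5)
Your proof is correct and follows essentially the same approach as the paper: both verify the factorisation identity $g(\xi) = g^\ua(-i\xi)g^\da(i\xi)$ by direct algebraic manipulation of $f(\xi) = f^\ua(-i\xi)f^\da(i\xi)$ using identities like $1/(i\xi) = -i/\xi$. You are somewhat more explicit than the paper's terse one-line proof in spelling out the reduction to the uniqueness clause of Theorem~\ref{th:rogers:wh}, the preservation of the normalisation $f^\ua(1)=f^\da(1)$, and the CBF-hood of the proposed factors, but the underlying computation is the same.
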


\begin{proof}
To prove the third statement, observe that $(-i \xi) f^\da(1 / (-i \xi)) (i \xi) (f^\ua(1 / (i \xi)) = \xi^2 (f^\da(i / \xi) f^\ua(-i / \xi)) = \xi^2 f(1 / \xi)$. The other ones are proved in a similar manner.
\end{proof}

Estimates of Wiener--Hopf factors play an important role.

\begin{proposition}
\label{prop:rogers:wh:est}
If $f$ is a nonzero Rogers function and $\xi > 0$, then
\formula[eq:rogers:wh:est]{
 \sqrt{\frac{|f(1)|}{2}} \, \frac{\xi}{1 + \xi} \le f^\ua(\xi) & \le \sqrt{2 |f(1)|} \, (1 + \xi) , \\
 \sqrt{\frac{|f(1)|}{2}} \, \frac{\xi}{1 + \xi} \le f^\da(\xi) & \le \sqrt{2 |f(1)|} \, (1 + \xi) .
}
\end{proposition}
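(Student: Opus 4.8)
The plan is to reduce the estimate to two ingredients: the monotonicity of complete Bernstein functions on $(0, \infty)$, and a two-sided comparison of $F(1)$ with $|F(i)|$ for a nonzero $F \in \cbf$.

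First I would use that any nonzero $F \in \cbf$ is nondecreasing on $(0, \infty)$, while $\xi \mapsto F(\xi)/\xi$ is nonincreasing there (both are immediate from the Stieltjes representation in Theorem~\ref{th:cbf}\ref{it:cbf:b}, or see~\cite{bib:ssv10}). Consequently $\min(1, \xi) F(1) \le F(\xi) \le \max(1, \xi) F(1)$ for $\xi > 0$, and since $\tfrac{\xi}{1 + \xi} \le \min(1, \xi)$ and $\max(1, \xi) \le 1 + \xi$, one gets $\tfrac{\xi}{1 + \xi} F(1) \le F(\xi) \le (1 + \xi) F(1)$. Applying this to $F = f^\ua$ and to $F = f^\da$ (both nonzero, since $f$ is), the proposition follows once it is known that $\sqrt{|f(1)|/2} \le f^\ua(1) = f^\da(1) \le \sqrt{2 |f(1)|}$.

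For this, I would establish the auxiliary bound $\tfrac{1}{\sqrt 2} F(1) \le |F(i)| \le \sqrt 2 \, F(1)$ for every nonzero $F \in \cbf$. Writing $F$ via Theorem~\ref{th:cbf}\ref{it:cbf:b} as $F(\xi) = c_0 + c_1 \xi + \tfrac1\pi \int_{(0,\infty)} \tfrac{\xi}{\xi + s} \tfrac{\tilde\mu(\D s)}{s}$, one has $F(1) = c_0 + c_1 + I$ and $F(i) = c_0 + c_1 i + J$ with $I = \tfrac1\pi \int \tfrac{1}{1+s} \tfrac{\tilde\mu(\D s)}{s} \ge 0$ and $J = \tfrac1\pi \int \tfrac{i}{i+s} \tfrac{\tilde\mu(\D s)}{s}$. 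The elementary inequalities $\tfrac{1}{1+s} \le \re \tfrac{i}{i+s} + \im \tfrac{i}{i+s} = \tfrac{1+s}{1+s^2}$ and $\bigl| \tfrac{i}{i+s} \bigr| = \tfrac{1}{\sqrt{1+s^2}} \le \tfrac{\sqrt 2}{1+s}$ (equivalent to $2s \ge 0$ and $(1-s)^2 \ge 0$) give, after integration against $\tfrac{\tilde\mu(\D s)}{s}$, that $\re J \ge 0$, $\im J \ge 0$, $\re J + \im J \ge I$ and $|J| \le \sqrt 2 \, I$. Hence $\re F(i), \im F(i) \ge 0$ and $\re F(i) + \im F(i) \ge F(1)$, so $|F(i)| \ge \tfrac{1}{\sqrt 2}(\re F(i) + \im F(i)) \ge \tfrac{1}{\sqrt 2} F(1)$; while $|F(i)| \le c_0 + c_1 + |J| \le \sqrt 2 (c_0 + c_1 + I) = \sqrt 2 \, F(1)$.

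Finally I would assemble the pieces. By Theorem~\ref{th:rogers:wh} the normalised factors satisfy $f^\ua(1) = f^\da(1) = \sqrt c$ (where $c$ is the constant in the exponential representation of $f$) and $f^\ua(-i) f^\da(i) = f(1)$; since $f^\ua, f^\da \in \cbf$ are real on $(0, \infty)$, Schwarz reflection gives $f^\ua(-i) = \overline{f^\ua(i)}$, whence $|f(1)| = |f^\ua(i)| \, |f^\da(i)|$. The auxiliary bound applied to $f^\ua$ and $f^\da$ yields $\tfrac12 c \le |f(1)| \le 2 c$, i.e.\ $\sqrt{|f(1)|/2} \le \sqrt c = f^\ua(1) = f^\da(1) \le \sqrt{2|f(1)|}$, as needed. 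The only step requiring genuine care is the auxiliary $\cbf$ estimate — the main point there is to split $F(i)$ so that its modulus and its (nonnegative) real and imaginary parts can all be compared to $F(1)$ through the scalar inequalities above; the remainder is bookkeeping, together with the trivial remark that $F(1) > 0$ for nonzero $F \in \cbf$ (and $f(1) \ne 0$ by Proposition~\ref{prop:rogers:trivial}), so no degeneracy arises.
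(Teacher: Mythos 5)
Your proof is correct and follows essentially the same route as the paper: bound $f^\ua(\xi)/f^\ua(1)$ and $f^\da(\xi)/f^\da(1)$ between $\min(1,\xi)$ and $\max(1,\xi)$ using monotonicity of complete Bernstein functions, and reduce the constant $f^\ua(1)=f^\da(1)$ to $|f(1)|$ by a two-sided comparison of $F(1)$ with $|F(\pm i)|$ for $F\in\cbf$, derived from the Stieltjes representation with the same scalar inequalities $(1+s)^2\le 2(1+s^2)$ and $1+s^2\le(1+s)^2$. The paper packages those inequalities a bit more compactly as $\tfrac{1}{\sqrt 2}\bigl|\tfrac{-i}{-i+s}\bigr| \le \tfrac{1}{1+s} \le \re\bigl((1+i)\tfrac{-i}{-i+s}\bigr)$, which also absorbs the $c_0,c_1$ terms without a separate case split, but the content and the constants are identical to what you obtain.
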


\begin{proof}
Recall that $|f^\ua(-i)| |f^\da(i)| = |f(1)|$ and $f^\ua(1) = f^\da(1)$. From the Stieltjes representation (Theorem~\ref{th:cbf}\ref{it:cbf:b}) and the inequality
\formula{
 \frac{1}{\sqrt{2}} \, \abs{\frac{-i}{-i + s}} & \le \frac{1}{1 + s} \le \re \expr{(1 + i) \, \frac{-i}{-i + s}} ,
}
it follows that
\formula{
 \frac{|f^\ua(-i)|}{\sqrt{2}} & \le f^\ua(1) \le \re ((1 + i) f^\ua(-i)) \le \sqrt{2} |f^\ua(-i)| .
}
This and a similar estimate for $f^\da$ yield
\formula{
 (f^\ua(1))^2 & = f^\ua(1) f^\da(1) \le 2 |f^\ua(-i)| |f^\da(i)| = 2 |f(1)|^2 ,
}
and a similar lower bound with constant $\frac{1}{2}$. Since $f^\ua$ is a complete Bernstein function, one has (by concavity and positivity)
\formula{
 \min(1, \xi) & \le \frac{f^\ua(\xi)}{f^\ua(1)} \le \max(1, \xi)
}
for $\xi \in (0, \infty)$. This completes the proof of the estimate for $f^\ua$. Estimate for $f^\da$ is proved in a similar way.
\end{proof}

\subsection{Balanced Rogers functions}

Main results of this paper require regularity condition stated in the following definition.

\begin{definition}
\label{def:rogers:reg}
The system of curves $\gamma$ is said to be \emph{balanced} if:
\begin{enumerate}[label=\rm (\alph*)]
\item
\label{it:rogers:reg:a}
$\gamma \cap \C_\ra$ is a single curve, $\gamma \cap i \R$ is either empty or equal to $\{0\}$, and $\gamma$ is symmetric with respect to the imaginary axis;
\item
\label{it:rogers:reg:b}
for all $r > 0$, the intersection of $\gamma$ with $\{ z \in \C : |z| = r \}$ is equal to $\{\zeta(r), -\overline{\zeta(r)}\}$ for some $\zeta(r) \in \C_\ra$;
\item
\label{it:rogers:reg:c}
the orientation of $\gamma$ is chosen so that $\zeta(r)$ ($r \in (0, \infty)$) is a parametrisation of $\gamma \cap \C_\ra$, and $-\overline{\zeta(-r)}$ ($r \in (-\infty, 0)$) is a parametrisation of $\gamma \cap \C_\la$;
\item
\label{it:rogers:reg:d}
$\sup \set{|\Arg \zeta(r))| : r \in (0, \infty)} < \frac{\pi}{2}$;
\item
\label{it:rogers:reg:e}
$|\zeta'(r)| (1 + r^2)^{-1} \log(r + 1 / r)$ is integrable over $r \in (0, \infty)$.
\end{enumerate}
The function $\zeta(r)$ is said to be the \emph{canonical parametrisation} of $\gamma \cap \C_\ra$. A point $\xi \in \C \setminus \gamma$ is said to \emph{lie above $\gamma$}, denoted $\xi \in \gamma^\ua$, if
\formula{
 \Arg_\ua \xi \in (\Arg \zeta(|\xi|), \pi - \Arg \zeta(|\xi|)) ,
}
where $\Arg_\ua$ is the continuous version of the argument function on $\C \setminus (-i \infty, 0]$. In a similar manner, $\xi \in \C \setminus \gamma$ \emph{lies below $\gamma$}, denoted $\xi \in \gamma^\da$, if
\formula{
 \Arg_\da \xi \in (-\pi - \Arg \zeta(|\xi|), \Arg \zeta(|\xi|)) ,
}
where $\Arg_\da$ is the continuous version of the argument function on $\C \setminus [0, i \infty)$. A Rogers function $f$ is said to be \emph{balanced} if it is nonconstant, and $\gamma_f$ is balanced.
\end{definition}

Note that the definition of $\zeta(r)$ for $\gamma_f$ coincides with $\zeta_f(r)$ defined in Theorem~\ref{th:rogers:real}.

It turns out that $f \in \rogers$ is a balanced Rogers function if and only if the line of real values $\gamma_f$ is contained in the sector $\{ \xi \in \C_\ra : |\Arg \xi| < \tfrac{\pi}{2} - \eps\}$ for some $\eps > 0$.

\begin{lemma}
\label{lem:rogers:reg}
If $f$ is a nonconstant Rogers function and $\gamma_f$ satisfies condition~\ref{it:rogers:reg:d} in Definition~\ref{def:rogers:reg}, then it automatically satisfies condition~\ref{it:rogers:reg:e}, and therefore $f$ is a balanced Rogers function. In fact, in this case $|\zeta_f'(r)|$ is bounded by a constant, which depends only on the supremum in condition~\ref{it:rogers:reg:d} in Definition~\ref{def:rogers:reg}.
\end{lemma}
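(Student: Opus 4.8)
The plan. First, condition~\ref{it:rogers:reg:d} in Definition~\ref{def:rogers:reg} forces $\abs{\gamma_f} = (0,\infty)$: by Theorem~\ref{th:rogers:real}\ref{it:rogers:real:b} one has $\zeta_f(r) \in \{-ir, ir\}$, hence $\abs{\Arg \zeta_f(r)} = \tfrac{\pi}{2}$, whenever $r \notin \abs{\gamma_f}$, and this is incompatible with the supremum in~\ref{it:rogers:reg:d} being some $\tfrac{\pi}{2} - \delta < \tfrac{\pi}{2}$. Hence, $\zeta_f$ being continuous (Theorem~\ref{th:rogers:real}\ref{it:rogers:real:b}), $\gamma_f \cap \C_\ra = \zeta_f((0,\infty))$ is a single simple analytic curve (Theorem~\ref{th:rogers:real}\ref{it:rogers:real:c}), parametrised by $r \mapsto \zeta_f(r) = r e^{i\vartheta(r)}$ with $\vartheta$ real-analytic and $\abs{\vartheta(r)} \le \tfrac{\pi}{2} - \delta$, and $\gamma_f \cap i\R \subseteq \{0\}$; with symmetry of $\gamma_f$ (automatic) this is conditions~\ref{it:rogers:reg:a}--\ref{it:rogers:reg:c}. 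As $\int_0^\infty (1+r^2)^{-1} \log(r + r^{-1}) \D r < \infty$, condition~\ref{it:rogers:reg:e} (hence balance of $f$) follows once $\abs{\zeta_f'(r)}$ is bounded by a constant depending only on $\delta$. Now $\zeta_f'(r) = e^{i\vartheta(r)}(1 + ir\vartheta'(r))$, and since $\lambda_f'(r) = f'(\zeta_f(r))\zeta_f'(r)$ is positive (Theorem~\ref{th:rogers:real}\ref{it:rogers:real:d}),
\formula{
 \frac{\zeta_f(r) f'(\zeta_f(r))}{f(\zeta_f(r))} & = \frac{\lambda_f'(r)\, r}{\lambda_f(r)\,(1 + ir\vartheta'(r))} \, , \\
 \abs{\zeta_f'(r)} & = \bigl(1 + r^2 \vartheta'(r)^2\bigr)^{1/2} = \frac{\abs{\zeta_f(r) f'(\zeta_f(r)) / f(\zeta_f(r))}}{\re\bigl(\zeta_f(r) f'(\zeta_f(r)) / f(\zeta_f(r))\bigr)} \, ,
}
the argument of $\zeta_f(r) f'(\zeta_f(r)) / f(\zeta_f(r))$ being $-\arctan(r\vartheta'(r))$. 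By Proposition~\ref{prop:rogers:prime}, using $\abs{\im \zeta_f(r)} / \abs{\zeta_f(r)} \le \cos\delta$ and $\abs{\zeta_f(r)} / \re \zeta_f(r) \le 1/\sin\delta$, the numerator is bounded by a constant $C(\delta)$; so it suffices to bound $\re(\zeta_f(r) f'(\zeta_f(r)) / f(\zeta_f(r)))$ below by a positive constant depending only on $\delta$ — equivalently, to keep $\Arg(\zeta_f(r) f'(\zeta_f(r)) / f(\zeta_f(r)))$ away from $\pm\tfrac{\pi}{2}$ by a $\delta$-dependent margin.

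This I would prove by a normal-family argument. Were it false, there would be $\delta > 0$, nonconstant $f_n \in \rogers$ with $\sup_r \abs{\Arg \zeta_{f_n}(r)} \le \tfrac{\pi}{2} - \delta$, and $r_n > 0$ with $\re(\zeta_{f_n}(r_n) f_n'(\zeta_{f_n}(r_n)) / f_n(\zeta_{f_n}(r_n))) \to 0$. Replacing $f_n(\xi)$ by $f_n(r_n\xi)$ (which preserves the hypotheses and moves $r_n$ to $1$) and then by a positive multiple (which alters neither $\gamma_{f_n}$ nor the logarithmic derivative), we may assume $r_n = 1$ and $f_n(1) = 1$. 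By Proposition~\ref{prop:rogers:bound} (with $r = 1$) the $f_n$ are locally uniformly bounded, and bounded away from $0$, on $\C_\ra$; since by Theorem~\ref{th:rogers}\ref{it:rogers:d} a locally uniform limit of Rogers functions is again one, a subsequence converges locally uniformly to a nonzero $f \in \rogers$ with $f(1) = 1$, with $f_n' \to f'$ locally uniformly. If $f$ is nonconstant, then along a further subsequence $\zeta_{f_n}(1) \to \zeta_* \in \C_\ra$ with $\abs{\zeta_*} = 1$ and $f(\zeta_*) = \lim f_n(\zeta_{f_n}(1)) > 0$, so $\zeta_* = \zeta_f(1)$; near $\zeta_*$ the curve $\gamma_f$ is analytic (Theorem~\ref{th:rogers:real}\ref{it:rogers:real:c}) and $f'(\zeta_*) \ne 0$ (as $f_{[\zeta_*]}$ is a Rogers function, Lemma~\ref{lem:rogers:quot}), so the implicit function theorem gives $\zeta_{f_n}'(1) \to \zeta_f'(1)$ and hence $\re(\zeta_{f_n}(1) f_n'(\zeta_{f_n}(1)) / f_n(\zeta_{f_n}(1))) \to \re(\zeta_f(1) f'(\zeta_f(1)) / f(\zeta_f(1))) = \lambda_f'(1) / (\lambda_f(1)(1 + \vartheta'(1)^2)) > 0$ by Theorem~\ref{th:rogers:real}\ref{it:rogers:real:d} — a contradiction.

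The hard part is the degenerate case $f \equiv 1$, in which $f_n' \to 0$ locally uniformly. Here one must rescale the first-order variation: with $m_n$ the supremum of $\abs{f_n'}$ over a fixed closed disc about $\zeta_*$ of radius $\asymp \sin\delta$ (contained in $\C_\ra$), the functions $(f_n - f_n(\zeta_*))/m_n$ are uniformly bounded on that disc and, along a subsequence, converge locally uniformly on it to a \emph{nonconstant} holomorphic $\hat f$; its imaginary part $\hat v = \im \hat f$ is a nonzero harmonic function with $\hat v(\zeta_*) = 0$, and dividing~\eqref{eq:rogers:angular} by $m_n$ and passing to the limit yields $\partial_\theta(\hat v / \re\xi) \ge 0$, so $\partial_\theta \hat v(\zeta_*) \ge 0$. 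Now $\abs{\zeta_{f_n}'(1)} \to \infty$ forces $\partial_\theta \hat v(\zeta_*) = 0$ with $\nabla \hat v(\zeta_*) \ne 0$, and the only harmonic $\hat v$ compatible with the sign structure ($\hat v > 0$ above the limiting curve, $< 0$ below) and exhibiting this is the ``circular'' variation $\hat v = \im\bigl(iD(\xi - \xi^{-1})\bigr)$, whose nodal set near $\zeta_*$ is the arc $\{\abs{\xi} = 1\}$. But this is excluded, because each $\gamma_{f_n}$ meets every centred circle at most once (Theorem~\ref{th:rogers:real}\ref{it:rogers:real:a}), while the \emph{strict} positivity in~\eqref{eq:rogers:angular} keeps $\partial_\theta(\hat v / \re\xi)$ a nondegenerate Poisson-type integral in the limit unless $\hat v$ itself degenerates to that special form. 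Carrying this last step through quantitatively enough to produce the contradiction is, I expect, the crux of the proof.
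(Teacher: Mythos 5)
Your reduction is genuinely different from the paper's, and part of it is correct, but the degenerate case is a real gap --- one you yourself flag as ``the crux'' you haven't carried through. Your identity $|\zeta_f'(r)| = |\zeta_f f'(\zeta_f)/f(\zeta_f)|\,/\,\re\bigl(\zeta_f f'(\zeta_f)/f(\zeta_f)\bigr)$, together with Proposition~\ref{prop:rogers:prime} and the bounds $|\im\zeta_f|/|\zeta_f| \le \cos\delta$, $|\zeta_f|/\re\zeta_f \le 1/\sin\delta$ from condition~\ref{it:rogers:reg:d}, is a clean reduction to lower-bounding $\re(\zeta_f f'(\zeta_f)/f(\zeta_f))$ by a $\delta$-dependent positive constant. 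The nonconstant-limit branch of your normal-family argument is sound: Proposition~\ref{prop:rogers:bound} gives local compactness, the limit is a nonzero Rogers function, $\zeta_* = \zeta_f(1)$, and the nondegeneracy $\re(\zeta_f f'(\zeta_f)) > 0$ (which follows from $\lambda_f' > 0$) lets the implicit function theorem pass $\zeta_{f_n}'(1) \to \zeta_f'(1)$.

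The gap is the degenerate branch $f_n \to 1$. Your blow-up setup is plausible, but the pivotal claim --- that $\partial_\theta \hat v(\zeta_*) = 0$ with $\nabla\hat v(\zeta_*) \ne 0$ and the ``sign structure'' force $\hat v$ to be the circular variation $\im\bigl(iD(\xi - \xi^{-1})\bigr)$, and that this is then excluded by~\eqref{eq:rogers:angular} --- is asserted, not proved. Moreover the structural information your blow-up retains is weaker than it appears: the strict positivity of $(x\partial_y - y\partial_x)g_n$ in~\eqref{eq:rogers:angular} is an integral against the boundary density $\ph_n$ of the exponential representation, and as $f_n \to 1$ these densities degenerate along with $f_n$, so there is no obvious uniform lower bound surviving in the limit. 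Without closing this, the compactness argument does not produce the stated $\delta$-dependent constant.

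For comparison, the paper avoids compactness entirely: writing $\zeta_f(r) = r e^{i\thet(r)}$, it identifies $r\thet'(r)$ as the ratio of the radial to the angular derivative of $g(\xi) = (\Arg f(\xi))/\re\xi$ along $\gamma_f$, expresses both via~\eqref{eq:rogers:angular} and~\eqref{eq:rogers:radial} as integrals with explicit positive kernels against the \emph{same} measure $\ph(s)\D s$, and then bounds the ratio directly by exploiting the inequalities $g(r t e^{i\delta}) \ge 0 \ge g(r t^{-1} e^{-i\delta})$ at the two sample points $t = \sqrt{2}\pm 1$. This both handles the ``small $f$'' situation (since $\ph$ is intrinsic, not scale-sensitive) and produces an explicit constant, which your route, even if completed, would not.
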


\begin{proof}
Let $\zeta_f(r) = r e^{i \thet(r)}$. Then $\zeta_f'(r) = (1 + i r \thet'(r)) e^{i \thet(r)}$, so that it suffices to prove that $r \thet'(r)$ is bounded in $r \in (0, \infty)$.

Define $g(\xi) = (\Arg f(\xi)) / (\re \xi)$, as in the proof of Theorem~\ref{th:rogers:real}. Since $g(\zeta_f(r)) = 0$, the gradient of $g$ at $\zeta_f(r)$ is orthogonal to $\zeta_f'(r)$. Therefore,
\formula{
 r \thet'(r) & = -\frac{r \tfrac{\partial}{\partial r} g(r e^{i \thet(r)})}{\tfrac{\partial}{\partial \thet} g(r e^{i \thet(r)})} = -\frac{(x \tfrac{\partial}{\partial x} + y \tfrac{\partial}{\partial y}) g(r e^{i \thet(r)})}{(x \tfrac{\partial}{\partial y} - y \tfrac{\partial}{\partial x}) g(r e^{i \thet(r)})} \, .
}
By~\eqref{eq:rogers:angular},
\formula{
 (x \tfrac{\partial}{\partial y} - y \tfrac{\partial}{\partial x}) g(x + i y) & = \frac{2}{\pi} \int_{-\infty}^\infty \frac{x |s|}{(x^2 + (s + y)^2)^2} \, \ph(s) \D s
}
for $x > 0$ and $y \in \R$; here $\ph$ is as in Theorem~\ref{th:rogers}\ref{it:rogers:c}. A similar calculation shows that
\formula[eq:rogers:radial]{
 (x \tfrac{\partial}{\partial x} + y \tfrac{\partial}{\partial y}) g(x + i y) & = \frac{1}{\pi} \int_{-\infty}^\infty \frac{-2 x^2 - 2 y (s + y)}{(x^2 + (s + y)^2)^2} \, (-\ph(s) \sign s) \D s \\
 & = -2 g(x + i y) - \frac{2}{\pi} \int_{-\infty}^\infty \frac{(s + y) |s|}{(x^2 + (s + y)^2)^2} \, \ph(s) \D s .
}
Since $g(r e^{i \thet(r)}) = 0$, it suffices to prove that for some $c > 0$ and all $r > 0$,
\formula{
 \abs{\int_{-\infty}^\infty \frac{(s + y) |s|}{(x^2 + (s + y)^2)^2} \, \ph(s) \D s} & \le c \int_{-\infty}^\infty \frac{x |s|}{(x^2 + (s + y)^2)^2} \, \ph(s) \D s ,
}
where $x + i y = r e^{i \thet(r)} = \zeta_f(r)$. Equivalently,
\formula{
 \int_{-\infty}^\infty \frac{c r \cos \thet(r) \pm (s + r \sin \thet(r))}{(r^2 + s^2 + 2 r s \sin \thet(r))^2} \, |s| \ph(s) \D s & \ge 0
}
for both choices of sign. After substituting $s = r u$, one needs to prove that
\formula[eq:rogers:reg:goal]{
 \int_{-\infty}^\infty \frac{c \cos \thet(r) \pm (u + \sin \thet(r))}{(1 + u^2 + 2 u \sin \thet(r))^2} \, |u| \ph(r u) \D u & \ge 0
}
for both choices of sign and all $r > 0$.

Let $\delta = \sup \set{|\Arg \zeta(r))| : r \in (0, \infty)}$. By the assumption, $\delta < \tfrac{\pi}{2}$, and by Theorem~\ref{th:rogers:real}, $g(r t e^{i \delta}) \ge 0$ and $g(r t^{-1} e^{-i \delta}) \le 0$ for all $t > 0$. By~\eqref{eq:rogers:real:aux1},
\formula{
 0 & \le t^2 g(r t^{-1} e^{-i \delta}) - t^{-2} g(r t^{-1} e^{-i \delta}) \displaybreak[0] \\
 & = \int_{-\infty}^\infty \expr{\frac{t^2 (-\ph(s) \sign s)}{(r t \cos \delta)^2 + (s + r t \sin \delta)^2} - \frac{t^{-2} (-\ph(s) \sign s)}{(r t^{-1} \cos \delta)^2 + (s - r t^{-1} \sin \delta)^2}} \D s \displaybreak[0] \\
 & = \int_{-\infty}^\infty \frac{(t + t^{-1}) (2 r \sin \delta - (t - t^{-1}) s)}{(r^2 t^2 + s^2 + 2 r t s \sin \delta) (r^2 t^{-2} + s^2 - 2 r t^{-1} s \sin \delta)} \, |s| \ph(s) \D s .
}
Substitution $s = r u$ gives
\formula[eq:rogers:reg:aux1]{
 \int_{-\infty}^\infty \frac{2 \sin \delta - (t - t^{-1}) u}{(t^2 + u^2 + 2 t u \sin \delta) (t^{-2} + u^2 - 2 t^{-1} u \sin \delta)} \, |u| \ph(r u) \D u & \ge 0 .
}
Define
\formula{
 c_1(\delta, u) & = \inf \set{\frac{(1 + u^2 + 2 u \sin \thet)^2}{(t^2 + u^2 + 2 t u \sin \delta) (t^{-2} + u^2 - 2 t^{-1} u \sin \delta)} : \thet \in [-\delta, \delta], \, t = \sqrt{2} \pm 1 } , \displaybreak[0] \\
 c_2(\delta, u) & = \sup \set{\frac{(1 + u^2 + 2 u \sin \thet)^2}{(t^2 + u^2 + 2 t u \sin \delta) (t^{-2} + u^2 - 2 t^{-1} u \sin \delta)} : \thet \in [-\delta, \delta], \, t = \sqrt{2} \pm 1 }
}
for $u \in \R$. By a simple calculation, $c_1(\delta, u)$ and $c_2(\delta, u)$ converge to $1$ as $u \nearrow \infty$ or $u \searrow -\infty$, and furthermore $u (c_1(\delta, u) - 1)$ and $u (c_2(\delta, u) - 1)$ are bounded in $u \in \R$.

By taking $t = \sqrt{2} - 1$ in~\eqref{eq:rogers:reg:aux1},
\formula{
 \int_{-\infty}^\infty \frac{(c_1(\delta, u) \ind_{(-\infty, -\sin \delta)}(u) + c_2(\delta, u) \ind_{(-\sin \delta, \infty)}(u)) (\sin \delta + u)}{(1 + u^2 + 2 u \sin \thet)^2} \, |u| \ph(r u) \D u & \ge 0
}
for all $\thet \in [-\delta, \delta]$. Therefore, if
\formula{
 c_3(\delta) & = \sup \set{(c_1(\delta, u) \ind_{(-\infty, -\sin \delta)}(u) + c_2(\delta, u) \ind_{(-\sin \delta, \infty)}(u)) (\sin \delta + u) - u} \in [0, \infty) ,
}
then it follows that
\formula{
 \int_{-\infty}^\infty \frac{c_3(\delta) + u}{(1 + u^2 + 2 u \sin \thet)^2} \, |u| \ph(r u) \D u & \ge 0
}
for all $\thet \in [-\delta, \delta]$ and $r > 0$. In a similar manner, by considering $t = \sqrt{2} + 1$,
\formula{
 \int_{-\infty}^\infty \frac{c_3(\delta) - u}{(1 + u^2 + 2 u \sin \thet)^2} \, |u| \ph(r u) \D u & \ge 0 .
}
Formula~\eqref{eq:rogers:reg:goal} follows, with $c = (c_3(\delta) + \sin \delta) / \cos \delta$ (recall that $\thet(r) \in [-\delta, \delta]$).
\end{proof}

\begin{example}
\label{ex:rogers:balanced}
\begin{enumerate}[label=\rm (\alph*)]
\item The Rogers function $f(\xi) = \tfrac{1}{2} \xi^2 - i b \xi$ (the L\'evy--Khintchine exponent of the Brownian motion with drift; $b \in \R$) is a balanced Rogers function if and only if $b = 0$.
\item The Rogers function $f(\xi) = a \xi^\alpha$ (the L\'evy--Khintchine exponent of a strictly stable L\'evy process; $\alpha \in (0, 2]$, $a$ satisfies~\eqref{eq:stable:a}) is balanced if and only if $|\Arg a| < \tfrac{\alpha \pi}{2}$ (see~\eqref{eq:stable:a}).
\item The Rogers function $f(\xi) = \xi / (\xi - a i) - i b \xi$ (the L\'evy--Khintchine exponent of the classical risk process; $a, b > 0$) is not a balanced Rogers function.
\end{enumerate}
\end{example}

\begin{proposition}
\label{prop:rogers:reg:lambda}
If $f$ is a balanced Rogers function, then
\formula[eq:rogers:reg:lambda:zero]{
 \lambda_f(0^+) & = f(0^+) = c_0 ,
}
where $c_0$ is the constant in the Stieltjes representation of $f$ (Theorem~\ref{th:rogers}\ref{it:rogers:b}). If $f$ is in addition bounded, then
\formula[eq:rogers:reg:lambda:infty]{
 \lambda_f(\infty^-) & = f(\infty^-) ;
}
otherwise, $\lim_{r \nearrow \infty} \lambda_f(r) = \infty$.
\end{proposition}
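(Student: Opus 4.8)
The plan is to use the fact that, for a balanced Rogers function $f$, the curve $\gamma_f \cap \C_\ra$ is parametrised by $\zeta_f(r) = \zeta(r)$ for \emph{every} $r \in (0,\infty)$ (so $|\gamma_f| = (0,\infty)$, $\partial|\gamma_f| = \varnothing$, and $\lambda_f$ is defined and, by Theorem~\ref{th:rogers:real}\ref{it:rogers:real:d}, strictly increasing on all of $(0,\infty)$, whence $\lambda_f(0^+)$ and $\lambda_f(\infty^-)\in[0,\infty]$ exist), and that $\delta := \sup_{r>0}\abs{\Arg\zeta_f(r)} < \tfrac{\pi}{2}$ by condition~\ref{it:rogers:reg:d} of Definition~\ref{def:rogers:reg}. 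Thus $\zeta_f(r)$ always lies in the sector $S_\delta = \set{\xi\in\C_\ra : \abs{\Arg\xi}\le\delta}$, in particular $\re\zeta_f(r)\ge r\cos\delta$, and $\zeta_f(r)\to0$ as $r\to0^+$, $\zeta_f(r)\to\infty$ as $r\to\infty$. I would then show, by dominated convergence applied to the representations of Theorem~\ref{th:rogers}, that $f$ has a limit as $\xi\to0$ inside $S_\delta$ equal to $c_0$, and — when $f$ is bounded — a limit as $\xi\to\infty$ inside $S_\delta$ equal to $f(\infty^-)$; evaluating these along $\xi = \zeta_f(r)$ gives the asserted values of $\lambda_f(0^+)$ and $\lambda_f(\infty^-)$.

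For the behaviour at $0$: as $\xi\to0$ in $S_\delta$, the terms $-ic_1\xi$ and $c_2\xi^2$ in~\eqref{eq:rogers:b} tend to $0$, and for the integral term I would use the estimate $\abs{\tfrac{\xi}{\xi+is}+\tfrac{i\xi\sign s}{1+\abs{s}}}\le\tfrac{\abs{\xi}(1+\abs{\xi}^2)}{(\re\xi)(1+s^2)}$ established in the proof of Proposition~\ref{prop:rogers:bound}. For $\xi\in S_\delta$ with $\abs{\xi}\le1$ this majorant is at most $\tfrac{2}{\cos\delta}\cdot\tfrac{1}{1+s^2}$, and $\tfrac{1}{(1+s^2)\abs{s}}\,\mu(\D s)$ is a finite measure on $\R\setminus\{0\}$ (it is comparable near $0$ and near $\infty$ to $\abs{s}^{-3}\min(1,s^2)\mu(\D s)$, which is finite by hypothesis). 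Dominated convergence therefore gives $f(\xi)\to c_0$ as $\xi\to0$ in $S_\delta$, hence $\lambda_f(0^+) = c_0$, which equals $f(0^+)$ by~\eqref{eq:rogers:const}.

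For the behaviour at $\infty$, first suppose $f$ is bounded. By Proposition~\ref{prop:rogers:bounded}, $f(\xi) = c_0 + \tfrac{1}{\pi}\int_{\R\setminus\{0\}}\tfrac{\xi}{\xi+is}\,\tfrac{\mu(\D s)}{\abs{s}}$ with $\tfrac{\mu(\D s)}{\abs{s}}$ finite, and $f(\infty^-)$ exists and is real. Since $\abs{\tfrac{\xi}{\xi+is}}\le\tfrac{\abs{\xi}}{\re\xi}\le\tfrac{1}{\cos\delta}$ for $\xi\in S_\delta$ and $\tfrac{\xi}{\xi+is}\to1$ pointwise as $\xi\to\infty$, dominated convergence shows that $f(\xi)$ tends to $c_0 + \tfrac{1}{\pi}\int_{\R\setminus\{0\}}\tfrac{\mu(\D s)}{\abs{s}}$ both along $(0,\infty)$ (so that this quantity equals $f(\infty^-)$) and along $\xi=\zeta_f(r)$ as $r\to\infty$; hence $\lambda_f(\infty^-) = f(\infty^-)$. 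If instead $f$ is not bounded, fix any $\rho>0$ and apply Proposition~\ref{prop:rogers:bound} with $\xi=\zeta_f(r)$ and the free parameter (denoted $r$ there) set to $\rho$: since $\lambda_f(r) = \abs{f(\zeta_f(r))}$, $\abs{\zeta_f(r)} = r$, and $\re\zeta_f(r)/r\ge\cos\delta$, the lower bound in~\eqref{eq:rogers:bound} gives $\lambda_f(r)\ge\tfrac{\cos\delta}{\sqrt{2}}\cdot\tfrac{r^2}{\rho^2+r^2}\abs{f(\rho)}$, so $\liminf_{r\to\infty}\lambda_f(r)\ge\tfrac{\cos\delta}{\sqrt{2}}\abs{f(\rho)}$ for every $\rho>0$. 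Letting $\rho\to\infty$ and using that $\abs{f(\rho)}\to\infty$ for an unbounded Rogers function (Proposition~\ref{prop:rogers:bounded}) yields $\lim_{r\to\infty}\lambda_f(r) = \infty$.

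The only step requiring genuine care is the dominated-convergence argument near $0$: one must exhibit a $\mu$-integrable majorant for the Stieltjes integrand in~\eqref{eq:rogers:b} that is uniform over $S_\delta$, and this is precisely where balancedness ($\delta<\tfrac{\pi}{2}$) is used, via the factor $\cos\delta$ in the denominator of the majorant; the remaining manipulations are routine bookkeeping with the representations already available.
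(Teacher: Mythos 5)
Your proof is correct, and it uses the same ingredients the paper does: condition~\ref{it:rogers:reg:d} of Definition~\ref{def:rogers:reg} to keep $\zeta_f(r)$ in a sector where $\re\zeta_f(r)\ge r\cos\delta$, Proposition~\ref{prop:rogers:bounded} for the bounded case, and Proposition~\ref{prop:rogers:bound} as the comparison tool; the bounded case at $\infty$ is argued essentially identically. The one place you take a genuinely different route is $\lambda_f(0^+)=c_0$: you re-derive, by dominated convergence on the Stieltjes representation~\eqref{eq:rogers:b}, the sectoral limit $f(\xi)\to c_0$ as $\xi\to0$ inside the sector. The paper is more economical here: it notes that $f-c_0$ is again a Rogers function and applies Proposition~\ref{prop:rogers:bound} with $r=|\xi|$ to $g=f-c_0$ and $\xi=\zeta_f(r)$, which gives $|\lambda_f(r)-c_0|\le c\,|f(r)-c_0|$ for a constant $c$ depending only on $\delta$, reducing the sectoral limit to the already-known limit~\eqref{eq:rogers:const} on the positive real axis and dispensing with any re-examination of the measure $\mu$. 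Similarly, for unbounded $f$ you introduce a free parameter $\rho$ and take two nested limits, whereas the paper simply puts the free parameter in Proposition~\ref{prop:rogers:bound} equal to $r=|\zeta_f(r)|$ and gets $\lambda_f(r)\ge c\,|f(r)|\to\infty$ directly; both are valid, yours is just slightly longer. The upshot: wherever you compute a dominated-convergence estimate by hand, the paper factors that work through Proposition~\ref{prop:rogers:bound} applied at $r=|\xi|$, which is worth internalising since the same device recurs throughout the paper (e.g.\ in Lemmas~\ref{lem:rogers:curve} and~\ref{lem:rogers:xwh}).
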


\begin{proof}
Since $f(\xi) - c_0$ is a Rogers function of $\xi$, by Proposition~\ref{prop:rogers:bound} (with $r = |\xi|$) and Definition~\ref{def:rogers:reg}, there is $c$ such that $|\lambda_f(r) - c_0| = |f(\zeta_f(r)) - c_0| \le c |f(r) - c_0|$ for all $r > 0$. This and~\eqref{eq:rogers:const} in Theorem~\ref{th:rogers} proves~\eqref{eq:rogers:reg:lambda:zero}.

If $f$ bounded, then it is given by~\eqref{eq:rogers:bounded} in Proposition~\ref{prop:rogers:bounded}. By condition~\ref{it:rogers:reg:d} in Definition~\ref{def:rogers:reg}, there is $c$ such that $|\xi / (\xi + i s)| \le |\xi| / \re \xi \le c$ for $\xi \in \gamma_f \cap \C_\ra$ and $s \in \R$. Hence, dominated convergence can be applied to~\eqref{eq:rogers:bounded}, and
\formula{
 \lim_{r \nearrow \infty} \lambda_f(r) & = c_0 + \frac{1}{\pi} \int_{\R \setminus \{0\}} \frac{\mu(\D s)}{|s|} = \lim_{\xi \nearrow \infty} f(\xi) ,
}
as desired. If $f$ is unbounded, then, by Proposition~\ref{prop:rogers:bound} (with $r = |\xi|$) and condition~\ref{it:rogers:reg:d} in Definition~\ref{def:rogers:reg}, there is $c > 0$ such that $|\lambda_f(r)| = |f(\zeta_f(r))| \ge c |f(r)|$, and so, by Proposition~\ref{prop:rogers:bounded}, $\lim_{r \nearrow \infty} \lambda_f(r) = \infty$.
\end{proof}

\subsection{Further formulae for the Wiener--Hopf factors}

Various useful expressions for the Wiener--Hopf factors can be obtained by appropriate contour deformations, and by integration by parts.

\begin{lemma}
\label{lem:rogers:curve}
If $f$ is a nonconstant Rogers function and $\gamma$ is a balanced curve, then
\formula[eq:rogers:wh:uu]{
 \frac{f^\ua(\xi_1)}{f^\ua(\xi_2)} & = \exp \expr{-\frac{1}{2 \pi i} \int_\gamma \expr{\frac{1}{i \xi_1 - z} - \frac{1}{i \xi_2 - z}} \log f(z) \D z}
}
for all $\xi_1, \xi_2$ such that $i \xi_1, i \xi_2 \in \gamma^\ua$, and
\formula[eq:rogers:wh:dd]{
 \frac{f^\da(\xi_1)}{f^\da(\xi_2)} & = \exp \expr{-\frac{1}{2 \pi i} \int_\gamma \expr{\frac{1}{i \xi_1 + z} - \frac{1}{i \xi_2 + z}} \log f(z) \D z}
}
for all $\xi_1, \xi_2$ such that $-i \xi_1, -i \xi_2 \in \gamma^\da$. Furthermore,
\formula[eq:rogers:wh:ud]{
 f^\ua(\xi_1) f^\da(\xi_2) & = \exp \expr{-\frac{1}{2 \pi i} \int_\gamma \expr{\frac{1}{i \xi_1 - z} + \frac{1}{i \xi_2 + z}} \log f(z) \D z}
}
for all $\xi_1, \xi_2$ such that $i \xi_1 \in \gamma^\ua$ and $-i \xi_2 \in \gamma^\da$.
\end{lemma}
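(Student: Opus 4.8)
The plan is to start from the classical formula~\eqref{eq:rogers:wh} for the normalised Wiener--Hopf factors, which expresses $\log f^\ua(\xi)$ as a contour integral over $(0,\infty)$ of $\log f$ against a suitable kernel, and then deform the contour $(0,\infty)$ into the balanced curve $\gamma \cap \C_\ra$ (and its mirror image into $\C_\la$). Concretely, recall that by Theorem~\ref{th:rogers:wh} and Theorem~\ref{th:cbf}\ref{it:cbf:c}, for $\xi\in\C\setminus(-\infty,0]$ one has
\formula{
 \log f^\ua(\xi) & = \tfrac{1}{2}\log c + \frac{1}{\pi}\int_0^\infty\expr{\frac{\xi}{\xi+s}-\frac{1}{1+s}}\ph(s)\,\D s,
}
and combining the two normalised factors gives the single-contour formula $\log f(\xi) = -\frac{1}{2\pi i}\int_{-\infty}^\infty\expr{\frac{1}{i\xi-z}-\frac{1}{i-z}}\log f(z)\,\D z$ on suitable domains; subtracting the versions at $\xi_1$ and $\xi_2$ kills the normalising term $1/(i-z)$ and leaves exactly the right-hand side of~\eqref{eq:rogers:wh:uu} with $\gamma$ replaced by $i\R$ (traversed in the appropriate direction). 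So the task reduces to justifying that this integral over $i\R$ equals the integral over $\gamma$.

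First I would set up the contour deformation for~\eqref{eq:rogers:wh:uu}. The integrand $z\mapsto\expr{\frac{1}{i\xi_1-z}-\frac{1}{i\xi_2-z}}\log f(z)$ is holomorphic in $z$ on $\C_\ra$ and on $\C_\la$ separately, away from the points $i\xi_1,i\xi_2$; the hypothesis $i\xi_1,i\xi_2\in\gamma^\ua$ guarantees these poles lie on the correct side, so that no residues are picked up when we slide the contour from $i\R$ out to $\gamma$. To make Cauchy's theorem rigorous on the unbounded contour I would invoke the estimate of Corollary~\ref{cor:residue} (and the refinements mentioned after it): by Proposition~\ref{prop:rogers:bound}, $|\log f(z)|$ grows at most logarithmically, $\abs{\log f(z)} \le C\log(|z|+|z|^{-1}) + C$, while the kernel difference $\frac{1}{i\xi_1-z}-\frac{1}{i\xi_2-z}$ decays like $|z|^{-2}$; together with condition~\ref{it:rogers:reg:e} in Definition~\ref{def:rogers:reg}, which controls $|\zeta'(r)|$ along $\gamma$, this makes the integral over $\gamma$ absolutely convergent and the connecting arcs at $0$ and at $\infty$ negligible in the limit. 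The mirror symmetry of $\gamma$ and the relation $f(\xi)=\overline{f(-\bar\xi)}$ handle the part of the contour in $\C_\la$. Formula~\eqref{eq:rogers:wh:dd} for $f^\da$ follows by the identical argument applied to the Rogers function $\overline{f(\bar\xi)}=f(-\xi)$ — equivalently, by replacing $z$ with $-z$ throughout, which swaps the roles of $\gamma^\ua$ and $\gamma^\da$ — and~\eqref{eq:rogers:wh:ud} is obtained simply by adding~\eqref{eq:rogers:wh:uu} at $(\xi_1,1)$ to~\eqref{eq:rogers:wh:dd} at $(\xi_2,1)$ and using $f^\ua(1)f^\da(1)=c$ together with the cancellation of the normalising contributions across the closed contour $\gamma$, where now the pole at $z=i\xi_1$ sits above $\gamma$ and the pole at $z=-i\xi_2$ sits below it, so that neither is crossed.

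The main obstacle I anticipate is the careful bookkeeping of branches and of which side of $\gamma$ the points $i\xi_1$, $i\xi_2$ fall on: one must verify that $\log f$ admits a single-valued holomorphic branch on the region swept out between $i\R$ and $\gamma$ — this uses that $f$ is non-vanishing there (Proposition~\ref{prop:rogers:trivial}) and that this region, being bounded by $\gamma$ on one side and free of zeros of $f$, is where $\Arg f$ stays in $(-\pi,\pi)$ by Theorem~\ref{th:rogers:real}\ref{it:rogers:real:a} — and that the homotopy from $i\R$ to $\gamma$ can be carried out without the moving contour ever touching $i\xi_1$ or $i\xi_2$. The definition of $\gamma^\ua$ via $\Arg_\ua$ is tailored exactly so that $i\xi_j\in\gamma^\ua$ is equivalent to $i\xi_j$ being separated from $\gamma$ on the side containing $(0,i\infty)$, which is the side from which we are deforming; making this precise, and checking the orientation conventions in Definition~\ref{def:rogers:reg}\ref{it:rogers:reg:c} so that the sign in front of $\frac{1}{2\pi i}$ comes out right, is the delicate part. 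Everything else is a routine application of Cauchy's theorem with the convergence estimates already assembled above.
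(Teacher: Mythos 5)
Your proposal correctly identifies contour deformation as the engine of the proof, and the convergence ingredients you cite (the logarithmic growth of $\log f$ from Proposition~\ref{prop:rogers:bound}, the quadratic decay of the kernel, conditions~\ref{it:rogers:reg:d} and~\ref{it:rogers:reg:e} of Definition~\ref{def:rogers:reg}) are exactly the right ones. But the deformation you set up runs from the wrong base contour in the wrong direction. The paper does not start from a real-axis formula and push outward to $\gamma$; it deforms $\gamma$ \emph{inward}, to the pair of rays $(-ie^{-i\eps}\infty,0)\cup(0,-ie^{i\eps}\infty)$ hugging the negative imaginary axis, and lets $\eps\searrow 0$. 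That target is chosen for two reasons: the region swept lies in $\gamma^\da$, so for~\eqref{eq:rogers:wh:uu}--\eqref{eq:rogers:wh:dd} no pole $i\xi_j\in\gamma^\ua$ is ever crossed; and in the limit $\Arg f(-ie^{i\eps}r)\to-\ph(r)$ a.e., which lets the resulting integral be matched \emph{directly} against the exponential representation~\eqref{eq:rogers:wh} already supplied by Theorem~\ref{th:rogers:wh}. Your base case, by contrast, is not in hand: the formula you quote, $\log f(\xi)=-\frac{1}{2\pi i}\int_{-\infty}^\infty(\frac{1}{i\xi-z}-\frac{1}{i-z})\log f(z)\,\D z$, is not correct as written (the left-hand side should involve the factor $f^\ua$, not $f$), and its corrected version is precisely Corollary~\ref{cor:rogers:wh}, which the paper derives \emph{from} this lemma as the case $\gamma=\R$ --- so invoking it here is circular without an independent proof. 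Moreover, with $\R$ as the base contour the deformation $\R\to\gamma$ \emph{can} cross $i\xi_j$ even when $i\xi_j\in\gamma^\ua$: if $\Arg\zeta(|\xi_j|)<0$ at that radius, $i\xi_j$ may lie between $\R$ and $\gamma$, producing a spurious residue; you would have to first restrict to $\xi_j\in\C_\ra$ and then analytically continue. You flag this as ``the delicate part'' but do not resolve it.

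The second genuine gap is in your treatment of~\eqref{eq:rogers:wh:ud}. Multiplying~\eqref{eq:rogers:wh:uu} at $(\xi_1,1)$ by~\eqref{eq:rogers:wh:dd} at $(\xi_2,1)$ and using $f^\ua(1)f^\da(1)=c$ gives
\[
 f^\ua(\xi_1)f^\da(\xi_2)=c\exp\biggl(-\frac{1}{2\pi i}\int_\gamma\biggl(\frac{1}{i\xi_1-z}-\frac{1}{i-z}+\frac{1}{i\xi_2+z}-\frac{1}{i+z}\biggr)\log f(z)\,\D z\biggr),
\]
and reaching~\eqref{eq:rogers:wh:ud} from here requires $\log c=-\frac{1}{2\pi i}\int_\gamma(\frac{1}{i-z}+\frac{1}{i+z})\log f(z)\,\D z$, which is exactly~\eqref{eq:rogers:wh:ud} at $\xi_1=\xi_2=1$; there is no ``cancellation of normalising contributions'' available before that special case has been established. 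This is not a bookkeeping slip but structural: in the paper's proof the pole at $z=-i\xi_2\in\gamma^\da$ \emph{is} crossed when the contour is pushed to the negative imaginary axis, and the residue $\log f(-i\xi_2)$ is precisely what converts the ratio $f^\ua(\xi_1)/f^\ua(-\xi_2)$ into the product $f^\ua(\xi_1)f^\da(\xi_2)$ via $f(-i\xi_2)=f^\ua(-\xi_2)f^\da(\xi_2)$. Formula~\eqref{eq:rogers:wh:ud} therefore needs its own deformation with the residue theorem; it is not an algebraic corollary of the other two.
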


Note that if $\xi_1, \xi_2 \in (0, \infty)$, then the above expressions can be rewritten as
\formula[eq:rogers:wh:real]{
 \frac{f^\ua(\xi_1)}{f^\ua(\xi_2)} & = \exp \expr{-\frac{1}{\pi} \int_0^\infty \im \expr{\expr{\frac{\zeta'(r)}{i \xi_1 - \zeta(r)} - \frac{\zeta'(r)}{i \xi_2 - \zeta(r)}} \log \lambda(r)} \D r} , \\
 \frac{f^\da(\xi_1)}{f^\da(\xi_2)} & = \exp \expr{-\frac{1}{\pi} \int_0^\infty \im \expr{\expr{\frac{\zeta'(r)}{i \xi_1 + \zeta(r)} - \frac{\zeta'(r)}{i \xi_2 + \zeta(r)}} \log \lambda(r)} \D r} , \\
 f^\ua(\xi_1) f^\da(\xi_2) & = \exp \expr{-\frac{1}{\pi} \int_0^\infty \im \expr{\expr{\frac{\zeta'(r)}{i \xi_1 - \zeta(r)} + \frac{\zeta'(r)}{i \xi_2 + \zeta(r)}} \log \lambda(r)} \D r} ,
}
where $\zeta(r)$ is the canonical parametrisation of $\gamma \cap \C_\ra$ and $\lambda(r) = f(\zeta(r))$.

\begin{proof}
By Proposition~\ref{prop:rogers:bound} (with $r = 1$) and conditions~\ref{it:rogers:reg:d} and~\ref{it:rogers:reg:e} in Definition~\ref{def:rogers:reg}, the integrals in~\eqref{eq:rogers:wh:uu}, \eqref{eq:rogers:wh:dd} and~\eqref{eq:rogers:wh:ud} are well-defined. Denote the right-hand side of formula~\eqref{eq:rogers:wh:uu} by $h(\xi_1, \xi_2)$. By an appropriate contour deformation (we omit the details),
\formula{
 h(\xi_1, \xi_2) & = \exp \expr{-\frac{1}{2 \pi i} \int_{(-i e^{-i \eps} \infty, 0) \cup (0, -i e^{i \eps} \infty)} \expr{\frac{1}{i \xi_1 - z} - \frac{1}{i \xi_2 - z}} \log f(z) \D z}
}
for all sufficiently small $\eps > 0$. By a natural parametrisation,
\formula{
 h(\xi_1, \xi_2) & = \exp \left( \frac{1}{2 \pi i} \int_0^\infty \left( \expr{\frac{e^{i \eps}}{\xi_1 + e^{i \eps} r} - \frac{e^{i \eps}}{\xi_2 + e^{i \eps} r}} \log f(-i e^{i \eps} r) \right. \right. \\
 & \hspace*{10em} \left. \left. {} - \expr{\frac{e^{-i \eps}}{\xi_1 + e^{-i \eps} r} - \frac{e^{-i \eps}}{\xi_2 + e^{-i \eps} r}} \log f(-i e^{-i \eps} r) \right) \D r \right) .
}
Recall that $f(-i e^{-i \eps} r)$ is the complex conjugate of $f(-i e^{i \eps} r)$, so that
\formula{
 h(\xi_1, \xi_2) & = \exp \left( \frac{1}{\pi} \int_0^\infty \left( \re \expr{\frac{e^{i \eps}}{\xi_1 + e^{i \eps} r} - \frac{e^{i \eps}}{\xi_2 + e^{i \eps} r}} \Arg f(-i e^{i \eps} r) \right. \right. \\
 & \hspace*{10em} \left. \left. {} + \im \expr{\frac{e^{i \eps}}{\xi_1 + e^{i \eps} r} - \frac{e^{i \eps}}{\xi_2 + e^{i \eps} r}} \log |f(-i e^{i \eps} r|) \right) \D r \right) .
}
By Theorem~\ref{th:harmonic:bounded} (see also~\eqref{eq:rogers:ph} in Theorem~\ref{th:rogers}), $\Arg f(-i e^{i \eps} r)$ converges as $\eps \searrow 0$ to $-\ph(r)$ for almost all $r > 0$, where $\ph$ is the function in the exponential representation of $f$ (Theorem~\ref{th:rogers}\ref{it:rogers:c}). By Proposition~\ref{prop:rogers:bound} (with $r = 1$), dominated convergence applies as $\eps \searrow 0$ (we omit the details), and it follows that
\formula{
 h(\xi_1, \xi_2) & = \exp \expr{\frac{1}{\pi} \int_0^\infty \expr{\frac{1}{\xi_1 + r} - \frac{1}{\xi_2 + r}} \, \ph(r) \D r} .
}
Formula~\eqref{eq:rogers:wh:uu} follows by the first part of~\eqref{eq:rogers:wh}. Formula~\eqref{eq:rogers:wh:dd} is proved in a similar way.

The proof of~\eqref{eq:rogers:wh:ud} follows the same line, but the residue theorem is used in contour deformation. Suppose first that $\xi_2$ is not real. By an argument as in the first part of the proof (we omit the details), the right-hand side of~\eqref{eq:rogers:wh:ud} is equal to
\formula{
 & \exp \expr{\log f(-i \xi_2) - \frac{1}{2 \pi i} \int_{(-i e^{-i \eps} \infty, 0) \cup (0, -i e^{i \eps} \infty)} \expr{\frac{1}{i \xi_1 - z} + \frac{1}{i \xi_2 + z}} \log f(z) \D z} \displaybreak[0] \\
 & \hspace*{1em} = \exp \left( \log f(-i \xi_2) + \frac{1}{2 \pi i} \int_0^\infty \left( \expr{\frac{e^{-i \eps}}{\xi_1 + e^{-i \eps} r} + \frac{e^{-i \eps}}{\xi_2 - e^{-i \eps} r}} \log f(-i e^{-i \eps} r) \right. \right. \\
 & \hspace*{17em} \left. \left. - \expr{\frac{e^{i \eps}}{\xi_1 + e^{i \eps} r} + \frac{e^{i \eps}}{\xi_2 - e^{i \eps} r}} \log f(-i e^{i \eps} r) \right) \D r \right)
}
for all $\eps > 0$ small enough. When $\eps \searrow 0$, the right-hand side converges to
\formula{
 \exp \expr{\log f(-i \xi_2) + \frac{1}{\pi} \int_0^\infty \expr{\frac{1}{\xi_1 + r} + \frac{1}{\xi_2 - r}} \, \ph(r) \D r} .
}
By~\eqref{eq:rogers:wh} and Theorem~\ref{th:rogers:wh}, the above expression is equal to
\formula{
 \frac{f(-i \xi_2) f^\ua(\xi_1)}{f^\ua(-\xi_2)} & = f^\ua(\xi_1) f^\ua(\xi_2) ,
}
as desired. Formula~\eqref{eq:rogers:wh:ud} for $\xi_2 \in (0, \infty)$ follows by continuity.
\end{proof}

\begin{corollary}
\label{cor:rogers:wh}
Let $f$ be a nonzero Rogers function. Then for all $\xi_1, \xi_2 \in \C_\ra$,
\formula[eq:rogers:wh:0]{
 \frac{f^\ua(\xi_1)}{f^\ua(\xi_2)} & = \exp \expr{\frac{1}{2 \pi} \int_{-\infty}^\infty \expr{\frac{1}{\xi_1 + i r} - \frac{1}{\xi_2 + i r}} \log f(r) \D r} , \\
 \frac{f^\da(\xi_1)}{f^\da(\xi_2)} & = \exp \expr{\frac{1}{2 \pi} \int_{-\infty}^\infty \expr{\frac{1}{\xi_1 - i r} - \frac{1}{\xi_2 - i r}} \log f(r) \D r} , \\
 f^\ua(\xi_1) f^\da(\xi_2) & = \exp \expr{\frac{1}{2 \pi} \int_{-\infty}^\infty \expr{\frac{1}{\xi_1 + i r} + \frac{1}{\xi_2 - i r}} \log f(r) \D r} .
}
\end{corollary}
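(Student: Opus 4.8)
The plan is to reduce all three identities to a single Cauchy-type integral formula along the imaginary axis, fed by the Wiener--Hopf factorisation. The starting observation is that, with the standard holomorphic branches of $\log f^\ua$ and $\log f^\da$ on $\C\setminus(-\infty,0]$, one has $\log f(r)=\log f^\ua(-ir)+\log f^\da(ir)$ for every real $r\ne0$: for $r>0$ this is Theorem~\ref{th:rogers:wh} evaluated at $\xi=r$ (both sides are holomorphic in $\C_\ra$ and the branch of $\log f$ matches near $(0,\infty)$), and for $r<0$ it follows by conjugation from $f(r)=\overline{f(-r)}$ together with $\overline{f^\ua(w)}=f^\ua(\bar w)$ and $\overline{f^\da(w)}=f^\da(\bar w)$. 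I would substitute this into the right-hand side of the first identity and split the integral into its $\log f^\ua$-part and its $\log f^\da$-part.

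For the building block, fix a nonzero $g\in\cbf$ and $\xi_1,\xi_2\in\C_\ra$. After the substitutions $z=-ir$ and $r\mapsto-r$, the integral $\tfrac1{2\pi}\int_{-\infty}^\infty(\tfrac1{\xi_1+ir}-\tfrac1{\xi_2+ir})\log g(-ir)\,\D r$ becomes $\tfrac1{2\pi i}\int(\tfrac1{\xi_1-z}-\tfrac1{\xi_2-z})\log g(z)\,\D z$ taken along the imaginary axis. Since a nonzero complete Bernstein function is zero-free on $\C\setminus(-\infty,0]$ (e.g.\ because $\xi/g(\xi)\in\cbf$), $\log g$ is holomorphic in $\C_\ra$; moreover $|\log g(z)|=O(\log(|z|+|z|^{-1}))$ uniformly on $\overline{\C}_\ra$ (the upper bound $|g(z)|\le A+B|z|$ coming from the Stieltjes representation, the matching lower bound from $\xi/g(\xi)\in\cbf$, and $|\Arg g|\le\pi$), while the kernel is $O(|z|^{-2})$. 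Hence I may close the contour with a large semicircle in $\C_\ra$ --- exactly the limiting procedure behind Corollary~\ref{cor:residue} --- and collect the residues at $z=\xi_1$ and $z=\xi_2$, obtaining $\log(g(\xi_1)/g(\xi_2))$. The same computation applied to $\tfrac1{2\pi}\int(\tfrac1{\xi_1+ir}-\tfrac1{\xi_2+ir})\log g(ir)\,\D r$ yields a contour integral whose only poles, $z=-\xi_1$ and $z=-\xi_2$, lie in $\C_\la$ outside the contour, so it vanishes. Taking $g=f^\ua$ and $g=f^\da$ in these two facts gives the first identity for all $\xi_1,\xi_2\in\C_\ra$.

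The second identity follows by applying the first to the dual Rogers function $\hat f(\xi)=f(-\xi)=\overline{f(\bar\xi)}$, whose normalised Wiener--Hopf factors are $\hat f^\ua=f^\da$ and $\hat f^\da=f^\ua$ (because $f(-\xi)=f^\ua(i\xi)f^\da(-i\xi)$), followed by the substitution $r\mapsto-r$. For the third identity I would write $\tfrac1{\xi_1+ir}+\tfrac1{\xi_2-ir}=(\tfrac1{\xi_1+ir}-\tfrac1{1+ir})+(\tfrac1{\xi_2-ir}-\tfrac1{1-ir})+\tfrac2{1+r^2}$; integrated against $\log f(r)$, the first two summands give $\log(f^\ua(\xi_1)/f^\ua(1))$ and $\log(f^\da(\xi_2)/f^\da(1))$ by the first two identities, and it remains to verify that $\tfrac1\pi\int_{-\infty}^\infty\tfrac{\log f(r)}{1+r^2}\,\D r=\log(f^\ua(1)f^\da(1))$, which equals $\log c$ with $c$ the constant in the exponential representation~\eqref{eq:rogers:c}. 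Here $\Arg f(r)$ is odd in $r$ (since $f(r)=\overline{f(-r)}$), so its contribution vanishes against the even weight $(1+r^2)^{-1}$; inserting the exponential representation of $\log|f(r)|$ and applying Fubini, the inner integral $\int_0^\infty\tfrac1{1+r^2}(\tfrac{r^2}{r^2+s^2}-\tfrac1{1+|s|})\,\D r$ vanishes identically by partial fractions, and what survives is exactly $\log c$.

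The one genuinely delicate point is the justification of the contour closures, namely that the semicircular arcs contribute nothing in the limit, uniformly including the portions of the arc near the imaginary axis where $\re z$ is small and the a priori estimates for complete Bernstein functions are weakest; this, together with the dominated-convergence and Fubini bookkeeping required to split and recombine the integrals (which are only conditionally absolutely convergent before the reference terms are subtracted), is where the care goes, but all of it rests on the logarithmic growth of $f^\ua$ and $f^\da$ away from $(-\infty,0]$.
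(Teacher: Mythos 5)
Your proof is correct, but it takes a genuinely different route from the paper. The paper derives the corollary in one line from Lemma~\ref{lem:rogers:curve}: the real axis is itself a balanced curve (with $\zeta(r)=r$), so the formulae in~\eqref{eq:rogers:wh:uu}--\eqref{eq:rogers:wh:ud} apply verbatim and, after rewriting $-\frac{1}{2\pi i}\left(\frac{1}{i\xi_1-r}-\frac{1}{i\xi_2-r}\right)=\frac{1}{2\pi}\left(\frac{1}{\xi_1+ir}-\frac{1}{\xi_2+ir}\right)$, give~\eqref{eq:rogers:wh:0}; a separate quick check handles constant $f$. You instead bypass Lemma~\ref{lem:rogers:curve} entirely: you split $\log f(r)=\log f^\ua(-ir)+\log f^\da(ir)$ using Theorem~\ref{th:rogers:wh} (with a branch-matching check), note that a nonzero complete Bernstein function and its logarithm are holomorphic on $\C\setminus(-\infty,0]$, and evaluate the resulting imaginary-axis integrals by closing in $\C_\ra$ and summing residues at $\xi_1,\xi_2$. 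This is more elementary and self-contained --- it avoids the contour deformation toward the imaginary axis and the limit $\Arg f(-ie^{i\eps}r)\to-\ph(r)$ that the paper uses inside Lemma~\ref{lem:rogers:curve} --- and it absorbs the constant case automatically rather than treating it as an exception. The trade-off is that Lemma~\ref{lem:rogers:curve} gives the result for an arbitrary balanced curve, which the paper needs elsewhere, whereas your argument only produces the real-axis instance. A few small points: your parenthetical ``the substitutions $z=-ir$ and $r\mapsto-r$'' should be either/or (doing both leads to $\frac{1}{\xi_j+z}$, not $\frac{1}{\xi_j-z}$); the uniform logarithmic bound $|\log g|=O(\log(|z|+|z|^{-1}))$ on $\overline{\C}_\ra$ that you invoke does follow from the Stieltjes representation plus $\xi/g(\xi)\in\cbf$ exactly as you say, but it is worth noting this is stronger than Proposition~\ref{prop:rogers:wh:est}, which is stated only on $(0,\infty)$; and in the last step the interchange of integrals needs a Fubini check, which works because $\left|\frac{r^2}{r^2+s^2}-\frac{1}{1+|s|}\right|=\frac{|s|\,|r^2-|s||}{(r^2+s^2)(1+|s|)}$ and the double integral against $\frac{\ph(s)}{|s|(1+r^2)}$ is dominated by a constant times $\int(1+|s|)^{-2}\D s$.
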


\begin{proof}
If $f$ is nonconstant, then the result follows directly from Lemma~\ref{lem:rogers:curve}. Otherwise, when $f(\xi) = c$ for all $\xi \in (0, \infty)$, one easily checks that both sides of first two lines of~\eqref{eq:rogers:wh:0} are equal to $1$, while both sides of the third line of~\eqref{eq:rogers:wh:0} are equal to $c$.
\end{proof}

\begin{lemma}
\label{lem:rogers:wh:alt}
If $f$ is a nonconstant Rogers function, $\gamma$ is a balanced curve, $\zeta(r)$ is the canonical parametrisation of $\gamma \cap \C_\ra$, $\lambda(r) = f(\zeta(r))$ and $\xi_1, \xi_2, s > 0$, then
\formula[eq:rogers:wh:alt:uudd]{
 \log \frac{f^\ua(\xi_1)}{f^\ua(\xi_2)} & = \frac{1}{\pi} \int_0^\infty \Arg \expr{\frac{\zeta(r) - i \xi_2}{\zeta(r) - i \xi_1}} \re \frac{\lambda'(r)}{\lambda(r)} \, \D r - \frac{1}{\pi} \log \frac{\xi_1}{\xi_2} \Arg \lambda(s) \\
 & + \frac{1}{\pi} \int_0^\infty \expr{\log \abs{\frac{\zeta(r) - i \xi_2}{\zeta(r) - i \xi_1}} + \ind_{(0, s)}(r) \log \frac{\xi_1}{\xi_2}} \im \frac{\lambda'(r)}{\lambda(r)} \, \D r , \\
 \log \frac{f^\da(\xi_1)}{f^\da(\xi_2)} & = \frac{1}{\pi} \int_0^\infty \Arg \expr{\frac{\zeta(r) + i \xi_1}{\zeta(r) + i \xi_2}} \re \frac{\lambda'(r)}{\lambda(r)} \, \D r - \frac{1}{\pi} \log \frac{\xi_2}{\xi_1} \Arg \lambda(s) \\
 & + \frac{1}{\pi} \int_0^\infty \expr{\log \abs{\frac{\zeta(r) + i \xi_1}{\zeta(r) + i \xi_2}} + \ind_{(0, s)}(r) \log \frac{\xi_2}{\xi_1}} \im \frac{\lambda'(r)}{\lambda(r)} \, \D r ,
}
and
\formula[eq:rogers:wh:alt:ud]{
 \log (f^\ua(\xi_1) f^\da(\xi_2)) & = \frac{1}{\pi} \int_0^\infty \expr{\Arg \expr{\frac{\zeta(r) + i \xi_2}{\zeta(r) - i \xi_1}} - \pi \ind_{(0, s)}(r)} \re \frac{\lambda'(r)}{\lambda(r)} \, \D r \\
 & + \frac{1}{\pi} \int_0^\infty \expr{\log \abs{\frac{\zeta(r) + i \xi_2}{\zeta(r) - i \xi_1}} + \ind_{(0, s)}(r) \log \frac{\xi_1}{\xi_2}} \im \frac{\lambda'(r)}{\lambda(r)} \, \D r \\
 & + \log |\lambda(s)| - \frac{1}{\pi} \log \frac{\xi_1}{\xi_2} \Arg \lambda(s) .
}
\end{lemma}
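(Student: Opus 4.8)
The plan is to deduce all three identities from the integral representations \eqref{eq:rogers:wh:real} in Lemma~\ref{lem:rogers:curve} by an integration by parts that transfers the derivative from the rational factor onto \(\log\lambda\). Fix \(\xi_1,\xi_2,s>0\) and set, with the principal branch of the logarithm,
\[
 L(r)=\log\frac{\zeta(r)-i\xi_1}{\zeta(r)-i\xi_2}\,,\qquad M(r)=\log\frac{\zeta(r)+i\xi_1}{\zeta(r)+i\xi_2}\,,\qquad N(r)=\log\frac{\zeta(r)+i\xi_2}{\zeta(r)-i\xi_1}\,.
\]
Since \(\zeta(r)\in\C_\ra\), each \(\zeta(r)\mp i\xi_j\) lies in \(\C_\ra\) and, for \(r>0\), none of the three ratios lies on \((-\infty,0]\), so \(L,M,N\) are well defined and real-analytic on \((0,\infty)\). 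A direct computation shows that the expression inside \(\im(\cdot)\) in the first line of \eqref{eq:rogers:wh:real} equals \(-L'(r)\log\lambda(r)\), in the second line it equals \(M'(r)\log\lambda(r)\), and in the third \(N'(r)\log\lambda(r)\); here \(\lambda(r)=f(\zeta(r))\neq0\) by Proposition~\ref{prop:rogers:trivial}, and in fact \(\lambda(r)\notin(-\infty,0]\) by Theorem~\ref{th:rogers}\ref{it:rogers:d}, so \(\Arg\lambda\) is the continuous branch of \(\im\log\lambda\). Writing \(\log\lambda=\log|\lambda|+i\Arg\lambda\) and using \(\R\)-linearity of \(\im\), taking logarithms in the first line of \eqref{eq:rogers:wh:real} gives
\[
 \log\frac{f^\ua(\xi_1)}{f^\ua(\xi_2)}=\frac1\pi\int_0^\infty\bigl((\re L)'(r)\,\Arg\lambda(r)+(\im L)'(r)\,\log|\lambda(r)|\bigr)\,\D r,
\]
and analogously for \(f^\da\) (with \(L\) replaced by \(M\) and an overall sign change) and for \(f^\ua(\xi_1)f^\da(\xi_2)\) (with \(N\), and an overall sign change).

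Next I would integrate each of the two summands by parts over \((0,\infty)\), using \(\tfrac{\D}{\D r}\log|\lambda(r)|=\re\tfrac{\lambda'(r)}{\lambda(r)}\) and \(\tfrac{\D}{\D r}\Arg\lambda(r)=\im\tfrac{\lambda'(r)}{\lambda(r)}\). The one delicate point is the boundary behaviour as \(r\searrow0\): since \(|\zeta(r)|=r\to0\), one computes \(L(0^+)=M(0^+)=\log\tfrac{\xi_1}{\xi_2}\) (a real number), while \(N(0^+)=\log\tfrac{\xi_2}{\xi_1}+i\pi\) — the ratio \(\tfrac{\zeta(r)+i\xi_2}{\zeta(r)-i\xi_1}\) approaches \(-\tfrac{\xi_2}{\xi_1}\) from the upper half-plane. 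A bare integration by parts would leave a boundary contribution \(G(0^+)\) times a boundary value of \(\Arg\lambda\) or \(\log|\lambda|\), which is not legitimate because \(\lambda(0^+)\) may be \(0\) (when \(c_0=0\)) and \(\Arg\lambda(0^+)\) need not exist. The remedy — which is precisely what produces the \(s\)-dependent terms in \eqref{eq:rogers:wh:alt:uudd} and \eqref{eq:rogers:wh:alt:ud} — is to split the interval at \(r=s\) and, on \((0,s)\) only, replace each antiderivative \(G\in\{L,M,N\}\) by \(G-G(0^+)\) before integrating by parts; this does not change \(G'\), and \(G(r)-G(0^+)=O(|\zeta(r)|)=O(r)\), so the boundary term at \(0\) now genuinely vanishes against \(\log|\lambda(r)|=O(\log(r+1/r))\). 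Reassembling, the jump of the corrected antiderivative across \(r=s\) feeds \(\re G(0^+)\) into the coefficient of \(\Arg\lambda(s)\) and, only in the mixed case where \(\im N(0^+)=\pi\neq0\), feeds \(\pi\) into the coefficient of \(\log|\lambda(s)|\); this is the origin of the isolated \(\log|\lambda(s)|\) term in \eqref{eq:rogers:wh:alt:ud} and of the \(\ind_{(0,s)}\) corrections inside the integrals. Substituting the explicit constants \(\re L(0^+)=\re M(0^+)=\log\tfrac{\xi_1}{\xi_2}\), \(\re N(0^+)=\log\tfrac{\xi_2}{\xi_1}\), \(\im N(0^+)=\pi\), together with the identities \(-\im L(r)=\Arg\tfrac{\zeta(r)-i\xi_2}{\zeta(r)-i\xi_1}\), \(-\re L(r)=\log\bigl|\tfrac{\zeta(r)-i\xi_2}{\zeta(r)-i\xi_1}\bigr|\) and their \(M,N\) analogues, turns these into \eqref{eq:rogers:wh:alt:uudd} and \eqref{eq:rogers:wh:alt:ud} verbatim.

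All convergence and boundary-vanishing claims reduce to a few uniform estimates valid because \(\gamma\) is balanced. By Proposition~\ref{prop:rogers:bound} applied with the choice \(r=|\zeta(r)|\) and condition~\ref{it:rogers:reg:d} of Definition~\ref{def:rogers:reg}, \(|\lambda(r)|=|f(\zeta(r))|\) is comparable to \(|f(r)|\), hence \(\log|\lambda(r)|=O(\log(r+1/r))\), while \(\Arg\lambda(r)\) is bounded (by Theorem~\ref{th:rogers}\ref{it:rogers:d}, \(\Arg f(\zeta(r))\in[\Arg\zeta(r)-\tfrac\pi2,\Arg\zeta(r)+\tfrac\pi2]\)). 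By Proposition~\ref{prop:rogers:prime} and condition~\ref{it:rogers:reg:d}, \(|\zeta(r)f'(\zeta(r))/f(\zeta(r))|\) is bounded, so \(|\lambda'(r)/\lambda(r)|\le C|\zeta'(r)|/r\). Since \(\re L,\im L\) (and their \(M,N\) analogues) are \(O(r)\) near \(0\) and \(O(1/r)\) near \(\infty\), and their derivatives are \(O(|\zeta'(r)|)\) near \(0\) and \(O(|\zeta'(r)|r^{-2})\) near \(\infty\), the original integrands are dominated by a constant times \(|\zeta'(r)|\log(1/r)\) near \(0\) and by a constant times \(|\zeta'(r)|r^{-2}\log r\) near \(\infty\) — integrable by condition~\ref{it:rogers:reg:e} — and the same bounds show that the post-integration-by-parts integrands are integrable and that the boundary terms at \(\infty\) vanish because \(L(r),M(r),N(r)\to0\) there. (When \(\gamma=\gamma_f\), one may use the boundedness of \(|\zeta_f'|\) from Lemma~\ref{lem:rogers:reg} instead.)

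I expect the only real obstacle to be the bookkeeping of the principal branch and of the one-sided limits at \(r=0\) — in particular establishing \(N(0^+)=\log(\xi_2/\xi_1)+i\pi\), the ratio there landing on the branch cut and being approached from above — together with checking that each regularized integrand is genuinely integrable. Once these are settled the integrations by parts are routine, and the \(f^\da\) and mixed identities follow the \(f^\ua\) argument with only the constants \(G(0^+)\) changed.
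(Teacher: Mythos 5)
Your proof is correct and follows essentially the same route as the paper: both begin from the representations \eqref{eq:rogers:wh:real} of Lemma~\ref{lem:rogers:curve}, integrate by parts, and split the range of integration at $r=s$ so that the boundary terms can be killed separately at $0$ and at $\infty$. The only difference is bookkeeping — the paper keeps a complex antiderivative $\log\bigl(c\,\tfrac{\zeta(r)-i\xi_2}{\zeta(r)-i\xi_1}\bigr)$ with $c$ chosen as $1$ on $(s,\infty)$ and $\xi_1/\xi_2$ (resp. $-\xi_1/\xi_2$ in the mixed case) on $(0,s)$, while you decompose into real and imaginary parts and subtract $G(0^+)$; your identification $N(0^+)=\log(\xi_2/\xi_1)+i\pi$ is exactly the paper's branch identity $\log\bigl(-\tfrac{\xi_1}{\xi_2}\,\tfrac{\zeta+i\xi_2}{\zeta-i\xi_1}\bigr)=\log\tfrac{\zeta+i\xi_2}{\zeta-i\xi_1}+\log\tfrac{\xi_1}{\xi_2}-i\pi$.
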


\begin{proof}
By Lemma~\ref{lem:rogers:curve}, $f^\ua(\xi_1) / f^\ua(\xi_2)$ is given by~\eqref{eq:rogers:wh:real}. Integration by parts gives
\formula[eq:rogers:wh:alt:uu:1]{
 \hspace*{5em} & \hspace*{-5em} \int \expr{\frac{\zeta'(r)}{i \xi_1 - \zeta(r)} - \frac{\zeta'(r)}{i \xi_2 - \zeta(r)}} \log \lambda(r) \D r \\
 & = \log \expr{c \, \frac{\zeta(r) - i \xi_2}{\zeta(r) - i \xi_1}} \log \lambda(r) - \int \log \expr{c \, \frac{\zeta(r) - i \xi_2}{\zeta(r) - i \xi_1}} \frac{\lambda'(r)}{\lambda(r)} \, \D r
}
for arbitrary $c > 0$. By Proposition~\ref{prop:rogers:bound} (with $r = 1$) and condition~\ref{it:rogers:reg:d} in Definition~\ref{def:rogers:reg}, the function
\formula{
 \log \expr{c \, \frac{\zeta(r) - i \xi_2}{\zeta(r) - i \xi_1}} \log \lambda(r)
}
tends to $0$ as $r \nearrow \infty$ if $c = 1$, and tends to $0$ as $r \searrow 0$ if $c = \xi_1 / \xi_2$. It follows that for any $s > 0$,
\formula{
 \log \frac{f^\ua(\xi_1)}{f^\ua(\xi_2)} & = \frac{1}{\pi} \int_0^s \im \expr{\log \expr{\frac{\xi_1}{\xi_2} \, \frac{\zeta(r) - i \xi_2}{\zeta(r) - i \xi_1}} \frac{\lambda'(r)}{\lambda(r)}} \D r \\
 & \hspace*{3em} + \frac{1}{\pi} \int_s^\infty \im \expr{\log \expr{\frac{\zeta(r) - i \xi_2}{\zeta(r) - i \xi_1}} \frac{\lambda'(r)}{\lambda(r)}} \D r - \frac{1}{\pi} \log \frac{\xi_1}{\xi_2} \Arg \lambda(s) ,
}
where the integrals are improper integrals at $0$ and at $\infty$. We claim that these integrals are absolutely convergent. Indeed, by condition~\ref{it:rogers:reg:d} in Definition~\ref{def:rogers:reg} and a simple calculation (we omit the details), for some $c_1$ (depending on $\gamma$, $\xi_1$, $\xi_2$ and $s$),
\formula{
 \abs{\log \expr{\frac{\zeta(r) - i \xi_2}{\zeta(r) - i \xi_1}}} & \le \frac{c_1}{r}
}
for $r \in (s, \infty)$, and
\formula{
 \abs{\log \expr{\frac{\xi_1}{\xi_2} \, \frac{\zeta(r) - i \xi_2}{\zeta(r) - i \xi_1}}} & \le c_1 r
}
for $r \in (0, s)$. By Proposition~\ref{prop:rogers:prime} and condition~\ref{it:rogers:reg:d} in Definition~\ref{def:rogers:reg}, for some $c_2$ (depending on $\gamma$ and $f$),
\formula[eq:rogers:wh:est:aux]{
 \abs{\frac{\lambda'(r)}{\lambda(r)}} & = \abs{\frac{f'(\zeta(r)) \zeta'(r)}{f(\zeta(r))}} \le c_2 \abs{\frac{\zeta'(r)}{\zeta(r)}} = \frac{c_2 |\zeta'(r)|}{r} \, .
}
The claim follows by condition~\ref{it:rogers:reg:e} in Definition~\ref{def:rogers:reg}, and the first part of~\eqref{eq:rogers:wh:alt:uudd} is proved.

The other part of~\eqref{eq:rogers:wh:alt:uudd} is proved in a similar way. The proof of~\eqref{eq:rogers:wh:alt:ud} is slightly different, in this case one uses $c = -\xi_1 / \xi_2$ for the integral over $(0, s)$. Note that
\formula{
 \log \expr{-\frac{\xi_1}{\xi_2} \, \frac{\zeta(r) + i \xi_2}{\zeta(r) - i \xi_1}} & = \log \frac{\zeta(r) + i \xi_2}{\zeta(r) - i \xi_1} + \log \frac{\xi_1}{\xi_2} - i \pi .
}
This leads to
\formula{
 \log (f^\ua(\xi_1) f^\da(\xi_2)) \hspace*{-3em} & \hspace*{3em} = \frac{1}{\pi} \int_0^s \im \expr{\log \expr{-\frac{\xi_1}{\xi_2} \, \frac{\zeta(r) + i \xi_2}{\zeta(r) - i \xi_1}} \frac{\lambda'(r)}{\lambda(r)}} \D r \\
 & + \frac{1}{\pi} \int_s^\infty \im \expr{\log \expr{\frac{\zeta(r) + i \xi_2}{\zeta(r) - i \xi_1}} \frac{\lambda'(r)}{\lambda(r)}} \D r - \frac{1}{\pi} \log \frac{\xi_1}{\xi_2} \Arg \lambda(s) + \log |\lambda(s)| ,
}
as desired.
\end{proof}

\begin{corollary}
\label{cor:rogers:wh:alt}
If $f$ is a balanced Rogers function, $\zeta_f$ is the canonical parametrisation of $\gamma_f \cap \C_\ra$, $\lambda_f(r) = f(\zeta_f(r))$ and $\xi_1, \xi_2 > 0$, then
\formula[eq:rogers:wh:alt:uu0]{
 \frac{f^\ua(\xi_1)}{f^\ua(\xi_2)} & = \exp \expr{-\frac{1}{\pi} \int_0^\infty \im \expr{\frac{\zeta_f'(r)}{i \xi_1 - \zeta_f(r)} - \frac{\zeta_f'(r)}{i \xi_2 - \zeta_f(r)}} \log \lambda_f(r) \D r} \\
 & = \exp \expr{\frac{1}{\pi} \int_0^\infty \Arg \expr{\frac{\zeta_f(r) - i \xi_2}{\zeta_f(r) - i \xi_1}} \frac{\lambda_f'(r)}{\lambda_f(r)} \, \D r} ,
}
and
\formula[eq:rogers:wh:alt:dd0]{
 \frac{f^\da(\xi_1)}{f^\da(\xi_2)} & = \exp \expr{-\frac{1}{\pi} \int_0^\infty \im \expr{\frac{\zeta_f'(r)}{i \xi_1 + \zeta_f(r)} - \frac{\zeta_f'(r)}{i \xi_2 + \zeta_f(r)}} \log \lambda_f(r) \D r} \\
 & = \exp \expr{\frac{1}{\pi} \int_0^\infty \Arg \expr{\frac{\zeta_f(r) + i \xi_1}{\zeta_f(r) + i \xi_2}} \frac{\lambda_f'(r)}{\lambda_f(r)} \, \D r} .
}
Furthermore,
\formula[eq:rogers:wh:alt:ud1]{
 f^\ua(\xi_1) f^\da(\xi_2) & = \exp \expr{-\frac{1}{\pi} \int_0^\infty \im \expr{\frac{\zeta_f'(r)}{i \xi_1 - \zeta_f(r)} + \frac{\zeta_f'(r)}{i \xi_2 + \zeta_f(r)}} \log \lambda_f(r) \D r} \\
 & = \lambda_f(s) \exp \expr{\frac{1}{\pi} \int_0^\infty \expr{\Arg \expr{\frac{\zeta_f(r) + i \xi_2}{\zeta_f(r) - i \xi_1}} - \pi \ind_{(0, s)}(r)} \frac{\lambda_f'(r)}{\lambda_f(r)} \, \D r}
}
for all $s > 0$, and if $f(0^+) > 0$, then
\formula[eq:rogers:wh:alt:ud0]{
 f^\ua(\xi_1) f^\da(\xi_2) & = f(0^+) \exp \expr{\frac{1}{\pi} \int_0^\infty \Arg \expr{\frac{\zeta_f(r) + i \xi_2}{\zeta_f(r) - i \xi_1}} \frac{\lambda_f'(r)}{\lambda_f(r)} \, \D r}
}
for all $\xi_1, \xi_2 > 0$.
\end{corollary}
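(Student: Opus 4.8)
The plan is to derive every identity in the statement by specialising the formulae of Lemma~\ref{lem:rogers:wh:alt} (for the second equalities) and Lemma~\ref{lem:rogers:curve} (for the first equalities) to the balanced curve $\gamma = \gamma_f$, and then exploiting that along $\gamma_f$ the function $\lambda_f$ is \emph{real, positive and strictly increasing}. To set things up, I would first recall that a balanced Rogers function $f$ is nonconstant and $\gamma_f$ is a balanced curve whose canonical parametrisation coincides with $\zeta_f$; moreover $|\gamma_f| = (0,\infty)$, so $\partial|\gamma_f| = \varnothing$ and, by Theorem~\ref{th:rogers:real}\ref{it:rogers:real:d}, $\lambda_f$ is differentiable on all of $(0,\infty)$ with $\lambda_f'(r) > 0$. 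For $\xi_1,\xi_2 > 0$ one has $\Arg_\ua(i\xi_j) = \tfrac{\pi}{2}$, and condition~\ref{it:rogers:reg:d} in Definition~\ref{def:rogers:reg} gives $|\Arg\zeta_f(r)| < \tfrac{\pi}{2}$ for all $r$, so $\tfrac{\pi}{2} \in (\Arg\zeta_f(|\xi_j|), \pi - \Arg\zeta_f(|\xi_j|))$, i.e.\ $i\xi_j \in \gamma_f^\ua$; symmetrically $-i\xi_j \in \gamma_f^\da$. Hence Lemma~\ref{lem:rogers:curve} is applicable, and because $\lambda_f(r) = f(\zeta_f(r)) \in (0,\infty)$ the real factor $\log\lambda_f(r)$ may be pulled out of the imaginary part in \eqref{eq:rogers:wh:real}; this is exactly the first equality in each of \eqref{eq:rogers:wh:alt:uu0}, \eqref{eq:rogers:wh:alt:dd0} and \eqref{eq:rogers:wh:alt:ud1}.

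For the second equalities I would invoke Lemma~\ref{lem:rogers:wh:alt} with $\gamma = \gamma_f$, $\zeta = \zeta_f$, $\lambda = \lambda_f$. Since $\lambda_f$ is real and positive, $\re\frac{\lambda_f'(r)}{\lambda_f(r)} = \frac{\lambda_f'(r)}{\lambda_f(r)}$, $\im\frac{\lambda_f'(r)}{\lambda_f(r)} = 0$, and $\Arg\lambda_f(s) = 0$. Plugging these into \eqref{eq:rogers:wh:alt:uudd} annihilates both the term carrying $\im\frac{\lambda'}{\lambda}$ and the $\Arg\lambda(s)$ term, leaving precisely the logarithm of the right-hand sides of the second equalities in \eqref{eq:rogers:wh:alt:uu0} and \eqref{eq:rogers:wh:alt:dd0}; exponentiating gives those two identities. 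The same substitution in \eqref{eq:rogers:wh:alt:ud}, together with $\log|\lambda_f(s)| = \log\lambda_f(s)$, turns the right-hand side into $\log\lambda_f(s) + \tfrac{1}{\pi}\int_0^\infty\bigl(\Arg(\tfrac{\zeta_f(r)+i\xi_2}{\zeta_f(r)-i\xi_1}) - \pi\ind_{(0,s)}(r)\bigr)\tfrac{\lambda_f'(r)}{\lambda_f(r)}\,\D r$, and exponentiation yields the second equality in \eqref{eq:rogers:wh:alt:ud1}, valid for every $s > 0$.

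Finally, for \eqref{eq:rogers:wh:alt:ud0} I would use the hypothesis $f(0^+) > 0$ together with Proposition~\ref{prop:rogers:reg:lambda}, which gives $\lambda_f(0^+) = f(0^+) > 0$. Then $\int_0^s\frac{\lambda_f'(r)}{\lambda_f(r)}\,\D r = \log\lambda_f(s) - \log f(0^+)$ is finite, so in the second form of \eqref{eq:rogers:wh:alt:ud1} the prefactor satisfies $\lambda_f(s)\exp\expr{-\int_0^s\frac{\lambda_f'(r)}{\lambda_f(r)}\,\D r} = f(0^+)$; substituting this absorbs the $-\pi\ind_{(0,s)}$ correction and leaves \eqref{eq:rogers:wh:alt:ud0}, which is then manifestly independent of $s$. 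I do not expect a genuinely hard step here: the argument is a routine reduction, and the only points meriting care are the memberships $i\xi_j \in \gamma_f^\ua$, $-i\xi_j \in \gamma_f^\da$ that make the two lemmas applicable, and the observation that $|\gamma_f| = (0,\infty)$ forces $\partial|\gamma_f| = \varnothing$, so that the formulae of Lemma~\ref{lem:rogers:wh:alt} hold for every $r \in (0,\infty)$ with no exceptional set and the simplifications are valid pointwise under the integral sign.
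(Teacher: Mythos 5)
Your proof is correct and follows essentially the paper's own route: the first three identities are immediate specialisations of Lemmas~\ref{lem:rogers:curve} and~\ref{lem:rogers:wh:alt} to $\gamma = \gamma_f$ (using that $\lambda_f$ is real, positive, and increasing, so that $\log\lambda_f(r)$ pulls out of $\im(\cdot)$ in~\eqref{eq:rogers:wh:real} and the terms carrying $\im\frac{\lambda'}{\lambda}$ and $\Arg\lambda(s)$ in~\eqref{eq:rogers:wh:alt:uudd}--\eqref{eq:rogers:wh:alt:ud} vanish identically), and you correctly verify the admissibility $i\xi_j \in \gamma_f^\ua$, $-i\xi_j \in \gamma_f^\da$ needed to apply these lemmas. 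The only divergence is in the last step for~\eqref{eq:rogers:wh:alt:ud0}: the paper lets $s \searrow 0$ in~\eqref{eq:rogers:wh:alt:ud1} and invokes dominated convergence (using $\lambda_f(s) \to f(0^+)$ and integrability of $\lambda_f'/\lambda_f$ near $0$), whereas you split off $\pi\int_0^s \frac{\lambda_f'}{\lambda_f}\,\D r = \pi\bigl(\log\lambda_f(s) - \log f(0^+)\bigr)$ and cancel the $\lambda_f(s)$ prefactor exactly, exhibiting the $s$-independence directly. Both arguments are valid and rest on the same fact ($\lambda_f(0^+) = f(0^+) > 0$, giving integrability of $\lambda_f'/\lambda_f$ on $(0,1)$ and hence absolute convergence of the split-off integral); your version is marginally cleaner since it avoids a limit passage.
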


\begin{proof}
First three statements follow directly from Lemmas~\ref{lem:rogers:curve} and~\ref{lem:rogers:wh:alt}. Suppose that $f(0^+)$ is positive. In this case, by Theorem~\ref{th:rogers}\ref{it:rogers:b} (see~\eqref{eq:rogers:const}), $f(\xi) - f(0^+)$ is a Rogers function, and by Proposition~\ref{prop:rogers:bound} and condition~\ref{it:rogers:reg:d} in Definition~\ref{def:rogers:reg}, $\lambda_f(s) - f(0^+) = f(\zeta_f(s)) - f(0^+)$ converges to $0$ as $s \searrow 0$. Furthermore, $\lambda_f'(r) / \lambda_f(r)$ is integrable over $(0, 1)$, so by dominated convergence, the integral in~\eqref{eq:rogers:wh:alt:ud1} converges to the integral in~\eqref{eq:rogers:wh:alt:ud0} as $s \searrow 0$. This proves~\eqref{eq:rogers:wh:alt:ud0}.
\end{proof}

\begin{remark}
\label{rem:rogers:substitution}
By a substitution $s = \lambda_f(r)$, the expressions given in Corollary~\ref{cor:rogers:wh:alt} can be rewritten as
\formula[eq:rogers:wh:alt:subst]{
 \frac{f^\ua(\xi_1)}{f^\ua(\xi_2)} & = \exp \expr{\frac{1}{\pi} \int_{f(0^+)}^{f(\infty^-)} \Arg \expr{\frac{f^{-1}(s) - i \xi_2}{f^{-1}(s) - i \xi_1}} \frac{1}{s} \, \D s} , \\
 \frac{f^\da(\xi_1)}{f^\da(\xi_2)} & = \exp \expr{\frac{1}{\pi} \int_{f(0^+)}^{f(\infty^-)} \Arg \expr{\frac{f^{-1}(s) + i \xi_1}{f^{-1}(s) + i \xi_2}} \frac{1}{s} \, \D s} , \\
 f^\ua(\xi_1) f^\da(\xi_2) & = f(0^+) \exp \expr{\frac{1}{\pi} \int_{f(0^+)}^{f(\infty^-)} \Arg \expr{\frac{f^{-1}(s) + i \xi_2}{f^{-1}(s) - i \xi_1}} \frac{1}{s} \, \D s} ,
}
where $f(0^+) = \lambda_f(0^+)$ and, slightly abusing the notation, $f(\infty^-) = \lambda_f(\infty^-)$.
\end{remark}

\begin{remark}
\label{rem:rogers:extension}
Lemma~\ref{lem:rogers:curve} apparently can be extended to more general curves $\gamma$, e.g.\ if the integral is understood as an improper Lebesgue integral. In a similar way, expressions given in Lemma~\ref{lem:rogers:wh:alt} and Corollary~\ref{cor:rogers:wh:alt} seemingly can be extended to more general Rogers functions. However, mostly due to technical problems with handling improper integrals (e.g.\ when using Morera's theorem), this article deals only with balanced curves.
\end{remark}

\subsection{Ratios of Rogers functions}

The following result is crucial for the normalisation of extended Wiener--Hopf factors in Section~\ref{sec:norm}.

\begin{lemma}
\label{lem:rogers:wh:limit}
Let $f, \tilde{f}$ be nonzero Rogers functions. If $\lim_{\xi \searrow 0} (f(\xi) / \tilde{f}(\xi))$ exists and it is a positive number, and $\xi^{-1} \Arg (f(\xi) / \tilde{f}(\xi))$ is integrable in $\xi \in (0, 1)$, then the limits of $f^\ua(\xi) / \tilde{f}^\ua(\xi)$ and $f^\da(\xi) / \tilde{f}^\da(\xi)$ as $\xi \searrow 0$ exist and they are positive numbers. More precisely,
\formula{
 \lim_{\xi \searrow 0} \frac{f^\ua(\xi)}{\tilde{f}^\ua(\xi)} & = \frac{f^\ua(1)}{\tilde{f}^\ua(1)} \, \exp\expr{\frac{1}{\pi} \, \int_0^\infty \frac{1}{1 + r^2} \, \log \abs{\frac{f(r)}{\tilde{f}(r)}} \D r} \times \\
 & \hspace*{3em} \times \expr{\lim_{\xi \searrow 0} \frac{f(\xi)}{\tilde{f}(\xi)}}^{1/2} \exp\expr{\frac{1}{\pi} \, \int_0^\infty \frac{1}{r (1 + r^2)} \, \Arg \expr{\frac{f(r)}{\tilde{f}(r)}} \D r} , \displaybreak[0] \\
 \lim_{\xi \searrow 0} \frac{f^\da(\xi)}{\tilde{f}^\da(\xi)} & = \frac{f^\da(1)}{\tilde{f}^\da(1)} \, \exp\expr{\frac{1}{\pi} \, \int_0^\infty \frac{1}{1 + r^2} \, \log \abs{\frac{f(r)}{\tilde{f}(r)}} \D r} \times \\
 & \hspace*{3em} \times \expr{\lim_{\xi \searrow 0} \frac{f(\xi)}{\tilde{f}(\xi)}}^{1/2} \exp\expr{-\frac{1}{\pi} \, \int_0^\infty \frac{1}{r (1 + r^2)} \, \Arg \expr{\frac{f(r)}{\tilde{f}(r)}} \D r} .
}
Similarly, if $\lim_{\xi \to \infty} (f(\xi) / \tilde{f}(\xi))$ exists and $\xi^{-1} \Arg (f(\xi) / \tilde{f}(\xi))$ is integrable in $\xi \in (1, \infty)$, then
\formula{
 \lim_{\xi \nearrow \infty} \frac{f^\ua(\xi)}{\tilde{f}^\ua(\xi)} & = \frac{f^\ua(1)}{\tilde{f}^\ua(1)} \, \exp\expr{\frac{1}{\pi} \, \int_0^\infty \frac{1}{1 + r^2} \, \log \abs{\frac{f(r)}{\tilde{f}(r)}} \D r} \times \\
 & \hspace*{3em} \times \expr{\lim_{\xi \nearrow \infty} \frac{f(\xi)}{\tilde{f}(\xi)}}^{1/2} \exp\expr{\frac{1}{\pi} \, \int_0^\infty \frac{r}{1 + r^2} \, \Arg \expr{\frac{f(r)}{\tilde{f}(r)}} \D r} , \displaybreak[0] \\
 \lim_{\xi \nearrow \infty} \frac{f^\da(\xi)}{\tilde{f}^\da(\xi)} & = \frac{f^\da(1)}{\tilde{f}^\da(1)} \, \exp\expr{\frac{1}{\pi} \, \int_0^\infty \frac{1}{1 + r^2} \, \log \abs{\frac{f(r)}{\tilde{f}(r)}} \D r} \times \\
 & \hspace*{3em} \times \expr{\lim_{\xi \nearrow \infty} \frac{f(\xi)}{\tilde{f}(\xi)}}^{1/2} \exp\expr{-\frac{1}{\pi} \, \int_0^\infty \frac{r}{1 + r^2} \, \Arg \expr{\frac{f(r)}{\tilde{f}(r)}} \D r} .
}
\end{lemma}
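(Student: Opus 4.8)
The plan is to derive everything from the ratio formula of Corollary~\ref{cor:rogers:wh}. First I would fix $\xi_2 = 1$ in the first two lines of~\eqref{eq:rogers:wh:0}, once for $f$ and once for $\tilde f$, and subtract the logarithms. Using that $\log f(r) - \log \tilde f(r) = \log(f(r)/\tilde f(r))$ with the branch inherited from the exponential representation (so that for $r > 0$ one has $\Arg(f(r)/\tilde f(r)) = \Arg f(r) - \Arg \tilde f(r) \in [-\pi, \pi]$ and $f(-r)/\tilde f(-r) = \overline{f(r)/\tilde f(r)}$), this yields
\[
 \log \frac{f^\ua(\xi)}{\tilde f^\ua(\xi)} = \log \frac{f^\ua(1)}{\tilde f^\ua(1)} + \frac{1}{2 \pi} \int_{-\infty}^\infty \expr{\frac{1}{\xi + i r} - \frac{1}{1 + i r}} \log \frac{f(r)}{\tilde f(r)} \, \D r ,
\]
and the analogous identity for $f^\da$ with the kernel $\frac{1}{\xi - i r} - \frac{1}{1 - i r}$. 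Splitting the integral at $0$, substituting $r \mapsto -r$ on the negative half, and separating $\log(f(r)/\tilde f(r))$ into $\log\abs{f(r)/\tilde f(r)}$ and $i \Arg(f(r)/\tilde f(r))$, each identity collapses to an integral over $(0, \infty)$ whose integrand is a combination of $\bigl(\frac{\xi}{\xi^2 + r^2} - \frac{1}{1 + r^2}\bigr) \log\abs{f(r)/\tilde f(r)}$ and $\pm\bigl(\frac{r}{\xi^2 + r^2} - \frac{r}{1 + r^2}\bigr) \Arg(f(r)/\tilde f(r))$, the sign being $+$ for $f^\ua$ and $-$ for $f^\da$.

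The decisive observation is that $r \mapsto \frac{\xi}{\xi^2 + r^2}$ is half of an approximate identity on $(0, \infty)$: its integral equals $\tfrac{\pi}{2}$ for every $\xi > 0$, and as $\xi \searrow 0$ (resp.\ $\xi \nearrow \infty$) its mass concentrates at $r = 0$ (resp.\ at $r = \infty$). Writing, with $\ell_0 = \lim_{\xi \searrow 0}(f(\xi)/\tilde f(\xi))$,
\[
 \int_0^\infty \frac{\xi}{\xi^2 + r^2} \log\abs{\frac{f(r)}{\tilde f(r)}} \D r = \frac{\pi}{2} \log \ell_0 + \int_0^\infty \frac{\xi}{\xi^2 + r^2} \expr{\log\abs{\frac{f(r)}{\tilde f(r)}} - \log \ell_0} \D r
\]
and substituting $r = \xi u$ in the remainder, dominated convergence forces the remainder to $0$ (using that $\log\abs{f/\tilde f}$ extends continuously to $0$ because $\ell_0$ is a positive number); this is precisely where the factor $\ell_0^{1/2}$ appears. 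At the same time $\frac{r}{\xi^2 + r^2} \to \frac{1}{r}$ and $\int_0^\infty \bigl(\frac{r}{\xi^2 + r^2} - \frac{r}{1 + r^2}\bigr) \Arg(f(r)/\tilde f(r)) \D r \to \int_0^\infty \frac{\Arg(f(r)/\tilde f(r))}{r(1 + r^2)} \D r$. For the limit $\xi \nearrow \infty$ the same computation applies with $r = \xi u$ in the $\log\abs{\cdot}$ integral, the mass now concentrating at $r = \infty$ and producing $\lim_{\xi \nearrow \infty}(f(\xi)/\tilde f(\xi))$, while $\frac{r}{\xi^2 + r^2} \to 0$ leaves $-\frac{1}{\pi} \int_0^\infty \frac{r \Arg(f(r)/\tilde f(r))}{1 + r^2} \D r$. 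Collecting terms and exponentiating then gives the four stated formulas.

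All integrability and domination are supplied by Proposition~\ref{prop:rogers:bound}: applied with $r = 1$ to both $f$ and $\tilde f$ it gives $\abs{\log\abs{f(r)/\tilde f(r)}} \le C(1 + \abs{\log r})$, so $\frac{\log\abs{f/\tilde f}}{1 + r^2}$ is integrable on $(0, \infty)$ and the error terms in the approximate-identity arguments (at $r = 0$ when $\xi \searrow 0$, at $r = \infty$ when $\xi \nearrow \infty$) become tractable after rescaling; together with the trivial bound $\abs{\Arg(f/\tilde f)} \le \pi$ and the hypothesis that $r^{-1} \Arg(f(r)/\tilde f(r))$ is integrable near $0$ (resp.\ near $\infty$), this also makes $\frac{\Arg(f/\tilde f)}{r(1 + r^2)}$ (resp.\ $\frac{r \Arg(f/\tilde f)}{1 + r^2}$) integrable and dominated. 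The main obstacle is the non-uniform behaviour of the kernels at the concentration point: $\frac{\xi}{\xi^2 + r^2}$ is unbounded in $r$ as $\xi \searrow 0$, so dominated convergence cannot be applied directly, and one must first subtract the constant $\log \ell_0$ and rescale; similarly, for the $\Arg$-integral near the concentration point one estimates $\int_0^\xi \frac{r}{\xi^2 + r^2} \abs{\Arg(f/\tilde f)} \D r \le \tfrac{1}{2} \int_0^\xi r^{-1} \abs{\Arg(f/\tilde f)} \D r \to 0$, and only on the complementary range does dominated convergence apply. Once these two passages to the limit are justified, the lemma follows.
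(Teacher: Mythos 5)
Your argument mirrors the paper's own proof: both start from the integral representation in Corollary~\ref{cor:rogers:wh}, decompose the kernel $\frac{1}{\xi+ir}-\frac{1}{1+ir}$ into a Poisson-type approximate identity $\frac{\xi}{\xi^2+r^2}-\frac{1}{1+r^2}$ acting on $\log\abs{f/\tilde f}$ and a conjugate-kernel part $\frac{r}{\xi^2+r^2}-\frac{r}{1+r^2}$ acting on $\Arg(f/\tilde f)$, and pass to the limit via Poisson concentration together with dominated convergence supplied by Proposition~\ref{prop:rogers:bound} and the integrability hypothesis on $\xi^{-1}\Arg(f/\tilde f)$; your subtract-$\log\ell_0$-and-rescale handling of the concentration is equivalent to the paper's citation of vague convergence of $\xi(\xi^2+r^2)^{-1}\,\D r$ to $\pi\delta_0(\D r)$. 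One thing to be careful about in the final ``collecting terms and exponentiating'' step: carried through, the exponent is
\[
\tfrac12\log\ell_0 \;-\; \tfrac1\pi\int_0^\infty\frac{\log\abs{f(r)/\tilde f(r)}}{1+r^2}\,\D r \;+\; \tfrac1\pi\int_0^\infty\frac{\Arg(f(r)/\tilde f(r))}{r(1+r^2)}\,\D r ,
\]
with a \emph{minus} sign on the $\log\abs{\cdot}$ integral, whereas the printed statement has a plus; a concrete check with $f(\xi)=\xi^2+1$, $\tilde f(\xi)=\xi^2+2$ (so $f^\ua(\xi)/\tilde f^\ua(\xi)=(\xi+1)/(\xi+\sqrt2)\to 1/\sqrt2$, while $f^\ua(1)/\tilde f^\ua(1)=2/(1+\sqrt2)$, $\ell_0=1/2$ and $\int_0^\infty\frac{\log\abs{f/\tilde f}}{1+r^2}\D r=\pi\log\frac{2}{1+\sqrt2}$) shows only the minus sign reproduces $1/\sqrt2$. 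This is a sign slip in the printed lemma rather than a defect in your method, so your claim that this ``gives the four stated formulas'' needs that small correction, but the approach is sound and is exactly the paper's.
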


\begin{proof}
Let $a = \lim_{\xi \searrow 0} (f(\xi) / \tilde{f}(\xi))$. By Lemma~\ref{lem:rogers:curve} (or Corollary~\ref{cor:rogers:wh}), for $\xi > 0$,
\formula{
 \frac{\tilde{f}^\ua(1) f^\ua(\xi)}{f^\ua(1) \tilde{f}^\ua(\xi)} & = \exp \expr{-\frac{1}{\pi} \int_{-\infty}^\infty \im \expr{\expr{\frac{1}{i \xi - r} - \frac{1}{i - r}} \log \frac{f(r)}{\tilde{f}(r)}} \D r} .
}
Observe that
\formula{
 -\expr{\frac{1}{i \xi - r} - \frac{1}{i - r}} & = i \expr{\frac{\xi}{\xi^2 + r^2} - \frac{1}{1 + r^2}} + \frac{r (1 - \xi^2)}{(\xi^2 + r^2) (1 + r^2)} \, .
}
By the assumption and Proposition~\ref{prop:rogers:bound} (with $r = 1$), there is $c$ such that
\formula{
 \abs{\log \frac{f(r)}{\tilde{f}(r)}} & \le c (1 + \log(1 + r^2))
}
for $r \in \R \setminus \{0\}$. By dominated convergence in the integrals over $(-\infty, -1]$ and $[1, \infty)$, and vague convergence of $\xi (\xi^2 + r^2)^{-1} \D r$ to $\pi \delta_0(\D r)$ in the integral over $(-1, 1)$, it follows that
\formula{
 \lim_{\xi \searrow 0} \int_{-\infty}^\infty \frac{\xi}{\xi^2 + r^2} \, \log \frac{f(r)}{\tilde{f}(r)} \, \D r & = \pi a .
}
Furthermore, by the assumption and dominated convergence,
\formula{
 \lim_{\xi \searrow 0} \int_{-\infty}^\infty \frac{r (1 - \xi^2)}{(1 + r^2) (\xi^2 + r^2)} \log \frac{f(r)}{\tilde{f}(r)} \, \D r & = 2 \lim_{\xi \searrow 0} \int_0^\infty \frac{r (1 - \xi^2)}{(1 + r^2) (\xi^2 + r^2)} \Arg \expr{\frac{f(r)}{\tilde{f}(r)}} \D r \displaybreak[0] \\
 & = 2 \int_0^\infty \frac{1}{r (1 + r^2)} \Arg \expr{\frac{f(r)}{\tilde{f}(r)}} \D r .
}
This proves that $f^\ua(\xi) / \tilde{f}^\ua(\xi)$ has the desired limit as $\xi \searrow 0$. A similar argument proves that $f^\da(\xi) / \tilde{f}^\da(\xi)$ converges. The statement about the limits at infinity is proved in a similar way. (Alternatively, one can reuse the result for limits at $0$ by considering Rogers functions $1 / f(1/\xi)$ and $1 / \tilde{f}(1/\xi)$.)
\end{proof}

\subsection{Nearly balanced Rogers functions}

In the next section, most results are proved for balanced Rogers functions. The following two lemmas are used to extend some of them to slightly more general context.

\begin{lemma}
\label{lem:rogers:translation}
If $f$ is a nonconstant Rogers function, $0 \le r_0 < r_\infty \le \infty$, $|\gamma_f| \sub (r_0, r_\infty)$, $\zeta_0 = \zeta_f(r_0)$ and $\zeta_\infty = \zeta_f(r_\infty)$ (where $\zeta_f$ is the canonical representation of $\gamma_f$, $\zeta_f(0) = 0$ and $\zeta_f(\infty)$ is complex infinity), then
\formula{
 g(\xi) & = f(\xi + \zeta_0)
}
defines a Rogers function $g$ such that $|\gamma_f| \sub (0, |\zeta_\infty - \zeta_0|)$. Furthermore, the Wiener--Hopf factors of $f$ and $g$ are related to each other by
\formula[eq:rogers:wh:translation]{
\begin{gathered}
 \begin{aligned} \frac{g^\ua(\xi_1)}{g^\ua(\xi_2)} & = \frac{f^\ua(-i (i \xi_1 + \zeta_0))}{f^\ua(-i (i \xi_2 + \zeta_0))} , \qquad & \frac{g^\da(\xi_1)}{g^\da(\xi_2)} & = \frac{f^\da(i (-i \xi_1 + \zeta_0))}{f^\da(i (-i \xi_2 + \zeta_0))} , \end{aligned} \\
 \begin{aligned} g^\ua(\xi_1) g^\da(\xi_2) & = f^\ua(-i (i \xi_1 + \zeta_0)) f^\da(i (-i \xi_2 + \zeta_0)) \end{aligned}
\end{gathered}
}
for all $\xi_1, \xi_2$ for which both sides of these equalities are defined.
\end{lemma}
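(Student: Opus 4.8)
The plan is first to dispose of the trivial cases $r_0=0$ (then $\zeta_0=0$, $g=f$) and $r_\infty=\infty$ (then $|\zeta_\infty-\zeta_0|=\infty$ and the bound on $|\gamma_g|$ is vacuous), and, by applying everything to the Rogers function $f(-\,\cdot\,)$ if necessary (which replaces $\zeta_0$ by $\overline{\zeta_0}$, interchanges the ascending and descending factors, and satisfies $g_{f(-\cdot)}(\xi)=g_f(-\xi)$), to assume $0<r_0<r_\infty\le\infty$ and $\zeta_0=ir_0$; here one uses that $(0,r_0]\subseteq(0,\infty)\setminus|\gamma_f|$, so by Theorem~\ref{th:rogers:real}\ref{it:rogers:real:b} the $\{-i,i\}$-valued continuous function $\zeta_f(r)/r$ is constant on $(0,r_0]$ and may be normalised to $i$. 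The single ingredient used throughout is that, as in the proof of Theorem~\ref{th:rogers:real}\ref{it:rogers:real:b}, $\zeta_f(r)=ir$ on $(0,r_0)$ forces $\ph=0$ a.e.\ on $(-r_0,0)$, i.e.\ $i(0,r_0)\subseteq\dom_f$; since $f$ is also zero-free on $\C_\ra\cup\C_\la$ (Proposition~\ref{prop:rogers:trivial}), $\log f$ is holomorphic on the open strip $\{0<\im z<r_0\}$.

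That $g(\xi)=f(\xi+ir_0)$ is a nonconstant Rogers function I would get straight from Theorem~\ref{th:rogers}\ref{it:rogers:d}: $g$ is holomorphic in $\C_\ra$ since $\re(\xi+ir_0)=\re\xi>0$, and $\Arg(g(\xi)/\xi)$ is a \emph{bounded} harmonic function there (because $f\in\rogers$ keeps $\Arg(f(\xi)/\xi)$, hence $\Arg f$, bounded on $\C_\ra$), hence equals the Poisson integral of its boundary values (Theorem~\ref{th:harmonic:bounded}). Writing $g(t+is)=f(t+i(s+r_0))$ and using~\eqref{eq:rogers:ph}, those boundary values equal $\sign(s+r_0)\,\ph(-(s+r_0))-\tfrac{\pi}{2}\sign s$ for a.e.\ $s$; this lies in $[-\tfrac{\pi}{2},\tfrac{\pi}{2}]$ for $s>0$ and for $s<-r_0$ because $\ph\in[0,\pi]$, and it equals $\tfrac{\pi}{2}$ for $s\in(-r_0,0)$ because $\ph$ vanishes there. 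Hence $\Arg(g(\xi)/\xi)\in[-\tfrac{\pi}{2},\tfrac{\pi}{2}]$, i.e.\ $g\in\rogers$.

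For the Wiener--Hopf identities I would start from Corollary~\ref{cor:rogers:wh}, which gives $\log\frac{g^\ua(\xi_1)}{g^\ua(\xi_2)}=\frac{1}{2\pi}\int_{-\infty}^{\infty}\bigl(\frac{1}{\xi_1+ir}-\frac{1}{\xi_2+ir}\bigr)\log g(r)\,\D r$. Substituting $z=r+ir_0$ turns the right-hand side into $\frac{1}{2\pi}\int_{\im z=r_0}\bigl(\frac{1}{(\xi_1+r_0)+iz}-\frac{1}{(\xi_2+r_0)+iz}\bigr)\log f(z)\,\D z$. Since $\log f$ is holomorphic on the open strip $\{0<\im z<r_0\}$, the only pole of the kernel sits at height $\xi_j+r_0>r_0$, and by Proposition~\ref{prop:rogers:bound} the integrand is $O(\log|z|/|z|)$ as $\re z\to\pm\infty$ along the strip, Cauchy's theorem lets me push the contour down to the real line; recognising the outcome through Corollary~\ref{cor:rogers:wh} with $\xi_j$ replaced by $\xi_j+r_0$ gives $\frac{g^\ua(\xi_1)}{g^\ua(\xi_2)}=\frac{f^\ua(\xi_1+r_0)}{f^\ua(\xi_2+r_0)}=\frac{f^\ua(-i(i\xi_1+\zeta_0))}{f^\ua(-i(i\xi_2+\zeta_0))}$. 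The same substitution and shift applied to the $f^\da$-formula (now producing $f^\da(\xi_j-r_0)$) and to the product formula of Corollary~\ref{cor:rogers:wh} give the other two identities, the clause ``for all $\xi_1,\xi_2$ for which both sides are defined'' taking care of the points where the extension of Remark~\ref{rem:rogers:wh:ext} is needed.

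Finally, $\re(\xi+ir_0)=\re\xi$ gives $\gamma_g\cap\C_\ra=\{\xi\in\C_\ra:\im f(\xi+ir_0)=0\}=(\gamma_f\cap\C_\ra)-ir_0$, so $|\gamma_g|=\{|\zeta_f(r)-ir_0|:r\in|\gamma_f|\}$; the lower bound is clear since $\re\zeta_f(r)>0$ forces $\zeta_f(r)\ne ir_0=\zeta_0$. Writing $\zeta_\infty=\epsilon_\infty ir_\infty$ with $\epsilon_\infty\in\{-1,1\}$ (the case $r_\infty=\infty$ being excluded), the identity $|\zeta_f(r)-ir_0|^2=r^2+r_0^2-2r_0\im\zeta_f(r)$ together with $\im\zeta_f(r)>-r$ yields $|\zeta_f(r)-ir_0|<r+r_0<r_\infty+r_0$, which already settles the case $\epsilon_\infty=-1$, where $|\zeta_\infty-\zeta_0|=r_\infty+r_0$. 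The case $\epsilon_\infty=+1$, where $|\zeta_\infty-\zeta_0|=r_\infty-r_0$ is smaller, is \textbf{the main obstacle}: here $\zeta_f(r)=ir$ also for $r\ge r_\infty$, so $\ph$ vanishes on $(-\infty,-r_\infty)$ as well, hence the exponential-representation function $\ph_g=\ph(\,\cdot\,-r_0)$ of $g$ vanishes on $(-\infty,-(r_\infty-r_0))$; combining this with the nodal-set analysis of Theorem~\ref{th:rogers:real} for $g$ — continuity of $\zeta_g$, the ``at most one point per centred circle'' property, the fact that $\zeta_g(\rho)=i\rho$ for all large $\rho$, and the argument via Hopf's lemma in the proof of part~\ref{it:rogers:real:c} — one shows that no point of $\gamma_g\cap\C_\ra$ attains modulus $r_\infty-r_0$, i.e.\ $|\gamma_g|\subseteq(0,r_\infty-r_0)=(0,|\zeta_\infty-\zeta_0|)$. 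I expect this last step, and the bookkeeping establishing $\ph_g=\ph(\,\cdot\,-r_0)$ cleanly, to carry most of the technical weight.
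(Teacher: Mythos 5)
Your proposal proves the Rogers-function part (the bounded-harmonic-function argument with boundary values $\sign(s+r_0)\ph(-(s+r_0))-\tfrac{\pi}{2}\sign s$) by essentially the same route as the paper. Where you genuinely differ is the derivation of~\eqref{eq:rogers:wh:translation}. You substitute $z=r+ir_0$ in the integral of Corollary~\ref{cor:rogers:wh} for $g$ and push the contour from $\im z=r_0$ down to $\R$, using the holomorphy of $\log f$ in the strip $\{0<\im z<r_0\}$ (which follows from $\ph\equiv 0$ a.e.\ on $(-r_0,0)$ and the absence of zeros). The paper's proof is more structural and avoids contours: it shows directly that $\xi\mapsto f^\ua(\xi+r_0)$ is a complete Bernstein function (trivially, as a right-shift), and that $\xi\mapsto f^\da(\xi-r_0)$ is one as well, using the holomorphic extension of $f^\da$ to $(\C\setminus(-\infty,0])\cup(i\dom_f)$ from Remark~\ref{rem:rogers:wh:ext} together with positivity on $(0,\infty)$ and $\im\ge 0$ on $\C_\ua$; these two functions then multiply to $g$, and the \emph{uniqueness} of the Wiener--Hopf factorisation (Theorem~\ref{th:rogers:wh}) gives~\eqref{eq:rogers:wh:translation} at once. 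Your contour route works, but it has a hidden subtlety worth naming: after the push-down, what you recover is $\log f$ by analytic continuation from the line $\im z = r_0$, and you need it to agree with the branch Corollary~\ref{cor:rogers:wh} implicitly uses on $\R$. For the two ratio formulas this is harmless because the kernel $\frac{1}{\xi_1+ir}-\frac{1}{\xi_2+ir}$ integrates to zero over $\R$, so a constant shift of $\log f$ drops out; for the product formula one should check the analogous cancellation rather than take it for granted. The paper's CBF-plus-uniqueness argument sidesteps the branch bookkeeping entirely.

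On the modulus bound $|\gamma_g|\subset(0,|\zeta_\infty-\zeta_0|)$: you are right to flag this as non-trivial in the sign-coincident case, and you are also right that it is \emph{not} a consequence of mere annular containment. The paper labels it ``clearly'', which is optimistic. The cleanest way to close it, and very likely what the author had in mind, is to apply Theorem~\ref{th:rogers:real} to $g$: $\gamma_g\cap\C_\ra$ meets each centred circle at most once (part~\ref{it:rogers:real:a}), and $\lambda_g$ is strictly increasing (part~\ref{it:rogers:real:d}). A point $\eta=\zeta_f(r)-\zeta_0\in\gamma_g\cap\C_\ra$ with $r\in|\gamma_f|$ satisfies $\lambda_g(|\eta|)=f(\zeta_f(r))=\lambda_f(r)<\lambda_f(r_\infty)=g(\zeta_\infty-\zeta_0)$; comparing with $\lambda_g$ at radius $|\zeta_\infty-\zeta_0|$ (where $\zeta_g$ equals $\zeta_\infty-\zeta_0$) and using strict monotonicity of $\lambda_g$ forces $|\eta|<|\zeta_\infty-\zeta_0|$. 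This is shorter and more robust than the Hopf-lemma/nodal-set route you sketch, though yours should also succeed. In summary, your proof is correct in outline and takes a genuinely different path for the Wiener--Hopf identities; the only real issue, which you acknowledge rather than hide, is the incomplete treatment of the modulus bound, where the monotonicity of $\lambda_g$ is the tool you are missing.
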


\begin{proof}
Note that by Theorem~\ref{th:rogers:real}, $\zeta_f((0, r_0))$ and $\zeta_f((r_\infty, \infty))$ are contained in $\dom_f$, and $\lambda_f(r) = f(\zeta_f(r))$ is nonnegative and increasing on $(0, \infty) \setminus \{r_0, r_\infty\}$.

If $r_0 = 0$, then $\zeta_0 = 0$ and $g = f$. If $r_0 > 0$, then either $\zeta_0 = i r_0$ or $\zeta_0 = -i r_0$. Consider first the case $\zeta_0 = i r_0$. The function $\Arg (g(\xi) / \xi) = \Arg f(\xi + \zeta_0) - \Arg \xi$ is a bounded harmonic function in $\C_\ra$, and with the notation of~\eqref{eq:rogers:c} in Theorem~\ref{th:rogers}\ref{it:rogers:c}, its boundary values are given by
\formula[eq:rogers:translation:boundary]{
 \lim_{t \searrow 0} \frac{\Arg g(t + i s)}{t + i s} & = \lim_{t \searrow 0} \Arg f(t + i s + i r_0) - \tfrac{\pi}{2} \sign s \\
 & = \sign(s + r_0) \ph(-s - r_0) - \tfrac{\pi}{2} \sign s
}
for almost all $s \in \R$.

Recall that $\ph$ takes values in $[0, \pi]$, and that $\ph$ vanishes almost everywhere on $(-r_0, 0)$ (because $(0, i r_0) \sub \dom_f$). Therefore, the right-hand side of~\eqref{eq:rogers:translation:boundary} is in $[-\tfrac{\pi}{2}, \tfrac{\pi}{2}]$ when $s > 0$ or $s < -r_0$, and it is equal to $\tfrac{\pi}{2}$ for almost all $s \in (-r_0, 0)$. By the maximum principle, $\Arg (g(\xi) / \xi) \in [-\tfrac{\pi}{2}, \tfrac{\pi}{2}]$ for all $\xi \in \C_\ra$. By Theorem~\ref{th:rogers}\ref{it:rogers:d}, $g$ is a Rogers function. Clearly, $\gamma_g$ is the translation of $\gamma_f$, hence $|\gamma_g| \sub (0, |\zeta_\infty - \zeta_0|)$.

Observe that $f^\ua(\xi - i \zeta_0) = f^\ua(\xi + r_0)$ is a complete Bernstein function of $\xi$. By Theorem~\ref{th:rogers:wh} (see~\eqref{eq:rogers:wh} and Remark~\ref{rem:rogers:wh:ext}), $f^\da(\xi + i \zeta_0) = f^\da(\xi - r_0)$ is holomorphic in $\xi \in \C \setminus (-\infty, 0]$ and $f^\da(\xi - r_0) \ge 0$ for $\xi \in (0, \infty)$. Furthermore, $\im f^\da(\xi - r_0) \ge 0$ when $\xi \in \C_\ua$. Hence, $f^\da(\xi - r_0)$ is a complete Bernstein function of $\xi$.

Since $f^\ua(-i (\xi + i \zeta_0)) f^\da(i (\xi + i \zeta_0)) = f(\xi + i \zeta_0) = g(\xi)$ for all $\xi \in \C \setminus i \R$, the functions $f^\ua(\xi - i \zeta_0)$ and $f^\da(\xi + i \zeta_0)$ are Wiener--Hopf factors of $g$, and therefore the are equal to $c g^\ua(\xi)$ and $c^{-1} g^\da(\xi)$, respectively, for some $c > 0$. This proves~\eqref{eq:rogers:wh:translation} for $\xi_1, \xi_2 \in \C \setminus i \R$. Extension to $\xi_1, \xi_2 \in i \R$ for which both sides make sense follows by continuity.

The proof in the case $\zeta_0 = -i r_0$ is very similar.
\end{proof}

\begin{lemma}
\label{lem:rogers:mobius}
With the assumptions and notation of Lemma~\ref{lem:rogers:translation},
\formula{
 g(\xi) & = f(u(\xi)) , & \text{where} && u(\xi) & = \zeta_0 + \frac{(\zeta_\infty - \zeta_0) \xi}{\xi + (\zeta_\infty - \zeta_0)}
}
(with the convention that $(\zeta_\infty - \zeta_0) \xi / (\xi - \zeta_\infty + \zeta_\infty) = \xi$ if $\zeta_\infty$ is complex infinity), defines a Rogers function $g$. Furthermore, the Wiener--Hopf factors of $f$ and $g$ are related to each other by
\formula[eq:rogers:wh:mobius]{
\begin{gathered}
 \begin{aligned} \frac{g^\ua(\xi_1)}{g^\ua(\xi_2)} & = \frac{f^\ua(-i u(i \xi_1))}{f^\ua(-i u(i \xi_2))} , & \frac{g^\da(\xi_1)}{g^\da(\xi_2)} & = \frac{f^\da(i u(-i \xi_1))}{f^\da(i u(-i \xi_2))} , \end{aligned} \\
 \begin{aligned} g^\ua(\xi_1) g^\da(\xi_2) & = f^\ua(-i u(i \xi_1)) f^\da(i u(-i \xi_2)) \end{aligned}
\end{gathered}
}
for all $\xi_1, \xi_2$ for which both sides of these equalities are defined.
\end{lemma}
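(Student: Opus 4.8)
The plan is to realise the Möbius substitution $u$ as a composition of elementary maps, each of which acts on $\rogers$ in a way already recorded, and to transport the Wiener--Hopf factorisation along this composition. If $r_\infty = \infty$, then $\zeta_\infty$ is complex infinity, $u(\xi) = \xi + \zeta_0$, and the assertion is precisely Lemma~\ref{lem:rogers:translation}; so assume $r_\infty < \infty$. Because $|\gamma_f| \sub (r_0, r_\infty)$, neither $r_0$ nor $r_\infty$ lies in $|\gamma_f|$, so by Theorem~\ref{th:rogers:real}\ref{it:rogers:real:b} both $\zeta_0 = \zeta_f(r_0)$ and $\zeta_\infty = \zeta_f(r_\infty)$ are purely imaginary (with the convention $\zeta_f(0) = 0$ when $r_0 = 0$). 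Hence $b := \zeta_\infty - \zeta_0 \in i\R \setminus \{0\}$ and $1/b \in i\R$; since $\frac{b\xi}{\xi + b} = (1/\xi + 1/b)^{-1}$, we get $u(\xi) = \zeta_0 + (1/\xi + 1/b)^{-1}$, and writing $\iota(\xi) = 1/\xi$ this reads $u = (\,\cdot + \zeta_0) \circ \iota \circ (\,\cdot + 1/b) \circ \iota$.

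I would then build $g$ through three intermediate Rogers functions. First, $f_1(\xi) := f(\xi + \zeta_0)$ is a Rogers function by Lemma~\ref{lem:rogers:translation}, and $\gamma_{f_1}$ is the translate of $\gamma_f$, so $|\gamma_{f_1}| \sub (0, |b|)$ and $\zeta_{f_1}(|b|) = b$. Second, $f_2(\xi) := 1/f_1(1/\xi)$ is a Rogers function by Proposition~\ref{prop:rogers:prop}\ref{it:rogers:prop:b} (note $f$, hence $f_1$, is nonzero); since $f_2$ is real exactly where $1/\xi \in \gamma_{f_1}$, its line of real values is $\iota(\gamma_{f_1})$, so $\zeta_{f_2}(\rho) = 1/\zeta_{f_1}(1/\rho)$, whence $|\gamma_{f_2}| \sub (1/|b|, \infty)$ and $\zeta_{f_2}(1/|b|) = 1/b$. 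Third, applying Lemma~\ref{lem:rogers:translation} a second time — now to $f_2$, with $r_0, \zeta_0, r_\infty$ replaced by $1/|b|, 1/b, \infty$ and $\zeta_\infty$ by complex infinity — shows that $f_3(\xi) := f_2(\xi + 1/b)$ is a Rogers function. Finally, since $f_1 \circ \iota = 1/f_2$ and $u(\xi) = \zeta_0 + (1/\xi + 1/b)^{-1}$, one gets $g(\xi) = f(u(\xi)) = 1/f_2(1/\xi + 1/b) = 1/f_3(1/\xi)$, which is a Rogers function by Proposition~\ref{prop:rogers:prop}\ref{it:rogers:prop:b}. This proves the first claim.

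The Wiener--Hopf relations should now follow by composing the four elementary identities attached to these steps. Lemma~\ref{lem:rogers:translation} gives, for the pairs $(f, f_1)$ and $(f_2, f_3)$, relations of the same shape as~\eqref{eq:rogers:wh:translation}; and Proposition~\ref{prop:rogers:wh:prop}, applied with $h = f_1$ and $h = f_3$, states that the Wiener--Hopf factors of $1/h(1/\xi)$ are $1/h^\da(1/\xi)$ and $1/h^\ua(1/\xi)$. Writing out $g^\ua(\xi_1)/g^\ua(\xi_2)$, $g^\da(\xi_1)/g^\da(\xi_2)$ and $g^\ua(\xi_1) g^\da(\xi_2)$ by chaining the four identities (all normalisation constants cancel, since only ratios and the product $g^\ua g^\da$ occur) and then simplifying the nested substitutions by means of $(1/\xi + i/b)^{-1} - i\zeta_0 = -i\,u(i\xi)$ and $(1/\xi - i/b)^{-1} + i\zeta_0 = i\,u(-i\xi)$ — again a short computation from the explicit form of $u$ — yields exactly~\eqref{eq:rogers:wh:mobius}.

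The main obstacle I anticipate is the bookkeeping justifying the second application of Lemma~\ref{lem:rogers:translation}: one must track how $\gamma_f$, its canonical parametrisation, and the set $|\gamma_f|$ behave under a translation followed by an inversion, and in particular confirm that the bottom endpoint of $\gamma_{f_2}$ is $1/b$ with the correct sign, not $-1/b$. The degenerate case $r_0 = 0$ (so $\zeta_0 = 0$, $f_1 = f$), together with the placement of $\zeta_0$ and $b$ on the imaginary axis, must also be handled. These points are soft — they follow from Theorem~\ref{th:rogers:real} and from how $\gamma$ transforms under the operations of Proposition~\ref{prop:rogers:prop} — but they are where the argument requires care.
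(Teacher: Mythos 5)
Your proposal is correct and takes essentially the same route as the paper: you decompose $u$ into a translation, followed by an inversion, a second translation, and a second inversion, build intermediate Rogers functions $f_1, f_2, f_3$ (which are exactly the paper's $g_0, g_1, g_\infty$), and obtain each step from Lemma~\ref{lem:rogers:translation} and Proposition~\ref{prop:rogers:prop}\ref{it:rogers:prop:b}, with the Wiener--Hopf identities chained through~\eqref{eq:rogers:wh:translation} and Proposition~\ref{prop:rogers:wh:prop}. The bookkeeping points you flag (that $\zeta_0,\zeta_\infty$ are purely imaginary by Theorem~\ref{th:rogers:real}\ref{it:rogers:real:b}, that $\zeta_{f_2}(1/|b|) = 1/b$ with the correct sign, and the substitution identities $-i\,u(i\xi) = (1/\xi + i/b)^{-1} - i\zeta_0$, $i\,u(-i\xi) = (1/\xi - i/b)^{-1} + i\zeta_0$) are indeed the spots needing care, and they check out as you describe.
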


Note that $u$ is the M\"obius transformation which maps $\zeta_0$ to $0$ and $\zeta_\infty$ to $\infty$, and preserves $\C_\ra$ and $i \R \cup \{\infty\}$ (here $\infty$ stands for the complex infinity). When $\zeta_\infty$ is complex infinity, then $u(\xi) = \xi + \zeta_0$ is a translation, and the statement of the lemma reduces to Lemma~\ref{lem:rogers:translation}. One easily finds that
\formula{
 u^{-1}(\xi) & = (\zeta_0 - \zeta_\infty) \, \frac{\xi - \zeta_0}{\xi - \zeta_\infty} \, .
}
Furthermore, $u^{-1}(0) = (\zeta_0 / \zeta_\infty) (\zeta_0 - \zeta_\infty)$ and $u^{-1}(\infty) = (\zeta_0 - \zeta_\infty)$.

\begin{proof}
By a direct calculation, $u(\xi) = u_0(u_\infty(\xi))$, where $u_0(\xi) = \xi + \zeta_0$ is a translation, and $u_\infty(\xi) = (1 / \xi + 1 / (\zeta_\infty - \zeta_0))^{-1}$ can be viewed as a translation in the $1 / \xi$ variable. By Lemma~\ref{lem:rogers:translation}, $g_0(\xi) = f(u_0(\xi))$ is a Rogers function, with $|\gamma_{g_0}| \sub (0, |\zeta_\infty - \zeta_0|)$. Hence, $g_1(\xi) = 1 / g_0(1 / \xi)$ is a Rogers function, and $|\gamma_{g_1}| \sub (|\zeta_\infty - \zeta_0|^{-1}, \infty)$. By another application of Lemma~\ref{lem:rogers:translation}, $g_\infty(\xi) = g_1(\xi + 1 / (\zeta_\infty - \zeta_0))$ is a Rogers function. Therefore, also 
\formula{
 \frac{1}{g_\infty(1 / \xi)} & = \frac{1}{g_1(1 / \xi + 1 / (\zeta_\infty - \zeta_0))} = g_0\!\expr{\frac{1}{1 / \xi + 1 / (\zeta_\infty - \zeta_0)}} = g_0(u_\infty(\xi)) = g(\xi)
}
is a Rogers function, as desired. In a similar way one shows that a double application of~\eqref{eq:rogers:wh:translation} proves~\eqref{eq:rogers:wh:mobius}.
\end{proof}

\begin{definition}
\label{def:rogers:nearly}
A nonconstant Rogers function $f$ is \emph{nearly balanced} if $|\gamma_f| = (r_0, r_\infty)$ for some $r_0$, $r_\infty$, and the Rogers function $g$ defined in Lemma~\ref{lem:rogers:mobius} is balanced.
\end{definition}

\begin{example}
\label{ex:rogers:nearly}
\begin{enumerate}[label=\rm (\alph*)]
\item The Rogers function $f(\xi) = \tfrac{1}{2} \xi^2 - i b \xi$ (the L\'evy--Khintchine exponent of the Brownian motion with drift; $b \in \R$) is a nearly balanced Rogers function.
\item The Rogers function $f(\xi) = a \xi^\alpha$ (the L\'evy--Khintchine exponent of a strictly stable L\'evy process; $\alpha \in (0, 2]$, $a$ satisfies~\eqref{eq:stable:a}) is nearly balanced if and only if it is balanced, that is, $|\Arg a| < \tfrac{\alpha \pi}{2}$ (see~\eqref{eq:stable:a}).
\item The Rogers function $f(\xi) = a \xi^\alpha - i b \xi$ (the L\'evy--Khintchine exponent of a nonstrictly stable L\'evy process; $\alpha \in (0, 1) \cup (1, 2]$, $a$ satisfies~\eqref{eq:stable:a}, $b \in \R \setminus \{0\}$) is nearly balanced if and only if $\alpha = 2$.
\item The Rogers function $f(\xi) = a \xi (1 + i \beta \tfrac{2}{\pi} \log \xi) - i b \xi$ (the L\'evy--Khintchine exponent of a nonstrictly stable L\'evy process with stability index $1$; $a > 0$, $b \in \R$, $\beta \in [-1, 1] \setminus \{0\}$) is not a nearly balanced function.
\item The Rogers function $f(\xi) = \xi / (\xi - a i) - i b \xi$ (the L\'evy--Khintchine exponent of the classical risk process; $a, b > 0$) is a nearly balanced Rogers function.
\end{enumerate}
\end{example}

%
%

\section{Extended Wiener--Hopf factorisation}
\label{sec:xwh}

The Wiener--Hopf factorisation in the $\xi$ variable of $f(\xi) + \tau$ for $\tau > 0$ and $f \in \rogers$, referred to as \emph{extended Wiener--Hopf factorisation}, plays an important role in the fluctuation theory of L\'evy processes. This topic is addressed in the following two sections, while applications are studied in Section~\ref{sec:ft}.

This section is the most technical one in the article. Its results do not depend on normalisation and they are stated for the Wiener--Hopf factors of $f(\xi) + \tau$ as defined above. In the next section the definitions are modified in order to agree with standard notions for L\'evy processes, where the limits of Wiener--Hopf factors of $f(\xi) + \tau$ are asymptotically equal as $\xi \nearrow \infty$.

In Lemmas~\ref{lem:rogers:xwh} and~\ref{lem:rogers:xwh:cbf} the ratios of extended Wiener--Hopf factors of balanced Rogers functions are proved to be complete Bernstein functions in $\tau$. This is extended to nearly balanced Rogers functions in Lemma~\ref{lem:rogers:xwh:nearly}. In Lemma~\ref{lem:rogers:xwh:boundary}, the boundary limits in $\tau$ at $(-\infty, 0)$ are calculated, which, in view of Theorem~\ref{th:rogers}\ref{it:rogers:b}, yields a formula for the measure $\mu$ in the representation of the ratios. Finally, an integral identity for the Wiener--Hopf factors is given in Lemma~\ref{lem:rogers:xwh:int}.

\subsection{Holomorphic extensions of extended Wiener--Hopf factors}

In this section by saying that a statement holds for \emph{admissible} $\xi_1, \xi_2$, it is understood that $\xi_1, \xi_2$ satisfy the appropriate assumptions of Lemma~\ref{lem:rogers:curve} for $\gamma = \gamma_f$. The following definition introduces a convenient notation for $(f+\tau)^\ua(\xi)$ and $(f+\tau)^\da(\xi)$.

\begin{definition}
\label{def:rogers:xwh}
Suppose that $f \in \rogers$, $\tau > 0$ and $g(\xi) = f(\xi) + \tau$. We define $f^\ua(\tau; \xi) = g^\ua(\xi)$, $f^\da(\tau; \xi) = g^\da(\xi)$.
\end{definition}

\begin{remark}
\label{rem:rogers:xwh}
Due to the normalisation of the Wiener--Hopf factors, $f^\ua(\tau; 1) = f^\da(\tau; 1)$ in the above definition. However, this is not the desired condition in the application to the fluctuation theory of L\'evy processes. The results of this part do not depend on the choice of the normalisation, which will become important in the next section.
\end{remark}

\begin{lemma}
\label{lem:rogers:xwh}
Let $f$ be a balanced Rogers function. Then for all admissible $\xi_1, \xi_2$, $f^\ua(\tau; \xi_1) / f^\ua(\tau; \xi_2)$, $f^\da(\tau; \xi_1) / f^\da(\tau; \xi_2)$ and $f^\ua(\tau; \xi_1) f^\da(\tau; \xi_2)$ extend to holomorphic functions of $\tau \in \C \setminus (-\infty, 0]$. If $\zeta_f$ is the canonical parametrisation of $\gamma_f \cap \C_\ra$ and $\lambda_f(r) = f(\zeta_f(r))$, then for all $\xi_1, \xi_2 > 0$ and all $\tau \in \C \setminus(-\infty, 0]$,
\formula[eq:rogers:xwh:uudd]{
 \frac{f^\ua(\tau; \xi_1)}{f^\ua(\tau; \xi_2)} & = \exp \expr{-\frac{1}{\pi} \int_0^\infty \im \expr{\frac{\zeta_f'(r)}{i \xi_1 - \zeta_f(r)} - \frac{\zeta_f'(r)}{i \xi_2 - \zeta_f(r)}} \log (\lambda_f(r) + \tau) \D r} \\
 & = \exp \expr{\frac{1}{\pi} \int_0^\infty \Arg \expr{\frac{\zeta_f(r) - i \xi_2}{\zeta_f(r) - i \xi_1}} \frac{\lambda_f'(r)}{\lambda_f(r) + \tau} \, \D r} , \\
 \frac{f^\da(\tau; \xi_1)}{f^\da(\tau; \xi_2)} & = \exp \expr{-\frac{1}{\pi} \int_0^\infty \im \expr{\frac{\zeta_f'(r)}{i \xi_1 + \zeta_f(r)} - \frac{\zeta_f'(r)}{i \xi_2 + \zeta_f(r)}} \log (\lambda_f(r) + \tau) \D r} \\
 & = \exp \expr{\frac{1}{\pi} \int_0^\infty \Arg \expr{\frac{\zeta_f(r) + i \xi_1}{\zeta_f(r) + i \xi_2}} \frac{\lambda_f'(r)}{\lambda_f(r) + \tau} \, \D r} ,
}
and
\formula[eq:rogers:xwh:ud]{
 & f^\ua(\tau; \xi_1) f^\da(\tau; \xi_2) \\
 & \qquad = \exp \expr{-\frac{1}{\pi} \int_0^\infty \im \expr{\frac{\zeta_f'(r)}{i \xi_1 - \zeta_f(r)} + \frac{\zeta_f'(r)}{i \xi_2 + \zeta_f(r)}} \log (\lambda_f(r) + \tau) \D r} \\
 & \qquad = \tau \exp \expr{\frac{1}{\pi} \int_0^\infty \Arg \expr{\frac{\zeta_f(r) + i \xi_2}{\zeta_f(r) - i \xi_1}} \frac{\lambda_f'(r)}{\lambda_f(r) + \tau} \, \D r} .
}
\end{lemma}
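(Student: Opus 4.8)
The plan is to reduce the statement to the case $\tau > 0$, where it follows at once from Lemma~\ref{lem:rogers:curve} and Corollary~\ref{cor:rogers:wh:alt}, and then to extend holomorphically in $\tau$ over $\C \setminus (-\infty, 0]$ by inspecting the integral representations. First I would record that for a fixed $\tau > 0$ the function $g = f + \tau$ is again a nonconstant Rogers function; since $\im g = \im f$, and since $f$ is real, nonzero, hence positive on $\gamma_f \cap \C_\ra$ (as $f$ maps $\C_\ra$ into $\C \setminus (-\infty, 0)$), the function $g$ takes positive values on $\gamma_f \cap \C_\ra$ as well, so $\gamma_g = \gamma_f$, $\zeta_g = \zeta_f$, $|\gamma_g| = |\gamma_f| = (0, \infty)$, and $\lambda_g(r) = \lambda_f(r) + \tau$. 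In particular $g$ is balanced whenever $f$ is. Applying Lemma~\ref{lem:rogers:curve} to $g$ with the balanced curve $\gamma_f$ (for general admissible $\xi_1, \xi_2$ in its contour form \eqref{eq:rogers:wh:uu}--\eqref{eq:rogers:wh:ud}, with $\log f(z)$ replaced by $\log(f(z) + \tau)$, and for $\xi_1, \xi_2 > 0$ in the form \eqref{eq:rogers:wh:real}) yields the first equalities in \eqref{eq:rogers:xwh:uudd}--\eqref{eq:rogers:xwh:ud} for $\tau > 0$; applying Corollary~\ref{cor:rogers:wh:alt} to $g$ — using Proposition~\ref{prop:rogers:reg:lambda} and $g(0^+) > 0$ to control the boundary term as $r \searrow 0$ — converts these into the second, ``$\Arg(\cdot)\,\lambda_f'/(\lambda_f + \tau)$'' forms.

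\emph{Extension in $\tau$.} Next I would let $\tau$ vary over $\C \setminus (-\infty, 0]$ in the integrals produced above. Since $f(z) \in [0, \infty)$ for $z \in \gamma_f$ and $\lambda_f(r) \in [0, \infty)$ for $r > 0$, the integrands $\log(f(z) + \tau)$, $\log(\lambda_f(r) + \tau)$ and $\lambda_f'(r)/(\lambda_f(r) + \tau)$ are holomorphic in $\tau$ on $\C \setminus (-\infty, 0]$. To move holomorphy through the integral I would use Morera's theorem together with Fubini, which requires a majorant uniform for $\tau$ in a compact $K \subset \C \setminus (-\infty, 0]$: from the elementary inequality $|w + \tau| \ge c_K(1 + w)$ for $w \ge 0$, the growth bound $|f(z)| \le C(1 + |z|^2)$ (equivalently $\lambda_f(r) \le C(1 + r^2)$) along $\gamma_f$ coming from Proposition~\ref{prop:rogers:bound} and condition~\ref{it:rogers:reg:d} of Definition~\ref{def:rogers:reg}, the logarithmic-derivative estimate~\eqref{eq:rogers:wh:est:aux} (from Proposition~\ref{prop:rogers:prime}) giving $|\lambda_f'(r)/\lambda_f(r)| \le c\,|\zeta_f'(r)|/r$, the decays $\Arg\bigl((\zeta_f(r) - i\xi_2)/(\zeta_f(r) - i\xi_1)\bigr) = O(r)$ as $r \searrow 0$ and $O(1/r)$ as $r \nearrow \infty$ for $\xi_1, \xi_2 > 0$ (and the analogous kernel decay in the admissible-$\xi$ case), the boundedness of $|\zeta_f'|$ from Lemma~\ref{lem:rogers:reg}, and the curve-length condition~\ref{it:rogers:reg:e}, one obtains an integrable majorant independent of $\tau \in K$ in each of the three integrals. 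Hence each of these integrals, and therefore its exponential, is holomorphic in $\tau$ on $\C \setminus (-\infty, 0]$.

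Finally, by the identity theorem: for $\tau \in (0, \infty)$ the right-hand sides of \eqref{eq:rogers:xwh:uudd}--\eqref{eq:rogers:xwh:ud} equal, by the first step, $f^\ua(\tau; \xi_1)/f^\ua(\tau; \xi_2)$, $f^\da(\tau; \xi_1)/f^\da(\tau; \xi_2)$ and $f^\ua(\tau; \xi_1)f^\da(\tau; \xi_2)$ respectively; being holomorphic in $\tau$ on $\C \setminus (-\infty, 0]$, they are the unique holomorphic continuations of these ratios and product, which proves the first assertion of the lemma, while equality of the two displayed forms on $(0, \infty)$ propagates to all of $\C \setminus (-\infty, 0]$. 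I expect the only genuine work to be the step of assembling the $\tau$-uniform integrable majorants so that no estimate degenerates as $\tau$ nears the slit $(-\infty, 0]$, which forces one to use simultaneously the angular bound~\ref{it:rogers:reg:d} (keeping $z = \zeta_f(r)$ away both from $i\xi_1, i\xi_2$ and from $(-\infty, 0]$), the growth of $f$ along $\gamma_f$, the bound on $\lambda_f'/\lambda_f$, and the length integrability~\ref{it:rogers:reg:e} — delicate but routine bookkeeping.
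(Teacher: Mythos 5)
Your proposal is correct and takes essentially the same approach as the paper: reduce to $\tau > 0$ via Lemma~\ref{lem:rogers:curve} (resp.~Corollary~\ref{cor:rogers:wh:alt}), observe that $\gamma_{f+\tau} = \gamma_f$ and $\lambda_{f+\tau} = \lambda_f + \tau$, then apply Morera's theorem (with Fubini, using bounds on the kernel, on $\log(\lambda_f(r)+\tau)$ or $\lambda_f'(r)/(\lambda_f(r)+\tau)$ uniform for $\tau$ in compacta of $\C\setminus(-\infty,0]$, and the curve-length condition~\ref{it:rogers:reg:e}) to continue both displayed forms holomorphically in $\tau$; they agree on $(0,\infty)$, hence everywhere by the identity theorem.
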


\begin{proof}
By Lemma~\ref{lem:rogers:curve}, if $i \xi_1, i \xi_2 \in \gamma_f^\ua$ and $\tau > 0$,
\formula[eq:rogers:xwh:uu:log]{
 \log \frac{f^\ua(\tau; \xi_1)}{f^\ua(\tau; \xi_2)} & = -\frac{1}{2 \pi i} \int_{\gamma_f} \expr{\frac{1}{i \xi_1 - z} - \frac{1}{i \xi_2 - z}} \log (f(z) + \tau) \D z .
}
Since $f$ takes positive values on $\gamma_f$, the integrand in the right-hand side extends to a holomorphic function of $\tau \in \C \setminus (-\infty, 0]$ for each $z \in \gamma_f$. We claim that by Morera's theorem, the integral is a holomorphic function of $\tau \in \C \setminus (-\infty, 0]$. The proof of this claim requires a few simple estimates.

Observe that for each $\xi_1$, $\xi_2$ such that $i \xi_1, i \xi_2 \in \gamma_f^\ua$, there is $c_1$ (depending on $f$, $\xi_1$ and $\xi_2$) such that
\formula[eq:rogers:xwh:est1]{
 \abs{\frac{1}{i \xi_1 - z} - \frac{1}{i \xi_2 - z}} & = \frac{|\xi_1 - \xi_2|}{|i \xi_1 - z| |i \xi_2 - z|} \le \frac{c_1}{1 + |z|^2}
}
for all $z \in \gamma_f$. Furthermore, for $z \in \gamma_f$ and $\tau \in \C \setminus (-\infty, 0]$,
\formula{
 \max(|\im \tau|, \re \tau) & \le |f(z) + \tau| \le f(z) + |\tau| ,
}
so that
\formula{
 -\frac{1}{\max(|\im \tau|, \re \tau)} & \le \log |f(z) + \tau| \le \log(1 + f(z)) + |\tau| ,
}
It follows that
\formula[eq:rogers:xwh:est2]{
 |\log(f(z) + \tau)| & \le \pi + |\log |f(z) + \tau|| \\
 & \le \pi + \frac{1}{\max(|\im \tau|, \re \tau)} + \log(1 + f(z)) + |\tau| .
}
By condition~\ref{it:rogers:reg:d} in Definition~\ref{def:rogers:reg} and Proposition~\ref{prop:rogers:bound} (with $r = 1$), there is $c_2$ (depending on $f$) such that
\formula[eq:rogers:xwh:est3]{
 \log(1 + f(z)) & \le c_2 (1 + \log(1 + |z|))
}
for $z \in \gamma_f$. By~\eqref{eq:rogers:xwh:est1}, \eqref{eq:rogers:xwh:est2}, \eqref{eq:rogers:xwh:est3} and condition~\ref{it:rogers:reg:e} in Definition~\ref{def:rogers:reg}, Fubini can be used to show that the integral of the right-hand side of~\eqref{eq:rogers:xwh:uu:log} along any closed simple smooth curve contained in $\C \setminus (-\infty, 0]$ is zero. By Morera's theorem, the right-hand side of~\eqref{eq:rogers:xwh:uu:log} is therefore a holomorphic function of $\tau \in \C \setminus (-\infty, 0]$, as claimed.

It follows that $f^\ua(\tau; \xi_1) / f^\ua(\tau; \xi_2)$ extends to a holomorphic function of $\tau \in \C \setminus (-\infty, 0]$, given by~\eqref{eq:rogers:xwh:uu:log}. The first part of formula~\eqref{eq:rogers:xwh:uudd} follows by symmetry of $\gamma_f$, as in~\eqref{eq:rogers:wh:real}. The properties of $f^\da(\tau; \xi_1) / f^\da(\tau; \xi_2)$ and $f^\ua(\tau; \xi_1) f^\da(\tau; \xi_2)$, as well as the third part of~\eqref{eq:rogers:xwh:uudd} and the first part of~\eqref{eq:rogers:xwh:ud}, are proved in a similar manner.

The second part of~\eqref{eq:rogers:xwh:uudd} for $\tau > 0$ is simply an application of Corollary~\ref{cor:rogers:wh:alt}. In order to extend this equality to all $\tau \in \C \setminus (-\infty, 0]$, one only needs to use Morera's theorem to show that the second line of~\eqref{eq:rogers:xwh:uudd} defines a holomorphic function of $\tau \in \C \setminus (-\infty, 0]$. This is done using similar estimates to those used in the proof of Lemma~\ref{lem:rogers:wh:alt}. Indeed, for each $\xi_1, \xi_2 > 0$ there is $c_3$ (depending on $f$, $\xi_1$ and $\xi_2$) such that
\formula{
 \abs{\Arg \expr{\frac{\zeta_f(r) - i \xi_2}{\zeta_f(r) - i \xi_1}}} & \le \abs{\log \expr{\frac{\zeta_f(r) - i \xi_2}{\zeta_f(r) - i \xi_1}}} \le \frac{c_3}{1 + r}
}
for all $r > 0$. Furthermore,
\formula{
 \abs{\frac{\lambda_f'(r)}{\lambda_f(r) + \tau}} & \le \frac{\lambda_f'(r)}{|\tau|}
}
for $r \in (0, 1)$, and, as in~\eqref{eq:rogers:wh:est:aux}, there is $c_4$ (depending on $f$) such that
\formula{
 \abs{\frac{\lambda_f'(r)}{\lambda_f(r) + \tau}} & \le \frac{c_4 |\zeta_f'(r)|}{r}
}
for $r \in (1, \infty)$. Hence, by condition~\ref{it:rogers:reg:e} in Definition~\ref{def:rogers:reg}, the integral in the second line of~\eqref{eq:rogers:xwh:uudd} is absolutely convergent and, by Fubini and Morera's theorem, it defines a holomorphic function of $\tau \in \C \setminus (-\infty, 0]$ (we omit the details). The second equality in~\eqref{eq:rogers:xwh:uudd} follows by uniqueness of holomorphic extension. The fourth part of~\eqref{eq:rogers:xwh:uudd} and the second part of~\eqref{eq:rogers:xwh:ud} are proved in the same way.
\end{proof}

\begin{lemma}
\label{lem:rogers:xwh:cbf}
With the assumptions and notation of Lemma~\ref{lem:rogers:xwh}, if $0 < \xi_1 < \xi_2$, then
\formula{
 \frac{f^\ua(\tau; \xi_1)}{f^\ua(\tau; \xi_2)} && \text{and} && \frac{f^\da(\tau; \xi_1)}{f^\da(\tau; \xi_2)}
}
are complete Bernstein functions of $\tau$. Furthermore, if $\xi_1, \xi_2 > 0$, then
\formula{
 f^\ua(\tau; \xi_1) f^\da(\tau; \xi_2)
}
is a complete Bernstein function of $\tau$. Finally, for all $\xi_1, \xi_2 > 0$, the functions $f^\ua(\tau; \xi_1) / f^\ua(\tau; \xi_2)$, $f^\da(\tau; \xi_1) / f^\da(\tau; \xi_2)$ and $f^\ua(\tau; \xi_1) f^\da(\tau; \xi_2) / \tau$ converge to $1$ as $\tau \to \infty$.
\end{lemma}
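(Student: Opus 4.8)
The plan is to read the exponential representation of a complete Bernstein function (Theorem~\ref{th:cbf}\ref{it:cbf:c}) directly off the formulas for the three quantities given in Lemma~\ref{lem:rogers:xwh}, after the substitution $s = \lambda_f(r)$. The first ingredient is the sign of the arguments occurring in the second and fourth lines of~\eqref{eq:rogers:xwh:uudd} and in the last line of~\eqref{eq:rogers:xwh:ud}. Since $\zeta_f(r) \in \C_\ra$ and $\xi_1, \xi_2 > 0$, each of the points $\zeta_f(r) \pm i \xi_j$ lies in $\C_\ra$, hence has argument in $(-\tfrac{\pi}{2}, \tfrac{\pi}{2})$, so that the argument of a quotient of two of them equals the difference of the individual arguments. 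Because $\Arg(x + i y)$ is increasing in $y$ when $x > 0$, for $\xi_1 < \xi_2$ this gives $\Arg\bigl(\tfrac{\zeta_f(r) - i \xi_2}{\zeta_f(r) - i \xi_1}\bigr) \in (-\pi, 0]$ and $\Arg\bigl(\tfrac{\zeta_f(r) + i \xi_1}{\zeta_f(r) + i \xi_2}\bigr) \in (-\pi, 0]$, whereas for arbitrary $\xi_1, \xi_2 > 0$ one has $\Arg\bigl(\tfrac{\zeta_f(r) + i \xi_2}{\zeta_f(r) - i \xi_1}\bigr) \in [0, \pi)$.

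Since $f$ is balanced we have $|\gamma_f| = (0, \infty)$, and by Theorem~\ref{th:rogers:real}\ref{it:rogers:real:d} together with Proposition~\ref{prop:rogers:reg:lambda} the map $\lambda_f$ is a $C^1$ diffeomorphism of $(0, \infty)$ onto $(c_0, \lambda_f(\infty^-))$, where $c_0 = f(0^+) \ge 0$ and $\lambda_f(\infty^-) \in (c_0, \infty]$. Substituting $s = \lambda_f(r)$ in the second and fourth lines of~\eqref{eq:rogers:xwh:uudd} and in the last line of~\eqref{eq:rogers:xwh:ud}, and writing $f^{-1}(s) = \zeta_f(\lambda_f^{-1}(s))$ as in Remark~\ref{rem:rogers:substitution}, one obtains
\formula{
 \frac{f^\ua(\tau; \xi_1)}{f^\ua(\tau; \xi_2)} & = \exp \expr{-\frac{1}{\pi} \int_0^\infty \frac{\ph_\ua(s)}{\tau + s} \, \D s} , \\
 \frac{f^\da(\tau; \xi_1)}{f^\da(\tau; \xi_2)} & = \exp \expr{-\frac{1}{\pi} \int_0^\infty \frac{\ph_\da(s)}{\tau + s} \, \D s} , \\
 f^\ua(\tau; \xi_1) f^\da(\tau; \xi_2) & = \tau \exp \expr{\frac{1}{\pi} \int_0^\infty \frac{\psi(s)}{\tau + s} \, \D s} ,
}
where $\ph_\ua(s) = -\Arg\bigl(\tfrac{f^{-1}(s) - i \xi_2}{f^{-1}(s) - i \xi_1}\bigr)$, $\ph_\da(s) = -\Arg\bigl(\tfrac{f^{-1}(s) + i \xi_1}{f^{-1}(s) + i \xi_2}\bigr)$ and $\psi(s) = \Arg\bigl(\tfrac{f^{-1}(s) + i \xi_2}{f^{-1}(s) - i \xi_1}\bigr)$ for $s \in (c_0, \lambda_f(\infty^-))$, all extended by $0$ on the rest of $(0, \infty)$; by the previous paragraph these three functions take values in $[0, \pi)$. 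Each of them is integrable against $(1 + s)^{-1} \, \D s$: near $c_0$ this is clear since they are bounded, and near $\infty$ it follows from $\ph_\ua, \ph_\da, \psi = O(1/r)$ (as $\zeta_f(r) \to \infty$) together with $\lambda_f'(r) / \lambda_f(r) = O(1/r)$, which is~\eqref{eq:rogers:wh:est:aux} combined with Lemma~\ref{lem:rogers:reg} — the same estimates already used in the proof of Lemma~\ref{lem:rogers:xwh}.

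It then remains to match the right-hand sides with Theorem~\ref{th:cbf}\ref{it:cbf:c}. Using the identity $\bigl(\tfrac{\tau}{\tau + s} - \tfrac{1}{1 + s}\bigr) \tfrac{1}{s} = \tfrac{1}{1 + s} - \tfrac{1}{\tau + s}$, I would write $-\tfrac{1}{\pi} \int_0^\infty \tfrac{\ph_\ua(s)}{\tau + s} \, \D s = -\tfrac{1}{\pi} \int_0^\infty \tfrac{\ph_\ua(s)}{1 + s} \, \D s + \tfrac{1}{\pi} \int_0^\infty \bigl(\tfrac{\tau}{\tau + s} - \tfrac{1}{1 + s}\bigr) \tfrac{\ph_\ua(s)}{s} \, \D s$; the first summand is a finite constant, so $f^\ua(\tau; \xi_1) / f^\ua(\tau; \xi_2)$ is of the form~\eqref{eq:cbf:c} with $\tilde\ph = \ph_\ua$, hence a complete Bernstein function of $\tau$, and the $f^\da$ ratio is identical with $\tilde\ph = \ph_\da$. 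For the product I would use $\log \tau = \int_0^\infty \bigl(\tfrac{1}{1 + s} - \tfrac{1}{\tau + s}\bigr) \D s$ and combine it with $\tfrac{1}{\pi} \int_0^\infty \tfrac{\psi(s)}{\tau + s} \, \D s$ to get $\log\bigl(f^\ua(\tau; \xi_1) f^\da(\tau; \xi_2)\bigr) = \int_0^\infty \bigl(\tfrac{1}{1 + s} - \tfrac{\tilde\ph(s)}{\pi(\tau + s)}\bigr) \D s$ with $\tilde\ph(s) = \pi - \psi(s) \in (0, \pi]$, and then split off the constant $\tfrac{1}{\pi} \int_0^\infty \tfrac{\psi(s)}{1 + s} \, \D s$ to leave a function of the form~\eqref{eq:cbf:c} with this $\tilde\ph$. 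Finally, the convergence to $1$ as $\tau \to \infty$ follows from the second and fourth lines of~\eqref{eq:rogers:xwh:uudd} and the last line of~\eqref{eq:rogers:xwh:ud}: for $\tau \ge 1$ the integrand $\Arg(\cdots) \, \lambda_f'(r) / (\lambda_f(r) + \tau)$ is dominated in modulus by the integrable $\tau = 1$ integrand and tends to $0$ pointwise, so by dominated convergence each exponent tends to $0$. The one slightly delicate point is the bookkeeping in this last step — keeping track of which integrals converge only after the $(1 + s)^{-1}$ normalisation is merged with $(\tau + s)^{-1}$ — but that is entirely routine given the estimates above.
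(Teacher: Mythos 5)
Your proof is correct, but it takes a genuinely different route from the paper's. Both start from the integral expressions in Lemma~\ref{lem:rogers:xwh} (the second and fourth lines of~\eqref{eq:rogers:xwh:uudd} and the last line of~\eqref{eq:rogers:xwh:ud}), and both begin by observing the sign of the relevant arguments, i.e.\ the inequality~\eqref{eq:rogersxwh:cbf:aux} and its analogues. After that the arguments diverge. You substitute $s = \lambda_f(r)$ and manipulate the exponent algebraically until it matches the exponential representation~\eqref{eq:cbf:c} of a complete Bernstein function, extracting the function $\tilde\ph$ explicitly (namely $\ph_\ua$, $\ph_\da$, and $\pi - \psi$ respectively). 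This requires the separate integrability argument you supply (boundedness near $c_0$, and the $O(1/r^2)$ decay from $|\zeta_f'(r)|$ bounded and $\lambda_f'/\lambda_f = O(1/r)$), in order to split the integrals. The paper instead applies the argument-based characterisation of complete Bernstein functions (Proposition~\ref{prop:cbf:arg}): it takes $\tau \in \C_\ua$, shows directly from the integral formula that $\im$ of the exponent lies in $[0, \Arg\tau]$ via the bound $\frac{1}{\pi}\int_0^\infty (-\pi)\im(\lambda_f'(r)/(\lambda_f(r)+\tau))\,\D r \le \Arg\tau$, and concludes without ever separating the integral into convergent pieces. The paper's route is slightly cleaner (it avoids the integrability bookkeeping and the change of variables), but yours has the benefit of producing the Stieltjes density $\tilde\ph$ explicitly, which is in fact what Lemma~\ref{lem:rogers:xwh:boundary} goes on to recover via boundary limits, and what Remark~\ref{rem:rogers:xwh:cbf:bound} refines further; so both viewpoints appear in the paper, just in different places. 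The final dominated-convergence step is the same in both.

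One small bookkeeping remark: strictly speaking the exponential representation~\eqref{eq:cbf:c} does not impose any integrability on $\tilde\ph$ — the kernel $\bigl(\tfrac{\xi}{\xi+s}-\tfrac{1}{1+s}\bigr)\tfrac{1}{s}$ is itself integrable against any bounded $\tilde\ph$. You need the integrability only to legitimise splitting the displayed integral into the constant piece $\tfrac{1}{\pi}\int_0^\infty\tfrac{\ph_\ua(s)}{1+s}\,\D s$ and the CBF exponent, which is exactly what you say; your estimates cover both the bounded case $\lambda_f(\infty^-) < \infty$ (compact support) and the unbounded case. So no gap.
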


\begin{proof}
By Lemma~\ref{lem:rogers:xwh}, $f^\ua(\tau; \xi_1) / f^\ua(\tau; \xi_2)$ extends to a holomorphic function of $\tau \in \C \setminus (-\infty, 0]$ and it is nonnegative for $\tau > 0$. Suppose that $0 < \xi_1 < \xi_2$. Then
\formula[eq:rogersxwh:cbf:aux]{
 -\pi & \le \Arg \expr{\frac{\zeta_f(r) - i \xi_2}{\zeta_f(r) - i \xi_1}} \le 0 ,
}
and hence for $\tau \in \C_\ua$,
\formula{
 0 & \le \frac{1}{\pi} \int_0^\infty \Arg \expr{\frac{\zeta_f(r) - i \xi_2}{\zeta_f(r) - i \xi_1}} \im \expr{\frac{\lambda_f'(r)}{\lambda_f(r) + \tau}} \, \D r \displaybreak[0] \\
 & \le \frac{1}{\pi} \int_0^\infty (-\pi) \im \expr{\frac{\lambda_f'(r)}{\lambda_f(r) + \tau}} \, \D r \displaybreak[0] \\
 & = \lim_{r \searrow 0} \Arg(\lambda_f(r) + \tau) - \lim_{r \nearrow \infty} \Arg(\lambda_f(r) + \tau) \le \Arg \tau .
}
By Lemma~\ref{lem:rogers:xwh}, for $\tau \in \C_\ua$,
\formula{
 \Arg \frac{f^\ua(\tau; \xi_1)}{f^\ua(\tau; \xi_2)} & = \im \expr{\frac{1}{\pi} \int_0^\infty \Arg \expr{\frac{\zeta_f(r) - i \xi_2}{\zeta_f(r) - i \xi_1}} \frac{\lambda_f'(r)}{\lambda_f(r) + \tau} \, \D r} \in [0, \Arg \tau] ,
}
which implies that $f^\ua(\tau; \xi_1) / f^\ua(\tau; \xi_2)$ is a complete Bernstein function of $\tau$.

A similar argument shows that if $0 < \xi_1 < \xi_2$ and $\tau \in \C_\ua$, then
\formula{
 \Arg \frac{f^\da(\tau; \xi_1)}{f^\da(\tau; \xi_2)} & = \im \expr{\frac{1}{\pi} \int_0^\infty \Arg \expr{\frac{\zeta_f(r) + i \xi_1}{\zeta_f(r) + i \xi_2}} \frac{\lambda_f'(r)}{\lambda_f(r) + \tau} \, \D r} \in [0, \Arg \tau] ,
}
and if $\xi_1, \xi_2 > 0$ and $\tau \in \C_\ua$, then
\formula{
 \Arg (f^\ua(\tau; \xi_1) f^\da(\tau; \xi_2)) & = \Arg \tau + \im \expr{\frac{1}{\pi} \int_0^\infty \Arg \expr{\frac{\zeta_f(r) + i \xi_2}{\zeta_f(r) - i \xi_1}} \frac{\lambda_f'(r)}{\lambda_f(r) + \tau} \, \D r}
 \\ & \in [0, \Arg \tau] ,
}
which proves that also $f^\da(\tau; \xi_1) / f^\da(\tau; \xi_2)$ (when $0 < \xi_1 < \xi_2$) and $f^\ua(\tau; \xi_1) f^\da(\tau; \xi_2)$ (when $\xi_1, \xi_2 > 0$) are complete Bernstein functions of $\tau$.

Finally, the last statement follows from Lemma~\ref{lem:rogers:xwh} by~\eqref{eq:rogersxwh:cbf:aux} (and its analogues for $f^\da(\tau; \xi_1) / f^\da(\tau; \xi_2)$ and $f^\ua(\tau; \xi_1) f^\da(\tau; \xi_2)$) and dominated convergence.
\end{proof}

\begin{remark}
\label{rem:rogers:xwh:cbf:bound}
The proof of Lemma~\ref{lem:rogers:xwh:cbf} requires only formulae in Corollary~\ref{cor:rogers:wh:alt}. For balanced Rogers function, the statement can be strengthened. Indeed, by Definition~\ref{def:rogers:reg}, there is $\alpha_\ua \in (0, \pi)$ such that $\Arg \zeta_f(r) \le \alpha - \tfrac{\pi}{2}$ for all $r > 0$. Hence,
\formula{
 -\alpha_\ua & \le \Arg \expr{\frac{\zeta_f(r) - i \xi_2}{\zeta_f(r) - i \xi_1}} \le 0
}
whenever $0 < \xi_1 < \xi_2$ and $r > 0$. The proof of Lemma~\ref{lem:rogers:xwh:cbf} shows that in fact
\formula{
 \Arg \frac{f^\ua(\tau; \xi_1)}{f^\ua(\tau; \xi_2)} & \in [0, \alpha_\ua \Arg \tau]
}
for $\tau \in \C^\ua$. In a similar way there is $\alpha_\da \in (0, \pi)$ such that $\Arg \zeta_f(r) \ge \tfrac{\pi}{2} - \alpha_\da$ for all $r > 0$, and one has
\formula{
 \Arg \frac{f^\da(\tau; \xi_1)}{f^\da(\tau; \xi_2)} & \in [0, \alpha_\da \Arg \tau]
}
if $0 < \xi_1 < \xi_2$ and $\tau \in \C^\ua$. Therefore, the functions
\formula{
 \expr{\frac{f^\ua(\tau; \xi_1)}{f^\ua(\tau; \xi_2)}}^{1 / \alpha_\ua} && \text{and} && \expr{\frac{f^\da(\tau; \xi_1)}{f^\da(\tau; \xi_2)}}^{1 / \alpha_\da}
}
are complete Bernstein functions of $\tau$.
\end{remark}

\begin{lemma}
\label{lem:rogers:xwh:boundary}
With the assumptions and notation of Lemma~\ref{lem:rogers:xwh}, for all admissible $\xi_1, \xi_2$, the limits
\formula[eq:rogers:xwh:boundary:lim]{
 \lim_{t \searrow 0} \frac{f^\ua(-s + i t; \xi_1)}{f^\ua(-s + i t; \xi_2)} , && \lim_{t \searrow 0} \frac{f^\da(-s + i t; \xi_1)}{f^\da(-s + i t; \xi_2)} , && \lim_{t \searrow 0} (f^\ua(-s + i t; \xi_1) f^\da(-s + i t; \xi_2))
}
exist for every $s \in \R$. If $\xi_1, \xi_2 > 0$, then in addition
\formula[eq:rogers:xwh:boundary:weak]{
 \lim_{t \searrow 0} \expr{\im \frac{f^\ua(-s + i t; \xi_1)}{f^\ua(-s + i t; \xi_2)} \, \D s} & = \expr{\lim_{t \searrow 0} \im \frac{f^\ua(-s + i t; \xi_1)}{f^\ua(-s + i t; \xi_2)}} \D s ,
}
with the vague limit of measures in the left-hand side, and similar formulae hold for $f^\da(\tau; \xi_1) / f^\da(\tau; \xi_2)$ and $f^\ua(\tau; \xi_1) f^\da(\tau; \xi_2)$. Furthermore:
\begin{enumerate}[label=\rm (\alph*)]
\item
\label{it:rogers:xwh:a}
For $\zeta \in \gamma_f \cap \C_\ra$ and $\xi_1, \xi_2 > 0$,
\formula{
 \lim_{t \searrow 0} \frac{f^\ua(-f(\zeta) + i t; \xi_1)}{f^\ua(-f(\zeta) + i t; \xi_2)} & = \expr{\frac{f_{[\zeta]}^\ua(\xi_1)}{\xi_1 + i \zeta}}^{-1} \frac{f_{[\zeta]}^\ua(\xi_2)}{\xi_2 + i \zeta} \, , \displaybreak[0] \\
 \lim_{t \searrow 0} \frac{f^\da(-f(\zeta) + i t; \xi_1)}{f^\da(-f(\zeta) + i t; \xi_2)} & = \expr{\frac{f_{[\zeta]}^\da(\xi_1)}{\xi_1 + i \bar{\zeta}}}^{-1} \frac{f_{[\zeta]}^\da(\xi_2)}{\xi_2 + i \bar{\zeta}} \, , 
}
and
\formula{
 \lim_{t \searrow 0} (f^\ua(-f(\zeta) + i t; \xi_1) f^\da(-f(\zeta) + i t; \xi_2)) & = \expr{\frac{f_{[\zeta]}^\ua(\xi_1)}{\xi_1 + i \zeta}}^{-1} \expr{\frac{f_{[\zeta]}^\da(\xi_2)}{\xi_2 + i \bar{\zeta}}}^{-1} .
}
\item
\label{it:rogers:xwh:b}
If $s < \inf \{ f(\zeta) : \zeta \in \gamma_f \}$, then $f(\xi) - s$ is a Rogers function, $f^\ua(\tau; \xi_1) / f^\ua(\tau; \xi_1)$, $f^\da(\tau; \xi_1) / f^\da(\tau; \xi_1)$ and $f^\ua(\tau; \xi_1) f^\da(\tau; \xi_1)$ extend continuously at $\tau = -s$, and for $\xi_1, \xi_2 > 0$, the values at $\tau = -s$ are given by~\eqref{eq:rogers:xwh:uudd} and~\eqref{eq:rogers:xwh:ud}.
\item
\label{it:rogers:xwh:c}
If $s > \sup \{ f(\zeta) : \zeta \in \gamma_f \}$, then again $f^\ua(\tau; \xi_1) / f^\ua(\tau; \xi_1)$, $f^\da(\tau; \xi_1) / f^\da(\tau; \xi_1)$ and $f^\ua(\tau; \xi_1) f^\da(\tau; \xi_1)$ extend continuously at $\tau = -s$. For $\xi_1, \xi_2 > 0$, the values at $\tau = -s$ are given by
\formula{
 \hspace*{3em} \frac{f^\ua(-s; \xi_1)}{f^\ua(-s; \xi_2)} & = \exp \expr{-\frac{1}{\pi} \int_0^\infty \im \expr{\frac{\zeta_f'(r)}{i \xi_1 - \zeta_f(r)} - \frac{\zeta_f'(r)}{i \xi_2 - \zeta_f(r)}} \log (s - \lambda_f(r)) \D r} , \displaybreak[0] \\
 \frac{f^\da(-s; \xi_1)}{f^\da(-s; \xi_2)} & = \exp \expr{-\frac{1}{\pi} \int_0^\infty \im \expr{\frac{\zeta_f'(r)}{i \xi_1 + \zeta_f(r)} - \frac{\zeta_f'(r)}{i \xi_2 + \zeta_f(r)}} \log (s - \lambda_f(r)) \D r} ,
}
and
\formula[eq:rogers:xwh:ud0]{
 \hspace*{3em} & f^\ua(-s; \xi_1) f^\da(-s; \xi_2) \\
 & \hspace*{1em} = -\exp \expr{-\frac{1}{\pi} \int_0^\infty \im \expr{\frac{\zeta_f'(r)}{i \xi_1 - \zeta_f(r)} + \frac{\zeta_f'(r)}{i \xi_2 + \zeta_f(r)}} \log (s - \lambda_f(r)) \D r} ,
}
where $\zeta_f$ is the canonical parametrisation of $\gamma_f \cap \C_\ra$ and $\lambda_f(r) = f(\zeta_f(r))$.
\end{enumerate}
\end{lemma}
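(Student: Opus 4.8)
The plan is to derive everything from the integral representations of Lemma~\ref{lem:rogers:xwh}. Writing, for the $\ua/\ua$ ratio,
\[ \log\frac{f^\ua(\tau;\xi_1)}{f^\ua(\tau;\xi_2)} = \frac1\pi\int_0^\infty A(r)\,\frac{\lambda_f'(r)}{\lambda_f(r)+\tau}\,\D r, \qquad A(r)=\Arg\expr{\frac{\zeta_f(r)-i\xi_2}{\zeta_f(r)-i\xi_1}}, \]
and the analogues for $f^\da(\tau;\xi_1)/f^\da(\tau;\xi_2)$ and $f^\ua(\tau;\xi_1)f^\da(\tau;\xi_2)$, the substitution $u=\lambda_f(r)$ turns this into a Cauchy--Stieltjes transform $\frac1\pi\int_a^b\tilde A(u)(u+\tau)^{-1}\,\D u$ of a bounded function $\tilde A$ on $[a,b]=[\lambda_f(0^+),\lambda_f(\infty^-)]$; here $\lambda_f$ is a strictly increasing bijection onto $(a,b)$ by Theorem~\ref{th:rogers:real}\ref{it:rogers:real:d}, and $\tilde A$ extends continuously by $0$ to the endpoints, since $\zeta_f(r)\to0$ as $r\searrow0$ and $|\zeta_f(r)|\to\infty$ as $r\nearrow\infty$. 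Existence of the boundary limits~\eqref{eq:rogers:xwh:boundary:lim} is then the classical Sokhotski--Plemelj statement: for $-s\notin[a,b]$ it is plain dominated convergence; for $s\in(a,b)$ the limit is $\frac1\pi\pvint_a^b\frac{\tilde A(u)\,\D u}{u-s}-i\tilde A(s)$; and at the two endpoints the vanishing of $\tilde A$ there makes the argument go through as well. Since $\log$ of each ratio has a boundary limit, so do the ratios. The vague convergence~\eqref{eq:rogers:xwh:boundary:weak} likewise follows from this representation: $\im$ of the exponential of the Cauchy transform is bounded locally uniformly in $(s,t)$ near $t=0$ (again because $\tilde A$ vanishes at $a$ and $b$, so the real part of the transform stays bounded near the endpoints) and converges pointwise almost everywhere; alternatively, when $0<\xi_1<\xi_2$ the ratio is a complete Bernstein function of $\tau$ (Lemma~\ref{lem:rogers:xwh:cbf}) and~\eqref{eq:rogers:xwh:boundary:weak} is a special case of~\eqref{eq:cbf:mu}, the general case of $\xi_1,\xi_2>0$ reducing to a quotient of two such functions.

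For part~\ref{it:rogers:xwh:a} I would first record the algebraic identity behind the answer. By Lemma~\ref{lem:rogers:quot}, $f_{[\zeta]}\in\rogers$ and $f(\xi)-f(\zeta)=(\xi-\zeta)(\xi+\bar\zeta)/f_{[\zeta]}(\xi)$; factoring $f_{[\zeta]}(\xi)=f_{[\zeta]}^\ua(-i\xi)f_{[\zeta]}^\da(i\xi)$ and using $\xi-\zeta=i((-i\xi)+i\zeta)$ and $\xi+\bar\zeta=-i((i\xi)+i\bar\zeta)$, one gets
\[ f(\xi)-f(\zeta)=\frac{(-i\xi)+i\zeta}{f_{[\zeta]}^\ua(-i\xi)}\cdot\frac{(i\xi)+i\bar\zeta}{f_{[\zeta]}^\da(i\xi)}, \]
which is the (formal) Wiener--Hopf factorisation that the factors of $f(\xi)+\tau$ must degenerate to as $\tau\to-f(\zeta)$. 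Since $\xi_j+i\zeta=i(\zeta-i\xi_j)$, the right-hand sides in~\ref{it:rogers:xwh:a} are exactly the ratios of $w\mapsto(w+i\zeta)/f_{[\zeta]}^\ua(w)$ and $w\mapsto(w+i\bar\zeta)/f_{[\zeta]}^\da(w)$. To identify the limits, take $\tau=-f(\zeta)+it$, $t\searrow0$, in the displayed integral above; with $r_0$ the unique parameter with $\lambda_f(r_0)=f(\zeta)$ (so $\zeta=\zeta_f(r_0)$), Sokhotski--Plemelj gives
\[ \lim_{t\searrow0}\log\frac{f^\ua(\tau;\xi_1)}{f^\ua(\tau;\xi_2)}=\frac1\pi\pvint_0^\infty A(r)\,\frac{\lambda_f'(r)}{\lambda_f(r)-f(\zeta)}\,\D r-iA(r_0), \]
and, using $\log\frac{\xi_1+i\zeta}{\xi_2+i\zeta}+iA(r_0)=\log\abs{\frac{\zeta-i\xi_1}{\zeta-i\xi_2}}$, the claim reduces to the real identity
\[ \log\frac{f_{[\zeta]}^\ua(\xi_1)}{f_{[\zeta]}^\ua(\xi_2)}=\log\abs{\frac{\zeta-i\xi_1}{\zeta-i\xi_2}}-\frac1\pi\pvint_0^\infty\Arg\expr{\frac{\zeta_f(r)-i\xi_2}{\zeta_f(r)-i\xi_1}}\frac{\lambda_f'(r)}{\lambda_f(r)-f(\zeta)}\,\D r \]
and its analogues for $f_{[\zeta]}^\da$ and for the product. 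I expect verifying this last identity to be the main obstacle: it amounts to computing the Wiener--Hopf factor $f_{[\zeta]}^\ua$ from its exponential representation (Corollary~\ref{cor:rogers:wh}) and deforming the defining integral onto the curve $\gamma_f$, which requires careful tracking of the branch of the logarithm near the point $\zeta=\zeta_f(r_0)$, where $f(z)-f(\zeta)$ changes sign along $\gamma_f$.

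Parts~\ref{it:rogers:xwh:b} and~\ref{it:rogers:xwh:c} are then short. For~\ref{it:rogers:xwh:b}, by Theorem~\ref{th:rogers}\ref{it:rogers:b} one has $f(0^+)=c_0$, and since $\lambda_f$ is increasing with $\lambda_f(0^+)=c_0$ (Proposition~\ref{prop:rogers:reg:lambda}), the hypothesis $s<\inf\{f(\zeta):\zeta\in\gamma_f\}$ means $s<c_0\le\lambda_f(r)$ for all $r>0$; hence $f(\xi)-s$ is again a Rogers function (replace $c_0$ by $c_0-s\ge0$ in~\eqref{eq:rogers:bb}), and it is balanced since $\gamma_{f-s}=\gamma_f$. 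As $\tau\to-s$ the denominator $\lambda_f(r)+\tau$ stays bounded away from $0$, so dominated convergence in~\eqref{eq:rogers:xwh:uudd}--\eqref{eq:rogers:xwh:ud} gives the continuous extension of the ratios at $\tau=-s$, and (using $\zeta_{f-s}=\zeta_f$, $\lambda_{f-s}=\lambda_f-s$) the limiting values coincide both with those formulas evaluated at $\tau=-s$ and with the Wiener--Hopf factor ratios of $f-s$ produced by Corollary~\ref{cor:rogers:wh:alt}. For~\ref{it:rogers:xwh:c} (vacuous unless $f$ is bounded, when $\sup\{f(\zeta):\zeta\in\gamma_f\}=f(\infty^-)<\infty$), let $\tau\to-s+i0$ with $s>f(\infty^-)\ge\lambda_f(r)$ in the first forms of~\eqref{eq:rogers:xwh:uudd} and~\eqref{eq:rogers:xwh:ud}; then $\lambda_f(r)+\tau\to\lambda_f(r)-s<0$ from the upper half-plane, so $\log(\lambda_f(r)+\tau)\to\log(s-\lambda_f(r))+i\pi$, and dominated convergence (with the bounds already used in Lemma~\ref{lem:rogers:xwh}, since $\log(s-\lambda_f(r))$ is bounded here) yields the stated formulas up to the factor $\exp\bigl(-i\int_0^\infty\im(\cdots)\,\D r\bigr)$ coming from the $i\pi$. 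The total-derivative integral $\int_0^\infty\expr{\frac{\zeta_f'(r)}{i\xi_1-\zeta_f(r)}-\frac{\zeta_f'(r)}{i\xi_2-\zeta_f(r)}}\D r$ equals $\log(\xi_1/\xi_2)\in\R$, so this factor is $1$ for the $\ua/\ua$ and $\da/\da$ ratios; for the product, $\int_0^\infty\expr{\frac{\zeta_f'(r)}{i\xi_1-\zeta_f(r)}+\frac{\zeta_f'(r)}{i\xi_2+\zeta_f(r)}}\D r$ has imaginary part $\pm\pi$ (an extra half-turn, since $\zeta_f(\infty)$ is complex infinity), which produces exactly the sign $-1$ in~\eqref{eq:rogers:xwh:ud0}.
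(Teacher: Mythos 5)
Your proposal takes a genuinely different route to the existence of the boundary limits: you substitute $u=\lambda_f(r)$ into the integrated-by-parts form from Lemma~\ref{lem:rogers:xwh}, obtaining a Cauchy--Stieltjes transform of a bounded density $\tilde A$ on $[a,b]=[\lambda_f(0^+),\lambda_f(\infty^-)]$, and invoke Sokhotski--Plemelj. The paper instead works directly with the first form (a contour integral of $\log(f(z)+\tau)$ over $\gamma_f$, where the singularity in $\tau$ is only logarithmic) and proves dominated convergence through three explicit lower bounds for $|\lambda_f(r)-s|$, equations~\eqref{eq:rogers:xwh:est5a}--\eqref{eq:rogers:xwh:est5c}. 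Your route is conceptually cleaner, but your handling of the endpoints $s=a$ and $s=b$ is too quick: the assertion that ``the vanishing of $\tilde A$ there makes the argument go through'' requires a quantitative modulus of decay (mere continuity with $\tilde A(a)=0$ does \emph{not} make $\int_a^b\tilde A(u)/(u-a)\,\D u$ finite), and $\tilde A$ is only obviously analytic on the open interval. Likewise, your second alternative for the vague convergence~\eqref{eq:rogers:xwh:boundary:weak} --- reducing to~\eqref{eq:cbf:mu} and taking quotients of complete Bernstein functions --- is not as immediate as indicated, since~\eqref{eq:cbf:mu} allows an atomic part and the quotient of two complete Bernstein functions need not be one; your first alternative (a pointwise bound on $\im$ of the ratio that rules out atoms) is the paper's argument and is the one that works.

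The decisive gap is Part~\ref{it:rogers:xwh:a}. Your algebraic set-up is correct (the identity $\log\frac{\xi_1+i\zeta}{\xi_2+i\zeta}+iA(r_0)=\log\abs{\frac{\zeta-i\xi_1}{\zeta-i\xi_2}}$ is right, and the Sokhotski--Plemelj boundary value for $\tau=-f(\zeta)+it$ is right), but you explicitly leave the resulting real integral identity
\formula{
\log\frac{f_{[\zeta]}^\ua(\xi_1)}{f_{[\zeta]}^\ua(\xi_2)}&=\log\abs{\frac{\zeta-i\xi_1}{\zeta-i\xi_2}}-\frac1\pi\,\pvint_0^\infty\Arg\expr{\frac{\zeta_f(r)-i\xi_2}{\zeta_f(r)-i\xi_1}}\frac{\lambda_f'(r)}{\lambda_f(r)-f(\zeta)}\,\D r
}
unproven, naming it ``the main obstacle.'' That obstacle is precisely the content of the paper's Part~II. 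There the author does not try to prove this identity directly; instead, by~\eqref{eq:rogers:wh:uu} applied to $f_{[\zeta]}$ and the boundary-limit formula (in the form~\eqref{eq:rogers:xwh:1}), the \emph{product} of $f_{[\zeta]}^\ua(\xi_1)/f_{[\zeta]}^\ua(\xi_2)$ with the boundary limit is written as a single contour integral of $h_\zeta(z)$, a specially chosen branch of $\log((z-\zeta)(z+\bar\zeta))$ whose jump structure across the segment $[-\bar\zeta,\zeta]$ (see Figure~\ref{fig:rogers:xwh}) absorbs the branch crossing you anticipated; this contour integral is then evaluated by the residue theorem. Until this step is carried out, the proof of Part~\ref{it:rogers:xwh:a} is incomplete.

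Parts~\ref{it:rogers:xwh:b} and~\ref{it:rogers:xwh:c} of your proof match the paper's argument and are correct; your bookkeeping of the $i\pi$ branch defect, in particular the real value $\log(\xi_1/\xi_2)$ of $\int_0^\infty K(r)\,\D r$ for the $\ua/\ua$ and $\da/\da$ kernels and the imaginary part $-\pi$ of the analogous integral for the $\ua\da$ kernel producing the sign in~\eqref{eq:rogers:xwh:ud0}, is exactly what the paper leaves to the reader.
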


\begin{proof}
The proof is divided into two parts. In the first one, the existence of boundary limits is shown, while in the second one, statements~\ref{it:rogers:xwh:a}--\ref{it:rogers:xwh:c} are proved. Recall that for $\zeta \in \gamma_f \cap \C_\ra$ and $\xi \in \gamma_f \setminus \{\zeta, -\bar{\zeta}\}$,
\formula{
 f_{[\zeta]}(\xi) & = \frac{(\xi - \zeta) (\xi + \bar{\zeta})}{f(\xi) - f(\zeta)}
}
defines a Rogers function $f_{[\zeta]}$, and $f_{[\zeta]}(\zeta) = (2 \re \zeta) / f'(\zeta)$.

\emph{Part I.} Suppose that $i \xi_1, i \xi_2 \in \gamma_f^\ua$, $\tau \in \C_\ua$, $\tau = -s + i t$ for $s \in \R$ and $t > 0$. The argument is similar to the proof of Lemma~\ref{lem:rogers:xwh}, but with more careful estimates: by dominated convergence it is shown that the boundary limit can be taken in~\eqref{eq:rogers:xwh:uu:log} (and its analogues for $f^\da(\tau; \xi_1) / f^\da(\tau; \xi_2)$ and $f^\ua(\tau; \xi_1) f^\da(\tau; \xi_2)$) under the integral sign. By~\eqref{eq:rogers:xwh:est1}, there is $c_1$ (depending on $f$, $\xi_1$ and $\xi_2$), such that
\formula[eq:rogers:xwh:est4]{
 \abs{\frac{1}{i \xi_1 - \zeta_f(r)} - \frac{1}{i \xi_2 - \zeta_f(r)}} & \le \frac{c_1}{1 + r^2}
}
for all $r > 0$. If $|\lambda_f(r) - s| \ge 1$, then
\formula{
 0 & \le \log(|\lambda_f(r) - s|^2 + t^2) \le \log(|\lambda_f(r) - s|^2) + t^2;
}
otherwise,
\formula{
 t^2 & \ge \log(1 + t^2) \ge \log(|\lambda_f(r) - s|^2 + t^2) \ge \log(|\lambda_f(r) - s|^2) .
}
It follows that
\formula[eq:rogers:xwh:est5]{
 |\log(\lambda_f(r) + \tau)| & \le \pi + \tfrac{1}{2} |\log(|\lambda_f(r) - s|^2 + t^2)| \le \pi + t^2 + |\log |\lambda_f(r) - s||
}
for all $r > 0$. By Proposition~\ref{prop:rogers:bound} (with $r = 1$) and condition~\ref{it:rogers:reg:d} in Definition~\ref{def:rogers:reg}, there is $c_2 > 1$ (depending on $f$) such that
\formula[eq:rogers:xwh:est5d]{
 |\lambda_f(r) - s| & \le \lambda_f(r) + |s| \le c_2 (1 + r^2) + |s|.
}
For the lower bound on $|\lambda_f(r) - s|$, three cases are considered separately.

If $s \le \lambda_f(0^+)$, then $f(\xi) - s$ is a Rogers function of $\xi$ (by Proposition~\ref{prop:rogers:reg:lambda}). Hence, by Proposition~\ref{prop:rogers:bound} (with $r = 1$) and condition~\ref{it:rogers:reg:d} in Definition~\ref{def:rogers:reg}, there is $c_3 > 1$ (depending on $f$) such that
\formula[eq:rogers:xwh:est5a]{
 |\lambda_f(r) - s| & = |f(\zeta_f(r)) - s| \ge \frac{1}{c_3} \, \frac{r^2}{1 + r^2} \, .
}
In a similar way, if $s \ge \lambda_f(\infty^-)$, then $f$ is a bounded Rogers function (by Proposition~\ref{prop:rogers:reg:lambda}) and $s - f(1/\xi)$ is a Rogers function of $\xi$ (by Proposition~\ref{prop:rogers:bounded:prop}). Hence, by Proposition~\ref{prop:rogers:bound} (with $r = 1$) and condition~\ref{it:rogers:reg:d} in Definition~\ref{def:rogers:reg}, there is $c_4 > 1$ (depending on $f$) such that
\formula[eq:rogers:xwh:est5b]{
 |\lambda_f(r) - s| & = |s - f(\zeta_f(r))| \ge \frac{1}{c_4} \, \frac{(1/r)^2}{1 + (1/r)^2} = \frac{1}{c_4} \, \frac{1}{1 + r^2} \, .
}
Finally, consider $s = \lambda_f(r_0)$ for some $r_0 > 0$, and let $z = \zeta_f(r)$, $z_0 = \zeta_f(r_0)$. Then
\formula{
 |\lambda_f(r) - s| & = |f(z) - f(z_0)| = \frac{|z - z_0| \, |z + \bar{z}_0|}{|f_{[z_0]}(z)|} \ge \frac{|r - r_0|^2}{|f_{[z_0]}(z)|} \, .
}
By Proposition~\ref{prop:rogers:bound} (applied twice, to the Rogers function $f_{[z_0]}$) and condition~\ref{it:rogers:reg:d} in Definition~\ref{def:rogers:reg}, there are $c_5, c_6 > 1$ (depending on $f$) such that
\formula{
 |f_{[z_0]}(z)| & \le c_5 \, \frac{r_0^2 + r^2}{r_0^2} \, |f_{[z_0]}(|z_0|)| \le c_6 \, \frac{r_0^2 + r^2}{r_0^2} \, |f_{[z_0]}(z_0)| \le c_6 \, \frac{r_0^2 + r^2}{r_0^2} \, \frac{2 r_0}{|f'(z_0)|} \, .
}
Hence,
\formula[eq:rogers:xwh:est5c]{
 |\lambda_f(r) - s| & \ge \frac{1}{c_6} \, |z_0 f'(z_0)| \, \frac{|r - r_0|^2}{2 (r_0^2 + r^2)} \, .
}
Recall that as in~\eqref{eq:rogers:xwh:uu:log} (with $\tau = -s + i t$),
\formula{
 \log \frac{f^\ua(\tau; \xi_1)}{f^\ua(\tau; \xi_2)} & = -\frac{1}{2 \pi i} \int_{\gamma_f} \expr{\frac{1}{i \xi_1 - z} - \frac{1}{i \xi_2 - z}} \log (f(z) + \tau) \D z \displaybreak[0] \\
 & \hspace*{-5.3em} = -\frac{1}{2 \pi i} \int_0^\infty \expr{\frac{\zeta_f'(r)}{i \xi_1 - \zeta_f(r)} - \frac{\zeta_f'(r)}{i \xi_2 - \zeta_f(r)} + \frac{\overline{\zeta_f'(r)}}{i \xi_1 + \overline{\zeta_f(r)}} - \frac{\overline{\zeta_f'(r)}}{i \xi_2 + \overline{\zeta_f(r)}}} \log (\lambda_f(r) + \tau) \D r .
}
Denote the integrand in the right-hand side by $I(r)$. By~\eqref{eq:rogers:xwh:est4} and \eqref{eq:rogers:xwh:est5}, it follows that
\formula{
 |I(r)| & \le \frac{2 c_1 |\zeta_f'(r)|}{1 + r^2} \, (\pi + t^2 + |\log |\lambda_f(r) - s||) .
}
By~\eqref{eq:rogers:xwh:est5d} and~\eqref{eq:rogers:xwh:est5a}, there is $c_7$ (depending on $f$, $\xi_1$, $\xi_2$) such that
\formula{
 |I(r)| & \le \frac{2 c_1 |\zeta_f'(r)|}{1 + r^2} \expr{\pi + t^2 + \log(c_2(1 + r^2) + |s|) + \log \frac{1 + r^2}{c_3 r^2}} \displaybreak[0] \\
 & \le \frac{c_7 |\zeta_f'(r)| \log(r + 1/r)}{1 + r^2}
}
for all $s \le \lambda_f(0^+)$ and $t \in (0, 1]$. In a similar way, by~\eqref{eq:rogers:xwh:est5d} and~\eqref{eq:rogers:xwh:est5b}, there is $c_8$ (depending on $f$, $\xi_1$, $\xi_2$) such that
\formula{
 |I(r)| & \le \frac{c_8 |\zeta_f'(r)| \log(r + 1/r)}{1 + r^2}
}
for all $s \ge \lambda_f(\infty^-)$ and $t \in (0, 1]$. Finally, if $s = \lambda_f(r_0)$ for some $r_0 > 0$, and if $z_0 = \zeta_f(r_0)$, then by~\eqref{eq:rogers:xwh:est5d} and~\eqref{eq:rogers:xwh:est5c}, there is $c_9$ (depending on $f$, $\xi_1$, $\xi_2$) such that
\formula{
 |I(r)| & \le \frac{2 c_1 |\zeta_f'(r)|}{1 + r^2} \expr{\pi + 1 + \log(c_2(1 + r^2) + |s|) + \max\expr{\log \frac{2 c_6 (r_0^2 + r^2)}{|z_0 f'(z_0)| \, |r - r_0|^2}, 0}} \displaybreak[0] \\
 & \le \frac{c_9 |\zeta_f'(r)|}{1 + r^2} \log\expr{r + \frac{1}{r} + \frac{1}{|r - r_0|} + r_0 + \frac{1}{r_0} + \frac{1}{|z_0 f'(z_0)|}}
}
for $t \in (0, 1]$. By condition~\ref{it:rogers:reg:e} in Definition~\ref{def:rogers:reg}, the right-hand sides of the above three estimates are integrable in $r \in (0, \infty)$. In addition, these estimates prove that for $t \in (0, 1]$, $\int_0^\infty |I(r)| \D r \le c_{10}(s)$, where $c_{10}$ depends on $f$, $\xi_1$ and $\xi_2$, $c_{10}(s)$ is constant on $(-\infty, \lambda_f(0^+)]$ and on $[\lambda_f(\infty^-), \infty)$, and it is continuous on $(\lambda_f(0^+), \lambda_f(\infty^-))$. By dominated convergence, for every $s \in \R$, the finite limit
\formula{
 \lim_{t \searrow 0} \log \frac{f^\ua(-s + i t; \xi_1)}{f^\ua(-s + i t; \xi_2)} & = -\frac{1}{2 \pi i} \int_{\gamma_f} \expr{\frac{1}{i \xi_1 - z} - \frac{1}{i \xi_2 - z}} \log_\ua (f(z) - s) \D z \displaybreak[0] \\
 & \hspace*{-9.4em} = -\frac{1}{2 \pi i} \int_0^\infty \expr{\frac{\zeta_f'(r)}{i \xi_1 - \zeta_f(r)} - \frac{\zeta_f'(r)}{i \xi_2 - \zeta_f(r)} + \frac{\overline{\zeta_f'(r)}}{i \xi_1 + \overline{\zeta_f(r)}} - \frac{\overline{\zeta_f'(r)}}{i \xi_2 + \overline{\zeta_f(r)}}} \log_\ua (\lambda_f(r) - s) \D r
}
exists, and it is bounded by $c_{10}(s) / (2 \pi)$; here $\log_\ua$ denotes the continuous version of the complex logarithm in $\C \setminus (-i \infty, 0]$. The existence of the first limit in~\eqref{eq:rogers:xwh:boundary:lim} is proved. The other parts of~\eqref{eq:rogers:xwh:boundary:lim} are proved in a similar manner.

For future reference, note that if $\xi_1, \xi_2 > 0$, then the formula for the boundary limit simplifies to
\formula[eq:rogers:xwh:limit]{
 & \lim_{t \searrow 0} \frac{f^\ua(-s + i t; \xi_1)}{f^\ua(-s + i t; \xi_2)} \\
 & \hspace*{4em} = \exp \expr{-\frac{1}{\pi} \int_0^\infty \im \expr{\frac{\zeta_f'(r)}{i \xi_1 - \zeta_f(r)} - \frac{\zeta_f'(r)}{i \xi_2 - \zeta_f(r)}} \log_\ua (\lambda_f(r) - s) \D r} ,
}
and similar formulae can be given for $f^\da(\tau; \xi_1) / f^\da(\tau; \xi_2)$ and $f^\ua(\tau; \xi_1) f^\da(\tau; \xi_2)$.

When $0 < \xi_1 < \xi_2$, then, by Lemma~\ref{lem:rogers:xwh:cbf}, $g(\tau) = f^\ua(\tau; \xi_1) / f^\ua(\tau; \xi_2)$ is a complete Bernstein function of $\tau$. Recall that $|\log g(-s + i t)| \le c_{10}(s) / (2 \pi)$ for $s \in \R$ and $t \in (0, 1]$. Hence, $0 \le \im g(-s + i t) \le \exp(c_{10}(s) / (2 \pi))$ for all $t \in [0, 1]$ and $s \in \R$, and therefore the vague limit $m(\D s) = \lim_{t \searrow 0} (\im g(-s + i t)) \D s$, except possibly at $s = \lambda_f(0^+)$ and $s = \lambda_f(\infty^-)$, is absolutely continuous, with density function bounded by $\exp(c_{10}(s) / (2 \pi))$. If $m(\D s)$ had an atom at $s = \lambda_f(0^+)$ or at $s = \lambda_f(\infty^-)$, then, by the representation of nonnegative harmonic functions (Theorem~\ref{th:harmonic:positive}), the limit $\lim_{t \searrow 0} (\im g(-s + i t))$ would be infinite at this value of $s$, a contradiction. Therefore, $m$ is indeed absolutely continuous, and hence~\eqref{eq:rogers:xwh:boundary:weak} follows from Theorem~\ref{th:harmonic:positive}. The same argument works for $f^\da(\tau; \xi_1) / f^\da(\tau; \xi_2)$ and $f^\ua(\tau; \xi_1) f^\da(\tau; \xi_2)$, and the first part of the proof is complete.

\begin{figure}
\centering
\def\svgwidth{0.7\textwidth}
\begingroup%
  \makeatletter%
  \ifx\svgwidth\undefined%
    \setlength{\unitlength}{213.93bp}%
    \ifx\svgscale\undefined%
      \relax%
    \else%
      \setlength{\unitlength}{\unitlength * \real{\svgscale}}%
    \fi%
  \else%
    \setlength{\unitlength}{\svgwidth}%
  \fi%
  \global\let\svgwidth\undefined%
  \global\let\svgscale\undefined%
  \makeatother%
  \begin{picture}(1,0.65040085)%
    \put(0,0){\includegraphics[width=\unitlength]{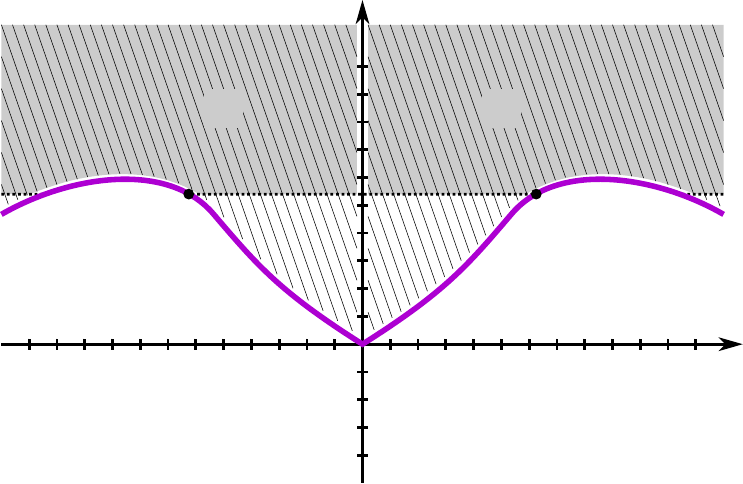}}%
    \put(0.56280092,0.16832727){\makebox(0,0)[cb]{\smash{\raisebox{-\height}{$ 1$}}}}%
    \put(0.63759174,0.16832727){\makebox(0,0)[cb]{\smash{\raisebox{-\height}{$ 2$}}}}%
    \put(0.71238256,0.16832727){\makebox(0,0)[cb]{\smash{\raisebox{-\height}{$ 3$}}}}%
    \put(0.78717337,0.16832727){\makebox(0,0)[cb]{\smash{\raisebox{-\height}{$ 4$}}}}%
    \put(0.86196419,0.16832727){\makebox(0,0)[cb]{\smash{\raisebox{-\height}{$ 5$}}}}%
    \put(0.93675501,0.16832727){\makebox(0,0)[cb]{\smash{\raisebox{-\height}{$ 6$}}}}%
    \put(0.03926518,0.16832727){\makebox(0,0)[cb]{\smash{\raisebox{-\height}{$-6$\phantom{$-$}}}}}%
    \put(0.11405600,0.16832727){\makebox(0,0)[cb]{\smash{\raisebox{-\height}{$-5$\phantom{$-$}}}}}%
    \put(0.18884682,0.16832727){\makebox(0,0)[cb]{\smash{\raisebox{-\height}{$-4$\phantom{$-$}}}}}%
    \put(0.26363764,0.16832727){\makebox(0,0)[cb]{\smash{\raisebox{-\height}{$-3$\phantom{$-$}}}}}%
    \put(0.33842846,0.16832727){\makebox(0,0)[cb]{\smash{\raisebox{-\height}{$-2$\phantom{$-$}}}}}%
    \put(0.41321928,0.16832727){\makebox(0,0)[cb]{\smash{\raisebox{-\height}{$-1$\phantom{$-$}}}}}%
    \put(0.46931239,0.26181580){\makebox(0,0)[rb]{\smash{\raisebox{-0.5\height}{$ 1$\!}}}}%
    \put(0.46931239,0.33660662){\makebox(0,0)[rb]{\smash{\raisebox{-0.5\height}{$ 2$\!}}}}%
    \put(0.46931239,0.41139742){\makebox(0,0)[rb]{\smash{\raisebox{-0.5\height}{$ 3$\!}}}}%
    \put(0.46931239,0.48618824){\makebox(0,0)[rb]{\smash{\raisebox{-0.5\height}{$ 4$\!}}}}%
    \put(0.46931239,0.56097907){\makebox(0,0)[rb]{\smash{\raisebox{-0.5\height}{$ 5$\!}}}}%
    \put(0.46931239,0.03744334){\makebox(0,0)[rb]{\smash{\raisebox{-0.5\height}{$-2$\!}}}}%
    \put(0.46931239,0.11223416){\makebox(0,0)[rb]{\smash{\raisebox{-0.5\height}{$-1$\!}}}}%
    \put(0.95545272,0.35904386){\makebox(0,0)[ct]{\smash{\raisebox{-\height}{$\gamma_f$}}}}%
    \put(0.72734072,0.38148111){\makebox(0,0)[lt]{\smash{\raisebox{-\height}{$\zeta$}}}}%
    \put(0.24493993,0.38148111){\makebox(0,0)[rt]{\smash{\raisebox{-\height}{$-\bar{\zeta}$}}}}%
    \put(0.30103305,0.50488596){\makebox(0,0)[cb]{\smash{\raisebox{-0.35\height}{(I)}}}}%
    \put(0.67498715,0.50488596){\makebox(0,0)[cb]{\smash{\raisebox{-0.35\height}{(II)}}}}%
  \end{picture}%
\endgroup%
\caption{Notation for the proof of Lemma~\ref{lem:rogers:xwh:boundary}. For $\zeta \in \gamma_f \cap \C_\ra$, the function $\log((\xi - \zeta) (\xi + \bar{\zeta}))$ is holomorphic in the complement of the segment $[-\bar{\zeta}, \zeta]$. The function $h_\zeta(\xi)$ is defined to be holomorphic in the hatched area (with the branch cut along imaginary axis), equal to $\log((\xi - \zeta) (\xi + \bar{\zeta}))$ in gray area (I), and equal to $\log((\xi - \zeta) (\xi + \bar{\zeta})) + i \pi$ in gray area (II).}
\label{fig:rogers:xwh}
\end{figure}

\emph{Part II.} We first prove statement~\ref{it:rogers:xwh:a}. By~\eqref{eq:rogers:wh:uu},
\formula{
 \frac{f_{[\zeta]}^\ua(\xi_1)}{f_{[\zeta]}^\ua(\xi_2)} & = \exp \expr{-\frac{1}{2 \pi i} \int_{\gamma_f} \expr{\frac{1}{i \xi_1 - z} - \frac{1}{i \xi_2 - z}} \log \frac{(z - \zeta) (z + \bar{\zeta})}{f(z) - f(\zeta)} \, \D z} .
}
Together with the first part of the proof, this shows that if $s = f(\zeta)$, then
\formula[eq:rogers:xwh:1]{
 \frac{f_{[\zeta]}^\ua(\xi_1)}{f_{[\zeta]}^\ua(\xi_2)} \, \lim_{t \searrow 0} \frac{f^\ua(-s + i t; \xi_1)}{f^\ua(-s + i t; \xi_2)} & = \exp \expr{-\frac{1}{2 \pi i} \int_{\gamma_f} \expr{\frac{1}{i \xi_1 - z} - \frac{1}{i \xi_2 - z}} h_\zeta(z) \D z} ,
}
where $h_\zeta(\xi)$ is a version of the complex logarithm of $(\xi - \zeta) (\xi + \bar{\zeta})$, which is continuous in $\xi$ on $\overline{\gamma}_f^\ua \setminus (i \R \cup \{\zeta, -\bar{\zeta}\})$ (here $\overline{\gamma}_f^\ua$ is the closure of $\gamma_f^\ua$), and such that $h_\zeta(\xi) = \log((\xi - \zeta) (\xi + \bar{\zeta}))$ when $\xi \in \gamma_f^\ua \cap \C_\ra$ and $\im \xi > \im \zeta$, and $h_\zeta(\xi) = \log((\xi - \zeta) (\xi + \bar{\zeta})) + i \pi$ when $\xi \in \gamma_f^\da \cap \C_\la$ and $\im \xi > \im \zeta$ and $\xi \in \C_\la$ (see Figure~\ref{fig:rogers:xwh}). In a similar manner,
\formula[eq:rogers:xwh:2]{
 \frac{f_{[\zeta]}^\da(\xi_1)}{f_{[\zeta]}^\da(\xi_2)} \, \lim_{t \searrow 0} \frac{f^\da(-s + i t; \xi_1)}{f^\da(-s + i t; \xi_2)} & = \exp \expr{-\frac{1}{2 \pi i} \int_{\gamma_f} \expr{\frac{1}{i \xi_1 + z} - \frac{1}{i \xi_2 + z}} h_\zeta(z) \D z}
}
for $\xi_1$, $\xi_2$ such that $-i \xi_1, -i \xi_2 \in \gamma_f^\da$, and
\formula[eq:rogers:xwh:3]{
 & f_{[\zeta]}^\ua(\xi_1) f_{[\zeta]}^\da(\xi_2) \lim_{t \searrow 0} (f^\ua(-s + i t; \xi_1) f^\da(-s + i t; \xi_2)) \\
 & \qquad\qquad = \exp \expr{-\frac{1}{2 \pi i} \int_{\gamma_f} \expr{\frac{1}{i \xi_1 - z} + \frac{1}{i \xi_2 + z}} h_\zeta(z) \D z}
}
for $\xi_1$, $\xi_2$ such that $i \xi_1 \in \gamma_f^\ua$ and $-i \xi_2 \in \gamma_f^\da$.

Recall that $h_\zeta(\xi)$ is a version of the logarithm of $(\xi - \zeta) (\xi + \bar{\zeta})$. Suppose that $i \xi_1, i \xi_2 \in \gamma_f^\ua$. By an appropriate use of the residue theorem and a limit procedure (we omit the details),
\formula{
 \exp \expr{ -\frac{1}{2 \pi i} \int_{\smash{\gamma_f}} \expr{\frac{1}{i \xi_1 - z} - \frac{1}{i \xi_2 - z}} \, h_\zeta(\xi) \D z } & = \frac{\xi_1 + i \zeta}{\xi_2 + i \zeta} \, .
}
In a similar manner,
\formula{
 \exp \expr{-\frac{1}{2 \pi i} \int_{\gamma_f} \expr{\frac{1}{i \xi_1 + z} - \frac{1}{i \xi_2 + z}} \, h_\zeta(\xi) \D z} & = \frac{\xi_1 + i \bar{\zeta}}{\xi_2 + i \bar{\zeta}}
}
for all $\xi_1$, $\xi_2$ such that $-i \xi_1, -i \xi_2 \in \gamma_f^\da$. Finally,
\formula{
 \exp \expr{-\frac{1}{2 \pi i} \int_{\gamma_f} \expr{\frac{1}{i \xi_1 - z} + \frac{1}{i \xi_2 + z}} \, h_\zeta(\xi) \D z} & = (\xi_1 + i \zeta) (\xi_2 + i \bar{\zeta})
}
for all $\xi_1$, $\xi_2$ such that $i \xi_1 \in \gamma_f^\ua$ and $-i \xi_2 \in \gamma_f^\da$. Part~\ref{it:rogers:xwh:a} follows.

Part~\ref{it:rogers:xwh:b} follows immediately from Lemma~\ref{lem:rogers:xwh}. Finally, part~\ref{it:rogers:xwh:c} is proved using~\eqref{eq:rogers:xwh:limit} and its analogues for the other two expressions: in this case one has $\log^\ua(\lambda_f(r) - s) = \log(s - \lambda_f(r)) + i \pi$ for all $r > 0$, and the result follows by the residue theorem (we omit the details; note the minus sign in front of the exponential in~\eqref{eq:rogers:xwh:ud0}).
\end{proof}

\begin{remark}
\label{rem:rogers:duality}
If $f$ is a bounded balanced Rogers function, $s \ge \lambda_f(\infty^-)$, then $g(\xi) = 1 / (s - f(\xi))$ and $h(\xi) = 1 / g(1 / \xi) = s - f(1 / \xi)$, then, by Lemma~\ref{lem:rogers:xwh:boundary}\ref{it:rogers:xwh:c}, Proposition~\ref{prop:rogers:wh:prop} and a short calculation,
\formula{
 \frac{f^\ua(-s; \xi_1)}{f^\ua(-s; \xi_2)} & = \frac{g^\ua(\xi_2)}{g^\ua(\xi_1)} = \frac{h^\da(1 / \xi_1)}{h^\da(1 / \xi_2)} \, , &
 \frac{f^\da(-s; \xi_1)}{f^\da(-s; \xi_2)} & = \frac{g^\da(\xi_2)}{g^\da(\xi_1)} = \frac{h^\ua(1 / \xi_1)}{h^\ua(1 / \xi_2)} \, ,
}
and
\formula{
 f^\ua(-s; \xi_1) f^\da(-s; \xi_2) & = -\frac{1}{g^\ua(\xi_1) g^\da(\xi_2)} = -h^\da(1 / \xi_1) h^\ua(1 / \xi_2)
}
for all admissible $\xi_1, \xi_2$.
\end{remark}

\subsection{Extension to nearly balanced Rogers functions}

With some effort, Lemma~\ref{lem:rogers:xwh:cbf} can be extended to nearly balanced Rogers functions. Below we only sketch the argument, leaving some details to the reader.

\begin{lemma}
\label{lem:rogers:xwh:translation}
Let $f$ be a nonconstant Rogers function, $r_0 > 0$, $|\gamma_f| \sub (r_0, \infty)$, $\zeta_0 = \zeta_f(r_0)$ (where $\zeta_f$ is the canonical parametrisation of $\gamma_f \cap \C_\ra$), $g(\xi) = f(\xi - \zeta_0)$ as in Lemma~\ref{lem:rogers:translation} and $h(\xi) = g(\xi) - \lambda_f(r_0^+)$ (where $\lambda_f(r) = f(\zeta_f(r))$). If the assertion of Lemma~\ref{lem:rogers:xwh:cbf} holds for $h$, then it also holds for $f$.
\end{lemma}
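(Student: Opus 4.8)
The plan is to reduce everything to Lemma~\ref{lem:rogers:xwh:cbf} applied to $h$, by invoking Lemma~\ref{lem:rogers:translation}, for every fixed $\tau \ge 0$, with the Rogers function $f + \tau$ in place of $f$. The point is that a nonnegative real shift changes nothing in the construction of $\gamma$: since $\im(f + \tau) = \im f$ we have $\gamma_{f+\tau} = \gamma_f$ and $\zeta_{f+\tau}(r) = \zeta_f(r)$ for every $r$ (the sign in Theorem~\ref{th:rogers:real}\ref{it:rogers:real:b} being governed by the sign of $\im f$ on $\{|z| = r\}$), so $|\gamma_{f+\tau}| = |\gamma_f| \sub (r_0, \infty)$ and $\zeta_{f+\tau}(r_0) = \zeta_0$, and also $\dom_{f+\tau} \supseteq \dom_f$. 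Hence Lemma~\ref{lem:rogers:translation} applies to $f + \tau$ with the same $r_0$, $\zeta_0$ and with $r_\infty = \infty$, so that $\zeta_\infty$ is complex infinity and the M\"obius map reduces to $\xi \mapsto \xi + \zeta_0$. It gives that $\tilde g_\tau(\xi) := (f+\tau)(\xi + \zeta_0) = g(\xi) + \tau$ is a Rogers function; writing $g(0^+) = f(\zeta_0) = \lim_{r \searrow r_0} f(\zeta_f(r)) = \lambda_f(r_0^+)$, so that $\tilde g_\tau = h + \sigma$ with $\sigma := \lambda_f(r_0^+) + \tau$ (and $h = g - g(0^+)$ is Rogers by Theorem~\ref{th:rogers}\ref{it:rogers:b}), and using $-i(i\xi + \zeta_0) = \xi - i\zeta_0$ and $i(-i\xi + \zeta_0) = \xi + i\zeta_0$, the Wiener--Hopf relations of Lemma~\ref{lem:rogers:translation} become
\[
 \frac{h^\ua(\sigma; \xi_1)}{h^\ua(\sigma; \xi_2)} = \frac{f^\ua(\tau; \xi_1 - i\zeta_0)}{f^\ua(\tau; \xi_2 - i\zeta_0)}, \qquad \frac{h^\da(\sigma; \xi_1)}{h^\da(\sigma; \xi_2)} = \frac{f^\da(\tau; \xi_1 + i\zeta_0)}{f^\da(\tau; \xi_2 + i\zeta_0)},
\]
and $h^\ua(\sigma; \xi_1) h^\da(\sigma; \xi_2) = f^\ua(\tau; \xi_1 - i\zeta_0) f^\da(\tau; \xi_2 + i\zeta_0)$, for all $\xi_1, \xi_2$ for which both sides make sense. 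Since $\tau \mapsto \tau + c$ with $c = \lambda_f(r_0^+) \ge 0$ carries complete Bernstein functions of $\sigma$ to complete Bernstein functions of $\tau$ (Theorem~\ref{th:cbf}\ref{it:cbf:d}) and sends $\tau \to \infty$ to $\sigma \to \infty$, transferring the assertion of Lemma~\ref{lem:rogers:xwh:cbf} from $h$ to $f$ is now a matter of manipulating these three identities.

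Since $r_0 \notin |\gamma_f|$, Theorem~\ref{th:rogers:real}\ref{it:rogers:real:b} gives $\zeta_0 \in \{ir_0, -ir_0\}$; replacing $f$ by the dual Rogers function $\xi \mapsto f(-\xi)$ (which interchanges $f^\ua \leftrightarrow f^\da$ and $\zeta_0 \leftrightarrow \overline{\zeta_0}$, and, as one checks, replaces $h$ by its dual, for which the assertion of Lemma~\ref{lem:rogers:xwh:cbf} is equivalent to the one for $h$), I may assume $\zeta_0 = ir_0$, so $-i\zeta_0 = r_0 > 0$ and $i\zeta_0 = -r_0 < 0$. Then for $0 < \xi_1 < \xi_2$ the three identities read
\[
 \frac{f^\ua(\tau; \xi_1 + r_0)}{f^\ua(\tau; \xi_2 + r_0)} = \frac{h^\ua(\sigma; \xi_1)}{h^\ua(\sigma; \xi_2)}, \qquad \frac{f^\da(\tau; \xi_1 - r_0)}{f^\da(\tau; \xi_2 - r_0)} = \frac{h^\da(\sigma; \xi_1)}{h^\da(\sigma; \xi_2)},
\]
and $f^\ua(\tau; \xi_1 + r_0) f^\da(\tau; \xi_2 - r_0) = h^\ua(\sigma; \xi_1) h^\da(\sigma; \xi_2)$. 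The second identity stays valid as $\xi_1, \xi_2 \to 0$ because $(f+\tau)^\da$ extends holomorphically and positively to a neighbourhood of $(-r_0, 0)$: indeed $i\dom_{f+\tau} \supseteq i\dom_f$ contains $i\{it : 0 < t < r_0\} = (-r_0, 0)$, the segment $\{it : 0 < t < r_0\}$ lying in $\dom_f$ by Theorem~\ref{th:rogers:real}\ref{it:rogers:real:b}, so Remark~\ref{rem:rogers:wh:ext} applies. Reading off Lemma~\ref{lem:rogers:xwh:cbf} for $h$, I obtain: $f^\da(\tau; \eta_1)/f^\da(\tau; \eta_2)$ is a complete Bernstein function of $\tau$ for all $-r_0 < \eta_1 < \eta_2$, in particular for all $0 < \eta_1 < \eta_2$; $f^\ua(\tau; \eta_1) f^\da(\tau; \eta_2)$ is one for all $\eta_1 > r_0$, $\eta_2 > -r_0$; and $f^\ua(\tau; \eta_1)/f^\ua(\tau; \eta_2)$ is one for all $r_0 < \eta_1 < \eta_2$; the corresponding normalisations tend to $1$, resp.\ $\tau$, as $\tau \to \infty$, using $\sigma/\tau \to 1$. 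What remains is to obtain $f^\ua(\tau; \eta_1)/f^\ua(\tau; \eta_2)$ and $f^\ua(\tau; \eta_1) f^\da(\tau; \eta_2)$ when $0 < \eta_1 \le r_0$.

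The hard part is exactly this last step for the ``critical'' factor $f^\ua$ on the bounded range $(0, r_0]$ of arguments. The plan there is to bring in the extended Wiener--Hopf identity $f^\ua(\tau; -i\xi) f^\da(\tau; i\xi) = f(\xi) + \tau$, which holds for $\xi > 0$ and extends by analytic continuation. Evaluated at $\xi = i\eta$ with $\eta \in (0, r_0)$ --- where $i\eta = \zeta_f(\eta) \in \dom_f$, so $f(i\eta) = \lambda_f(\eta) \in (0, \infty)$, while $f^\da(\tau; -\eta)$ is defined and positive by the extension above --- it gives
\[
 f^\ua(\tau; \eta) = \frac{\lambda_f(\eta) + \tau}{f^\da(\tau; -\eta)} \qquad (0 < \eta < r_0).
\]
Substituting this into $f^\ua(\tau; \eta_1)/f^\ua(\tau; \eta_2)$ and into $f^\ua(\tau; \eta_1) f^\da(\tau; \eta_2)$ expresses the missing cases through ratios and products of $f^\da(\tau; \cdot)$ at arguments in $(-r_0, \infty)$ --- already known to be complete Bernstein in $\tau$ --- together with the elementary factors $\lambda_f(\eta) + \tau$ and $(\lambda_f(\eta_1) + \tau)/(\lambda_f(\eta_2) + \tau)$, the latter being a complete Bernstein function of $\tau$ since $0 < \lambda_f(\eta_1) < \lambda_f(\eta_2)$ by the strict monotonicity of $\lambda_f$ (Theorem~\ref{th:rogers:real}\ref{it:rogers:real:d}). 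Assembling these so that the product comes out complete Bernstein again --- which cannot be done by appealing only to the complete Bernstein property of each factor, but needs Proposition~\ref{prop:cbf:quot} together with the precise holomorphic continuation of the factors across the negative real axis (furnished for $h$ by Lemma~\ref{lem:rogers:xwh:boundary}) --- is the delicate bookkeeping the statement has in mind when it leaves details to the reader. The dual case $\zeta_0 = -ir_0$ then follows by the symmetry noted above, with the roles of $f^\ua$ and $f^\da$ interchanged.
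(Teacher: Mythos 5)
Your reduction matches the paper's in all essentials: apply Lemma~\ref{lem:rogers:translation} with $f+\tau$ (noting $\gamma_{f+\tau}=\gamma_f$), identify $\sigma=\tau+\lambda_f(r_0^+)$ as the time-variable for $h$, split into cases $\zeta_0=\pm ir_0$, and observe that the only nontrivial range of arguments is when some $\xi_i$ falls in $(0,r_0]$, where the extended Wiener--Hopf identity $f^\ua(\tau;\eta)\,f^\da(\tau;-\eta)=\lambda_f(\eta)+\tau$ (for $0<\eta<r_0$) trades a bad $f^\ua$-argument for a good $f^\da$-one at the cost of a factor $\tau+\lambda_f(\eta)$. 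That much the paper does too.

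The gap is in the final step, which you explicitly decline to carry out and instead guess needs ``Proposition~\ref{prop:cbf:quot} together with the precise holomorphic continuation of the factors across the negative real axis (Lemma~\ref{lem:rogers:xwh:boundary})''. That guess is incorrect: the paper's proof of this lemma uses neither. The point is elementary. You end up, for example, with
\[
 f^\ua(\tau;\xi_1)\,f^\da(\tau;\xi_2)=\bigl(\tau+\lambda_f(\xi_1)\bigr)\,\frac{h^\da\!\bigl(\tau+\lambda_f(r_0^+);\,\xi_2+r_0\bigr)}{h^\da\!\bigl(\tau+\lambda_f(r_0^+);\,r_0-\xi_1\bigr)}\,,
\]
and the assertion for $h$ tells you that $G(\sigma):=h^\da(\sigma;r_0-\xi_1)/h^\da(\sigma;\xi_2+r_0)$ is a complete Bernstein function. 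The resolution is a chain of just two closure properties of $\cbf$: a shift $\tau\mapsto\tau+c$ with $c\ge 0$, and $g(\tau)\mapsto\tau/g(\tau)$. One factors the total shift $\lambda_f(r_0^+)$ through the point $\lambda_f(\xi_1)$, which works because $\lambda_f(\xi_1)<\lambda_f(r_0^+)$ (strict monotonicity of $\lambda_f$, $\xi_1<r_0$). Concretely: $G(\tau+\lambda_f(r_0^+)-\lambda_f(\xi_1))$ is CBF (positive shift); hence $\tau/G(\tau+\lambda_f(r_0^+)-\lambda_f(\xi_1))$ is CBF; hence shifting again by $\lambda_f(\xi_1)\ge 0$ gives $(\tau+\lambda_f(\xi_1))/G(\tau+\lambda_f(r_0^+))$ CBF, which is exactly the expression above. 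The ratio $f^\ua(\tau;\xi_1)/f^\ua(\tau;\xi_2)$ with $0<\xi_1<\xi_2<r_0$ needs one more round of the same two moves, using the ordering $\lambda_f(\xi_1)<\lambda_f(\xi_2)<\lambda_f(r_0^+)$, and the boundary cases $\xi_i=r_0$ are settled by continuity and Proposition~\ref{prop:cbf:limit}. Proposition~\ref{prop:cbf:quot} and Lemma~\ref{lem:rogers:xwh:boundary} enter only in the companion Lemma~\ref{lem:rogers:xwh:nearly} for the M\"obius (nearly balanced) case, where the extra factor comes from $\lambda_f(r_\infty^-)$, sits on the ``wrong'' side, and a positive shift cannot absorb it. So while your setup is right, the proof is not finished, and the tools you propose to finish it are not the ones that do.
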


\begin{proof}
The argument, although relatively simple, is quite lengthy. Recall that either $\zeta_0 = i r_0$ or $\zeta_0 = -i r_0$. Suppose first that $\zeta_0 = i r_0$. By~\eqref{eq:rogers:wh:translation},
\formula{
 \frac{f^\da(\tau; \xi_1)}{f^\da(\tau; \xi_2)} & = \frac{g^\da(\tau; \xi_1 + r_0)}{g^\da(\tau; \xi_2 + r_0)} = \frac{h^\da(\tau + \lambda_f(r_0^+); \xi_1 + r_0)}{h^\da(\tau + \lambda_f(r_0^+); \xi_2 + r_0)} \, .
}
If $0 < \xi_1 < \xi_2$, then, by the assertion of Lemma~\ref{lem:rogers:xwh:cbf} for $h$, $f^\da(\tau; \xi_1) / f^\da(\tau; \xi_2)$ is a complete Bernstein function of $\tau$, as desired. In a similar manner, by~\eqref{eq:rogers:wh:translation},
\formula{
 f^\ua(\tau; \xi_1) f^\da(\tau; \xi_2) & = h^\ua(\tau + \lambda_f(r_0^+); \xi_1 - r_0) h^\da(\tau + \lambda_f(r_0^+); \xi_2 + r_0) .
}
If $\xi_1 > r_0$ and $\xi_2 > 0$, then, by the assertion of Lemma~\ref{lem:rogers:xwh:cbf} for $h$, $f^\ua(\tau; \xi_1) f^\da(\tau; \xi_2)$ is a complete Bernstein function of $\tau$. If $0 < \xi_1 < r_0$ and $\xi_2 > 0$, then, by the definition of the Wiener--Hopf factors and the identity $h(i \xi_1 - i r_0) + \lambda_f(r_0^+) = f(i \xi_1) = \lambda_f(\xi_1)$,
\formula{
 f^\ua(\tau; \xi_1) f^\da(\tau; \xi_2) & = \frac{h(i \xi_1 - i r_0) + \tau + \lambda_f(r_0^+)}{h^\da(\tau + \lambda_f(r_0^+); r_0 - \xi_1)} \, h^\da(\tau + \lambda_f(r_0^+); \xi_2 + r_0) \displaybreak[0] \\
 & = (\tau + \lambda_f(\xi_1)) \, \frac{h^\da(\tau + \lambda_f(r_0^+); \xi_2 + r_0)}{h^\da(\tau + \lambda_f(r_0^+); r_0 - \xi_1)} \, .
}
By the assertion of Lemma~\ref{lem:rogers:xwh:cbf} for $h$, the function $h^\da(\tau; r_0 - \xi_1) / h^\da(\tau; \xi_2 + r_0)$ is a complete Bernstein function of $\tau$. By substituting $\tau + \lambda_f(r_0^+) - \lambda_f(\xi_1)$ for $\tau$ (note that $\lambda_f(r_0^+) - \lambda_f(\xi_1) > 0$), so are
\formula{
 \frac{h^\da(\tau + \lambda_f(r_0^+) - \lambda_f(\xi_1); r_0 - \xi_1)}{h^\da(\tau + \lambda_f(r_0^+) - \lambda_f(\xi_1); \xi_2 + r_0)} && \text{and} && \tau \, \frac{h^\da(\tau + \lambda_f(r_0^+) - \lambda_f(\xi_1); \xi_2 + r_0)}{h^\da(\tau + \lambda_f(r_0^+) - \lambda_f(\xi_1); r_0 - \xi_1)} \, .
}
Substituting $\tau + \lambda_f(\xi_1)$ for $\tau$ in the right-hand side expression proves that $f^\ua(\tau; \xi_1) f^\da(\tau; \xi_2)$ is a complete Bernstein function of $\tau$. Finally, if $\xi_1 = r_0$ and $\xi_2 > 0$, then, by continuity of the Wiener--Hopf factors and Proposition~\ref{prop:cbf:limit},
\formula{
 f^\ua(\tau; \xi_1) f^\da(\tau; \xi_2) & = \lim_{\eps \searrow 0^+} f^\ua(\tau; \xi_1 + \eps) f^\da(\tau; \xi_2)
}
is a complete Bernstein function of $\tau$ as a pointwise limit of complete Bernstein functions. Therefore, $f^\ua(\tau; \xi_1) f^\da(\tau; \xi_2)$ is a complete Bernstein function of $\tau$ whenever $\xi_1, \xi_2 > 0$.

The remaining case is very similar. Once again by by~\eqref{eq:rogers:wh:translation},
\formula{
 \frac{f^\ua(\tau; \xi_1)}{f^\ua(\tau; \xi_2)} & = \frac{g^\ua(\tau; \xi_1 - r_0)}{g^\ua(\tau; \xi_2 - r_0)} = \frac{h^\ua(\tau + \lambda_f(r_0^+); \xi_1 - r_0)}{h^\ua(\tau + \lambda_f(r_0^+); \xi_2 - r_0)} \, .
}
As before, if $r_0 < \xi_1 < \xi_2$, then $f^\ua(\tau; \xi_1) / f^\ua(\tau; \xi_2)$ is a complete Bernstein function of $\tau$ directly by the assertion of Lemma~\ref{lem:rogers:xwh:cbf} for $h$. If $\xi_1 < r_0 < \xi_2$, then
\formula{
 \frac{f^\ua(\tau; \xi_1)}{f^\ua(\tau; \xi_2)} & = \frac{\tau + \lambda_f(\xi_1)}{h^\ua(\tau + \lambda_f(r_0^+); \xi_2 - r_0) h^\da(\tau + \lambda_f(r_0^+); r_0 - \xi_1)} \, .
}
By the assertion of Lemma~\ref{lem:rogers:xwh:cbf} for $h$, $h^\ua(\tau; \xi_2 - r_0) h^\da(\tau; r_0 - \xi_1)$ is a complete Bernstein function of $\tau$. Substituting as in the former part of the proof, we obtain that
\formula{
 \frac{\tau}{h^\ua(\tau + \lambda_f(r_0^+) - \lambda_f(\xi_1); \xi_2 - r_0) h^\da(\tau + \lambda_f(r_0^+) - \lambda_f(\xi_1); r_0 - \xi_1)} && \text{and} && \frac{f^\ua(\tau; \xi_1)}{f^\ua(\tau; \xi_2)}
}
are complete Bernstein functions of $\tau$. If $\xi_1 < \xi_2 < r_0$, then, in a similar manner,
\formula{
 \frac{f^\ua(\tau; \xi_1)}{f^\ua(\tau; \xi_2)} & = \frac{\tau + \lambda_f(\xi_1)}{\tau + \lambda_f(\xi_2)} \, \frac{h^\da(\tau + \lambda_f(r_0^+); r_0 - \xi_2)}{h^\da(\tau + \lambda_f(r_0^+); r_0 - \xi_1)} \, .
}
By the assertion of Lemma~\ref{lem:rogers:xwh:cbf} for $h$, $h^\da(\tau; r_0 - \xi_2) / h^\da(\tau; r_0 - \xi_1)$ is a complete Bernstein function of $\tau$. Substituting $\tau + \lambda_f(r_0^+) - \lambda_f(\xi_2)$ for $\tau$, then $\tau + \lambda_f(\xi_2) - \lambda_f(\xi_1)$ for $\tau$, and finally $\tau + \lambda_f(\xi_1)$ for $\tau$, we obtain that
\formula{
\begin{gathered}
 \tau \, \frac{h^\da(\tau + \lambda_f(r_0^+) - \lambda_f(\xi_2); r_0 - \xi_1)}{h^\da(\tau + \lambda_f(r_0^+) - \lambda_f(\xi_2); r_0 - \xi_2)} \, , \\
 \frac{\tau}{\tau + \lambda_f(\xi_2) - \lambda_f(\xi_1)} \, \frac{h^\da(\tau + \lambda_f(r_0^+) - \lambda_f(\xi_1); r_0 - \xi_2)}{h^\da(\tau + \lambda_f(r_0^+) - \lambda_f(\xi_1); r_0 - \xi_1)} \, ,
\end{gathered}
}
and $f^\ua(\tau; \xi_1) / f^\da(\tau; \xi_2)$ are all complete Bernstein functions of $\tau$. The remaining cases $r_0 = \xi_1 < \xi_2$ and $0 < \xi_1 < \xi_2 = r_0$ are resolved by continuity of the Wiener--Hopf factors and Proposition~\ref{prop:cbf:limit}, as in the former part of the proof.

The proof is complete if $\zeta_0 = i r_0$. In the remaining case $\zeta_0 = -i r_0$, the argument is very similar.
\end{proof}

\begin{lemma}
\label{lem:rogers:xwh:nearly}
Lemma~\ref{lem:rogers:xwh:cbf} extends to nearly balanced Rogers functions.
\end{lemma}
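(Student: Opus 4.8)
The plan is to reduce, in two reversible steps, the nearly balanced case to the \emph{balanced} case, where the assertion is precisely Lemma~\ref{lem:rogers:xwh:cbf}. Let $f$ be nearly balanced with $|\gamma_f| = (r_0, r_\infty)$, $0 \le r_0 < r_\infty \le \infty$ (if $r_0 = 0$ and $r_\infty = \infty$, then $f$ is balanced and there is nothing to prove). \emph{First I would clear a positive left endpoint.} If $r_0 > 0$, let $h$ be the function of Lemma~\ref{lem:rogers:xwh:translation} (a translate of $f$ shifted by the real constant $\lambda_f(r_0^+)$); by that lemma it suffices to prove the assertion for $h$. Up to the translation, $\gamma_h$ is the sub-arc of $\gamma_f$ traced out by $r \in (r_0, r_\infty)$, so $|\gamma_h|$ is an interval with left endpoint $0$. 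If in addition $r_\infty = \infty$, then the M\"obius map of Lemma~\ref{lem:rogers:mobius} attached to $f$ is exactly this translation, so ``$f$ nearly balanced'' says precisely that $\gamma_h$ satisfies condition~\ref{it:rogers:reg:d} of Definition~\ref{def:rogers:reg}; by Lemma~\ref{lem:rogers:reg}, $h$ is then balanced and Lemma~\ref{lem:rogers:xwh:cbf} applies. Thus I may assume that I have reduced to a nonconstant Rogers function---which I rename $f$---with $|\gamma_f| = (0, \rho)$ for some finite $\rho > 0$.

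\emph{Next I would clear the finite right endpoint by inversion.} Put $F(\xi) = 1/f(1/\xi)$, a nonzero Rogers function by Proposition~\ref{prop:rogers:prop}\ref{it:rogers:prop:b}, whose line of real values is the image of $\gamma_f$ under $\xi \mapsto 1/\xi$, so $|\gamma_F| = (1/\rho, \infty)$. The key claim is that the assertion for $F$ implies it for $f$. Indeed, writing $f(\xi) + \tau = \tau\,(1/\tau + F(1/\xi))/F(1/\xi)$ and applying the Wiener--Hopf factorisation to $F$ at parameters $0$ and $1/\tau$, uniqueness of the factorisation yields identities of the form
\formula{
 \frac{f^\ua(\tau; \xi_1)}{f^\ua(\tau; \xi_2)} & = \frac{F^\da(1/\xi_2)}{F^\da(1/\xi_1)} \cdot \frac{F^\da(1/\tau; 1/\xi_1)}{F^\da(1/\tau; 1/\xi_2)} \,,
}
a symmetric one for the $\da$-ratio, and $f^\ua(\tau;\xi_1) f^\da(\tau;\xi_2) = \tau\,\Phi(1/\tau)/(F^\da(1/\xi_1)F^\ua(1/\xi_2))$ with $\Phi(\sigma) = F^\ua(\sigma;1/\xi_2)F^\da(\sigma;1/\xi_1)$. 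Granting the assertion for $F$: when $0 < \xi_1 < \xi_2$ the map $\sigma \mapsto F^\da(\sigma;1/\xi_2)/F^\da(\sigma;1/\xi_1)$ lies in $\cbf$, hence so does $\tau \mapsto 1/\big(F^\da(1/\tau;1/\xi_2)/F^\da(1/\tau;1/\xi_1)\big)$ by the standard fact that $g \in \cbf$ implies $\tau \mapsto 1/g(1/\tau) \in \cbf$; multiplying by the positive constant $F^\da(1/\xi_2)/F^\da(1/\xi_1)$ shows $f^\ua(\tau;\xi_1)/f^\ua(\tau;\xi_2) \in \cbf$ in $\tau$, and the $\da$-ratio is identical. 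For the product, $\Phi \in \cbf$ by hypothesis, and $g \in \cbf$ implies $\tau \mapsto \tau\,g(1/\tau) \in \cbf$ (both facts being standard properties of complete Bernstein functions; cf.\ Theorem~\ref{th:cbf}\ref{it:cbf:d}), so $f^\ua(\tau;\xi_1)f^\da(\tau;\xi_2) \in \cbf$ in $\tau$; the normalisations at $\tau = \infty$ drop out by letting $1/\tau \searrow 0$ and using $\Phi(0^+) = F^\da(1/\xi_1)F^\ua(1/\xi_2)$.

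\emph{Finally I would close the loop.} The function $F$ is again a nonconstant Rogers function, now with $|\gamma_F| = (1/\rho, \infty)$---a positive left endpoint and $r_\infty = \infty$---so the first step applies to $F$ and, after a single translation, lands on a Rogers function with $|\gamma| = (0, \infty)$ whose line of real values is, up to the inversion $\xi \mapsto 1/\xi$, that of the balanced Rogers function $g$ of Lemma~\ref{lem:rogers:mobius} attached to the original $f$. Since inversion preserves condition~\ref{it:rogers:reg:d} (and hence, by Lemma~\ref{lem:rogers:reg}, balancedness), this terminal function is balanced, Lemma~\ref{lem:rogers:xwh:cbf} applies to it, and unwinding the chain of reductions proves the lemma. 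The step I expect to be the main obstacle is the inversion identities above: one must verify carefully that the extended Wiener--Hopf factors of $1/f(1/\cdot)$ are, up to the normalising constant, the reciprocals of those of $f$ evaluated at parameter $1/\tau$ and argument $1/\xi$ (with the $\ua$ and $\da$ roles interchanged, as in Proposition~\ref{prop:rogers:wh:prop}), together with the routine but tedious bookkeeping showing that real constant shifts leave $\rogers$ and $\gamma_f$ unchanged and that each endpoint-clearing operation is consistent with the M\"obius reduction of Definition~\ref{def:rogers:nearly}. As the excerpt warns, these verifications are best left to the reader.
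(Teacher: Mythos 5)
Your proof is correct and takes a genuinely different route from the paper's. The paper's proof works directly with the M\"obius relation~\eqref{eq:rogers:wh:mobius}: after the same reduction to $r_0 = 0$, $\lambda_f(0^+) = 0$ via Lemma~\ref{lem:rogers:xwh:translation}, it expresses the Wiener--Hopf factors of $f$ in terms of those of the balanced $g$ evaluated at $(1/\xi_i \pm 1/r_\infty)^{-1}$, and then runs through cases according to the position of $\xi_1, \xi_2$ relative to $r_\infty$. The spurious $(\tau + \lambda_f(\xi_i))$ factors that appear when $\xi_i$ crosses $r_\infty$ are absorbed using Lemma~\ref{lem:rogers:xwh:boundary}\itref{it:rogers:xwh:c} and Proposition~\ref{prop:cbf:quot}. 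You instead invert, $F(\xi) = 1/h(1/\xi)$, which swaps the finite right endpoint to a positive left endpoint; this replaces the entire case analysis and the appeal to Proposition~\ref{prop:cbf:quot} by the two elementary facts that $g \in \cbf$ implies $1/g(1/\tau) \in \cbf$ and $\tau g(1/\tau) \in \cbf$. The inversion identity you state is correct --- one verifies it by matching domains of holomorphy, consistent with the $\ua/\da$ exchange in Proposition~\ref{prop:rogers:wh:prop} --- and the claim $\gamma_H = \{1/\xi : \xi \in \gamma_g\}$ (hence $H$ balanced) follows because $\iota \circ u_\infty \circ \iota$ is exactly the translation by $1/(\zeta_\infty - \zeta_0)$. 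Two small points you should spell out if writing this up: (i) the normalising constant relation $c_1(\tau) c_2(\tau) = \tau$, which is what makes $f^\ua(\tau;\xi_1) f^\da(\tau;\xi_2) = \tau\Phi(1/\tau)/(F^\da(1/\xi_1)F^\ua(1/\xi_2))$ exact rather than up to a $\tau$-dependent factor; and (ii) the continuity $\Phi(0^+) = F^\ua(1/\xi_2)F^\da(1/\xi_1)$, which is slightly outside the literal assertion of Lemma~\ref{lem:rogers:xwh:cbf} but follows from Corollary~\ref{cor:rogers:wh} and dominated convergence. On balance your route is shorter and avoids casework, at the cost of having to set up and justify the inversion identity for the extended factors; the paper's route is longer but keeps everything inside the family of M\"obius relations already established in Section~\ref{sec:wh}.
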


\begin{proof}
In this proof, the notation of Lemmas~\ref{lem:rogers:translation} and~\ref{lem:rogers:mobius} are used. In particular, $g(\xi) = f(u(\xi))$, or $f(\xi) = g(u^{-1}(\xi))$, where $u$ is the M\"obius transformation defined in~\eqref{eq:rogers:wh:mobius}, and $g$ is a balanced Rogers function. As in the proof of Lemma~\ref{lem:rogers:mobius}, one has $u(\xi) = u_0(u_\infty(\xi))$, where $u_0(\xi) = \xi + \zeta_0$ and $u_\infty(\xi) = (1 / \xi + 1 / (\zeta_\infty - \zeta_0))^{-1}$.

The argument is somewhat similar to the proof of Lemma~\ref{lem:rogers:xwh:translation}. It suffices to show that the assertion of Lemma~\ref{lem:rogers:xwh:cbf} holds for $f_\infty(\xi) - \lambda_f(r_0^+)$, where $f_\infty(\xi) = g(u_\infty^{-1}(\xi)) = f(u_0(\xi))$. Indeed, then the result for $f(\xi) = f_\infty(u_0^{-1}(\xi))$ follows by Lemma~\ref{lem:rogers:xwh:translation}. By substituting $g(\xi) - \lambda_f(r_0^+)$ for $g$ and $f_\infty(\xi) - \lambda_f(r_0^+)$ for $f$, one sees that it suffices to consider the case $r_0 = 0$ and $\lambda_f(0^+) = 0$.

We therefore assume that $f(\xi) = g((1 / \xi - 1 / \zeta_\infty)^{-1})$ for a balanced Rogers function $g$. If $r_\infty = \infty$, then $f(\xi) = g(\xi)$ and the result reduces to Lemma~\ref{lem:rogers:xwh:cbf}. Otherwise, either $\zeta_\infty = i r_\infty$ or $\zeta_\infty = -i r_\infty$. Suppose that $\zeta_\infty = i r_\infty$. By~\eqref{eq:rogers:wh:mobius},
\formula{
 \frac{f^\da(\tau; \xi_1)}{f^\da(\tau; \xi_2)} & = \frac{g^\da(\tau; (1 / \xi_1 + 1 / r_0)^{-1})}{g^\da(\tau; (1 / \xi_2 + 1 / r_0)^{-1})} \, ,
}
which is a complete Bernstein function of $\tau$ if $0 < \xi_1 < \xi_2$ by Lemma~\ref{lem:rogers:xwh:cbf}. In a similar manner, by~\eqref{eq:rogers:wh:mobius},
\formula{
 f^\ua(\tau; \xi_1) f^\da(\tau; \xi_2) & = g^\ua(\tau; (1 / \xi_1 - 1 / r_0)^{-1}) g^\da(\tau; (1 / \xi_2 + 1 / r_0)^{-1})
}
is a complete Bernstein function of $\tau$ if $0 < \xi_1 < r_\infty$ and $\xi_2 > 0$. If $\xi_1 > r_\infty$ and $\xi_2 > 0$, then, as in the proof of Lemma~\ref{lem:rogers:xwh:translation},
\formula{
 f^\ua(\tau; \xi_1) f^\da(\tau; \xi_2) & = \frac{g(i \xi_1 - i r_0) + \tau}{g^\da(\tau; (1 / r_0 - 1 / \xi_1)^{-1})} \, g^\da(\tau; (1 / \xi_2 + 1 / r_0)^{-1}) \displaybreak[0] \\
 & = (\tau + \lambda_f(\xi_1)) \, \frac{g^\da(\tau; (1 / \xi_2 + 1 / r_0)^{-1})}{g^\da(\tau; (1 / r_0 - 1 / \xi_1)^{-1})} \, .
}
By Lemma~\ref{lem:rogers:xwh:cbf}, the function $g^\da(\tau; (1 / \xi_2 + 1 / r_0)^{-1}) / g^\da(\tau; (1 / r_0 - 1 / \xi_1)^{-1})$ is a complete Bernstein function of $\tau$, and by Lemma~\ref{lem:rogers:xwh:boundary}\ref{it:rogers:xwh:c}, it extends to a holomorphic function of $\tau \in \C \setminus [-\lambda_f(r_\infty^-), 0]$, with positive values on $(-\infty, -\lambda_f(r_\infty^-))$. By Proposition~\ref{prop:cbf:quot}, $f^\ua(\tau; \xi_1) f^\da(\tau; \xi_2)$ is a complete Bernstein function of $\tau$. Finally, if $\xi_1 = r_\infty$ and $\xi_2 > 0$, one uses continuity of Wiener--Hopf factors and Proposition~\ref{prop:cbf:limit}.

For the remaining case, once again by by~\eqref{eq:rogers:wh:mobius},
\formula{
 \frac{f^\ua(\tau; \xi_1)}{f^\ua(\tau; \xi_2)} & = \frac{g^\ua(\tau; (1 / \xi_1 - 1 / r_0)^{-1})}{g^\ua(\tau; (1 / \xi_2 - 1 / r_0)^{-1})} \, .
}
As before, if $0 < \xi_1 < \xi_2 < r_\infty$, then $f^\ua(\tau; \xi_1) / f^\ua(\tau; \xi_2)$ is a complete Bernstein function of $\tau$ by Lemma~\ref{lem:rogers:xwh:cbf}. If $0 < \xi_1 < r_\infty < \xi_2$, then
\formula{
 \frac{f^\ua(\tau; \xi_1)}{f^\ua(\tau; \xi_2)} & = \frac{g^\ua(\tau; (1 / \xi_1 - 1 / r_0)^{-1}) g^\da(\tau; (1 / r_0 - 1 / \xi_2)^{-1})}{\tau + \lambda_f(\xi_2)} \, .
}
By Lemma~\ref{lem:rogers:xwh:cbf}, the numerator is a complete Bernstein function of $\tau$, and by Lemma~\ref{lem:rogers:xwh:boundary}\ref{it:rogers:xwh:c}, it extends to a holomorphic function of $\tau \in \C \setminus [-\lambda_f(r_\infty^-), 0]$, with negative values on $(-\infty, -\lambda_f(r_\infty^-))$. By Proposition~\ref{prop:cbf:quot}, $f^\ua(\tau; \xi_1) / f^\da(\tau; \xi_2)$ is a complete Bernstein function of $\tau$. If $r_\infty < \xi_1 < \xi_2$, then
\formula{
 \frac{f^\ua(\tau; \xi_1)}{f^\ua(\tau; \xi_2)} & = \frac{(\tau + \lambda_f(\xi_1)) g^\da(\tau; (1 / r_0 - 1 / \xi_2)^{-1})}{(\tau + \lambda_f(\xi_2)) g^\da(\tau; (1 / r_0 - 1 / \xi_1)^{-1})} \, ,
}
and again, by Lemmas~\ref{lem:rogers:xwh:cbf} and~\ref{lem:rogers:xwh:boundary}\ref{it:rogers:xwh:c}, and by a double application of Proposition~\ref{prop:cbf:quot}, $f^\ua(\tau; \xi_1) / f^\da(\tau; \xi_2)$ is a complete Bernstein function of $\tau$. Finally, if $0 < \xi_1 < \xi_2 = r_\infty$ or $r_\infty = \xi_1 < \xi_2$, one uses continuity of Wiener--Hopf factors and Proposition~\ref{prop:cbf:limit}.
\end{proof}

\begin{remark}
\label{rem:rogers:xwh:nearly}
Using similar methods, Lemma~\ref{lem:rogers:xwh:boundary}, and so also Theorem~\ref{th:kappa:inv}, apparently can be extended to nearly balanced Rogers functions. However, the proof requires rather tedious calculations.
\end{remark}

\subsection{Integral identity}

The following result was proved in the symmetric case in~\cite[Lemma~3.1]{bib:kmr12} and used in~\cite{bib:k11, bib:kmr12} to construct eigenfunction expansion of the generator of the process killed upon leaving half-line. For further discussion, see Section~\ref{subsec:ee}.

\begin{lemma}
\label{lem:rogers:xwh:int}
With the assumptions and notation of Lemma~\ref{lem:rogers:xwh}, for all $\xi_1, \xi_2 > 0$,
\formula[eq:rogers:xwh:int]{
 \frac{\xi_1 + \xi_2}{\pi} \int_0^\infty \frac{f_{[\zeta_f(r)]}^\ua(\xi_1)}{(\xi_1 + i \zeta_f(r)) (\xi_1 - i \overline{\zeta_f(r)})} \, \frac{f_{[\zeta_f(r)]}^\da(\xi_2)}{(\xi_2 - i \zeta_f(r)) (\xi_2 + i \overline{\zeta_f(r)})} \, \lambda_f'(r) \re \zeta_f(r) \D r = 1 ,
}
where $\zeta_f$ is the canonical parametrisation of $\gamma_f \cap \C_\ra$ and $\lambda_f(r) = f(\zeta_f(r))$.
\end{lemma}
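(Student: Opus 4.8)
The strategy is to identify the integral in~\eqref{eq:rogers:xwh:int} with the residue/boundary decomposition of a meromorphic function obtained from the extended Wiener--Hopf factors, and then evaluate that decomposition directly. The starting point is the factorisation identity $f^\ua(\tau; -i\xi) \, f^\da(\tau; i\xi) = f(\xi) + \tau$ for $\xi > 0$, together with Lemma~\ref{lem:rogers:xwh} which provides the holomorphic extension of the ratios in $\tau$. Writing $\xi_1, \xi_2 > 0$ fixed and viewing $\tau \mapsto f^\ua(\tau; \xi_1) f^\da(\tau; \xi_2)$ as a complete Bernstein function of $\tau$ (Lemma~\ref{lem:rogers:xwh:cbf}), I would use the Stieltjes representation from Theorem~\ref{th:cbf}\ref{it:cbf:b} applied to this function: the associated measure on $(0,\infty)$ is given by the boundary limit $\lim_{t \searrow 0} \im\bigl(f^\ua(-s+it; \xi_1) f^\da(-s+it; \xi_2)\bigr)$, which by Lemma~\ref{lem:rogers:xwh:boundary}\ref{it:rogers:xwh:a} is supported on the interval $[\lambda_f(0^+), \lambda_f(\infty^-)]$ and, at $s = \lambda_f(\zeta) = f(\zeta)$ with $\zeta = \zeta_f(r)$, equals (up to the factor $\sin$ of an angle) the reciprocal of $f_{[\zeta]}^\ua(\xi_1)/(\xi_1 + i\zeta) \cdot f_{[\zeta]}^\da(\xi_2)/(\xi_2 + i\bar\zeta)$. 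Converting this boundary measure to the variable $r$ via $s = \lambda_f(r)$ introduces the Jacobian $\lambda_f'(r) \, \D r$ and produces precisely the integrand in~\eqref{eq:rogers:xwh:int}, modulo the bookkeeping with $\re\zeta_f(r)$ and the absolute-value factors in the denominators.

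To pin down the normalising constant $1$ on the right-hand side, I would evaluate the Stieltjes representation at a convenient point, most naturally by examining the asymptotic behaviour as $\tau \to \infty$. By Lemma~\ref{lem:rogers:xwh:cbf}, $f^\ua(\tau;\xi_1) f^\da(\tau;\xi_2)/\tau \to 1$; comparing this with the $\tau \to \infty$ asymptotics of the Stieltjes integral (the leading term being $c_1 \tau$ where $c_1 = \lim_{\tau \to \infty} (\cdot)/\tau$) forces the total mass of the boundary measure, suitably weighted, to contribute the coefficient $1$. Alternatively, and perhaps more cleanly, I would divide through by $\tau$ and take the limit, or evaluate at $\tau = 0$: when $f(0^+) = 0$, the relation $f^\ua(0; \xi_1) f^\da(0; \xi_2) = f^\ua(\xi_1) f^\da(\xi_2)$ combined with~\eqref{eq:rogers:wh:alt:ud0}-type formulae gives another anchor. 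The identity~\eqref{eq:rogers:xwh:int} is then the statement that the density, integrated against the constant $1$ (which is what appears after one cancels the $\tau$-dependence in the Stieltjes kernel at $\tau \to \infty$), recovers the coefficient $c_1 = 1$. The factor $(\xi_1+\xi_2)/\pi$ and the combination $\re\zeta_f(r)$ arise from rewriting $\im(\zeta_f'(r)/(i\xi_1 - \zeta_f(r)) + \zeta_f'(r)/(i\xi_2 + \zeta_f(r)))$, which appears in Lemma~\ref{lem:rogers:xwh}, in terms of $\lambda_f'(r)$ using the chain-rule relation $\lambda_f'(r) = f'(\zeta_f(r))\,\zeta_f'(r)$ and the defining property $f_{[\zeta]}(\zeta) = (2\re\zeta)/f'(\zeta)$ from Lemma~\ref{lem:rogers:quot}.

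In more computational terms: differentiate the first equality of~\eqref{eq:rogers:xwh:ud} in $\tau$ at $\tau = \infty$ after subtracting the leading term, or — equivalently and more robustly — use that for a complete Bernstein function $h(\tau)$ with representation $h(\tau) = c_0 + c_1\tau + \tfrac{1}{\pi}\int \tfrac{\tau}{\tau+s}\,\tfrac{\tilde\mu(\D s)}{s}$, one has $c_1 = \lim_{\tau\to\infty} h(\tau)/\tau$ and the density of $\tilde\mu(\D s)/s$ at $s$ is $\lim_{t\searrow 0}\tfrac{1}{\pi}\im h(-s+it)$. Apply this to $h(\tau) = f^\ua(\tau;\xi_1)f^\da(\tau;\xi_2)$; here $c_1 = 1$ and, by Lemma~\ref{lem:rogers:xwh:boundary}\ref{it:rogers:xwh:a}, the boundary imaginary part at $s = f(\zeta)$ is computed from the jump of $\log(f(z) - s)$ across $z = \zeta$ on the contour $\gamma_f$. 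The residue computation underlying Lemma~\ref{lem:rogers:xwh:boundary}\ref{it:rogers:xwh:a} already extracts the factor $(\xi_j \pm i\zeta)$, $(\xi_j \pm i\bar\zeta)$; assembling the two factors, changing variable $s = \lambda_f(r)$, and matching with $c_1 = 1$ yields~\eqref{eq:rogers:xwh:int}. The arithmetic linking $\im\bigl(\zeta_f'(r)/(i\xi_1-\zeta_f(r)) + \zeta_f'(r)/(i\xi_2+\zeta_f(r))\bigr)$ to $(\xi_1+\xi_2)\,\re\zeta_f(r)\,\lambda_f'(r) / \bigl[(\xi_1+i\zeta_f(r))(\xi_1-i\overline{\zeta_f(r)})(\xi_2-i\zeta_f(r))(\xi_2+i\overline{\zeta_f(r)})\bigr]$ is a direct but slightly tedious partial-fraction manipulation.

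**Main obstacle.** The principal difficulty is not conceptual but organisational: carefully justifying the passage from the Stieltjes representation of $h(\tau) = f^\ua(\tau;\xi_1)f^\da(\tau;\xi_2)$ to the explicit contour integral, in particular showing that the boundary measure is exactly the pushforward under $\lambda_f$ of the measure with density $\bigl(f_{[\zeta_f(r)]}^\ua(\xi_1)/(\xi_1+i\zeta_f(r))\bigr)^{-1}\bigl(f_{[\zeta_f(r)]}^\da(\xi_2)/(\xi_2+i\overline{\zeta_f(r)})\bigr)^{-1}$ with no stray atoms at the endpoints $\lambda_f(0^+), \lambda_f(\infty^-)$ — this absence-of-atoms point is handled exactly as in Part~I of the proof of Lemma~\ref{lem:rogers:xwh:boundary}, using the bounds~\eqref{eq:rogers:xwh:est5a}--\eqref{eq:rogers:xwh:est5c} and Theorem~\ref{th:harmonic:positive}. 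The secondary obstacle is the bookkeeping of arguments and branch choices (the $h_\zeta$ construction of Figure~\ref{fig:rogers:xwh}) so that the reciprocals appear with the correct sign and the real positive prefactor $(\xi_1+\xi_2)/\pi$ comes out right; I would verify the final normalisation against the explicit power-type example $f(\xi) = a\xi^\alpha$ from Section~\ref{sec:ex}, where all quantities are elementary, as a consistency check before committing to the general argument.
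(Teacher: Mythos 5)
Your proposal has the right supporting ingredients (Lemma~\ref{lem:rogers:xwh:boundary}\ref{it:rogers:xwh:a}, the Stieltjes representation of a complete Bernstein function, absence of atoms, the partial-fraction manipulation producing $(\xi_1+\xi_2)\re\zeta_f(r)$), but it is built around the wrong complete Bernstein function, and the normalisation step as you describe it cannot close the argument.

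You propose to apply the Stieltjes representation to $h(\tau) = f^\ua(\tau;\xi_1)\,f^\da(\tau;\xi_2)$ and to fix the constant on the right-hand side of~\eqref{eq:rogers:xwh:int} by the asymptotic $c_1 = \lim_{\tau\to\infty} h(\tau)/\tau = 1$. This does not give an integral identity: for $h(\tau) = c_0 + c_1\tau + \tfrac{1}{\pi}\int_{(0,\infty)}\frac{\tau}{\tau+s}\,\frac{\tilde\mu(\D s)}{s}$, the equality $c_1 = 1$ is extracted by dividing by $\tau$ and letting $\tau\to\infty$, which annihilates the Stieltjes integral term altogether and says nothing about $\int\tilde\mu(\D s)/s$. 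There is no way to recover~\eqref{eq:rogers:xwh:int} from the linear growth coefficient alone. Moreover (and this is a confirmation that something is off), the boundary limit you correctly quote from Lemma~\ref{lem:rogers:xwh:boundary}\ref{it:rogers:xwh:a} for $f^\ua(\tau;\xi_1)f^\da(\tau;\xi_2)$ has $f_{[\zeta]}^\ua(\xi_1)$ and $f_{[\zeta]}^\da(\xi_2)$ in the \emph{denominator}, whereas the integrand of~\eqref{eq:rogers:xwh:int} has them in the \emph{numerator}; your density would therefore not even have the right shape.

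The missing idea is to work with the reciprocal: set $h(\xi_1,\xi_2;\tau) = \tau / \bigl(f^\ua(\tau;\xi_1)\,f^\da(\tau;\xi_2)\bigr)$, which by Lemma~\ref{lem:rogers:xwh:cbf} is again a complete Bernstein function of $\tau$, but now one for which $h(0^+) = 0$ and, by the estimate of Proposition~\ref{prop:rogers:wh:est}, $\lim_{\tau\to\infty}h(\tau) = 1$ is finite. Hence $c_0 = c_1 = 0$, and by monotone convergence $1 = \lim_{\tau\to\infty}h(\tau) = \tfrac{1}{\pi}\int_{(0,\infty)}\tilde\mu(\D s)/s$, i.e.\ the \emph{total Stieltjes mass} equals $\pi$. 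This is what anchors the constant. The boundary limit of $h(-s+it)/(-s+it) = 1/\bigl(f^\ua(-s+it;\xi_1)f^\da(-s+it;\xi_2)\bigr)$, evaluated via Lemma~\ref{lem:rogers:xwh:boundary}\ref{it:rogers:xwh:a}, now naturally puts $f_{[\zeta]}^\ua(\xi_1)$ and $f_{[\zeta]}^\da(\xi_2)$ in the numerator; taking $-\im$ and using $-\im\bigl((\xi_1+i\zeta)(\xi_2+i\bar\zeta)\bigr)^{-1} = (\xi_1+\xi_2)\re\zeta / \bigl((\xi_1+i\zeta)(\xi_1-i\bar\zeta)(\xi_2-i\zeta)(\xi_2+i\bar\zeta)\bigr)$, together with the change of variable $s = \lambda_f(r)$ and the absence-of-atoms statement~\eqref{eq:rogers:xwh:boundary:weak}, yields~\eqref{eq:rogers:xwh:int} exactly. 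Everything else in your plan is sound once this inversion is made.
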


\begin{proof}
By Lemma~\ref{lem:rogers:xwh:cbf}, the function $h(\xi_1, \xi_2; \tau) = \tau / (f^\ua(\tau; \xi_1) f^\da(\tau; \xi_2))$ is a complete Bernstein function of $\tau$, and $\lim_{\tau \nearrow \infty} h(\xi_1, \xi_2, \tau) = 1$. Furthermore, by Proposition~\ref{prop:rogers:wh:est},
\formula{
 h(\xi_1, \xi_2; \tau) & \le 2 \, \frac{1 + \xi_1}{\xi_1} \, \frac{1 + \xi_2}{\xi_2} \, \frac{\tau}{\sqrt{|f(1)| + \tau} \, \sqrt{|f(1)| + \tau}} \, ,
}
so that $\lim_{\tau \searrow 0} h(\xi_1, \xi_2; \tau) = 0$. In particular, the constants $c_0$ and $c_1$ are equal to zero in the Stieltjes representation~\eqref{eq:cbf:b} (Theorem~\ref{th:cbf}\ref{it:cbf:b}) of the complete Bernstein function $h(\xi_1, \xi_2; \tau)$.

By Theorem~\ref{th:cbf}\ref{it:cbf:b}, monotone convergence and~\eqref{eq:rogers:xwh:boundary:weak} in Lemma~\ref{lem:rogers:xwh:boundary},
\formula{
 1 & = \lim_{\tau \nearrow \infty} h(\xi_1, \xi_2; \tau) = \frac{1}{\pi} \int_0^\infty \lim_{t \searrow 0} \expr{-\im \frac{h(\xi_1, \xi_2; -s + i t)}{-s + i t}} \D s .
}
By Lemma~\ref{lem:rogers:xwh:boundary}, the integrand vanishes if $0 < s < \inf\{f(\zeta) : \zeta \in \gamma_f\}$ or $s > \sup\{f(\zeta) : \zeta \in \gamma_f\}$. Hence, substituting $s = \lambda_f(r) = f(\zeta_f(r))$,
\formula{
 \frac{1}{\pi} \int_0^\infty \lim_{t \searrow 0} \expr{-\im \frac{h(\xi_1, \xi_2; -f(\zeta_f(r)) + i t)}{-f(\zeta_f(r)) + i t}} \lambda_f'(r) \D r & = 1 .
}
Again by Lemma~\ref{lem:rogers:xwh:boundary} and the identity
\formula{
 -\im \frac{1}{(\xi_1 + i \zeta) (\xi_2 + i \bar{\zeta})} & = \frac{(\xi_1 + \xi_2) \re \zeta}{(\xi_1 + i \zeta) (\xi_1 - i \bar{\zeta}) (\xi_2 - i \zeta) (\xi_2 + i \bar{\zeta})} ,
}
one obtains
\formula{
 \frac{1}{\pi} \int_0^\infty \frac{(\xi_1 + \xi_2) f_{[\zeta_f(r)]}^\ua(\xi_1) f_{[\zeta_f(r)]}^\da(\xi_2) \re \zeta_f(r)}{(\xi_1 + i \zeta_f(r)) (\xi_1 - i \overline{\zeta_f(r)}) (\xi_2 - i \zeta_f(r)) (\xi_2 + i \overline{\zeta_f(r)})} \, \lambda_f'(r) \D r & = 1 ,
}
as desired.
\end{proof}

%
%

\section{Normalisation of extended Wiener--Hopf factors}
\label{sec:norm}

In order to apply the results of the preceding section in the fluctuation theory of L\'evy processes, the extended Wiener--Hopf factors need to be renormalised. In this short section we introduce the modified Wiener--Hopf factors $\kappa_f^\ua(\tau; \xi)$, $\kappa_f^\da(\tau; \xi)$ and $\kappa_f^\bullet(\tau)$, and prove that they satisfy the assertion of Theorem~\ref{th:kappa:cbf}, in particular, they are complete Bernstein functions in both $\xi$ and $\tau$. In the next section it will be argued that \smash{$\kappa_f^\ua(\tau; \xi)$} and \smash{$\kappa_f^\da(\tau; \xi)$} agree with the usual definition of the Wiener--Hopf factors for a L\'evy process with L\'evy--Khintchine exponent $f$.

Recall that $f(\xi) + \tau = f^\ua(\tau; -i \xi) f^\da(\tau; i \xi)$ is the extended Wiener--Hopf factorisation, considered in the previous section.

\subsection{Normalisation}

The main definition requires the following technical result.

\begin{lemma}
\label{lem:rogers:xwh:limit}
For every nonzero Rogers function $f$ and $\tau_1, \tau_2 > 0$, the functions $f^\ua(\tau_1; \xi) / f^\ua(\tau_2; \xi)$ and $f^\da(\tau_1; \xi) / f^\da(\tau_2; \xi)$ converge as $\xi \nearrow \infty$ to positive numbers.
\end{lemma}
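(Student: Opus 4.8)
The statement is essentially a continuity/asymptotics fact about Wiener--Hopf factors, and the cleanest route uses the exponential (or Darling-type) integral formulae established in Lemma~\ref{lem:rogers:curve} and Corollary~\ref{cor:rogers:wh}. First I would reduce to the balanced case, or rather bypass geometry altogether: by Corollary~\ref{cor:rogers:wh} applied to the Rogers functions $f+\tau_1$ and $f+\tau_2$ (both nonzero, since $\tau_j>0$), for all $\xi_1,\xi_2\in\C_\ra$,
\formula{
 \frac{f^\ua(\tau_1;\xi_1)}{f^\ua(\tau_1;\xi_2)} \cdot \frac{f^\ua(\tau_2;\xi_2)}{f^\ua(\tau_2;\xi_1)} & = \exp\expr{\frac{1}{2\pi} \int_{-\infty}^\infty \expr{\frac{1}{\xi_1+ir} - \frac{1}{\xi_2+ir}} \log \frac{f(r)+\tau_1}{f(r)+\tau_2} \, \D r} .
}
Fix $\xi_2 = 1$ and let $\xi_1 = \xi \nearrow \infty$. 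The plan is to show this expression tends to a positive limit; combined with the normalisation $f^\ua(\tau_j;1) = f^\da(\tau_j;1)$ (Definition~\ref{def:rogers:xwh} and Theorem~\ref{th:rogers:wh}) and the fact that $f^\ua(\tau_1;1)/f^\ua(\tau_2;1)$ is just a positive constant, this gives convergence of $f^\ua(\tau_1;\xi)/f^\ua(\tau_2;\xi)$ as $\xi \nearrow \infty$. The argument for $f^\da$ is identical with $\xi_1+ir$ replaced by $\xi_1-ir$.

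The analytic core is a dominated-convergence argument for the integral. Write $\Phi(r) = \log\bigl((f(r)+\tau_1)/(f(r)+\tau_2)\bigr)$; this is bounded (since $f(r)\in[0,\infty)$ for $r>0$ by Theorem~\ref{th:rogers}\ref{it:rogers:d}, so $f(r)+\tau_j$ is real and positive), and moreover $\Phi(r)\to 0$ as $|r|\to\infty$ because $|f(r)|\to\infty$ or $f$ is bounded — in the bounded case one uses that $f(\infty^-)$ is a real number (Proposition~\ref{prop:rogers:bounded}) so the ratio still tends to $(f(\infty^-)+\tau_1)/(f(\infty^-)+\tau_2)$, a positive constant, which I would handle by first subtracting off that constant. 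Actually it is cleaner to keep $\Phi$ as is and analyze
\formula{
 \frac{1}{2\pi} \int_{-\infty}^\infty \expr{\frac{1}{\xi+ir} - \frac{1}{1+ir}} \Phi(r) \, \D r .
}
As $\xi\nearrow\infty$, $1/(\xi+ir)\to 0$ pointwise, but it is not dominated by an integrable function uniformly; the standard fix is to split off the real part: $1/(\xi+ir) = \xi/(\xi^2+r^2) - ir/(\xi^2+r^2)$. The measure $\tfrac{\xi}{\xi^2+r^2}\D r$ has total mass $\pi$ and concentrates at $r=0$, so $\int \tfrac{\xi}{\xi^2+r^2}\Phi(r)\D r \to \pi\Phi(0) = \pi\log\bigl((f(0^+)+\tau_1)/(f(0^+)+\tau_2)\bigr)$ provided $\Phi$ is continuous at $0$ (it is, since $f(0^+)=c_0\ge 0$ and $f$ is continuous up to the boundary away from $0$; and even if $f(0^+)=0$ the ratio $\tau_1/\tau_2$ is still a well-defined positive number) — this is exactly the kind of vague-convergence argument used in the proof of Lemma~\ref{lem:rogers:wh:limit}. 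For the imaginary-part piece $\int \tfrac{-ir}{\xi^2+r^2}\Phi(r)\D r$, I would use $|\tfrac{r}{\xi^2+r^2}|\le \tfrac{1}{2\xi}$ together with $|\tfrac{r}{\xi^2+r^2}|\le\tfrac{1}{|r|}$ and $\Phi$ bounded with decay at infinity, to get that this term is $O(\xi^{-1}\log\xi)$ or similar, hence $\to 0$; and $\int\tfrac{1}{1+ir}\Phi(r)\D r$ is a fixed finite constant. Thus the whole integral converges to a finite limit, and exponentiating gives a positive limit.

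The main obstacle is the integrability/decay bookkeeping for $\Phi$ at infinity, especially in the unbounded case: one needs a quantitative lower bound on $|f(r)|$ to control $\log\bigl(1 + (\tau_1-\tau_2)/(f(r)+\tau_2)\bigr)$, which is $O(1/f(r))$ once $f(r)$ is large. Here Proposition~\ref{prop:rogers:bound} (with $r=1$) gives $|f(r)|\ge c\,|r|^2/(1+|r|^2)\cdot|f(1)|$ for $r$ real — wait, more precisely for $\xi=r>0$ it gives a lower bound growing like a constant times... actually the bound $\tfrac{1}{\sqrt2}\tfrac{|\xi|^2}{1+|\xi|^2}|f(1)|\le|f(\xi)|$ only gives a positive lower bound, not growth. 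So for the unbounded case I would instead invoke Proposition~\ref{prop:rogers:bounded}: if $f$ is unbounded then $|f(r)|\to\infty$ as $r\nearrow\infty$, which is all that is needed for $\Phi(r)\to 0$; and $\Phi$ is bounded on all of $\R$ regardless, and is $O(r^{-2})$-ish near $\pm\infty$ only if we have quadratic growth — but in fact $\Phi$ merely \emph{bounded with a limit of $0$ at $\pm\infty$} is enough to push through the two estimates above (dominate $\Phi$ by its sup on $|r|\ge 1$ and use the $1/|r|$ decay of the kernel; dominate by the sup on $|r|\le 1$ which is a compact interval). Care is needed that $f$ extends continuously to $i\R\setminus\{0\}$ on the boundary so that $f(r)$ for real $r$ makes sense and is the relevant boundary value; this is part of Theorem~\ref{th:rogers} and Remark~\ref{rem:rogers}. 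I do not expect any genuinely hard point beyond organizing these estimates, since the structure mirrors Lemma~\ref{lem:rogers:wh:limit} closely.
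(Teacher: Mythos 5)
Your overall shape is right — apply the exponential integral formula from Corollary~\ref{cor:rogers:wh} to $f+\tau_1$ and $f+\tau_2$ and push $\xi\nearrow\infty$ through the integral; this is the same route as the paper, which packages it as an application of Lemma~\ref{lem:rogers:wh:limit}. But there are two problems, one minor and one that is a genuine gap.

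The minor one: you assert that $f(r)\in[0,\infty)$ for $r>0$, citing Theorem~\ref{th:rogers}\ref{it:rogers:d}. That condition only gives $\re(f(r)/r)\ge 0$, i.e.\ $\re f(r)\ge 0$; for a non-symmetric Rogers function $f(r)$ is genuinely complex on $(0,\infty)$ (it is real only on $\gamma_f$). Your subsequent conclusion that $\Phi(r)=\log\bigl((f(r)+\tau_1)/(f(r)+\tau_2)\bigr)$ is bounded is still correct, but for the reason $\re(f(r)+\tau_j)\ge\tau_j>0$, not because the numbers are positive reals.

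The genuine gap is the decay bookkeeping that you yourself flag as "the main obstacle" and then wave away. You claim that "$\Phi$ merely bounded with a limit of $0$ at $\pm\infty$ is enough," proposing to dominate the tail by $\sup_{|r|\ge 1}|\Phi|$ times the $1/|r|$ decay of the kernel. But $\int_{|r|\ge 1}\D r/|r|$ diverges, so this domination gives nothing; boundedness plus vanishing at infinity, without a rate, does not make $\int_{|r|\ge 1}\Phi(r)/r\,\D r$ converge (take $\Phi(r)\sim 1/\log|r|$). What is actually needed is exactly the integrability hypothesis of Lemma~\ref{lem:rogers:wh:limit}: $\xi^{-1}\Arg\bigl((f(\xi)+\tau_1)/(f(\xi)+\tau_2)\bigr)$ must be integrable on $(1,\infty)$. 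Verifying this is the nontrivial content of the paper's proof, and it splits into two cases. When $f$ is unbounded, the paper bounds the argument by $(\tau_1-\tau_2)\,|\im(1/f(\xi))|$ (using $\re f\ge 0$ so that $1+(\tau_1-\tau_2)/(f+\tau_2)\in\C_\ra$), and then invokes Proposition~\ref{prop:rogers:imag} for the Rogers function $\xi^2/f(\xi)$ to get $\int_1^\infty|\im(\xi^2/f(\xi))|\,\xi^{-3}\D\xi<\infty$. When $f$ is bounded, the argument is bounded by $|\im f(\xi)|/(\tau+\re f(\xi))$ and the finiteness follows from the Stieltjes representation of $f$ in Proposition~\ref{prop:rogers:bounded} (with $\mu(\D s)/|s|$ a finite measure). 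Your proposal contains neither of these quantitative inputs, so the dominated-convergence step at the heart of it does not go through as written.
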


\begin{proof}
If $f$ is bounded, then, by Proposition~\ref{prop:rogers:bounded}, $a = f(\infty^-)$ is nonnegative. Hence,
\formula{
 \lim_{\xi \nearrow \infty} \frac{f(\xi) + \tau_1}{f(\xi) + \tau_2} & = \frac{\tau_1 + a}{\tau_2 + a} > 0 .
}
Using the inequality $|\Arg z| \le |\im z| / \re z$ for $z = f(\xi) + \tau \in \C_\ra$, and then again Proposition~\ref{prop:rogers:bounded}, for $\tau, \xi > 0$ one obtains
\formula{
 |\Arg (f(\xi) + \tau)| & \le \frac{|\im f(\xi)|}{\tau + \re f(\xi)} \le \frac{1}{\tau + a} \, \frac{1}{\pi} \int_{-\infty}^\infty \frac{\xi |s|}{\xi^2 + s^2} \, \frac{\mu(\D s)}{|s|} \, ,
}
so that
\formula{
 \int_0^\infty \frac{|\Arg (f(\xi) + \tau)|}{\xi} \, \D \xi & \le \frac{1}{\tau + a} \, \frac{1}{2} \int_{-\infty}^\infty \frac{\mu(\D s)}{|s|} < \infty .
}
Hence the functions $f(\xi) + \tau_1$ and $f(\xi) + \tau_2$ satisfy the assumptions of Lemma~\ref{lem:rogers:wh:limit}, and the result follows.

Suppose now that $f$ is not bounded. Then, by Proposition~\ref{prop:rogers:bounded}, $\lim_{\xi \nearrow \infty} |f(\xi)| = \infty$, so that
\formula{
 \lim_{\xi \nearrow \infty} \frac{f(\xi) + \tau_1}{f(\xi) + \tau_2} & = 1 .
}
Assume with no loss of generality that $\tau_1 \ge \tau_2$. Then $z = 1 + (\tau_1 - \tau_2) / (f(\xi) + \tau_2) \in \C_\ra$ for $\xi > 0$, so that $|\Arg z| \le |\im z| / \re z$. Hence,
\formula{
 \abs{\Arg \frac{f(\xi) + \tau_1}{f(\xi) + \tau_2}} & = \abs{\Arg \expr{1 + \frac{\tau_1 - \tau_2}{f(\xi) + \tau_2}}} \displaybreak[0] \\
 & \le \abs{\im \frac{\tau_1 - \tau_2}{f(\xi) + \tau_2}} \le \frac{(\tau_1 - \tau_2) |\im f(\xi)|}{|f(\xi)|^2} = (\tau_1 - \tau_2) \abs{\im \frac{1}{f(\xi)}} .
}
Therefore,
\formula{
 \int_1^\infty \abs{\Arg \frac{f(\xi) + \tau_1}{f(\xi) + \tau_2}} \frac{\D \xi}{\xi} & \le (\tau_1 - \tau_2) \int_1^\infty \abs{\im \frac{\xi^2}{f(\xi)}} \frac{\D \xi}{\xi^3} \, ,
}
which is finite by Proposition~\ref{prop:rogers:imag} applied to the Rogers function $\xi^2 / f(\xi)$ (see Proposition~\ref{prop:rogers:prop}\ref{it:rogers:prop:b}). Again $f(\xi) + \tau_1$ and $f(\xi) + \tau_2$ satisfy the assumptions of Lemma~\ref{lem:rogers:wh:limit}, and the proof is complete.
\end{proof}

By Lemma~\ref{lem:rogers:xwh:limit}, the following definition is well-formed. It provides a modified version of the extended Wiener--Hopf factorisation: $f(\xi) + \tau = \kappa_f^\bullet(\tau) \kappa_f^\ua(\tau; -i \xi) \kappa_f^\da(\tau; i \xi)$.

\begin{definition}
\label{def:rogers:kappa}
For a nonzero Rogers function $f$, we define
\formula[eq:rogers:kappa]{
 \kappa_f^\ua(\tau; \xi) & = \expr{\lim_{\eta \nearrow \infty} \frac{f^\ua(1; \eta)}{f^\ua(\tau; \eta)}} f^\ua(\tau; \xi) , \\
 \kappa_f^\da(\tau; \xi) & = \expr{\lim_{\eta \nearrow \infty} \frac{f^\da(1; \eta)}{f^\da(\tau; \eta)}} f^\da(\tau; \xi) , \\
 \kappa_f^\bullet(\tau) & = \expr{\lim_{\eta \nearrow \infty} \frac{f^\ua(\tau; \eta)}{f^\ua(1; \eta)}} \expr{\lim_{\eta \nearrow \infty} \frac{f^\da(\tau; \eta)}{f^\da(1; \eta)}} = \frac{f(\xi) + \tau}{\kappa_f^\ua(\tau; -i \xi) \kappa_f^\da(\tau; i \xi)}
}
for all $\xi \in \C \setminus (-\infty, 0]$ and $\tau > 0$.
\end{definition}

\begin{lemma}
\label{lem:rogers:dot}
If $f$ is a bounded nonzero Rogers function, then $\kappa_f^\bullet(\tau) = (\tau + a) / (1 + a)$ for all $\tau > 0$, where $a = f(\infty^-) > 0$. Otherwise, $\kappa_f^\bullet(\tau) = 1$ for all $\tau > 0$.
\end{lemma}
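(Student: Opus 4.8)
The plan is to evaluate $\kappa_f^\bullet(\tau)$ directly from the Baxter--Donsker-type formulae of Corollary~\ref{cor:rogers:wh}, applied to the nonzero Rogers function $g(\xi) = f(\xi) + \tau$, whose Wiener--Hopf factors are $f^\ua(\tau; \cdot)$ and $f^\da(\tau; \cdot)$ by Definition~\ref{def:rogers:xwh}. By Theorem~\ref{th:rogers}\ref{it:rogers:d} (together with the convention $f(r) = \overline{f(-r)}$ for $r < 0$) one has $\re f(r) \ge 0$ for all $r \in \R \setminus \{0\}$, so $f(r) + \tau \in \C_\ra$, the logarithm $\log(f(r) + \tau)$ is well defined, and $\log(f(-r) + \tau) = \overline{\log(f(r) + \tau)}$. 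Since both limits in Definition~\ref{def:rogers:kappa} exist by Lemma~\ref{lem:rogers:xwh:limit}, $\kappa_f^\bullet(\tau)$ equals $\lim_{\eta \nearrow \infty} (f^\ua(\tau; \eta) f^\da(\tau; \eta)) / (f^\ua(1; \eta) f^\da(1; \eta))$, so it suffices to understand the Wiener--Hopf factors evaluated at large \emph{real} arguments.

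First I would add the first two identities of Corollary~\ref{cor:rogers:wh} for $g = f + \tau$: the weight that appears is $\frac{1}{\eta + ir} + \frac{1}{\eta - ir} = \frac{2 \eta}{\eta^2 + r^2}$, which is even in $r$, so the odd (imaginary) part of $\log(f(r) + \tau)$ integrates to zero and, for $\eta > 0$,
\formula{
 \frac{f^\ua(\tau; \eta) f^\da(\tau; \eta)}{f^\ua(\tau; 1) f^\da(\tau; 1)} & = \exp\expr{\frac{1}{\pi} \int_{-\infty}^\infty \expr{\frac{\eta}{\eta^2 + r^2} - \frac{1}{1 + r^2}} \log|f(r) + \tau| \, \D r} .
}
Likewise the third identity of Corollary~\ref{cor:rogers:wh} with $\xi_1 = \xi_2 = 1$ gives $f^\ua(\tau; 1) f^\da(\tau; 1) = \exp\expr{\frac{1}{\pi} \int_{-\infty}^\infty (1 + r^2)^{-1} \log|f(r) + \tau| \, \D r}$. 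Taking the quotient of these two displays with their $\tau = 1$ counterparts, the terms carrying the weight $(1 + r^2)^{-1}$ cancel, and there remains
\formula{
 \frac{f^\ua(\tau; \eta) f^\da(\tau; \eta)}{f^\ua(1; \eta) f^\da(1; \eta)} & = \exp\expr{\frac{1}{\pi} \int_{-\infty}^\infty \frac{\eta}{\eta^2 + r^2} \, \log \frac{|f(r) + \tau|}{|f(r) + 1|} \, \D r} .
}

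It then remains to let $\eta \nearrow \infty$. Writing $w = f(r) + 1 \in \C_\ra$, one has $\frac{f(r) + \tau}{f(r) + 1} = 1 + \frac{\tau - 1}{w}$, whose real part is $1 + (\tau - 1) \frac{\re w}{|w|^2} \ge \min(1, \tau) > 0$ because $0 < \frac{\re w}{|w|^2} \le (\re w)^{-1} \le 1$; hence $r \mapsto \log \frac{|f(r) + \tau|}{|f(r) + 1|}$ is bounded on $\R$. Moreover it has a common finite limit $L$ as $r \to \pm\infty$: if $f$ is unbounded then $|f(r)| \to \infty$ by Proposition~\ref{prop:rogers:bounded}, so $L = 0$; if $f$ is bounded then $f(r) \to a = f(\infty^-)$, which is positive since $f$ is nonzero (again Proposition~\ref{prop:rogers:bounded}), so $L = \log \frac{a + \tau}{a + 1}$. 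Since $\frac{1}{\pi} \frac{\eta}{\eta^2 + r^2}$ is a probability density in $r$ whose mass escapes to infinity as $\eta \nearrow \infty$, splitting the integral at $|r| = R$ and using $\frac{\eta}{\eta^2 + r^2} \le \eta^{-1}$ on $|r| \le R$ shows that the last integral tends to $L$. Therefore $\kappa_f^\bullet(\tau) = e^L$, which equals $1$ when $f$ is unbounded and $\frac{\tau + a}{1 + a}$ when $f$ is bounded, as claimed.

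The routine bookkeeping — evenness in $r$ killing the imaginary parts, and convergence of all the integrals via the $O(\log|r|)$ bound on $\log|f(r) + \tau|$ coming from Proposition~\ref{prop:rogers:bound} — is harmless. The one point requiring genuine care is the final limit: that the Poisson-type average of the bounded function $\log \frac{|f(r) + \tau|}{|f(r) + 1|}$ converges to its value $L$ at infinity as $\eta \nearrow \infty$, and that this function really has a single finite limit at $\pm\infty$ in the first place. This is exactly where the bounded/unbounded dichotomy of Proposition~\ref{prop:rogers:bounded} and the reflection symmetry $f(-r) = \overline{f(r)}$ are used.
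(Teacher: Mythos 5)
Your proof is correct and follows essentially the same route as the paper's: both express the ratio $f^\ua(\tau;\eta)f^\da(\tau;\eta)/\bigl(f^\ua(1;\eta)f^\da(1;\eta)\bigr)$ as the Poisson-kernel average $\exp\bigl(\tfrac{1}{\pi}\int_{-\infty}^\infty \tfrac{\eta}{\eta^2+r^2}\log\tfrac{f(r)+\tau}{f(r)+1}\,\D r\bigr)$ via Corollary~\ref{cor:rogers:wh} and then send $\eta\nearrow\infty$. The only cosmetic differences are that the paper obtains this display directly from the third identity of Corollary~\ref{cor:rogers:wh} with $\xi_1=\xi_2=\eta$ (rather than assembling it from the first two plus the third at $\xi_1=\xi_2=1$), and handles the limit by substituting $r=1/s$ and invoking vague convergence of $(1/\eta)/(1/\eta^2+s^2)\,\D s$ to $\pi\delta_0$, whereas you argue by hand that the escaping Poisson mass picks out the common limit at $\pm\infty$; both closing arguments are sound.
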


\begin{proof}
Let $a = f(\infty^-)$ if $f$ is bounded, $a = \infty$ otherwise. By Corollary~\ref{cor:rogers:wh}, for $\eta, \tau > 0$,
\formula{
 \frac{f^\ua(\tau; \eta) f^\da(\tau; \eta)}{f^\ua(1; \eta) f^\da(1; \eta)} & = \exp \expr{\frac{1}{2 \pi} \int_{-\infty}^\infty \expr{\frac{1}{\eta + i r} + \frac{1}{\eta - i r}} \log \frac{f(r) + \tau}{f(r) + 1} \, \D r} \displaybreak[0] \\
 & = \exp \expr{\frac{1}{\pi} \int_{-\infty}^\infty \frac{\eta}{\eta^2 + r^2} \, \log \frac{f(r) + \tau}{f(r) + 1} \, \D r} \displaybreak[0] \\
 & = \exp \expr{\frac{1}{\pi} \int_{-\infty}^\infty \frac{1/\eta}{1/\eta^2 + s^2} \, \log \frac{f(1/s) + \tau}{f(1/s) + 1} \, \D s} .
}
As $\eta \nearrow \infty$, the measures $(1/\eta) / (1/\eta^2 + s^2) \D s$ converge to $\pi \delta_0(\D s)$, and hence
\formula{
 \lim_{\eta \nearrow \infty} \frac{f^\ua(\tau; \eta) f^\da(\tau; \eta)}{f^\ua(1; \eta) f^\da(1; \eta)} & = \exp \expr{\log \frac{a + \tau}{a + 1}} ,
}
where if $a = \infty$, it is understood that $(a + \tau) / (a + 1) = 1$.
\end{proof}

\begin{corollary}
\label{cor:rogers:xwh:cbf}
If $f$ is a nearly balanced Rogers function, then
\formula{
 \kappa_f^\ua(\tau; \xi), && \kappa_f^\da(\tau; \xi), && \kappa_f^\bullet(\tau) \kappa_f^\ua(\tau; \xi), && \kappa_f^\bullet(\tau) \kappa_f^\da(\tau; \xi), && \kappa_f^\bullet(\tau)
}
are complete Bernstein functions of both $\xi$ and $\tau$ for all $\xi, \tau > 0$.
\end{corollary}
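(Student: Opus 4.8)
The plan is to treat the two variables separately: the $\xi$-direction is essentially a restatement of Theorem~\ref{th:rogers:wh}, while the $\tau$-direction follows by combining the extended Wiener--Hopf results of the previous section (in their nearly balanced form) with the closure of $\cbf$ under pointwise limits, Proposition~\ref{prop:cbf:limit}.

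First I would dispose of the $\xi$-direction. By Definition~\ref{def:rogers:kappa}, each of $\kappa_f^\ua(\tau;\xi)$, $\kappa_f^\da(\tau;\xi)$, $\kappa_f^\bullet(\tau)\kappa_f^\ua(\tau;\xi)$ and $\kappa_f^\bullet(\tau)\kappa_f^\da(\tau;\xi)$ is, for fixed $\tau>0$, a constant (in $\xi$) multiple of $f^\ua(\tau;\xi)$ or $f^\da(\tau;\xi)$; this constant is a positive real number by Lemma~\ref{lem:rogers:xwh:limit} and Lemma~\ref{lem:rogers:dot}, and $f^\ua(\tau;\cdot)$, $f^\da(\tau;\cdot)$ are the complete Bernstein Wiener--Hopf factors of the nonzero Rogers function $f(\xi)+\tau$ by Theorem~\ref{th:rogers:wh}. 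Hence all four are complete Bernstein functions of $\xi$, and $\kappa_f^\bullet(\tau)$, being a positive constant in $\xi$, is a (trivial) complete Bernstein function of $\xi$ as well.

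For the $\tau$-direction I would first cancel the normalising constants in Definition~\ref{def:rogers:kappa} to isolate the $\tau$-dependence, obtaining for $\eta>\xi$
\[
\kappa_f^\ua(\tau;\xi)=\lim_{\eta\nearrow\infty}f^\ua(1;\eta)\,\frac{f^\ua(\tau;\xi)}{f^\ua(\tau;\eta)},\qquad\kappa_f^\da(\tau;\xi)=\lim_{\eta\nearrow\infty}f^\da(1;\eta)\,\frac{f^\da(\tau;\xi)}{f^\da(\tau;\eta)},
\]
and for arbitrary $\eta>0$
\[
\kappa_f^\bullet(\tau)\kappa_f^\ua(\tau;\xi)=\lim_{\eta\nearrow\infty}\frac{f^\ua(\tau;\xi)\,f^\da(\tau;\eta)}{f^\da(1;\eta)},\qquad\kappa_f^\bullet(\tau)\kappa_f^\da(\tau;\xi)=\lim_{\eta\nearrow\infty}\frac{f^\ua(\tau;\eta)\,f^\da(\tau;\xi)}{f^\ua(1;\eta)},
\]
where in the last two identities the reciprocal limits $\lim_{\eta}f^\ua(\tau;\eta)/f^\ua(1;\eta)$ and $\lim_{\eta}f^\ua(1;\eta)/f^\ua(\tau;\eta)$ (both existing by Lemma~\ref{lem:rogers:xwh:limit}) cancel. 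By Lemma~\ref{lem:rogers:xwh:cbf}, extended to nearly balanced Rogers functions in Lemma~\ref{lem:rogers:xwh:nearly}, the ratios $f^\ua(\tau;\xi)/f^\ua(\tau;\eta)$ and $f^\da(\tau;\xi)/f^\da(\tau;\eta)$ (for $\xi<\eta$) and the products $f^\ua(\tau;\xi)f^\da(\tau;\eta)$ and $f^\ua(\tau;\eta)f^\da(\tau;\xi)$ are complete Bernstein functions of $\tau$, and multiplying by the $\tau$-independent positive numbers $f^\ua(1;\eta)$, $f^\da(1;\eta)$, $1/f^\da(1;\eta)$ or $1/f^\ua(1;\eta)$ preserves this. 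The pointwise (in $\tau$) limits as $\eta\nearrow\infty$ exist and are finite and strictly positive by Lemma~\ref{lem:rogers:xwh:limit}, so they are not identically infinite on $(0,\infty)$; applying Proposition~\ref{prop:cbf:limit} along $\eta$ running through the integers then shows that $\kappa_f^\ua(\tau;\xi)$, $\kappa_f^\da(\tau;\xi)$, $\kappa_f^\bullet(\tau)\kappa_f^\ua(\tau;\xi)$ and $\kappa_f^\bullet(\tau)\kappa_f^\da(\tau;\xi)$ are complete Bernstein functions of $\tau$. Finally, $\kappa_f^\bullet(\tau)$ is a complete Bernstein function of $\tau$ directly by Lemma~\ref{lem:rogers:dot}, being either the affine function $(\tau+a)/(1+a)$ with $a=f(\infty^-)>0$, or the constant $1$.

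This proof is routine once the machinery of Section~\ref{sec:xwh} is in place; the only points requiring attention are the bookkeeping of the normalising constants, so that the $\tau$-dependence is placed exactly in the ratios and products covered by Lemma~\ref{lem:rogers:xwh:cbf}, and the verification (through Lemmas~\ref{lem:rogers:xwh:limit} and~\ref{lem:rogers:dot}) that every limiting constant is strictly positive, so that Proposition~\ref{prop:cbf:limit} applies in its nondegenerate case rather than producing the value $+\infty$.
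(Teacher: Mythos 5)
Your proof is correct and follows essentially the same route as the paper's: for fixed $\tau$ the $\xi$-factors are positive multiples of $f^\ua(\tau;\cdot)$ or $f^\da(\tau;\cdot)$, while for fixed $\xi$ one writes each expression as a limit as $\eta\nearrow\infty$ of products/ratios covered by Lemma~\ref{lem:rogers:xwh:cbf} (via Lemma~\ref{lem:rogers:xwh:nearly}) and invokes Proposition~\ref{prop:cbf:limit}. The only differences are cosmetic: you spell out the $\kappa_f^\da$ cases and the passage to a sequence $\eta\to\infty$ explicitly, where the paper says ``in a similar manner.''
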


\begin{proof}
For a fixed $\tau > 0$, $\kappa_f^\ua(\tau; \xi) = c f^\ua(\tau; \xi)$ for some $c > 0$, so that $\kappa_f^\ua(\tau; \xi)$ and $\kappa_f^\bullet(\tau) \kappa_f^\ua(\tau; \xi)$ are complete Bernstein functions of $\xi$. Furthermore, for a fixed $\xi > 0$, by Lemma~\ref{lem:rogers:xwh:nearly}, $\kappa_f^\ua(\tau; \xi)$ and $\kappa_f^\bullet(\tau) \kappa_f^\ua(\tau; \xi)$ are pointwise limits of complete Bernstein functions of $\tau$, namely
\formula{
 \kappa_f^\ua(\tau; \xi) & = \lim_{\eta \nearrow \infty} \expr{f^\ua(1; \eta) \, \frac{f^\ua(\tau; \xi)}{f^\ua(\tau; \eta)}} , & \kappa_f^\bullet(\tau) \kappa_f^\ua(\tau; \xi) & = \lim_{\eta \nearrow \infty} \frac{f^\ua(\tau; \xi) f^\da(\tau; \eta)}{f^\da(1; \eta)} \, .
}
By Proposition~\ref{prop:cbf:limit}, $\kappa_f^\ua(\tau; \xi)$ and $\kappa_f^\bullet(\tau) \kappa_f^\ua(\tau; \xi)$ are complete Bernstein functions of $\tau$. Statements related to $\kappa_f^\da(\tau; \xi)$ are proved in a similar manner. Finally, $\kappa_f^\bullet(\tau)$ is either constant or equal to $(\tau + a) / (1 + a)$ for some $a > 0$, and hence a complete Bernstein function of $\tau$.
\end{proof}

\begin{corollary}
\label{cor:rogers:xwh:xcbf}
If $f$ is a nearly balanced Rogers function, $0 < \tau_1 < \tau_2$ and $0 < \xi_1 < \xi_2$, then
\formula{
 \frac{\kappa_f^\ua(\tau_1; \xi)}{\kappa_f^\ua(\tau_2; \xi)} \, , && \frac{\kappa_f^\da(\tau_1; \xi)}{\kappa_f^\da(\tau_2; \xi)} \, , && \frac{\kappa_f^\ua(\tau; \xi_1)}{\kappa_f^\ua(\tau; \xi_2)} \, , && \frac{\kappa_f^\da(\tau; \xi_1)}{\kappa_f^\da(\tau; \xi_2)}
}
are complete Bernstein functions of $\xi$ and $\tau$, respectively.
\end{corollary}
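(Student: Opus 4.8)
The plan is to peel off the $\tau$-dependent normalising constants and reduce everything to the factors $f^\ua(\tau;\xi)$, $f^\da(\tau;\xi)$ of $f(\xi)+\tau$ studied in Section~\ref{sec:xwh}. Since Definition~\ref{def:rogers:kappa} (well posed by Lemma~\ref{lem:rogers:xwh:limit}) gives $\kappa_f^\ua(\tau;\xi)=c_\ua(\tau)\,f^\ua(\tau;\xi)$ and $\kappa_f^\da(\tau;\xi)=c_\da(\tau)\,f^\da(\tau;\xi)$ with $c_\ua(\tau),c_\da(\tau)>0$ \emph{independent of $\xi$}, in the two ratios in the $\tau$ variable the constants cancel identically, $\kappa_f^\ua(\tau;\xi_1)/\kappa_f^\ua(\tau;\xi_2)=f^\ua(\tau;\xi_1)/f^\ua(\tau;\xi_2)$ and likewise with $\da$; for $0<\xi_1<\xi_2$ these are complete Bernstein functions of $\tau$ directly by Lemma~\ref{lem:rogers:xwh:nearly}, which is the only place where near-balancedness of $f$ is used. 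In the two ratios in the $\xi$ variable the constants survive only as an overall positive factor, $\kappa_f^\ua(\tau_1;\xi)/\kappa_f^\ua(\tau_2;\xi)=\bigl(c_\ua(\tau_1)/c_\ua(\tau_2)\bigr)f^\ua(\tau_1;\xi)/f^\ua(\tau_2;\xi)$, and since a positive multiple of a complete Bernstein function is again one, it suffices to prove that $f^\ua(\tau_1;\xi)/f^\ua(\tau_2;\xi)$ and $f^\da(\tau_1;\xi)/f^\da(\tau_2;\xi)$ are complete Bernstein functions of $\xi$ whenever $0<\tau_1<\tau_2$.

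For this I would put $g=f+\tau_1\in\rogers$ (nonzero, by Theorem~\ref{th:rogers}\ref{it:rogers:d}) and $\sigma=\tau_2-\tau_1>0$; then $f^\ua(\tau_1;\xi)=g^\ua(\xi)$ and $f^\ua(\tau_2;\xi)=(g+\sigma)^\ua(\xi)$ by Definition~\ref{def:rogers:xwh}, so the assertion reduces to the following statement, valid for \emph{any} nonzero $g\in\rogers$ and $\sigma>0$: $g^\ua(\xi)/(g+\sigma)^\ua(\xi)$ is a complete Bernstein function of $\xi$ (and the same with $\da$). Let $\tilde\ph_1,\tilde\ph_2\colon(0,\infty)\to[0,\pi]$ be the angle functions of the complete Bernstein functions $g^\ua$, $(g+\sigma)^\ua$ in the exponential representation of Theorem~\ref{th:cbf}\ref{it:cbf:c}. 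Using the extension $g^\ua(\xi)=g(i\xi)/g^\da(-\xi)$ of Remark~\ref{rem:rogers:wh:ext} together with $g(-t-is)=\overline{g(t-is)}$ and $g^\da(s-it)\to g^\da(s)>0$ as $t\searrow0$, formula~\eqref{eq:cbf:ph} identifies $\tilde\ph_1(s)=\lim_{t\searrow0}\Arg g^\ua(-s+it)$, for a.e.\ $s>0$, with the argument in $[0,\pi]$ of $\overline{L(s)}$, where $L(s)$ is the boundary value of $g$ at $-is$ from $\C_\ra$; by Theorem~\ref{th:rogers}\ref{it:rogers:d} one has $L(s)/(-is)\in\overline{\C}_\ra$, hence $L(s)\in\overline{\C}_\da$ and $\overline{L(s)}\in\overline{\C}_\ua$. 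The identical computation for $g+\sigma$ replaces $L(s)$ by $L(s)+\sigma$, giving $\tilde\ph_2(s)$ as the $[0,\pi]$-argument of $\overline{L(s)}+\sigma$.

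The key (elementary) point is the monotonicity: for $w\in\overline{\C}_\ua$ and $\sigma>0$ the $[0,\pi]$-argument of $w+\sigma$ is at most that of $w$, because moving a point of the closed upper half-plane to the right cannot increase its argument. Hence $\tilde\ph_2(s)\le\tilde\ph_1(s)$ for a.e.\ $s>0$, so $\tilde\ph_1-\tilde\ph_2$ again takes values in $[0,\pi]$; dividing the two exponential representations exhibits $g^\ua(\xi)/(g+\sigma)^\ua(\xi)$ in the form of Theorem~\ref{th:cbf}\ref{it:cbf:c}, with angle function $\tilde\ph_1-\tilde\ph_2$ and positive constant $g^\ua(1)/(g+\sigma)^\ua(1)$, so it is a complete Bernstein function of $\xi$. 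The $\da$ case is analogous, using $g^\da(\xi)=g(-i\xi)/g^\ua(-\xi)$ and the boundary value of $g$ at $is$. The main obstacle is precisely this angle monotonicity under $g\mapsto g+\sigma$; the only extra care needed is a brief branch-of-argument remark at the (null, or for special $g$ even non-null) set of $s$ where $L(s)$ is a negative real number, where the limiting argument equals $\pi$ and the inequality $\tilde\ph_2\le\tilde\ph_1$ is still valid. Everything else is bookkeeping with the normalisation, together with Lemma~\ref{lem:rogers:xwh:nearly} and Theorem~\ref{th:cbf}.
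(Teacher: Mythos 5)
Your proof is correct, and for the $\tau$-ratios your argument (cancel the $\tau$-dependent normalising constants and invoke Lemma~\ref{lem:rogers:xwh:nearly}) is identical to the paper's. For the $\xi$-ratios, you take a different route. The paper observes that
\[
g(\xi)=\frac{\tau_1+f(\xi)}{\tau_2+f(\xi)}
=\xi^2\Bigl(\xi^2+(\tau_2-\tau_1)\,\frac{\xi^2}{\tau_1+f(\xi)}\Bigr)^{-1}
\]
is itself a Rogers function by the closure properties in Proposition~\ref{prop:rogers:prop}, and then uses the integral formula of Corollary~\ref{cor:rogers:wh} to identify $f^\ua(\tau_1;\xi)/f^\ua(\tau_2;\xi)$ (up to a positive constant) with $g^\ua(\xi)$, which is a complete Bernstein function by Theorem~\ref{th:rogers:wh}. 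You instead work directly with the exponential representations (Theorem~\ref{th:cbf}\ref{it:cbf:c}) of the two Wiener--Hopf factors, compute their boundary-value angle functions $\tilde\ph_1,\tilde\ph_2$ in terms of the boundary values $L(s)$ of $f$, and deduce $\tilde\ph_1-\tilde\ph_2\in[0,\pi]$ from the elementary geometric fact that translating a point of $\overline{\C}_\ua$ to the right cannot increase its argument. These two proofs are really the same idea at different levels of abstraction: your angle-monotonicity inequality is exactly the statement that the exponential representation of $g=(f+\tau_1)/(f+\tau_2)$ has an angle function valued in $[0,\pi]$, i.e.\ that $g$ is a Rogers function. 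The paper's version has the advantage of avoiding boundary-limit arguments entirely (it is pure algebra of Rogers functions), while yours makes the mechanism visible at the level of Herglotz data and correctly flags the minor branch-of-argument care needed on the exceptional set where $L(s)$ is negative real. Both are valid; yours buys concreteness at the cost of a slightly more delicate limiting argument, and the paper's buys brevity from the already-developed calculus of Rogers functions.
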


\begin{proof}
Observe that for fixed $\tau_1, \tau_2$, one has $\kappa_f^\ua(\tau_1; \xi) / \kappa_f^\ua(\tau_2; \xi) = c f^\ua(\tau_1; \xi) / f^\ua(\tau_2; \xi)$ for some $c > 0$. Furthermore, by Corollary~\ref{cor:rogers:wh},
\formula{
 \frac{f^\ua(\tau_1; \xi)}{f^\ua(\tau_2; \xi)} = \frac{f^\ua(\tau_1; 1)}{f^\ua(\tau_2; 1)} \, \frac{g^\ua(\xi)}{g^\ua(1)} \, ,
}
where
\formula{
 g(\xi) & = \frac{\tau_1 + f(\xi)}{\tau_2 + f(\xi)} = \xi^2 \expr{\xi^2 + (\tau_2 - \tau_1) \, \frac{\xi^2}{\tau_1 + f(\xi)}}^{-1}
}
is a Rogers function by Proposition~\ref{prop:rogers:prop}\ref{it:rogers:prop:a}. In particular, $\kappa_f^\ua(\tau_1; \xi) / \kappa_f^\ua(\tau_2; \xi)$ is a complete Bernstein function of $\xi$. By Lemma~\ref{lem:rogers:xwh:nearly}, also $\kappa_f^\ua(\tau; \xi_1) / \kappa_f^\ua(\tau; \xi_2) = f^\ua(\tau; \xi_1) / f^\ua(\tau; \xi_2)$ is a complete Bernstein function of $\tau$. Statements related to $\kappa_f^\da(\tau; \xi)$ are proved in a similar manner.
\end{proof}

\begin{remark}
The exponential representation (Theorem~\ref{th:rogers}\ref{it:rogers:c}) of the functions $\kappa_f^\ua(\tau; \xi_1) / \kappa_f^\ua(\tau; \xi_2) = f^\ua(\tau; \xi_1) / f^\ua(\tau; \xi_2)$ and $\kappa_f^\da(\tau; \xi_1) / \kappa_f^\da(\tau; \xi_2) = f^\da(\tau; \xi_1) / f^\da(\tau; \xi_2)$, where $0 < \xi_1 < \xi_2$, is described by Lemma~\ref{lem:rogers:xwh:boundary}. Similar statement for $\kappa_f^\ua(\tau; \xi)$ and $\kappa_f^\da(\tau; \xi)$, where $\xi > 0$ is fixed, can apparently be obtained by taking an appropriate limit (see Proposition~\ref{prop:cbf:limit}). If this procedure can be justified, then one has
\formula{
 \lim_{t \searrow 0} \kappa_f^\ua(-f(\zeta) + i t; \xi) & = \expr{\frac{f_{[\zeta]}^\ua(\xi)}{\xi + i \zeta}}^{-1} \lim_{\eta \nearrow \infty} \frac{f^\ua(1; \eta) f_{[\zeta]}^\ua(\eta)}{\eta} \, , \displaybreak[0] \\
 \lim_{t \searrow 0} \kappa_f^\da(-f(\zeta) + i t; \xi) & = \expr{\frac{f_{[\zeta]}^\da(\xi)}{\xi + i \bar{\zeta}}}^{-1} \lim_{\eta \nearrow \infty} \frac{f^\da(1; \eta) f_{[\zeta]}^\da(\eta)}{\eta}
}
for $\zeta \in \gamma_f \cap \C_\ra$ and $\xi > 0$. In particular, the measures $\mu_{\xi\ua}(\D s)$, $\mu_{\xi\da}(\D s)$ in the Stieltjes representation~\eqref{eq:cbf:b} (Theorem~\ref{th:cbf}\ref{it:cbf:b}) for $\kappa_f^\ua(\tau; \xi)$ and $\kappa_f^\da(\tau; \xi)$ (as functions of $\tau$) would have density functions $c_\ua(s) / f_{[\zeta(s)]}^\ua(\xi)$, $c_\da(s) / f_{[\zeta(s)]}^\ua(\xi)$ for some $c_\ua(s), c_\da(s)$ and $\zeta(s) \in \gamma_f \cap \C_\ra$ satisfying $f(\zeta(s)) = s$. Noteworthy, these density functions are \emph{Stieltjes functions} of $\xi$.
\end{remark}

\subsection{Formulae and examples}

After proving three different expressions for the normalised extended Wiener--Hopf factors, four examples are discussed. The first result is reminiscent of~\cite{bib:ku10a}.

\begin{corollary}
\label{cor:rogers:kappa:a}
If $f$ is a nonzero Rogers function, then for all $\tau, \xi > 0$
\formula[eq:rogers:kappa:a]{
 \sqrt{\kappa_f^\bullet(\tau)} \, \frac{\kappa_f^\ua(\tau; \xi)}{f^\ua(1; 1)} & = \exp \expr{-\frac{1}{\pi} \int_0^\infty \im \expr{\frac{\log (f(r) + \tau)}{i \xi - r} - \frac{\log (f(r) + 1)}{i - r}} \D r} , \\
 \sqrt{\kappa_f^\bullet(\tau)} \, \frac{\kappa_f^\da(\tau; \xi)}{f^\da(1; 1)} & = \exp \expr{-\frac{1}{\pi} \int_0^\infty \im \expr{\frac{\log (f(r) + \tau)}{i \xi + r} - \frac{\log (f(r) + 1)}{i + r}} \D r} .
}
\end{corollary}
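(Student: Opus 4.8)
The plan is to start from the known formula for ratios of Wiener--Hopf factors in Corollary~\ref{cor:rogers:wh}, applied to the function $f(\xi)+\tau$. Writing $g(\xi) = f(\xi)+\tau$, Corollary~\ref{cor:rogers:wh} with $\xi_2 = 1$ gives
\formula{
 \frac{f^\ua(\tau;\xi)}{f^\ua(\tau;1)} & = \exp\expr{\frac{1}{2\pi}\int_{-\infty}^\infty \expr{\frac{1}{\xi+ir} - \frac{1}{1+ir}} \log(f(r)+\tau) \D r} ,
}
and an analogous formula with $\tau$ replaced by $1$. Dividing these and invoking the definition of $\kappa_f^\ua(\tau;\xi)$ (Definition~\ref{def:rogers:kappa}), one obtains an expression for $\kappa_f^\ua(\tau;\xi)/f^\ua(1;\xi)$, but to isolate $\kappa_f^\ua(\tau;\xi)/f^\ua(1;1)$ one must also track the normalising constant $\lim_{\eta\nearrow\infty} f^\ua(1;\eta)/f^\ua(\tau;\eta)$, which by the computation in the proof of Lemma~\ref{lem:rogers:dot} equals $\sqrt{\kappa_f^\bullet(\tau)}^{\,-1}$ up to the symmetric splitting of $\kappa_f^\bullet$ between the two factors. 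This is exactly where the factor $\sqrt{\kappa_f^\bullet(\tau)}$ in the statement comes from.

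Next I would fold the integral over $(-\infty,0)$ onto $(0,\infty)$ using the symmetry $f(-r) = \overline{f(r)}$ (Remark~\ref{rem:rogers}), so that $\log(f(-r)+\tau) = \overline{\log(f(r)+\tau)}$; this converts the integral $\frac{1}{2\pi}\int_{-\infty}^\infty$ of a kernel times $\log(f(r)+\tau)$ into $\frac{1}{\pi}\int_0^\infty$ of the imaginary part (or real part, depending on the kernel) of a combined expression. Concretely, $\frac{1}{\xi+ir}$ paired with $r>0$ and its conjugate paired with the reflection combine so that the resulting integrand is $-\frac{1}{\pi}\im\bigl(\frac{\log(f(r)+\tau)}{i\xi-r}\bigr)$ after the substitution $r\mapsto -ir$ that rotates the contour; one has to be a little careful here, since the ``classical'' formula~\eqref{eq:stable:kappa:pre} is naturally written with integration over the imaginary axis $z = ir$ rather than over $\R$. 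I would either rewrite Corollary~\ref{cor:rogers:wh} in the $z=ir$ variable first, or equivalently note that the kernel $\frac{1}{\xi+ir}-\frac{1}{1+ir}$ becomes $\frac{1}{i\xi-z}-\frac{1}{i-z}$ under $z=ir$, matching the target. Combining the $\tau$-term and the $1$-term of the normalisation into a single bracket $\frac{\log(f(r)+\tau)}{i\xi-r} - \frac{\log(f(r)+1)}{i-r}$ then yields precisely the first displayed line of~\eqref{eq:rogers:kappa:a}, and the computation for $\kappa_f^\da$ is identical with $i\xi+r$, $i+r$ in place of $i\xi-r$, $i-r$, coming from the kernel $\frac{1}{\xi-ir}-\frac{1}{1-ir}$ in the second line of Corollary~\ref{cor:rogers:wh}.

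The main obstacle I anticipate is not any single analytic estimate but rather the bookkeeping of constants: Corollary~\ref{cor:rogers:wh} only gives \emph{ratios} of Wiener--Hopf factors, whereas the target formula has the single factor $\kappa_f^\ua(\tau;\xi)$ (divided by the fixed constant $f^\ua(1;1)$), so one must correctly extract $f^\ua(1;\xi)/f^\ua(1;1)$ from the $\tau=1$ instance and verify that it recombines with the limit $\lim_{\eta\nearrow\infty} f^\ua(1;\eta)/f^\ua(\tau;\eta)$ to produce exactly $\sqrt{\kappa_f^\bullet(\tau)}$ and no spurious $\xi$-dependent constant. This uses Lemma~\ref{lem:rogers:xwh:limit} for well-definedness of the limit and the explicit evaluation in Lemma~\ref{lem:rogers:dot}: since $\kappa_f^\bullet(\tau) = \bigl(\lim f^\ua(\tau;\eta)/f^\ua(1;\eta)\bigr)\bigl(\lim f^\da(\tau;\eta)/f^\da(1;\eta)\bigr)$, the two Wiener--Hopf factors must be renormalised by the \emph{geometric mean} of these limits, which is $\sqrt{\kappa_f^\bullet(\tau)}$, provided one checks that $\lim f^\ua(\tau;\eta)/f^\ua(1;\eta) = \lim f^\da(\tau;\eta)/f^\da(1;\eta)$; this symmetry follows from applying Corollary~\ref{cor:rogers:wh} to the ratio $f^\ua(\tau;\eta)f^\da(\tau;\eta)/(f^\ua(1;\eta)f^\da(1;\eta))$ as $\eta\nearrow\infty$ together with the evenness of the resulting kernel $\frac{\eta}{\eta^2+r^2}$, exactly as in the proof of Lemma~\ref{lem:rogers:dot}. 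Once these constant identifications are in place, the remaining convergence of the integral and the passage between $\int_{-\infty}^\infty$ and $\int_0^\infty$ are routine, justified by the estimate $|\log(f(r)+\tau)| \le C(1+|\log(1+r^2)|)$ from Proposition~\ref{prop:rogers:bound}, which makes the kernel times logarithm absolutely integrable.
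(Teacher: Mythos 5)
Your overall route matches the paper's: write $\kappa_f^\ua(\tau;\xi)/f^\ua(1;1)$ via Definition~\ref{def:rogers:kappa} as $\lim_{\eta\nearrow\infty}f^\ua(\tau;\xi)f^\ua(1;\eta)/\bigl(f^\ua(\tau;\eta)f^\ua(1;1)\bigr)$, expand each ratio with Lemma~\ref{lem:rogers:curve} (equivalently Corollary~\ref{cor:rogers:wh} with $\gamma=i\R$), fold the integral onto $(0,\infty)$ by conjugation, and absorb the $\eta$-dependent remainder into $\sqrt{\kappa_f^\bullet(\tau)}$. Your bookkeeping of constants is sound, and the $\sqrt{\kappa_f^\bullet(\tau)}$ factor can indeed be obtained either by your symmetry observation or, as the paper does, by evaluating the surviving limit $\lim_\eta\frac{1}{\pi}\int_0^\infty\frac{\eta}{\eta^2+r^2}\log\bigl|\frac{f(r)+\tau}{f(r)+1}\bigr|\,\D r$ directly.

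The genuine gap is your final claim that the remaining convergence is routine using $|\log(f(r)+\tau)|\le C(1+\log(1+r^2))$. After the algebra, the term that has to be controlled uniformly in $\eta$ and then sent to zero is $\int_0^\infty\frac{r}{\eta^2+r^2}\bigl|\im\log\frac{f(r)+\tau}{f(r)+1}\bigr|\,\D r$. The kernel $\frac{r}{\eta^2+r^2}$ decays only like $1/r$, so a bound of the form $\bigl|\im\log\frac{f(r)+\tau}{f(r)+1}\bigr|\le C$ or $\le C\log(1+r^2)$ produces a divergent majorant; the estimate you cite is both useless for the limit passage and insufficient to show that the right-hand side of~\eqref{eq:rogers:kappa:a} is even a convergent integral. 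One needs genuine decay of $\im\log\frac{f(r)+\tau}{f(r)+1}$ weighted by $\frac{r}{1+r^2}$. The paper supplies this with a Rogers-function trick: for $\tau>1$, $g(\xi)=\bigl(\log\frac{\xi+\tau}{\xi+1}\bigr)^{-1}$ is a complete Bernstein function, so by Proposition~\ref{prop:rogers:conv} $g\circ f$ is a Rogers function, hence by Proposition~\ref{prop:rogers:prop}\ref{it:rogers:prop:b} so is $h(r)=r^2/g(f(r))=r^2\log\frac{f(r)+\tau}{f(r)+1}$, and then Proposition~\ref{prop:rogers:imag} applied to $h$ gives precisely $\int_0^\infty\frac{r}{1+r^2}\bigl|\im\log\frac{f(r)+\tau}{f(r)+1}\bigr|\,\D r<\infty$. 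Without this step your monotone-convergence argument (and well-posedness of the target integral) is unjustified.
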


Recall that if $f$ is unbounded, then $\kappa_f^\bullet(\tau) = 1$.

\begin{proof}
By Lemma~\ref{lem:rogers:curve} (see~\eqref{eq:rogers:wh:real}),
\formula{
 \frac{\kappa_f^\ua(\tau; \xi)}{f^\ua(1; 1)} & = \lim_{\eta \nearrow \infty} \exp \biggl( -\frac{1}{\pi} \int_0^\infty \im \biggl( \frac{\log (f(r) + \tau)}{i \xi - r} - \frac{\log (f(r) + 1)}{i - r} \\
 & \hspace*{15em} - \frac{1}{i \eta - r} \, \log \frac{f(r) + \tau}{f(r) + 1} \biggr) \D r \biggr) .
}
Observe that for $\eta \in (0, \infty)$, $r \in (0, \infty)$,
\formula{
 \im \expr{\frac{1}{i \eta - r} \, \log \frac{f(r) + \tau}{f(r) + 1}} & = - \frac{\eta}{\eta^2 + r^2} \log \abs{\frac{f(r) + \tau}{f(r) + 1}} - \frac{r}{\eta^2 + r^2} \im \log \frac{f(r) + \tau}{f(r) + 1} \, .
}
By a substitution $r = \eta s$ and dominated convergence,
\formula{
 \lim_{\eta \nearrow \infty} \int_0^\infty \frac{\eta}{\eta^2 + r^2} \log \abs{\frac{f(r) + \tau}{f(r) + 1}} \D r & = \lim_{\eta \nearrow \infty} \int_0^\infty \frac{1}{1 + s^2} \log \abs{\frac{f(\eta r) + \tau}{f(\eta r) + 1}} \D s = 0
}
if $f$ is unbounded (note that $\lim_{r \nearrow \infty} |f(r)| = \infty$ by Proposition~\ref{prop:rogers:bounded}). For bounded $f$, in a similar manner,
\formula{
 \lim_{\eta \nearrow \infty} \int_0^\infty \frac{\eta}{\eta^2 + r^2} \log \abs{\frac{f(r) + \tau}{f(r) + 1}} \D r & = \frac{\pi}{2} \log \frac{f(\infty^-) + \tau}{f(\infty^-) + 1} \, .
}
Furthermore, by monotone convergence, provided that the integral in the left-hand side of the following formula is finite for some $\eta \in (0, \infty)$,
\formula{
 \lim_{\eta \nearrow \infty} \int_0^\infty \frac{r}{\eta^2 + r^2} \abs{\im \log \frac{f(r) + \tau}{f(r) + 1}} \D r & = 0 ,
}
and the first part of~\eqref{eq:rogers:kappa:a} follows. The other one is proved by a similar argument. Hence, it remains to prove finiteness of the integral in the left-hand side.

Suppose that $\tau > 1$. Observe that the following are complete Bernstein functions of $\xi$: $\log(1 + (\tau - 1) \xi)$, $1 / \log(1 + (\tau - 1) / \xi)$, and finally
\formula{
 g(\xi) & = \expr{\log \frac{\xi + \tau}{\xi + 1}}^{-1} = \expr{\log \expr{1 + \frac{\tau - 1}{\xi + 1}}}^{-1} .
}
By Proposition~\ref{prop:rogers:conv}, $g(f(r))$ is a Rogers function. Hence, so is
\formula{
 h(r) & = \frac{r^2}{g(f(r))} = r^2 \log \frac{f(r) + \tau}{f(r) + 1} \, .
}
By Proposition~\ref{prop:rogers:imag},
\formula{
 \int_0^\infty \frac{r}{\eta^2 + r^2} \abs{\im \log \frac{f(r) + \tau}{f(r) + 1}} \D r & = \int_0^\infty \frac{r}{\eta^2 + r^2} \frac{\im |h(r)|}{r^2} \, \D r
}
is finite for $\eta = 1$.

If $\tau < 1$, the argument is very similar (we omit the details). The case $\tau = 1$ reduces to Lemma~\ref{lem:rogers:curve} (see~\eqref{eq:rogers:wh:real}).
\end{proof}

By Lemma~\ref{lem:rogers:xwh} and dominated convergence, one obtains easily the following two results (we omit the details).

\begin{corollary}
\label{cor:rogers:kappa:b}
If $f$ is a balanced Rogers function and $\int_1^\infty 1 / (r \lambda_f(r)) \D r < \infty$, then for all $\tau, \xi > 0$
\formula[eq:rogers:kappa:b]{
 \frac{\kappa_f^\ua(\tau; \xi)}{f^\ua(1; 1)} & = \exp \expr{-\frac{1}{\pi} \int_0^\infty \im \expr{\frac{\zeta_f'(r) \log (\lambda_f(r) + \tau)}{i \xi - \zeta_f(r)} - \frac{\zeta_f'(r) \log (\lambda_f(r) + 1)}{i - \zeta_f(r)}} \D r} , \\
 \frac{\kappa_f^\da(\tau; \xi)}{f^\da(1; 1)} & = \exp \expr{-\frac{1}{\pi} \int_0^\infty \im \expr{\frac{\zeta_f'(r) \log (\lambda_f(r) + \tau)}{i \xi + \zeta_f(r)} - \frac{\zeta_f'(r) \log (\lambda_f(r) + 1)}{i + \zeta_f(r)}} \D r} .
}
Here $\lambda_f(r) = f(\zeta_f(r))$, where $\zeta_f(r)$ is the canonical parametrisation of $\gamma_f \cap \C_\ra$.
\end{corollary}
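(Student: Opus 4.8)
The plan is to derive~\eqref{eq:rogers:kappa:b} from the identities of Lemma~\ref{lem:rogers:xwh} for ratios of extended Wiener--Hopf factors, eliminating the normalising constant built into $\kappa_f^\ua$ by sending an auxiliary argument $\eta$ to infinity. Using Definition~\ref{def:rogers:kappa} and rearranging the quotients,
\formula{
 \frac{\kappa_f^\ua(\tau; \xi)}{f^\ua(1; 1)} & = \lim_{\eta \nearrow \infty} \frac{f^\ua(\tau; \xi)}{f^\ua(\tau; \eta)} \cdot \frac{f^\ua(1; \eta)}{f^\ua(1; 1)} .
}
I would substitute into this the first line of~\eqref{eq:rogers:xwh:uudd}, with parameter $\tau$ and $(\xi_1, \xi_2) = (\xi, \eta)$ for the first quotient, and with parameter $1$ and $(\xi_1, \xi_2) = (\eta, 1)$ for the second. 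Adding the two exponents and combining the $1/(i\eta - \zeta_f(r))$ contributions, the exponent becomes $-\tfrac1\pi$ times
\[
 \int_0^\infty \im \expr{\frac{\zeta_f'(r) \log(\lambda_f(r) + \tau)}{i\xi - \zeta_f(r)} - \frac{\zeta_f'(r) \log(\lambda_f(r) + 1)}{i - \zeta_f(r)} - \frac{\zeta_f'(r)}{i\eta - \zeta_f(r)} \, \log \frac{\lambda_f(r) + \tau}{\lambda_f(r) + 1}} \D r .
\]
The last summand tends to $0$ pointwise as $\eta \nearrow \infty$, so~\eqref{eq:rogers:kappa:b} for $\kappa_f^\ua$ follows once this limit may be moved inside the integral; the formula for $\kappa_f^\da$ is obtained identically from the corresponding line of~\eqref{eq:rogers:xwh:uudd}.

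To pass to the limit I would invoke dominated convergence, and this is where the standing hypothesis $\int_1^\infty (r\lambda_f(r))^{-1} \D r < \infty$ comes in. Since $f$ is balanced, condition~\ref{it:rogers:reg:d} in Definition~\ref{def:rogers:reg} provides $\eps > 0$ with $\re \zeta_f(r) \ge r \sin \eps$, whence $|i\xi - \zeta_f(r)|$, $|i - \zeta_f(r)|$ and, uniformly in $\eta \ge 1$, $|i\eta - \zeta_f(r)|$ are bounded below by a positive constant times $\max(r, 1)$; by Lemma~\ref{lem:rogers:reg}, $|\zeta_f'(r)|$ is bounded; by Proposition~\ref{prop:rogers:reg:lambda}, $\lambda_f$ is continuous with $\lambda_f(0^+) = f(0^+)$ finite, and Proposition~\ref{prop:rogers:bound} gives $\lambda_f(r) \le C(1 + r^2)$, so that $\log(\lambda_f(r) + \tau)$ is bounded near $r = 0$ and $O(\log(r + r^{-1}))$ near $r = \infty$, while $\bigl|\log \tfrac{\lambda_f(r)+\tau}{\lambda_f(r)+1}\bigr| \le \tfrac{|\tau - 1|}{\lambda_f(r) + \min(1,\tau)}$. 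Writing the integrand above (after taking $\im$) as the sum of $(\tfrac1{i\xi - \zeta_f(r)} - \tfrac1{i - \zeta_f(r)})\,\zeta_f'(r) \log(\lambda_f(r)+\tau)$, of $\tfrac{\zeta_f'(r)}{i - \zeta_f(r)} \log\tfrac{\lambda_f(r)+\tau}{\lambda_f(r)+1}$, and of $-\tfrac{\zeta_f'(r)}{i\eta - \zeta_f(r)} \log\tfrac{\lambda_f(r)+\tau}{\lambda_f(r)+1}$, and using that $\tfrac1{i\xi - \zeta_f(r)} - \tfrac1{i - \zeta_f(r)}$ is $O(r^{-2})$ at infinity, the first term is dominated by a multiple of $|\zeta_f'(r)|(1 + r^2)^{-1}\log(r + r^{-1})$, integrable by condition~\ref{it:rogers:reg:e}, and the second and third are dominated, uniformly in $\eta \ge 1$, by a multiple of $(\max(r,1))^{-1}|\zeta_f'(r)|\,\bigl|\log\tfrac{\lambda_f(r)+\tau}{\lambda_f(r)+1}\bigr|$, which is bounded near $r = 0$ and $O\bigl((r\lambda_f(r))^{-1}\bigr)$ near $r = \infty$, hence integrable precisely because of the hypothesis. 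Dominated convergence then gives~\eqref{eq:rogers:kappa:b}.

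The single delicate point — and the one to watch — is that one must \emph{not} estimate the two summands $\zeta_f'(r) \log(\lambda_f(r)+\tau)/(i\xi - \zeta_f(r))$ and $\zeta_f'(r) \log(\lambda_f(r)+1)/(i - \zeta_f(r))$ separately: each decays only like $r^{-1}\log r$ at infinity and is not integrable, so the cancellation that produces the factor $\log\tfrac{\lambda_f(r)+\tau}{\lambda_f(r)+1} \sim (\tau - 1)/\lambda_f(r)$ must be kept, and the hypothesis $\int_1^\infty (r\lambda_f(r))^{-1}\D r < \infty$ is exactly what makes the resulting integral — equivalently, the right-hand side of~\eqref{eq:rogers:kappa:b} — absolutely convergent. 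Note finally that this hypothesis forces $\lambda_f(r) \to \infty$, so $f$ is unbounded and Lemma~\ref{lem:rogers:dot} yields $\kappa_f^\bullet \equiv 1$; this is consistent with the absence of the factor $\sqrt{\kappa_f^\bullet(\tau)}$ in~\eqref{eq:rogers:kappa:b}, in contrast with Corollary~\ref{cor:rogers:kappa:a}.
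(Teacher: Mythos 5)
Your proposal is correct and follows exactly the route the paper sketches (``by Lemma~\ref{lem:rogers:xwh} and dominated convergence''): you express $\kappa_f^\ua(\tau;\xi)/f^\ua(1;1)$ as $\lim_{\eta\to\infty} (f^\ua(\tau;\xi)/f^\ua(\tau;\eta))\,(f^\ua(1;\eta)/f^\ua(1;1))$, substitute~\eqref{eq:rogers:xwh:uudd}, and justify passing the limit under the integral by dominated convergence. The details you fill in are sound, and you correctly isolate the key point the paper leaves implicit: the two $\log$ terms must be paired so that the large-$r$ cancellation produces $\log\tfrac{\lambda_f(r)+\tau}{\lambda_f(r)+1} = O(1/\lambda_f(r))$, which is exactly where the hypothesis $\int_1^\infty (r\lambda_f(r))^{-1}\,\D r < \infty$ enters, and this hypothesis also forces $f$ unbounded, hence $\kappa_f^\bullet \equiv 1$ by Lemma~\ref{lem:rogers:dot}, consistent with~\eqref{eq:rogers:kappa:b} carrying no $\sqrt{\kappa_f^\bullet(\tau)}$ factor.
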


\begin{corollary}
\label{cor:rogers:kappa:c}
If $f$ is a balanced Rogers function, then for all $\tau, \xi > 0$
\formula[eq:rogers:kappa:c]{
 \frac{\kappa_f^\ua(\tau; \xi)}{f^\ua(1; 1)} & = \exp \expr{-\frac{1}{\pi} \int_0^\infty \expr{\frac{\lambda_f'(r) \Arg(\zeta_f(r) - i \xi)}{\lambda_f(r) + \tau} - \frac{\lambda_f'(r) \Arg(\zeta_f(r) - i)}{\lambda_f(r) + 1}} \D r} , \\
 \frac{\kappa_f^\da(\tau; \xi)}{f^\da(1; 1)} & = \exp \expr{-\frac{1}{\pi} \int_0^\infty \expr{\frac{\lambda_f'(r) \Arg(\zeta_f(r) + i \xi)}{\lambda_f(r) + \tau} - \frac{\lambda_f'(r) \Arg(\zeta_f(r) + i)}{\lambda_f(r) + 1}} \D r} .
}
Here $\lambda_f(r) = f(\zeta_f(r))$, where $\zeta_f(r)$ is the canonical parametrisation of $\gamma_f \cap \C_\ra$.
\end{corollary}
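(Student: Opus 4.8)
The plan is to read off Corollary~\ref{cor:rogers:kappa:c} from the second, Darboux-type form of~\eqref{eq:rogers:xwh:uudd} in Lemma~\ref{lem:rogers:xwh} by a dominated-convergence argument, with bookkeeping slightly heavier than, but in the same spirit as, the proof of Lemma~\ref{lem:rogers:xwh} itself. By Definition~\ref{def:rogers:kappa},
\[
 \frac{\kappa_f^\ua(\tau; \xi)}{f^\ua(1; 1)} = \lim_{\eta \nearrow \infty} \expr{\frac{f^\ua(\tau; \xi)}{f^\ua(\tau; \eta)} \cdot \frac{f^\ua(1; \eta)}{f^\ua(1; 1)}} ,
\]
so I would apply the $\Arg$-weighted identity in Lemma~\ref{lem:rogers:xwh} (equivalently, Corollary~\ref{cor:rogers:wh:alt} for the balanced Rogers functions $f + \tau$ and $f + 1$, whose $\gamma$-curve, $\zeta$ and $\lambda$ are $\gamma_f$, $\zeta_f$ and $\lambda_f + \tau$, $\lambda_f + 1$) to each of the two ratios and write the product as one exponential of $\tfrac1\pi\int_0^\infty G_\eta(r)\,\D r$. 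Expanding $\Arg$ of each quotient as a difference of arguments — legitimate since every factor $\zeta_f(r) - i\xi$, $\zeta_f(r) - i\eta$, $\zeta_f(r) - i$ lies in $\C_\ra$, so all differences lie in $(-\pi,\pi)$ — the $\eta$-dependence collapses: $G_\eta$ splits into an $\eta$-free part equal to the integrand of the claimed formula and the single term $\Arg(\zeta_f(r) - i\eta)\bigl(\tfrac{\lambda_f'(r)}{\lambda_f(r)+\tau} - \tfrac{\lambda_f'(r)}{\lambda_f(r)+1}\bigr)$.

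Next I would pass to the limit $\eta \nearrow \infty$. Pointwise $\Arg(\zeta_f(r) - i\eta) \to -\tfrac{\pi}{2}$, and this factor is bounded by $\tfrac{\pi}{2}$ while $\bigl|\tfrac{\lambda_f'(r)}{\lambda_f(r)+\tau} - \tfrac{\lambda_f'(r)}{\lambda_f(r)+1}\bigr| = |1-\tau|\,\lambda_f'(r)/((\lambda_f(r)+\tau)(\lambda_f(r)+1))$, which is integrable over $(0,\infty)$ (substitute $u = \lambda_f(r)$ and use that $\lambda_f$ is increasing with $\lambda_f(0^+) = c_0$ finite, by Proposition~\ref{prop:rogers:reg:lambda}); hence dominated convergence applies to that term. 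For the $\eta$-free part one reuses the integrability estimates from the proof of Lemma~\ref{lem:rogers:xwh}: the bound on $\Arg(\zeta_f(r) \mp i\xi)$ from condition~\ref{it:rogers:reg:d} of Definition~\ref{def:rogers:reg}, the bound on $\lambda_f'(r)/\lambda_f(r)$ from Propositions~\ref{prop:rogers:bound} and~\ref{prop:rogers:prime}, and condition~\ref{it:rogers:reg:e}. Collecting, the limiting exponent is $-\tfrac1\pi\int_0^\infty\bigl(\tfrac{\lambda_f'(r)\Arg(\zeta_f(r)-i\xi)}{\lambda_f(r)+\tau} - \tfrac{\lambda_f'(r)\Arg(\zeta_f(r)-i)}{\lambda_f(r)+1}\bigr)\D r$ plus the boundary contribution $-\tfrac12\int_0^\infty\bigl(\tfrac{\lambda_f'(r)}{\lambda_f(r)+\tau} - \tfrac{\lambda_f'(r)}{\lambda_f(r)+1}\bigr)\D r = -\tfrac12\bigl[\log\tfrac{\lambda_f(r)+\tau}{\lambda_f(r)+1}\bigr]_0^\infty$, which is evaluated from $\lambda_f(0^+) = c_0$ and $\lambda_f(\infty^-) = f(\infty^-)$ (or $+\infty$) via Proposition~\ref{prop:rogers:reg:lambda} and identified with the normalising factor $\kappa_f^\bullet(\tau)$ (Lemma~\ref{lem:rogers:dot} and~\eqref{eq:rogers:const}). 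The identity for $f^\da$ is verbatim the same with $\zeta_f(r) + i\eta$, $\zeta_f(r) + i\xi$, $\zeta_f(r) + i$ replacing the corresponding terms and the pointwise limit $\Arg(\zeta_f(r) + i\eta) \to +\tfrac{\pi}{2}$.

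The hard part is not the structure but the estimates and the final bookkeeping. One must verify carefully that the $\eta$-free integrand is absolutely integrable: near $r = 0$ it behaves like $-\tfrac\pi2\bigl(\tfrac{\lambda_f'(r)}{\lambda_f(r)+\tau} - \tfrac{\lambda_f'(r)}{\lambda_f(r)+1}\bigr)$, since both arguments tend to $-\tfrac\pi2$, and this is integrable because $\lambda_f'$ is integrable near $0$; near $r = \infty$ the difference $\Arg(\zeta_f(r)-i\xi) - \Arg(\zeta_f(r)-i)$ is $O(1/r)$, so the dangerous part is again controlled by the integrable weight above together with the $\lambda_f'/\lambda_f$ bound. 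One then has to track precisely how the boundary term combines with Definition~\ref{def:rogers:kappa} to reproduce exactly the stated right-hand side. A cleaner-looking alternative would be to start from Corollary~\ref{cor:rogers:kappa:b} and integrate by parts, choosing the additive constant inside the logarithm so that the contribution at $r = \infty$ cancels, exactly as in the passage from Lemma~\ref{lem:rogers:curve} to Corollary~\ref{cor:rogers:wh:alt}; but that route inherits the superfluous hypothesis $\int_1^\infty \tfrac{\D r}{r\lambda_f(r)} < \infty$ of Corollary~\ref{cor:rogers:kappa:b}, whereas the $\Arg$-weighted kernel used above is integrable for every balanced $f$. For that reason I would carry out the direct dominated-convergence argument.
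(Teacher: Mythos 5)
Your overall strategy — apply the $\Arg$-weighted form of Lemma~\ref{lem:rogers:xwh}, split $\Arg$ of the quotients into differences, and pass to the limit $\eta\nearrow\infty$ by dominated convergence — is clearly the intended argument, and your integrability estimates near $r=0$ and $r=\infty$ are correct. The gap is in the very last step, the ``identification'' of the boundary contribution with $\kappa_f^\bullet(\tau)$; this is not true, and it is precisely where the claimed formula has to be reconciled.

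Concretely, your $\eta$-term contributes, in the limit,
\[
 \exp\expr{-\frac{1}{2}\Bigl[\log\tfrac{\lambda_f(r)+\tau}{\lambda_f(r)+1}\Bigr]_{0}^{\infty}}
 \;=\;
 \frac{1}{\sqrt{\kappa_f^\bullet(\tau)}}\,\sqrt{\frac{f(0^+)+\tau}{f(0^+)+1}}\,,
\]
using $\lambda_f(0^+)=f(0^+)$ and $\lambda_f(\infty^-)=f(\infty^-)$ from Proposition~\ref{prop:rogers:reg:lambda}, together with Lemma~\ref{lem:rogers:dot}. This is not $1$ (nor any power of $\kappa_f^\bullet(\tau)$ alone). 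For a balanced unbounded $f$ with $f(0^+)=0$ — the generic case — it equals $\sqrt{\tau}$. So your derivation actually yields the right-hand side of~\eqref{eq:rogers:kappa:c} multiplied by $\sqrt{\tau}$, not the left-hand side. A direct check on the Brownian case confirms that this extra factor is real and not an artefact: for $f(\xi)=\tfrac12\xi^2$ one has $\zeta_f(r)=r$, $\lambda_f(r)=\tfrac12 r^2$, $\kappa_f^\ua(\tau;\xi)=\tfrac1{\sqrt2}(\xi+\sqrt{2\tau})$, $f^\ua(1;1)=\tfrac1{\sqrt2}(1+\sqrt2)$, so the left-hand side of~\eqref{eq:rogers:kappa:c} is $(\xi+\sqrt{2\tau})/(1+\sqrt2)$, while the right-hand side evaluates via $\int_0^\infty \tfrac{2s\arctan(u/s)}{1+s^2}\,\D s = \pi\log(1+u)$ to $(\xi+\sqrt{2\tau})/(\sqrt\tau\,(1+\sqrt2))$; they differ by $\sqrt\tau$, exactly your boundary term.

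The computation therefore reveals a discrepancy that needs to be resolved rather than absorbed: either the statement of Corollary~\ref{cor:rogers:kappa:c} should carry the normalising factor
$\sqrt{\kappa_f^\bullet(\tau)}\sqrt{(f(0^+)+1)/(f(0^+)+\tau)}$
on the left-hand side (in the spirit of the $\sqrt{\kappa_f^\bullet(\tau)}$ on the left of Corollary~\ref{cor:rogers:kappa:a}), or the formula is intended under an extra hypothesis such as $f(0^+)=0$ together with $f$ bounded (which forces $\kappa_f^\bullet(\tau)=\tau$, making the factor equal to $1$) — but that combination is vacuous. As written, you cannot close the proof by invoking Lemma~\ref{lem:rogers:dot}; you should instead state explicitly what the boundary term is and observe that a correction factor of $\sqrt{(f(0^+)+\tau)/(f(0^+)+1)}\,/\sqrt{\kappa_f^\bullet(\tau)}$ appears. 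Your alternative route via Corollary~\ref{cor:rogers:kappa:b} and integration by parts has exactly the same issue: the boundary terms of the integration by parts do not vanish without a clever choice of additive constant inside the logarithm (cf.\ the passage from~\eqref{eq:rogers:wh:alt:uu:1} in the proof of Lemma~\ref{lem:rogers:wh:alt}), and a single constant cannot kill the contributions at both $r\to0$ and $r\to\infty$ here because $\log(\lambda_f+\tau)$ and $\log(\lambda_f+1)$ have different weights.
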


\begin{example}
\label{ex:rogers:xwh:bm}
Let $f(\xi) = \tfrac{1}{2} \xi^2 - i b \xi$ with $b \in \R$ be the L\'evy--Khintchine exponent of the Brownian motion with drift. By a simple calculation,
\formula{
 f(\xi) + \tau & = \tfrac{1}{2} (-i \xi + \sqrt{b^2 + 2 \tau} - b) (i \xi + \sqrt{b^2 + 2 \tau} + b) ,
}
so that
\formula{
 \frac{f^\ua(\tau; \xi_1)}{f^\ua(\tau; \xi_2)} & = \frac{\xi_1 + \sqrt{b^2 + 2 \tau} - b}{\xi_2 + \sqrt{b^2 + 2 \tau} - b} \, , & \frac{f^\da(\tau; \xi_1)}{f^\da(\tau; \xi_2)} & = \frac{\xi_1 + \sqrt{b^2 + 2 \tau} + b}{\xi_2 + \sqrt{b^2 + 2 \tau} + b}
}
and
\formula{
 \kappa_f^\ua(\tau; \xi) & = \sqrt{\frac{1 + \sqrt{b^2 + 2} + b}{2 (1 + \sqrt{b^2 + 2} - b)}} \, (\xi + \sqrt{b^2 + 2 \tau} - b) , \displaybreak[0] \\
 \kappa_f^\da(\tau; \xi) & = \sqrt{\frac{1 + \sqrt{b^2 + 2} - b}{2 (1 + \sqrt{b^2 + 2} + b)}} \, (\xi + \sqrt{b^2 + 2 \tau} + b) .
}
These formulae are well-known, see e.g.~\cite{bib:k06}.
\end{example}

\begin{example}
\label{ex:rogers:xwh:stable}
If $f$ is the L\'evy--Khintchine exponent of a strictly stable L\'evy process (see Section~\ref{sec:ex}), the extended Wiener--Hopf factors typically are not given by a closed-form expression; see~\cite{bib:ku13} for the description of known cases and series representation. Here we consider case of stability index $1$, that is, $f(\xi) = a \xi$ with $\re a \ge 0$. Then $\zeta_f(r) = r e^{i \thet}$ and $\lambda_f(r) = c r$, where $\thet = -\Arg a$ and $c = |a|$ (see Example~\ref{ex:rogers:real}). Denote $\delta = e^{i \thet}$. By~\eqref{eq:rogers:wh:uu},
\formula{
 \frac{f^\ua(\tau; \xi_1)}{f^\ua(\tau; \xi_2)} & = \exp \expr{-\frac{1}{\pi} \int_0^\infty \im \expr{\frac{\delta}{i \xi_1 - \delta r} - \frac{\delta}{i \xi_2 - \delta r}} \log(c r + \tau) \D r}
}
for $\tau, \xi_1, \xi_2 > 0$. Recall that the dilogarithm $\Li x$ is the antiderivative of $-\log(1 - x) / x$, holomorphic in $\C \setminus [1, \infty)$. By a short calculation,
\formula{
 -\int \frac{\delta}{i \xi - \delta r} \, \log(c r + \tau) \D r & = \Li \frac{\delta (c r + \tau)}{\delta \tau + i c \xi} + \log (c r + \tau) \log \expr{1 - \frac{\delta (c r + \tau)}{\delta \tau + i c \xi}} \, .
}
Since $\im (\delta / (\delta \tau + i c \xi)) < 0$ when $\xi > 0$, and since $\Li x + \tfrac{1}{2} (\log x)^2 + i \pi \log x$ converges to $\pi^2/3$ as $|x| \nearrow \infty$ and $\im x < 0$, one obtains, after a short calculation,
\formula{
 -\int_0^\infty \expr{\frac{\delta}{i \xi_1 - \delta r} - \frac{\delta}{i \xi_2 - \delta r}} \log(c r + \tau) \D r
 & = \Li \frac{\delta \tau}{\delta \tau + i c \xi_2} - \Li \frac{\delta \tau}{\delta \tau + i c \xi_1} \\
 & \hspace*{-17em} + \log \tau \log \frac{\xi_2}{\xi_1} + \expr{i \pi + \frac{1}{2} \log \frac{\delta \tau}{\delta \tau + i c \xi_1} + \frac{1}{2} \log \frac{\delta \tau}{\delta \tau + i c \xi_2}} \log \frac{\delta \tau + i c \xi_1}{\delta \tau + i c \xi_2} .
}
Hence, for $\tau, \xi_1, \xi_2 > 0$,
\formula{
 \frac{f^\ua(\tau; \xi_1)}{f^\ua(\tau; \xi_2)} & = \exp \biggl( \frac{1}{\pi} \im \biggl( \Li \frac{\delta \tau}{\delta \tau + i c \xi_2} - \Li \frac{\delta \tau}{\delta \tau + i c \xi_1} \\
 & + \expr{i \pi + \frac{1}{2} \log \frac{\delta \tau}{\delta \tau + i c \xi_1} + \frac{1}{2} \log \frac{\delta \tau}{\delta \tau + i c \xi_2}} \log \frac{\delta \tau + i c \xi_1}{\delta \tau + i c \xi_2} \biggr) \biggr) ,
}
and, in a similar manner, for $\tau, \xi > 0$,
\formula{
 \frac{\kappa_f^\ua(\tau; \xi)}{f^\ua(1; 1)} & = \exp \biggl( \frac{1}{\pi} \im \biggl(- \Li \frac{\delta \tau}{\delta \tau + i c \xi} - \frac{i \pi}{2} \log \tau + \expr{i \pi + \frac{1}{2} \log \frac{\delta^2 \tau^2}{\delta \tau + i c \xi}} \log(\delta \tau + i c \xi) \\
 & \hspace*{13.5em} + \Li \frac{\delta}{\delta + i c} - \expr{i \pi + \log \frac{\delta^2}{\delta + i c}} \log(\delta + i c) \biggr) \biggr) .
}
It follows that
\formula{
 \kappa_f^\ua(\tau; \xi) & = \frac{c}{\sqrt{\tau}} \, \exp \expr{\re \expr{\expr{1 + \frac{1}{2 \pi i} \log \frac{\delta^2 \tau^2}{\delta \tau + i c \xi}} \log(\delta \tau + i c \xi)} - \frac{1}{\pi} \im \Li \frac{\delta \tau}{\delta \tau + i c \xi} }
}
for some constant $c > 0$. By a similar argument,
\formula{
 \kappa_f^\da(\tau; \xi) & = \frac{c}{\sqrt{\tau}} \, \exp \expr{\re \expr{\expr{1 + \frac{1}{2 \pi i} \log \frac{\bar{\delta}^2 \tau^2}{\bar{\delta} \tau + i c \xi}} \log(\bar{\delta} \tau + i c \xi)} - \frac{1}{\pi} \im \Li \frac{\bar{\delta} \tau}{\bar{\delta} \tau + i c \xi} }
}
for a (possibly different) constant $c > 0$. For a related calculation in the symmetric case, see~\cite{bib:kkms10}.
\end{example}

The following explicit example illustrates the extended Wiener--Hopf factorisation for nearly balanced Rogers functions.

\begin{example}
\label{ex:rogers:xwh:nearly:1}
Let $f(\xi) = \xi / (\xi - 4 i) - i \xi = (-i \xi) (i \xi + 3) (i \xi + 4)^{-1}$ be the L\'evy--Khintchine exponent of the classical risk process (see Example~\ref{ex:rogers}\itref{ex:rogers:e}). Then $f^\ua(\xi) = (2 / \sqrt{5}) \xi$ and $f^\da(\xi) = (\sqrt{5} / 2) (\xi + 3) / (\xi + 4)$. Furthermore, by a simple calculation, $\im f(x + i y) = 0$ if and only if $x = 0$ or $x^2 + (y - 4)^2 = 4$, so $\gamma_f$ is the circle $|\xi - 4 i| = 2$ and $|\gamma_f| = (2, 6)$. It can be checked that
\formula{
 f(\xi) + \tau & = \frac{\xi^2 + (\tau - 3) i \xi + 4 \tau}{i \xi + 4} \displaybreak[0] \\
 & = \frac{(i \xi + \tfrac{1}{2}(\sqrt{\tau + 1} \sqrt{\tau + 9} - \tau + 3))(-i \xi + \tfrac{1}{2}(\sqrt{\tau + 1} \sqrt{\tau + 9} + \tau - 3))}{i \xi + 4}
}
It follows that
\formula{
 \frac{f^\ua(\tau; \xi_1)}{f^\ua(\tau; \xi_2)} & = \frac{\xi_1 + \tfrac{1}{2}(\sqrt{\tau + 1} \sqrt{\tau + 9} + \tau - 3)}{\xi_2 + \tfrac{1}{2}(\sqrt{\tau + 1} \sqrt{\tau + 9} + \tau - 3)} \, , \displaybreak[0] \\
 \frac{f^\da(\tau; \xi_1)}{f^\da(\tau; \xi_2)} & = \frac{(\xi_1 + \tfrac{1}{2}(\sqrt{\tau + 1} \sqrt{\tau + 9} - \tau + 3)) (\xi_2 + 4)}{(\xi_2 + \tfrac{1}{2}(\sqrt{\tau + 1} \sqrt{\tau + 9} - \tau + 3)) (\xi_1 + 4)}
}
and
\formula{
 \kappa_f^\ua(\tau; \xi) & = \frac{1}{\sqrt{5}} \expr{\xi + \tfrac{1}{2}(\sqrt{\tau + 1} \sqrt{\tau + 9} + \tau - 3)} , \displaybreak[0] \\
 \kappa_f^\da(\tau; \xi) & = \sqrt{5} \, \frac{\xi + \tfrac{1}{2}(\sqrt{\tau + 1} \sqrt{\tau + 9} - \tau + 3)}{\xi + 4} \, .
}
\end{example}

%
%

\section{Applications to fluctuation theory}
\label{sec:ft}

The results of the previous section apply directly to the Wiener--Hopf factorisation in fluctuation theory of L\'evy processes. In the following section the main results of the article are proved. First, a short introduction to fluctuation theory is provided. Next it is shown that the Laplace exponents of ladder processes are the Wiener--Hopf factors $\kappa_f^\ua(\tau; \xi)$ and $\kappa_f^\da(\tau; \xi)$, studied in the previous section. This already proves Theorem~\ref{th:kappa:cbf}. Then, a formula for the space-only Laplace exponent of the supremum functional is given (Theorem~\ref{th:kappa:inv}), and it is specialised for strictly stable L\'evy processes (Theorem~\ref{th:kappa:stable}). Finally, inversion of this Laplace transform (Conjecture~\ref{con:kappa:sup}) and connection to generalised eigenfunction expansions are discussed.

\subsection{Elements of fluctuation theory}

Let $X_t$ be a L\'evy process and $\Psi$ its L\'evy--Khintchine exponent. Define the supremum and the infimum functionals
\formula{
 X^\ua_t & = \sup \tset{X_s : s \in [0, t]} , & X^\da_t & = \inf \tset{X_s : s \in [0, t]}
}
Furthermore, let
\formula{
 T_t^\ua & = \inf \tset{s \in [0, t] : X_s = X^\ua_t} , & T_t^\da & = \inf \tset{s \in [0, t] : X_s = X^\da_t}
}
be the (unique almost surely if $X_t$ is not a compound Poisson process) times at which the process $X_t$ attains its supremum and infimum over $[0, t]$, respectively. Fluctuation theory provides a description of the above objects and studies their properties. This is typically done using Laplace exponents of increasing and decreasing \emph{ladder processes}, which are given by the Fristedt--Pecherski--Rogozin formulae: for $\tau \ge 0$ and $\xi \in \overline{\C}_\ra$,
\formula[eq:ft:f]{
 \kappa^\ua(\tau; \xi) & = \exp\expr{\int_0^\infty \int_{(0, \infty)} \frac{e^{-t} - e^{-\tau t - \xi x}}{t} \, \pr(X_t \in \D x) \D t} , \\
 \kappa^\da(\tau; \xi) & = \exp\expr{\int_0^\infty \int_{(-\infty, 0)} \frac{e^{-t} - e^{-\tau t + \xi x}}{t} \, \pr(X_t \in \D x) \D t} , \\
 \kappa^\bullet(\tau) & = \exp\expr{\int_0^\infty \frac{e^{-t} - e^{-\tau t}}{t} \, \pr(X_t = 0) \D t} ,
}
and for $\tau, \sigma \ge 0$ and $\xi \in \overline{\C}_\ra$,
\formula[eq:ft:pr]{
 \int_0^\infty \ex \exp(-\xi X^\ua_t - \tau T_t^\ua) \sigma e^{-\sigma t} \D t & = \frac{\kappa^\ua(\sigma; 0)}{\kappa^\ua(\tau + \sigma; \xi)} \, , \\
 \int_0^\infty \ex \exp(\xi X^\da_t - \tau T_t^\da) \sigma e^{-\sigma t} \D t & = \frac{\kappa^\da(\sigma; 0)}{\kappa^\da(\tau + \sigma; \xi)} .
}
For the derivation of these formulae and various extensions, see~\cite{bib:b96, bib:d07, bib:k06}.

As was observed e.g.\ in~\cite{bib:b96}, for $\tau > 0$ and $\xi \in \R$,
\formula{
 \kappa^\bullet(\tau) \kappa^\ua(\tau; -i \xi) \kappa^\da(\tau; i \xi) & = \exp\expr{\int_0^\infty \int_{\R \setminus \{0\}} \frac{e^{-t} - e^{-\tau t + i \xi x}}{t} \, \pr(X_t \in \D x) \D t} , \displaybreak[0] \\
 & = \exp\expr{\int_0^\infty \frac{e^{-t} - e^{-t (\tau + \Psi(\xi))}}{t} \, \D t} \displaybreak[0] \\
 & = \exp(\log(\tau + \Psi(\xi))) = \tau + \Psi(\xi)
}
(the second equality is an application of Frullani integral). This provides the Wiener--Hopf factorisation of $\tau + \Psi(\xi)$.

\begin{remark}
Unless $X_t$ is a compound Poisson process (that is, $\Psi$ is bounded on $\R$), $\kappa^\bullet(\tau) = 1$ and therefore it can be ignored in definitions. Some authors decide to exclude compound Poisson processes from their results, others incorporate $\kappa^\bullet(\tau)$ into $\kappa^\ua(\tau; \xi)$ or $\kappa^\da(\tau; \xi)$. However, this either breaks the perfect duality between the Wiener--Hopf factors, or leads to less general results. For this reason, below $\kappa^\bullet(\tau)$ is kept in the notation, following, e.g.,~\cite{bib:tt02, bib:tt08}.
\end{remark}

\subsection{Connection with Wiener--Hopf factorisation for Rogers functions}

If $\Psi$ is equal to a Rogers function $f$, then, by uniqueness of the Wiener--Hopf factorisation, the two factorisations of $\tau + \Psi(\xi) = \tau + f(\xi)$: Fristedt--Pecherski--Rogozin one using~\eqref{eq:ft:f} and Baxter--Donsker-type one based on~\eqref{eq:rogers:kappa}, must agree up to a constant. That is, $\kappa^\ua(\tau; \xi) = c_1(\tau) \kappa_f^\ua(\tau; \xi)$ and $\kappa^\da(\tau; \xi) = c_2(\tau) \kappa_f^\da(\tau; \xi)$ for $\xi \in \C_\ra$ for some $c_1(\tau), c_2(\tau) > 0$. Furthermore, by Definition~\ref{def:rogers:kappa},
\formula{
 \lim_{\xi \nearrow \infty} \frac{\kappa_f^\ua(\tau; \xi)}{\kappa_f^\ua(1; \xi)} & = 1 ,
}
and by the definition~\eqref{eq:ft:f} and monotone convergence,
\formula{
 \lim_{\xi \nearrow \infty} \frac{\kappa^\ua(\tau; \xi)}{\kappa^\ua(1; \xi)} & = 1 .
}
It follows that $c_1(\tau)$ does not depend on $\tau$. In a similar manner, $c_2(\tau)$ is a constant. As a corollary, $\kappa^\bullet(\tau) = c_3 \kappa_f^\bullet(\tau)$ for some constant $c_3$. If $0 < \tau_1 < \tau_2$ and $0 < \xi_1 < \xi_2$, Theorem~\ref{th:kappa:cbf} is therefore an immediate consequence of Corollaries~\ref{cor:rogers:xwh:cbf} and~\ref{cor:rogers:xwh:xcbf}. The cases $\tau_1 = 0$ and $\xi_1 = 0$ follow by continuity and Proposition~\ref{prop:cbf:limit}; the cases $\tau_1 = \tau_2$ and $\xi_1 = \xi_2$ are trivial.

As an almost immediate consequence of Theorem~\ref{th:kappa:cbf}, one obtains complete monotonicity of the following functionals of $X_t$.

\begin{theorem}
\label{th:kappa:cm}
If $X_t$ is a L\'evy process whose L\'evy--Khintchine exponent is a nearly balanced Rogers function, then for $\sigma > 0$ and $\xi > 0$,
\formula{
 \frac{\D}{\D x} \int_0^\infty \sigma e^{-\sigma t} \pr(X^\ua_t < x) \D t , && \frac{\D}{\D s} \int_0^\infty \sigma e^{-\sigma t} \pr(T^\ua_t < s) \D t && \text{and} && \ex \exp(-\xi X^\ua_t)
}
are completely monotone functions on $(0, \infty)$ of $x$, $s$ and $t$, respectively. Similar statement holds for $X^\da_t$ and $T^\da_t$.
\end{theorem}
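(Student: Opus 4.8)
The plan is to deduce Theorem~\ref{th:kappa:cm} from Theorem~\ref{th:kappa:cbf} by combining the Fristedt--Pecherski--Rogozin formulae~\eqref{eq:ft:pr} with the fact that a ratio of values of a complete Bernstein function is a Stieltjes function, hence the Laplace transform of a completely monotone density. First I would recall the elementary observation that if $g$ is a complete Bernstein function, then $g(0^+)/g(\sigma)$ is a Stieltjes function of $\sigma$ (this is Theorem~\ref{th:cbf}\ref{it:cbf:b} applied to $\xi/g(\xi)$, which is again in $\cbf$), and a Stieltjes function vanishing at infinity with value $\le 1$ at $0$ can be written, after possibly splitting off the constant term, as $c_0 + \frac1\pi\int_{(0,\infty)} \frac{1}{\sigma+s}\tilde\mu(\D s)$; integrating against $\sigma e^{-\sigma t}$ over a suitable representation shows such a function is the Laplace transform in $\sigma$ of a completely monotone function, or more directly: a Stieltjes function is completely monotone, and the quantities in question are, up to normalisation, Stieltjes functions.

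Concretely, for the first functional, I would set $\tau = 0$ in~\eqref{eq:ft:pr} to get
\formula{
 \int_0^\infty \sigma e^{-\sigma t} \ex \exp(-\xi X^\ua_t) \, \D t & = \frac{\kappa^\ua(\sigma; 0)}{\kappa^\ua(\sigma; \xi)} \, .
}
By Theorem~\ref{th:kappa:cbf}, $\kappa^\ua(\sigma;\xi)$ is a complete Bernstein function of $\sigma$, and $\kappa^\ua(\sigma;0)/\kappa^\ua(\sigma;\xi)$ is (the $\tau_1=0$, $\tau_2=\sigma$ — wait, one must be careful: Theorem~\ref{th:kappa:cbf} gives that $\kappa^\ua(\tau_1;\xi)/\kappa^\ua(\tau_2;\xi)$ is a complete Bernstein function of $\tau$; here the relevant variable is $\sigma$ appearing in both numerator and denominator, so I instead use that $\kappa^\ua(\sigma;\xi)/\kappa^\ua(\sigma;0)$ is a complete Bernstein function of $\sigma$, being $\kappa^\ua(\tau;\xi_2)/\kappa^\ua(\tau;\xi_1)$ with $\xi_1=0<\xi_2=\xi$, and then its reciprocal times $\sigma$ relationship). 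Thus $\kappa^\ua(\sigma;0)/\kappa^\ua(\sigma;\xi)$ is the reciprocal of a complete Bernstein function, hence a Stieltjes function of $\sigma$, hence completely monotone in $\sigma$; sending the Laplace transform in $t$ back, the function $\int_0^\infty \sigma e^{-\sigma t}\pr(X^\ua_t<x)\D t$ differs from this (after letting $\xi\to\infty$ inside, or rather by recognising $\ex\exp(-\xi X_t^\ua)$ as the Laplace transform in $x$ of $-\frac{\D}{\D x}\pr(X^\ua_t<x)$ up to sign conventions and Fubini) and one extracts complete monotonicity in $x$ and in $s$ by the same mechanism applied to~\eqref{eq:ft:pr} with $\xi=0$, $\tau>0$, using that $\kappa^\ua(\sigma;0)/\kappa^\ua(\tau+\sigma;0)$ is completely monotone in $\tau$ and in $s$ after inversion.

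For the third functional, $\ex\exp(-\xi X_t^\ua)$ as a function of $t$: I would argue that it is completely monotone in $t$ because $t\mapsto \ex\exp(-\xi X^\ua_t)$ is the $\sigma$-inverse-Laplace of $\kappa^\ua(\sigma;0)/(\sigma\,\kappa^\ua(\sigma;\xi))$ — no, cleaner: one shows directly that $\sigma\mapsto \int_0^\infty\sigma e^{-\sigma t}\ex\exp(-\xi X^\ua_t)\D t = \kappa^\ua(\sigma;0)/\kappa^\ua(\sigma;\xi)$ is a Stieltjes function of $\sigma$, whence by the Bernstein-type characterisation (Theorem~\ref{th:bernstein} combined with the Stieltjes representation) $\ex\exp(-\xi X^\ua_t)$ is completely monotone in $t$; indeed a function $\phi$ on $(0,\infty)$ is completely monotone iff $\sigma\mapsto \laplace\phi(\sigma)$ is a Stieltjes function, which is a standard fact I would invoke. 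The statement for $X^\da_t$ and $T^\da_t$ follows by replacing $\kappa^\ua$ with $\kappa^\da$ verbatim, since Theorem~\ref{th:kappa:cbf} is symmetric in $\ua$/$\da$.

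The main obstacle is bookkeeping the normalisation and the correct monotone variable: Theorem~\ref{th:kappa:cbf} asserts complete Bernstein property in $\xi$ and in $\tau$ \emph{separately} and for \emph{ratios} with one of the two arguments fixed, so one must check in each of the three cases that the relevant ratio from~\eqref{eq:ft:pr} is exactly of the form covered (either $\kappa^\ua(\tau;\xi_1)/\kappa^\ua(\tau;\xi_2)$ or $\kappa^\ua(\tau_1;\xi)/\kappa^\ua(\tau_2;\xi)$, possibly inverted) and then translate "complete Bernstein / its reciprocal is Stieltjes" into "completely monotone after Laplace inversion". The only genuinely non-formal point is the passage from $\pr(X^\ua_t<x)$ (or $\pr(T^\ua_t<s)$) to the Laplace-transform side; this is handled by Fubini and the observation that $\ex\exp(-\xi X^\ua_t) = \xi\int_0^\infty e^{-\xi x}\pr(X^\ua_t<x)\,\D x$, so that $-\frac{\D}{\D x}\pr(X_t^\ua<x)$ and $\ex\exp(-\xi X_t^\ua)$ encode the same information, and the double Laplace transform in $(\sigma,\xi)$ equals $\kappa^\ua(\sigma;0)/\kappa^\ua(\sigma;\xi)$, a Stieltjes function of each variable with the other fixed (in $\sigma$ by the above; in $\xi$ trivially since $1/\kappa^\ua(\sigma;\xi)$ is Stieltjes in $\xi$ as $\kappa^\ua(\sigma;\cdot)\in\cbf$), which by a two-variable Bernstein argument gives joint complete monotonicity and in particular the one-variable statements claimed. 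I expect the write-up to be short once these identifications are spelled out.
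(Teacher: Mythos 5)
Your high-level plan is the same as the paper's: Laplace-transform each functional, identify the transform with a ratio of Wiener--Hopf factors via~\eqref{eq:ft:pr}, and invoke Theorem~\ref{th:kappa:cbf} together with the Stieltjes/complete-Bernstein machinery. However, there is a concrete error that breaks the argument as written, and it propagates through the first and third cases. Theorem~\ref{th:kappa:cbf} states that $\kappa^\ua(\tau;\xi_1)/\kappa^\ua(\tau;\xi_2)$ with $0\le\xi_1\le\xi_2$ is a complete Bernstein function of $\tau$ — i.e.\ the \emph{smaller} $\xi$-argument is in the numerator. You instead assert that $\kappa^\ua(\sigma;\xi)/\kappa^\ua(\sigma;0)$ (larger argument on top) is the complete Bernstein function, and conclude that $\kappa^\ua(\sigma;0)/\kappa^\ua(\sigma;\xi)$ is Stieltjes of $\sigma$. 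In fact $\kappa^\ua(\sigma;0)/\kappa^\ua(\sigma;\xi)$ is \emph{itself} the complete Bernstein function of $\sigma$ — it is increasing, not completely monotone — so the claim ``hence completely monotone in $\sigma$'' is false. This matters: for the third assertion one needs precisely the extra factor $1/\sigma$ that you discarded as a ``less clean'' route. The $t$-Laplace transform of $\ex\exp(-\xi X_t^\ua)$ is $\frac{1}{\sigma}\,\kappa^\ua(\sigma;0)/\kappa^\ua(\sigma;\xi)$, and this is Stieltjes of $\sigma$ exactly because it is (complete Bernstein function of $\sigma$)$/\sigma$; the expression $\kappa^\ua(\sigma;0)/\kappa^\ua(\sigma;\xi)$ without the $1/\sigma$ is not Stieltjes.

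For the first assertion the correct variable is $\xi$, not $\sigma$: one needs $\kappa^\ua(\sigma;0)/\kappa^\ua(\sigma;\xi)$ Stieltjes in $\xi$, which holds since $\kappa^\ua(\sigma;\xi)$ is a complete Bernstein function of $\xi$, so its reciprocal is Stieltjes, and multiplying by the constant $\kappa^\ua(\sigma;0)$ does not change this. You do note this fact parenthetically in your final paragraph (``in $\xi$ trivially since $1/\kappa^\ua(\sigma;\xi)$ is Stieltjes in $\xi$''), but the main line of the argument instead detours through a false $\sigma$-Stieltjes claim. The second assertion (completely monotone in $s$) you sketch only briefly; the correct argument there is that $\kappa^\ua(\sigma;0)/\kappa^\ua(\tau+\sigma;0)$ is a reciprocal of a complete Bernstein function of $\tau$ (shifting the argument of a complete Bernstein function by a positive constant preserves the class), hence Stieltjes of $\tau$. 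If you fix the direction of the ratio in Theorem~\ref{th:kappa:cbf} and restore the $1/\sigma$ in the third case, the write-up collapses to the paper's three-line proof.
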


\begin{proof}
The Laplace transforms of the above expressions in the corresponding variables are given by
\formula{
 \frac{\kappa^\ua(\sigma; 0)}{\kappa^\ua(\sigma; \xi)} \, , && \frac{\kappa^\ua(\sigma; 0)}{\kappa^\ua(\tau + \sigma; 0)} \, , && \frac{1}{\sigma} \, \frac{\kappa^\ua(\sigma; 0)}{\kappa^\ua(\sigma; \xi)} \, ,
}
as functions of $\xi$, $\tau$ and $\sigma$, respectively. All these functions are \emph{Stieltjes functions}: the first two are reciprocals of complete Bernstein functions (of $\xi$ an and $\tau$, respectively), and the last one is a complete Bernstein function of $\sigma$ divided by $\sigma$ (see~\cite[Chapter~2 and Theorem~7.3]{bib:ssv10}). To complete the proof, recall that a Stieltjes function is the Laplace transform of a sum of a completely monotone function on $(0, \infty)$ and a multiple of the Dirac measure $\delta_0$ (see~\cite[Theorem~2.2]{bib:ssv10}).
\end{proof}

\subsection{Space-only Laplace transform of the supremum}

Inversion of the Laplace transform in the time variable $\sigma$ is possible, and it leads to a technically complicated, but useful expression for the Laplace transform of $X^\ua_t$ and $X^\da_t$, contained in Theorem~\ref{th:kappa:inv}.

\begin{proof}[Proof of Theorem~\ref{th:kappa:inv}]
For consistency with the main part of the article, denote the L\'evy--Khintchine exponent of $X_t$ by $f$. Note that with the present notation, $\zeta(r) = \zeta_f(r)$, $\lambda(r) = \lambda_f(r)$, $\Psi_{r\ua}(\xi) = f_{[\zeta_f(r)]}^\ua(\xi) / f_{[\zeta_f(r)]}^\ua(0^+)$ and $\Psi_{r\da}(\xi) = f_{[\zeta_f(r)]}^\da(\xi) / f_{[\zeta_f(r)]}^\da(0^+)$ in~\eqref{eq:kappa:inv} and~\eqref{eq:kappa:inv:ratio}. Formula~\eqref{eq:kappa:inv:ratio} follows directly from Lemma~\ref{lem:rogers:curve}. It remains to prove~\eqref{eq:kappa:inv}.

Let $\xi > 0$. As in the proof of Theorem~\ref{th:kappa:cm}, the Laplace transform of $\ex \exp(-\xi X^\ua_t)$, where $t$ is transformed into $\sigma$, is given by $\sigma^{-1} \kappa_f^\ua(\sigma; 0) / \kappa_f^\ua(\sigma; \xi)$. By Lemma~\ref{lem:rogers:xwh:cbf}, for $\eps \in (0, \xi)$,
\formula{
 \frac{\kappa_f^\ua(\sigma; \eps)}{\kappa_f^\ua(\sigma; \xi)} & = \frac{f^\ua(\sigma; \eps)}{f^\ua(\sigma; \xi)}
}
is a complete Bernstein function of $\sigma$, whose limit as $\sigma \nearrow \infty$ is equal to $1$. Hence, in the Stieltjes representation~\eqref{eq:cbf:b} (Theorem~\ref{th:cbf}\ref{it:cbf:b}) of the above function, the constant $c_1$ is equal to zero. By Theorem~\ref{th:cbf} and Lemma~\ref{lem:rogers:xwh:boundary},
\formula[eq:kappa:inv:aux]{
 \frac{\kappa_f^\ua(\sigma; \eps)}{\kappa_f^\ua(\sigma; \xi)} & = \frac{\kappa_f^\ua(0; \eps)}{\kappa_f^\ua(0; \xi)} + \frac{1}{\pi} \int_0^\infty \frac{f_{[\zeta(r)]}^\ua(\xi)}{f_{[\zeta(r)]}^\ua(\eps)} \, \im \expr{\frac{\eps + i \zeta(r)}{\xi + i \zeta(r)}} \frac{\sigma}{\sigma + \lambda(r)} \, \frac{\lambda'(r)}{\lambda(r)} \, \D r \\
 & = \lim_{\tau \searrow 0} \frac{f^\ua(\tau; \eps)}{f^\ua(\tau; \xi)} + \frac{1}{\pi} \int_0^\infty \frac{f_{[\zeta(r)]}^\ua(\xi)}{f_{[\zeta(r)]}^\ua(\eps)} \, \frac{(\xi + \eps) \re \zeta(r)}{|\xi + i \zeta(r)|^2} \, \frac{\sigma}{\sigma + \lambda(r)} \, \frac{\lambda'(r)}{\lambda(r)} \, \D r ,
}
with $\zeta = \zeta_f$ and $\lambda = \lambda_f$.

By Lemma~\ref{lem:rogers:xwh} and monotone convergence (note that the integrand is negative),
\formula{
 \lim_{\tau \searrow 0} \frac{f^\ua(\tau; \eps)}{f^\ua(\tau; \xi)} & = \exp\expr{\frac{1}{\pi} \int_0^\infty \Arg\expr{\frac{\zeta(r) - i \xi}{\zeta(r) - i \eps}} \frac{\lambda'(r)}{\lambda(r)} \, \D r} .
}
By Fatou's lemma,
\formula{
 \lim_{\eps \searrow 0} \lim_{\tau \searrow 0} \frac{f^\ua(\tau; \eps)}{f^\ua(\tau; \xi)} & \le \exp\expr{\frac{1}{\pi} \int_0^\infty \Arg\expr{\frac{\zeta(r) - i \xi}{\zeta(r)}} \frac{\lambda'(r)}{\lambda(r)} \, \D r} .
}
By Definition~\ref{def:rogers:reg}, there is $\alpha \in (0, \tfrac{\pi}{2})$ such that $|\Arg \zeta(r)| \le \alpha$ for $r \in (0, \infty)$. This and the inequality $\arctan x \ge x / (1 + x)$ for $x > 0$ imply that
\formula{
 \Arg\expr{\frac{\zeta(r) - i \xi}{\zeta(r)}} & \le \Arg (1 - i e^{i \alpha} (\xi / r)) = -\arctan \frac{(\xi / r) \sin \alpha}{1 + (\xi / r) \cos \alpha} \displaybreak[0] \\
 & \le -\frac{(\xi / r) \sin \alpha}{1 + (\xi / r) \cos \alpha + (\xi / r) \sin \alpha} \le -\frac{\sin \alpha}{2} \, \frac{\xi/r}{1 + \xi/r} = -\frac{\sin \alpha}{2} \, \frac{\xi}{r + \xi} \, .
}
Therefore,
\formula{
 \lim_{\eps \searrow 0} \lim_{\sigma \searrow 0} \frac{f^\ua(\sigma; \eps)}{f^\ua(\sigma; \xi)} & \le \exp\expr{-\frac{\sin \alpha}{2 \pi} \int_0^\infty \frac{\xi}{r + \xi} \, \frac{\lambda'(r)}{\lambda(r)} \, \D r} \le \exp\expr{-\frac{\sin \alpha}{4 \pi} \int_0^\xi \frac{\lambda'(r)}{\lambda(r)} \, \D r} = 0 ,
}
because $\lim_{r \searrow 0} \log \lambda(r) = -\infty$. Hence, the first term in the right-hand side of~\eqref{eq:kappa:inv:aux} is equal to zero, that is,
\formula{
 \frac{\kappa_f^\ua(\sigma; 0)}{\kappa_f^\ua(\sigma; \xi)} & = \frac{1}{\pi} \, \lim_{\eps \searrow 0} \int_0^\infty \frac{f_{[\zeta(r)]}^\ua(\xi)}{f_{[\zeta(r)]}^\ua(\eps)} \, \frac{(\xi + \eps) \re \zeta(r)}{|\xi + i \zeta(r)|^2} \, \frac{\sigma}{\sigma + \lambda(r)} \, \frac{\lambda'(r)}{\lambda(r)} \, \D r .
}
Note that $(\xi + \eps) \le 2 \xi$ and $f_{[\zeta(r)]}^\ua(\eps) \ge f_{[\zeta(r)]}^\ua(0^+)$ for $\eps \in (0, \xi)$. By dominated convergence,
\formula{
 \frac{1}{\sigma} \, \frac{\kappa_f^\ua(\sigma; 0)}{\kappa_f^\ua(\sigma; \xi)} & = \frac{1}{\pi} \int_0^\infty \frac{f_{[\zeta(r)]}^\ua(\xi)}{f_{[\zeta(r)]}^\ua(0^+)} \, \frac{\xi \re \zeta(r)}{|\xi + i \zeta(r)|^2} \, \frac{1}{\sigma + \lambda(r)} \, \frac{\lambda'(r)}{\lambda(r)} \, \D r .
}
It remains to observe that $1 / (\sigma + \lambda(r)) = \int_0^\infty e^{-\sigma t - \lambda(r) t} \D t$, and use Fubini and uniqueness of the Laplace transform.
\end{proof}

\begin{remark}
The second paragraph of the proof shows that for a L\'evy process whose L\'evy--Khintchine exponent is a balanced Rogers function, one has $\kappa^\ua(0; 0) = \kappa^\da(0; 0) = 0$. This property is equivalent to each of the following two statements: the ladder processes are proper subordinators (with no killing); $X_t$ oscillates as $t \nearrow \infty$.
\end{remark}

\begin{proof}[Proof of Theorem~\ref{th:kappa:stable}]
Recall that $X_t$ is a strictly stable L\'evy process with L\'evy--Khintchine exponent $f(\xi) = a \xi^\alpha$ for $\xi > 0$ (see Section~\ref{sec:ex}), and the Rogers function $f$ is nondegenerate, that is, $a \ne 0$ and $|\Arg a| < \tfrac{\alpha \pi}{2}$ (equality may hold in the other condition in~\eqref{eq:stable:a}, $|\Arg a| \le (2 - \alpha) \tfrac{\alpha \pi}{2}$). In this case $\zeta_f(r) = r e^{i \thet}$ and $\lambda_f(r) = c r^\alpha$ for $r > 0$, where $c = |a|$ and $\thet = -\tfrac{1}{\alpha} \Arg a$ (see Example~\ref{ex:rogers:real}).

By~\eqref{eq:kappa:inv} in Theorem~\ref{th:kappa:inv}, and a substitution $r = \xi u$, with the present notation,
\formula{
 \ex \exp(-\xi X^\ua_t) & = \frac{1}{\pi} \int_0^\infty \frac{f_{[r e^{i \thet}]}^\ua(\xi)}{f_{[r e^{i \thet}]}^\ua(0^+)} \, \frac{\xi r \cos \thet}{\xi^2 - 2 \xi r \sin \thet + r^2} \, \frac{c \alpha r^{\alpha - 1}}{c r^\alpha} \, e^{-c t r^\alpha} \D r \displaybreak[0] \\
 & = \frac{\alpha}{\pi} \int_0^\infty \frac{f_{[\xi u e^{i \thet}]}^\ua(\xi)}{f_{[\xi u e^{i \thet}]}^\ua(0^+)} \, \frac{\cos \thet}{1 - 2 u \sin \thet + u^2} \, e^{-c t \xi^\alpha u^\alpha} \D u .
}
By~\eqref{eq:kappa:inv:ratio} in Theorem~\ref{th:kappa:inv},
\formula{
 \frac{f_{[r e^{i \thet}]}^\ua(\xi)}{f_{[r e^{i \thet}]}^\ua(0^+)} & = \lim_{\eps \searrow 0} \exp \Biggl( \frac{1}{\pi} \int_0^\infty \re \Biggl( \expr{\frac{e^{i \thet}}{\xi + i s e^{i \thet}} - \frac{e^{i \thet}}{\eps + i s e^{i \thet}}} \times \\
 & \hspace*{9em} \times \log \frac{(s e^{i \thet} - r e^{i \thet}) (s e^{i \thet} + r e^{-i \thet})}{c s^\alpha - c r^\alpha} \Biggr) \D s \Biggr) \displaybreak[0] \\
 & = \lim_{\eps \searrow 0} \exp \Biggl( \frac{1}{\pi} \int_0^\infty \expr{\frac{\xi \cos \thet}{|\xi + i s e^{i \thet}|^2} - \frac{\eps \cos \thet}{|\eps + i s e^{i \thet}|^2}} \log \frac{(s - r) |s e^{2 i \thet} + r|}{c (s^\alpha - r^\alpha)} \, \D s \\
 & \hspace*{7em} - \frac{1}{\pi} \int_0^\infty \expr{\frac{\xi \sin \thet - s}{|\xi + i s e^{i \thet}|^2} - \frac{\eps \sin \thet - s}{|\eps + i s e^{i \thet}|^2}} \Arg(s e^{2 i \thet} + r) \D s \Biggr) .
}
By a direct calculation,
\formula[eq:kappa:stable:aux]{
 \frac{1}{\pi} \int_0^\infty \frac{\eps \cos \thet}{|\eps + i s e^{i \thet}|^2} \, \D s & = \frac{1}{2} + \frac{\thet}{\pi} = \ro
}
(see~\eqref{eq:stable:a:ro}), and hence
\formula{
 \lim_{\eps \searrow 0} \expr{\frac{1}{\pi} \, \frac{\eps \cos \thet}{|\eps + i s e^{i \thet}|^2} \, \ind_{(0, \infty)}(s) \D s} & = \ro \delta_0(\D s) .
}
Using this for the former integral and dominated convergence for the latter one (we omit the details), one obtains
\formula{
 \frac{f_{[r e^{i \thet}]}^\ua(\xi)}{f_{[r e^{i \thet}]}^\ua(0^+)} & = \exp \Biggl( \frac{1}{\pi} \int_0^\infty \frac{\xi \cos \thet}{|\xi + i s e^{i \thet}|^2} \, \log \frac{(s - r) |s e^{2 i \thet} + r|}{c (s^\alpha - r^\alpha)} \, \D s - \ro \log \frac{r^{2-\alpha}}{c} \\
 & \hspace*{12em} - \frac{1}{\pi} \int_0^\infty \expr{\frac{\xi \sin \thet - s}{|\xi + i s e^{i \thet}|^2} + \frac{1}{s}} \Arg(s e^{2 i \thet} + r) \D s \Biggr) .
}
When $r = \xi u$, a substitution $s = \xi v$ gives
\formula{
 \frac{f_{[\xi u e^{i \thet}]}^\ua(\xi)}{f_{[\xi u e^{i \thet}]}^\ua(0^+)} & = \exp \Biggl( \frac{1}{\pi} \int_0^\infty \frac{\cos \thet}{|1 + i v e^{i \thet}|^2} \, \log \frac{(v - u) |v e^{2 i \thet} + u|}{c \xi^{\alpha - 2} (v^\alpha - u^\alpha)} \, \D v \\
 & \hspace*{-1em} - \ro \log \frac{(\xi u)^{2-\alpha}}{c} - \frac{1}{\pi} \int_0^\infty \expr{\frac{\sin \thet - v}{|1 + i v e^{i \thet}|^2} + \frac{1}{v}} \frac{\Arg(v e^{2 i \thet} + u)}{v} \, \D v \Biggr) .
}
Finally, by~\eqref{eq:kappa:stable:aux},
\formula{
 \frac{f_{[\xi u e^{i \thet}]}^\ua(\xi)}{f_{[\xi u e^{i \thet}]}^\ua(0^+)} & = \exp \Biggl( \frac{1}{\pi} \int_0^\infty \frac{\cos \thet}{1 - 2 v \sin \thet + v^2} \, \log \frac{(v - u) |v e^{2 i \thet} + u|}{v^\alpha - u^\alpha} \D v \\
 & \hspace*{-1em} - \ro \log u^{2-\alpha} - \frac{1}{\pi} \int_0^\infty \frac{1 - v \sin \thet}{1 - 2 v \sin \thet + v^2} \, \frac{\Arg(v e^{2 i \thet} + u)}{v} \, \D v \Biggr) .
}
The above calculations prove that
\formula[eq:kappa:stable:inv:alt]{
 \ex \exp(-\xi X^\ua_t) & = \frac{\alpha}{\pi} \int_0^\infty \exp \Biggl( \frac{1}{\pi} \int_0^\infty \frac{\cos \thet}{1 - 2 v \sin \thet + v^2} \, \log \frac{(v - u) |v e^{2 i \thet} + u|}{v^\alpha - u^\alpha} \D v \\
 & \hspace*{-4em} - \frac{1}{\pi} \int_0^\infty \frac{1 - v \sin \thet}{1 - 2 v \sin \thet + v^2} \, \frac{\Arg(v e^{2 i \thet} + u)}{v} \, \D v \Biggr) \frac{u^{-(2 - \alpha) \ro} \cos \thet}{1 - 2 u \sin \thet + u^2} \, e^{-c t \xi^\alpha u^\alpha} \D u .
}
It remains to use $\thet = \pi \ro - \tfrac{\pi}{2}$.
\end{proof}

\subsection{Toward the formula for the distribution function of the supremum and generalised eigenfunction expansion}
\label{subsec:ee}

In the remaining part of this section, inversion of the Laplace transform in Theorem~\ref{th:kappa:inv} is discussed. This requires the following definition, which for symmetric Rogers functions reduces to generalised eigenfunctions studied in~\cite{bib:k11, bib:kmr12}.

\begin{definition}
\label{def:kappa:f}
Let $f$ be a balanced Rogers function and $r > 0$. The functions $F_{f \ua}(r; x)$ and $F_{f \da}(r; x)$ of $x \in (0, \infty)$ are defined by means of Laplace transform,
\formula{
 \laplace F_{f \ua}(r; \xi) & = \frac{\re \zeta_f(r)}{|f_{[\zeta_f(r)]}^\ua(i \overline{\zeta_f(r)})|} \, \frac{f_{[\zeta_f(r)]}^\ua(\xi)}{(\xi + i \zeta_f(r)) (\xi - i \overline{\zeta_f(r)})}
}
for $\xi \in \C_\ra$ such that $\re \xi > \im \zeta_f(r)$, and
\formula{
 \laplace F_{f \da}(r; \xi) & = \frac{\re \zeta_f(r)}{|f_{[\zeta_f(r)]}^\da(i \zeta_f(r))|} \, \frac{f_{[\zeta_f(r)]}^\da(\xi)}{(\xi - i \zeta_f(r)) (\xi + i \overline{\zeta_f(r)})}
}
for $\xi \in \C_\ra$ such that $\re \xi > -\im \zeta_f(r)$.
\end{definition}

\begin{lemma}
\label{lem:kappa:f}
Let $f$ be a balanced Rogers function, $r > 0$ and $\zeta_f(r) = a_f(r) + i b_f(r)$. Then for $x > 0$,
\formula{
 F_{f \ua}(r; x) & = e^{b_f(r) x} \sin(a_f(r) x + \thet_{f \ua}(r)) - G_{f \ua}(r; x) , \\
 F_{f \da}(r; x) & = e^{-b_f(r) x} \sin(a_f(r) x + \thet_{f \da}(r)) - G_{f \da}(r; x) ,
}
where $G_{f \ua}(r; x)$, $G_{f \da}(r; x)$ are completely monotone functions of $x \in (0, \infty)$, with Laplace transforms
\formula{
 \laplace G_{f \ua}(r; \xi) & = \frac{1}{2 i |f_{[\zeta]}^\ua(i \bar{\zeta})|} \expr{\frac{f_{[\zeta]}^\ua(i \bar{\zeta})}{\xi - i \bar{\zeta}} - \frac{f_{[\zeta]}^\ua(-i \zeta)}{\xi + i \zeta}} - \frac{\re \zeta}{|f_{[\zeta]}^\ua(i \bar{\zeta})|} \, \frac{f_{[\zeta]}^\ua(\xi)}{(\xi - i \bar{\zeta}) (\xi + i \zeta)} , \displaybreak[0] \\
 \laplace G_{f \da}(r; \xi) & = \frac{1}{2 i |f_{[\zeta]}^\da(i \zeta)|} \expr{\frac{f_{[\zeta]}^\da(i \zeta)}{\xi - i \zeta} - \frac{f_{[\zeta]}^\da(-i \bar{\zeta})}{\xi + i \bar{\zeta}}} - \frac{\re \zeta}{|f_{[\zeta]}^\da(i \zeta)|} \, \frac{f_{[\zeta]}^\da(\xi)}{(\xi - i \zeta) (\xi + i \bar{\zeta})}
}
(here $\zeta = \zeta_f(r)$), and $\thet_{f \ua}(r), \thet_{f \da}(r) \in [0, \pi)$ are given by
\formula[eq:kappa:thet]{
 \thet_{f \ua}(r) & = \Arg f_{[\zeta_f(r)]}^\ua(i \overline{\zeta_f(r)}) = -\Arg f_{[\zeta_f(r)]}^\ua(-i \zeta_f(r)) , \\
 \thet_{f \da}(r) & = \Arg f_{[\zeta_f(r)]}^\da(i \zeta_f(r)) = -\Arg f_{[\zeta_f(r)]}^\da(-i \overline{\zeta_f(r)}) .
}
Furthermore, $\thet_{f \ua}(r) - \thet_{f \da}(r) = \Arg f'(\zeta_f(r)) = -\Arg \zeta_f'(r)$.
\end{lemma}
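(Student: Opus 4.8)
The plan is to first establish the Laplace transform identities for $F_{f\ua}$ and $F_{f\da}$ by reducing them to Stieltjes representations, and then to extract the stated trigonometric decomposition by residue calculus, with the phase identities coming from a careful bookkeeping of arguments. Write $\zeta = \zeta_f(r)$, $a = a_f(r)$, $b = b_f(r)$, so that $\zeta = a + ib$ with $a > 0$. The first observation is that $\laplace F_{f\ua}(r;\xi)$, as given in Definition~\ref{def:kappa:f}, is a rational modification of the complete Bernstein function $f_{[\zeta]}^\ua$: since $f_{[\zeta]} \in \rogers$ by Lemma~\ref{lem:rogers:quot}, its Wiener--Hopf factor $f_{[\zeta]}^\ua \in \cbf$ (Theorem~\ref{th:rogers:wh}), and dividing by $(\xi + i\zeta)(\xi - i\bar\zeta) = \xi^2 + 2i a \xi - |\zeta|^2$ produces a function whose growth at infinity is $O(|\xi|^{-1})$, hence a candidate for a Laplace transform of a bounded function. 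The key step is to decompose
\formula{
 \frac{f_{[\zeta]}^\ua(\xi)}{(\xi + i\zeta)(\xi - i\bar\zeta)} & = \frac{A}{\xi + i\zeta} + \frac{\bar A}{\xi - i\bar\zeta} + (\text{remainder})
}
using Lemma~\ref{lem:cbf:quot}: indeed, applying that lemma to the complete Bernstein function $f_{[\zeta]}^\ua$ and the point $i\bar\zeta \in \C_\ua$ gives exactly
\formula{
 \frac{f_{[\zeta]}^\ua(\xi)}{(\xi + i\zeta)(\xi - i\bar\zeta)} & = \frac{1}{2i\im(i\bar\zeta)}\expr{\frac{f_{[\zeta]}^\ua(i\bar\zeta)}{\xi - i\bar\zeta} - \frac{f_{[\zeta]}^\ua(-i\zeta)}{\xi + i\zeta}} - \frac{g(\xi)}{\xi}
}
for a complete Bernstein function $g$ with vanishing constants $c_0, c_1$ in its Stieltjes representation; note $\im(i\bar\zeta) = \re\zeta = a$. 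Normalising by $\re\zeta / |f_{[\zeta]}^\ua(i\bar\zeta)|$ identifies $\laplace G_{f\ua}(r;\xi)$ with $(\re\zeta/|f_{[\zeta]}^\ua(i\bar\zeta)|)\, g(\xi)/\xi$, which is a Stieltjes function, hence the Laplace transform of a completely monotone function (by Bernstein's theorem, Theorem~\ref{th:bernstein}, applied to the inner measure). This gives the formula for $\laplace G_{f\ua}$ stated in the lemma, and the parallel computation at the point $i\zeta$ handles $G_{f\da}$.

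Next I would invert the two residue terms. The function $\xi \mapsto C/(\xi + i\zeta) + \bar C/(\xi - i\bar\zeta)$ with $C = f_{[\zeta]}^\ua(i\bar\zeta)/(2ia)$ multiplied by the normalisation $\re\zeta/|f_{[\zeta]}^\ua(i\bar\zeta)| = a/|f_{[\zeta]}^\ua(i\bar\zeta)|$ is the Laplace transform of $\xi \mapsto$ a combination of $e^{-i\zeta x}$ and $e^{i\bar\zeta x}$; since $-i\zeta = b - ia$ and $i\bar\zeta = b + ia$, we get $e^{bx}$ times a combination of $e^{\mp iax}$, i.e.\ $e^{bx}(\alpha e^{iax} + \bar\alpha e^{-iax})$ with $\alpha = (1/(2i|f_{[\zeta]}^\ua(i\bar\zeta)|)) f_{[\zeta]}^\ua(i\bar\zeta) \cdot$ (sign bookkeeping), which equals $e^{bx}\sin(ax + \thet)$ precisely when $\thet = \Arg f_{[\zeta]}^\ua(i\bar\zeta)$ — here one uses that $|f_{[\zeta]}^\ua(i\bar\zeta)| e^{i\thet} = f_{[\zeta]}^\ua(i\bar\zeta)$ so the modulus cancels and $2i\sin(ax+\thet) = e^{i(ax+\thet)} - e^{-i(ax+\thet)}$. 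The relation $-\Arg f_{[\zeta]}^\ua(-i\zeta) = \Arg f_{[\zeta]}^\ua(i\bar\zeta)$ follows because $f_{[\zeta]} \in \rogers$ satisfies $f_{[\zeta]}(\bar\eta) = \overline{f_{[\zeta]}(-\eta)}$ (the reflection built into the definition of Rogers functions, Remark~\ref{rem:rogers}), which transfers to its Wiener--Hopf factors via $f^\ua(\eta) = f(i\eta)/f^\da(-\eta)$ and the fact that $f^\ua, f^\da$ are real on $(0,\infty)$; concretely, $\overline{f_{[\zeta]}^\ua(-i\zeta)} = f_{[\zeta]}^\ua(\overline{-i\zeta}) = f_{[\zeta]}^\ua(i\bar\zeta)$. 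That $\thet_{f\ua}(r) \in [0,\pi)$ is because $f_{[\zeta]}^\ua \in \cbf$ maps $\C_\ua$ into $\overline\C_\ua$ (Theorem~\ref{th:cbf}\ref{it:cbf:d}) and $i\bar\zeta = b + ia \in \C_\ua$. The argument for $\thet_{f\da}$ is identical with $i\bar\zeta$ replaced by $i\zeta$ and $b$ by $-b$.

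Finally, for the last phase identity $\thet_{f\ua}(r) - \thet_{f\da}(r) = \Arg f'(\zeta_f(r)) = -\Arg\zeta_f'(r)$, I would compute $f_{[\zeta]}^\ua(i\bar\zeta) \cdot f_{[\zeta]}^\da(i\zeta)$ and relate it to $f_{[\zeta]}$ evaluated via the Wiener--Hopf product. By Theorem~\ref{th:rogers:wh}, $f_{[\zeta]}(\eta) = f_{[\zeta]}^\ua(-i\eta) f_{[\zeta]}^\da(i\eta)$; setting $\eta = \zeta$ gives $f_{[\zeta]}(\zeta) = f_{[\zeta]}^\ua(-i\zeta) f_{[\zeta]}^\da(i\zeta)$, and $f_{[\zeta]}(\zeta) = (2\re\zeta)/f'(\zeta)$ by the continuous extension in Lemma~\ref{lem:rogers:quot}. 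Taking arguments, $\Arg f_{[\zeta]}^\ua(-i\zeta) + \Arg f_{[\zeta]}^\da(i\zeta) = \Arg((2\re\zeta)/f'(\zeta)) = -\Arg f'(\zeta)$ since $2\re\zeta > 0$. Combined with $\Arg f_{[\zeta]}^\ua(-i\zeta) = -\thet_{f\ua}(r)$ and $\Arg f_{[\zeta]}^\da(i\zeta) = \thet_{f\da}(r)$, this yields $-\thet_{f\ua}(r) + \thet_{f\da}(r) = -\Arg f'(\zeta)$, i.e.\ the claim. The identity $\Arg f'(\zeta_f(r)) = -\Arg\zeta_f'(r)$ is already contained in the proof of Theorem~\ref{th:rogers:real}\ref{it:rogers:real:d}, where it is shown that $f'(\zeta_f(r)) = a\,\overline{\zeta_f'(r)}$ for some $a > 0$. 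The main obstacle I anticipate is the sign and branch-of-logarithm bookkeeping in passing from the partial-fraction residues to the normalised sine form: one must track carefully which complex exponential corresponds to $e^{+iax}$ versus $e^{-iax}$, confirm that the normalisation constant $\re\zeta/|f_{[\zeta]}^\ua(i\bar\zeta)|$ is exactly what converts the residue pair into a clean $\sin$, and verify the half-line of validity $\re\xi > \im\zeta_f(r) = b_f(r)$ for the Laplace transform — this last point is why $e^{b_f(r)x}$ (which grows if $b_f(r) > 0$) is still legitimately Laplace-transformable on that shifted half-plane. Everything else is a routine application of Lemma~\ref{lem:cbf:quot}, Bernstein's theorem, and the reflection symmetry of Rogers functions.
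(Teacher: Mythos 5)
Your proposal is correct and follows essentially the same route as the paper: apply Lemma~\ref{lem:cbf:quot} to $f_{[\zeta]}^\ua$ at $i\bar\zeta$ (using $\im(i\bar\zeta)=\re\zeta$) to peel off the two simple-pole terms and leave a Stieltjes function $g_\ua(r;\xi)/\xi$ with no mass at $0$ or $\infty$; invert the pole terms to get the $e^{b_f(r)x}\sin(a_f(r)x+\thet)$ form; use the Schwarz reflection $\overline{g(\xi)}=g(\bar\xi)$ of complete Bernstein functions (which in the paper is stated directly, rather than routed through the Rogers-function reflection as you do) to justify the two expressions for $\thet_{f\ua}$; and use $f_{[\zeta]}^\ua(-i\zeta)f_{[\zeta]}^\da(i\zeta)=f_{[\zeta]}(\zeta)=2\re\zeta/f'(\zeta)$ for the phase difference. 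The only point where you supply more detail than the paper is the final equality $\Arg f'(\zeta_f(r))=-\Arg\zeta_f'(r)$, which you correctly trace back to the relation $f'(\zeta_f(r))=a\,\overline{\zeta_f'(r)}$, $a>0$, established inside the proof of Theorem~\ref{th:rogers:real}\ref{it:rogers:real:d}.
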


\begin{proof}
Let $\zeta = \zeta_f(r)$. By Lemma~\ref{lem:rogers:quot} and Theorem~\ref{th:rogers:wh}, $f_{[\zeta]}^\ua$ is a complete Bernstein function. By Lemma~\ref{lem:cbf:quot} (applied to $f_{[\zeta]}^\ua$ and $i \bar{\zeta}$),
\formula{
 \frac{f_{[\zeta]}^\ua(\xi)}{(\xi - i \bar{\zeta}) (\xi + i \zeta)} & = \frac{1}{2 i \re \zeta} \expr{\frac{f_{[\zeta]}^\ua(i \bar{\zeta})}{\xi - i \bar{\zeta}} - \frac{f_{[\zeta]}^\ua(-i \zeta)}{\xi + i \zeta}} - \frac{g_\ua(r; \xi)}{\xi}
}
for $\xi \in \C \setminus (-\infty, 0]$, where $g_\ua(r; \xi)$ is a complete Bernstein function of $\xi$.

Recall that for $g \in \cbf$, $g(\xi) / \xi$ is a \emph{Stieltjes function}, that is, the Laplace transform of a completely monotone function plus a multiple of the Dirac measure $\delta_0$ (see~\cite[Theorem~2.2]{bib:ssv10}). Furthermore, $g_\ua(r; \xi) / \xi$ converges to $0$ as $\xi \nearrow \infty$ (see Lemma~\ref{lem:cbf:quot}). Therefore, $g_\ua(r; \xi) / \xi$ is the Laplace transform of a completely monotone function.

Observe that since $f_{[\zeta]}^\ua$ and $f_{[\zeta]}^\da$ are complete Bernstein functions, one has
\formula{
 f_{[\zeta]}^\ua(i \bar{\zeta}) & = \overline{f_{[\zeta]}^\ua(-i \zeta)} , & f_{[\zeta]}^\da(-i \bar{\zeta}) & = \overline{f_{[\zeta]}^\da(i \zeta)} .
}
This proves that~\eqref{eq:kappa:thet} properly defines $\thet_{f \ua}(r)$ and $\thet_{f \da}(r)$. Furthermore, by Lemma~\ref{lem:rogers:quot} and Theorem~\ref{th:rogers:wh},
\formula[eq:kappa:f:wh]{
 f_{[\zeta]}^\ua(-i \zeta) f_{[\zeta]}^\da(i \zeta) = f_{[\zeta]}(\zeta) = \frac{2 \re \zeta}{f'(\zeta)} ,
}
and therefore $\thet_{f \da}(r) - \thet_{f \ua}(r) = -\Arg f'(\zeta)$. 

Finally, for $z \in \C$, the function $1 / (\xi - z)$ is (the holomorphic extension of) the Laplace transform of $e^{z x} \ind_{\hl}(x)$. It follows that
\formula{
 F_{f \ua}(r; x) & = \frac{\exp(i \bar{\zeta} x + i \thet_{f \ua}(\zeta)) - \exp(-i \zeta x - i \thet_{f \da}(\zeta))}{2 i} - G_{f \ua}(\zeta; x) ,
}
as desired. The proof for $F_{f \da}(r; x)$ is very similar.
\end{proof}

It follows that the first part of~\eqref{eq:kappa:inv} can be rewritten as
\formula{
 \int_0^\infty e^{-\xi x} \pr(X^\ua_t < x) \D x & = \int_0^\infty \frac{|f_{[\zeta_f(r)]}^\ua(i \overline{\zeta_f(r)})|}{f_{[\zeta_f(r)]}^\ua(0^+)} \, \laplace F_{f\ua}(r; x) e^{-t \lambda_f(r)} \lambda_f'(r) \D r .
}
Provided that Fubini can be used to change the order of integration, one can invert the Laplace transform, thus obtaining a semi-explicit expression for the distribution of $X^\ua_t$. This leads to the following conjectured result, which possibly could be proved by using the methods developed in~\cite{bib:k11, bib:kmr12} for the symmetric case. The natural question whether the assumption $\sup \{ \im \zeta_f(r) : r \in (0, \infty)\} < \infty$ can be relaxed seems challenging, even for strictly stable L\'evy processes.

\begin{conjecture}
\label{con:kappa:sup}
If $X_t$ is a L\'evy process whose L\'evy--Khintchine exponent $f$ is a balanced Rogers function, $\sup \{ \im \zeta_f(r) : r \in (0, \infty)\} < \infty$, $\sup \{ \thet_{f\ua}(r) : r \in (0, \infty) \} < \tfrac{\pi}{2}$, and
\formula{
 \int_1^\infty \frac{|f_{[\zeta_f(r)]}^\ua(i \overline{\zeta_f(r)})|}{f_{[\zeta_f(r)]}^\ua(0^+)} \, e^{-t_0 \lambda_f(r)} \lambda_f'(r) \D r & < \infty
}
for some $t_0 \in [0, \infty)$, then
\formula[eq:kappa:sup]{
 \pr(X^\ua_t < x) & = \int_0^\infty \frac{|f_{[\zeta_f(r)]}^\ua(i \overline{\zeta_f(r)})|}{f_{[\zeta_f(r)]}^\ua(0^+)} \, F_{f\ua}(r; x) e^{-t \lambda_f(r)} \lambda_f'(r) \D r
}
for $t > t_0$ and $x > 0$. Here $\zeta_f$ is the canonical parametrisation of $\gamma_f$ (see Definition~\ref{def:rogers:curve}), $\lambda_f(r) = f(\zeta_f(r))$, $f_{[\zeta]}$ is defined in Lemma~\ref{lem:rogers:quot}, Wiener--Hopf factor $f_{[\zeta]}^\ua$ is defined in Theorem~\ref{th:rogers:wh} and $F_{f \ua}(r; x)$ is described by Lemma~\ref{lem:kappa:f}.
\end{conjecture}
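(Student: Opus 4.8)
The plan is to invert, in the time variable, the space-only Laplace transform supplied by Theorem~\ref{th:kappa:inv}, exploiting that the ratio $\Psi_{r\ua}$ appearing there is, up to an explicit elementary factor, the Laplace transform of $F_{f\ua}(r; \cdot)$ from Definition~\ref{def:kappa:f} and Lemma~\ref{lem:kappa:f}. First I would divide the first identity in~\eqref{eq:kappa:inv} by $\xi$, so that its left-hand side becomes $\int_0^\infty e^{-\xi x} \pr(X^\ua_t < x) \D x$, and rewrite the right-hand side by means of the elementary identity $(\xi + i \zeta_f(r))(\xi - i \overline{\zeta_f(r)}) = \xi^2 - 2 \xi \im \zeta_f(r) + r^2$ together with Definition~\ref{def:kappa:f}; this is the computation sketched in the paragraph preceding the statement, and it gives
\formula{
 \int_0^\infty e^{-\xi x} \pr(X^\ua_t < x) \D x & = \int_0^\infty \frac{\abs{f_{[\zeta_f(r)]}^\ua(i \overline{\zeta_f(r)})}}{f_{[\zeta_f(r)]}^\ua(0^+)} \, \laplace F_{f\ua}(r; \xi) \, e^{-t \lambda_f(r)} \lambda_f'(r) \D r .
}
Expanding $\laplace F_{f\ua}(r; \xi) = \int_0^\infty e^{-\xi x} F_{f\ua}(r; x) \D x$ and interchanging the two integrations would turn the right-hand side into $\int_0^\infty e^{-\xi x} H_t(x) \D x$, with $H_t(x)$ the right-hand side of~\eqref{eq:kappa:sup}; by uniqueness of the Laplace transform this would yield $\pr(X^\ua_t < x) = H_t(x)$ for almost every $x > 0$, and since $x \mapsto \pr(X^\ua_t < x)$ is left-continuous while $H_t$ is continuous (again by dominated convergence), the identity would then extend to every $x > 0$.

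Everything hinges on justifying the Fubini interchange, and this is where the three hypotheses enter. By Lemma~\ref{lem:kappa:f}, $F_{f\ua}(r; x) = e^{b_f(r) x} \sin(a_f(r) x + \thet_{f\ua}(r)) - G_{f\ua}(r; x)$ with $G_{f\ua}(r; \cdot)$ completely monotone, so writing $M = \sup \{ \im \zeta_f(r) : r \in (0, \infty) \}$, which is finite by assumption, one has $\abs{F_{f\ua}(r; x)} \le e^{M x} + G_{f\ua}(r; x)$ and hence, for $\xi > M$,
\formula{
 \int_0^\infty e^{-\xi x} \abs{F_{f\ua}(r; x)} \D x & \le \frac{1}{\xi - M} + \laplace G_{f\ua}(r; \xi) \le \frac{1}{\xi - M} + \laplace G_{f\ua}(r; M + 1) .
}
The plan is to estimate $\laplace G_{f\ua}(r; M+1)$ using the explicit formula of Lemma~\ref{lem:kappa:f} together with the two-sided bounds of Propositions~\ref{prop:rogers:bound} and~\ref{prop:rogers:wh:est} applied to the Rogers function $f_{[\zeta_f(r)]}$; the outcome should be a bound by a constant multiple of $1/\abs{f_{[\zeta_f(r)]}^\ua(i \overline{\zeta_f(r)})}$ plus smaller terms, so that after multiplication by the weight $\abs{f_{[\zeta_f(r)]}^\ua(i \overline{\zeta_f(r)})} / f_{[\zeta_f(r)]}^\ua(0^+)$ the dangerous factor cancels and one is left with an integrand comparable to $\bigl(\text{that weight}\bigr) \, e^{-t \lambda_f(r)} \lambda_f'(r)$. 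Its tail at $r \nearrow \infty$ is then controlled by the hypothesis $\int_1^\infty \bigl(\text{that weight}\bigr) \, e^{-t_0 \lambda_f(r)} \lambda_f'(r) \D r < \infty$, since $t > t_0$ and $\lambda_f$ is increasing; its behaviour at $r \searrow 0$ is harmless because $\re \zeta_f(r) \to 0$ there, which makes $\laplace F_{f\ua}(r; \cdot)$ vanish linearly. The assumption $\sup \{ \thet_{f\ua}(r) : r \in (0, \infty) \} < \tfrac{\pi}{2}$ is what keeps the phase of the eigenfunctions $F_{f\ua}(r; \cdot)$ uniformly away from the degenerate value $\tfrac{\pi}{2}$, and this uniformity is needed for the above estimates (in particular the bound involving $1/f_{[\zeta_f(r)]}^\ua(i\overline{\zeta_f(r)})$) to hold with $r$-independent constants. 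Once these bounds are in place, Fubini applies for $\xi > M$ and $H_t$ is finite, locally bounded and continuous.

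The main obstacle is precisely this uniform-in-$r$ control of the Wiener--Hopf factors of $f_{[\zeta_f(r)]}$ along the curve $\gamma_f$: one needs simultaneous two-sided estimates of $f_{[\zeta_f(r)]}^\ua(\xi)$, of $f_{[\zeta_f(r)]}^\ua(0^+)$ and of $f_{[\zeta_f(r)]}^\ua(i\overline{\zeta_f(r)})$ with constants not depending on $r$, which is exactly the delicate part carried out in the symmetric case in~\cite{bib:k11, bib:kmr12} and whose asymmetric analogue has to be developed here. It is worth emphasising that the hypothesis $\sup \{ \im \zeta_f(r) : r \in (0, \infty) \} < \infty$ is a genuine restriction: for a strictly stable L\'evy process with $\gamma_f = (0, e^{i\thet}\infty)$ and $\thet \ne 0$ one has $\im \zeta_f(r) = r \sin \thet \to \infty$, so not even that class is covered, which is the difficulty the discussion preceding the statement points to; lifting this restriction would require controlling the unbounded exponential growth $e^{b_f(r) x}$ of the functions $F_{f\ua}(r; \cdot)$ through cancellation inside the $r$-integral rather than by an absolute estimate, a substantially harder task.
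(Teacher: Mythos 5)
The statement you are addressing is explicitly labelled a \emph{conjecture} in the paper, and the paper does not contain a proof of it. What the paper gives is the heuristic derivation in the paragraph preceding the statement: dividing~\eqref{eq:kappa:inv} by $\xi$, recognising (via $(\xi + i \zeta_f(r))(\xi - i \overline{\zeta_f(r)}) = \xi^2 - 2 \xi \im \zeta_f(r) + r^2$ and Definition~\ref{def:kappa:f}) that the integrand is the Laplace transform of $F_{f\ua}(r;\cdot)$ times the stated weight, and then observing that ``provided that Fubini can be used to change the order of integration, one can invert the Laplace transform.'' The paper stops there; it conjectures the result and explicitly flags that even relaxing the hypothesis $\sup\{\im\zeta_f(r)\} < \infty$ ``seems challenging, even for strictly stable L\'evy processes.''

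Your proposal reproduces that heuristic faithfully and then, to its credit, diagnoses precisely the same obstacle the paper leaves open: justifying the Fubini interchange requires two-sided estimates of $f_{[\zeta_f(r)]}^\ua(\xi)$, $f_{[\zeta_f(r)]}^\ua(0^+)$ and $f_{[\zeta_f(r)]}^\ua(i\overline{\zeta_f(r)})$ that are uniform in $r$, and you correctly note this asymmetric analogue of the estimates from~\cite{bib:k11, bib:kmr12} ``has to be developed.'' Your computation of the Laplace-transform identity and of $(\xi + i\zeta)(\xi - i\bar\zeta)$ is right, your role assigned to each of the three hypotheses is sensible, and your observation that $\im\zeta_f(r) = r\sin\thet \to \infty$ excludes the non-symmetric strictly stable case is exactly the point the paper raises. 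But because the Fubini step is only outlined, not justified, what you have is a plan, not a proof --- which is in fact consistent with the statement's status in the paper: it is a conjecture precisely because that step has not been carried out. There is no error in your reasoning, only the honestly acknowledged, and genuine, gap.
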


\begin{remark}
If $\{ \im \zeta_f(r) : r \in (0, \infty) \}$ is a bounded subset of $\R$, then the expression for $\pr(X_t^\ua < x)$ in the above conjecture corresponds to the eigenfunction expansion of the constant $1$ in the system of generalised eigenfunctions $F_{f\ua}(r; -x)$ and coeigenfunctions $F_{f\da}(r; -x)$ of the generator of the process $X_t$, killed upon leaving $(-\infty, 0)$. With this interpretation, Lemma~\ref{lem:rogers:xwh:int} can be rewritten as
\formula[eq:ee:completeness]{
 \frac{2}{\pi} \int_0^\infty \laplace F_{f\ua}(r; \xi_1) \laplace F_{f_\da}(r; \xi_2) |\zeta_f'(r)| \D r & = \frac{1}{\xi_1 + \xi_2} \, .
}
The numbers $\laplace F_{f\da}(r; \xi_2)$ are the coefficients of the eigenfunction expansion of $f_2(x) = e^{\xi_2 x} \ind_{(-\infty, 0)}(x)$. Similarly, $\laplace F_{f\ua}(r; \xi_1)$ are the coefficients of the coeigenfunction expansion of $f_1(x) = e^{\xi_1 x} \ind_{(-\infty, 0)}(x)$. Since $\tscalar{f_1, f_2} = 1 / (\xi_1 + \xi_2)$, and the functions of the form $e^{\xi x} \ind_{(-\infty, 0)}(x)$ form a linearly dense set in $L^2((-\infty, 0))$, formula~\eqref{eq:ee:completeness} asserts that $F_{f\ua}(r; x)$ and $F_{f\da}(r; \xi)$ form a complete system of generalised eigenfunctions and coeigenfunctions.

When $\{ \im \zeta_f(r) : r \in (0, \infty) \}$ is not bounded, a formal statement of the notion of generalised eigenfunction expansion is already a challenging problem. With an appropriate definition, Lemma~\ref{lem:rogers:xwh:int} can be hoped to be applicable in a similar way as described above.
\end{remark}

\subsection{Generalised eigenfunctions for strictly stable L\'evy processes (or power-type Rogers functions)}

Suppose that the assumptions of Theorem~\ref{th:kappa:stable} are satisfied, that is, the L\'evy--Khintchine exponent of $X_t$ is the Rogers function $f(\xi) = a \xi^\alpha$ for $\xi \in \C_\ra$, where $\alpha \in (0, 2]$, $a$ satisfies~\eqref{eq:stable:a} and $|\Arg a| < \tfrac{\alpha \pi}{2}$ (see Section~\ref{sec:ex}). Below formulae for $\thet_{f\ua}(r)$, $\thet_{f\da}(r)$, $G_{f\ua}(r; x)$ and $G_{f\da}(r; x)$ are found.

\begin{lemma}
\label{lem:ee:stable}
Let $X_t$ be a strictly stable L\'evy process with stability index $\alpha \in (0, 2]$, positivity parameter $\ro \in (0, 1)$ and scale parameter $k > 0$. If $f$ denotes the corresponding Rogers function, then for $r, x > 0$,
\formula{
 F_{f\ua}(r; x) & = e^{-r x \cos(\ro \pi)} \sin(r x \sin(\ro \pi) + (1 - \ro) (1 - \alpha \ro) \tfrac{\pi}{2}) - G_{f\ua}(r x) ,
}
where
\formula{
 G_{f\ua}(x) & = \frac{1}{\pi} \sqrt{\frac{\alpha \cos \thet}{2}} \int_0^\infty \frac{\sin(\alpha \pi \ro) u^\alpha}{1 + 2 u^\alpha \cos(\alpha \pi \ro) + u^{2 \alpha}} \, e^{I(u) + J(u) - x u} \D u ,
}
with
\formula{
 I(u) & = -\frac{1}{2 \pi} \int_0^\infty \expr{\frac{2 u \sin(\ro \pi)}{u^2 - 2 u v \cos(\ro \pi) + v^2} - \frac{\sin(2 \ro \pi)}{1 - 2 v \cos(2 \ro \pi) + v^2}} \times \\
 & \hspace*{17em} \times \log \frac{(v - 1) \sqrt{1 - 2 v \cos(2 \ro \pi) + v^2}}{v^\alpha - 1} \, \D v \\
 J(u) & = \frac{1}{2 \pi} \, \pvint_0^\infty \expr{\frac{2 v - 2 u \cos(\ro \pi)}{u^2 - 2 u v \cos(\ro \pi) + v^2} + \frac{1}{1 - v} - \frac{v - \cos(2 \ro \pi)}{1 - 2 v \cos(2 \ro \pi) + v^2}} \times \\
 & \hspace*{17em} \times \Arg(1 - v \cos(2 \ro \pi) + i v \sin(2 \ro \pi)) \D v ,
}
and $\pvint$ denoting the Cauchy principal value integral, with a singularity at $v = 1$. A similar formula for $F_{f\da}(r; x)$ is obtained by replacing $\ro$ by $1 - \ro$.
\end{lemma}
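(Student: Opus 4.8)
The plan is to specialise the general machinery of Lemma~\ref{lem:kappa:f} to the power-type Rogers function $f(\xi) = a\xi^\alpha$, for which everything is explicit. First I would record the ingredients from Example~\ref{ex:rogers:real}: $\zeta_f(r) = re^{i\thet}$ and $\lambda_f(r) = cr^\alpha$ with $c = |a|$ and $\thet = -\tfrac1\alpha\Arg a = \pi\ro - \tfrac\pi2$ (the last identity via~\eqref{eq:stable:a:ro}), so that $a_f(r) = r\cos\thet = r\sin(\ro\pi)$ and $b_f(r) = r\sin\thet = -r\cos(\ro\pi)$. This immediately gives the oscillating part of $F_{f\ua}(r;x)$ in the form $e^{-rx\cos(\ro\pi)}\sin(rx\sin(\ro\pi) + \thet_{f\ua}(r))$, and by scaling $f(s\xi) = s^\alpha f(\xi)$ one checks that $f_{[s\zeta]}$ is a rescaled copy of $f_{[\zeta]}$, so that $\thet_{f\ua}(r)$ does not depend on $r$ and $G_{f\ua}(r;x)$ depends on $(r,x)$ only through $rx$; this is where the reduction $G_{f\ua}(r;x) = G_{f\ua}(rx)$ comes from. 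The scaling of Wiener--Hopf factors in~\eqref{eq:stable:scaling} and Example~\ref{ex:rogers:wh}\ref{it:rogers:wh:st} ($f^\ua(\xi) = |a|^{1/2}\xi^{\ro\alpha}$) will be used repeatedly.

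Next I would compute $\thet_{f\ua}(r) = \Arg f_{[\zeta]}^\ua(i\bar\zeta)$. Using $f_{[\zeta]}(\xi) = (\xi-\zeta)(\xi+\bar\zeta)/(f(\xi) - f(\zeta))$ and Lemma~\ref{lem:rogers:curve} (or Corollary~\ref{cor:rogers:wh}) applied to the balanced curve $\gamma_f = (-e^{-i\thet}\infty,0)\cup(0,e^{i\thet}\infty)$, one gets an explicit integral representation for $\log f_{[\zeta]}^\ua(\xi)$; evaluating its argument at $\xi = i\bar\zeta = ire^{-i\thet}$ and taking the real/imaginary parts of the standard Poisson-type kernels produces $\thet_{f\ua}(r)$. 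I expect the answer to collapse to an elementary expression; matching against the claimed $(1-\ro)(1-\alpha\ro)\tfrac\pi2$ should follow from $\thet = \pi\ro - \tfrac\pi2$, the identity $f^\ua(\xi) = |a|^{1/2}\xi^{\ro\alpha}$, and the relation $\thet_{f\ua}(r) - \thet_{f\da}(r) = -\Arg\zeta_f'(r) = -\thet$ from Lemma~\ref{lem:kappa:f} (here $\zeta_f'(r) = e^{i\thet}$, so $\Arg\zeta_f'(r) = \thet$). A convenient consistency check: the formula must be symmetric under $\ro\leftrightarrow 1-\ro$ after swapping $\thet_{f\ua}\leftrightarrow\thet_{f\da}$, i.e. $(1-\ro)(1-\alpha\ro) + \ro(\,\cdot\,) $-type bookkeeping with the $\pm\thet$ shift.

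Then I would turn to $G_{f\ua}(x)$. By Lemma~\ref{lem:kappa:f}, $\laplace G_{f\ua}(r;\xi)$ is an explicit Stieltjes function built from $f_{[\zeta]}^\ua$; by Theorem~\ref{th:cbf} it is the Laplace transform of a completely monotone function whose representing measure is recovered from the boundary values of $\im(\laplace G_{f\ua}(r;\xi)/\xi)$ along the negative axis, equivalently from the jump of $\Arg f_{[\zeta]}^\ua$ across $\gamma_f$. Concretely I would take $r = 1$, write $f_{[\zeta]}^\ua(\xi)$ via its exponential representation using~\eqref{eq:rogers:wh} and Lemma~\ref{lem:rogers:curve}, compute $\log f_{[e^{i\thet}]}^\ua(\xi)$ as an integral over $(0,\infty)$ in the canonical parameter $v$, split $\log\frac{(v-1)(v e^{2i\thet}+1)}{c(v^\alpha - 1)}$ into modulus and argument parts, and identify the resulting pieces with $I(u)$ and $J(u)$ — the principal value in $J$ arising from the simple pole of $f_{[\zeta]}$ at $\xi = \zeta$ (i.e. $v=1$), which corresponds to $f_{[\zeta]}(\zeta) = 2\re\zeta/f'(\zeta)$ in Lemma~\ref{lem:rogers:quot}. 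The prefactor $\tfrac1\pi\sqrt{\tfrac{\alpha\cos\thet}{2}}$ should emerge from $\re\zeta/|f_{[\zeta]}^\ua(i\bar\zeta)|$ together with the normalisation constants $f^\ua(1;1)$, $|a|^{1/2}$ and the factor $\alpha$ coming from $\lambda_f'(r) = c\alpha r^{\alpha-1}$ and the substitution $r = xu$ as in the proof of Theorem~\ref{th:kappa:stable}; the weight $\sin(\alpha\pi\ro)u^\alpha/(1+2u^\alpha\cos(\alpha\pi\ro)+u^{2\alpha})$ is exactly $\im(1/f_{[\zeta(u)]}) $-type data, i.e. $-\im\bigl((\laplace G_{f\ua})(\cdots)/\xi\bigr)$ evaluated along the cut, which for a power function is classical. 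Finally, complete monotonicity of $G_{f\ua}$ and the absence of a $\delta_0$ atom follow verbatim from Lemma~\ref{lem:kappa:f} and Lemma~\ref{lem:cbf:quot} (the constants $c_0,c_1$ vanish there), so no extra work is needed for that.

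The main obstacle will be the bookkeeping of constants and principal-value integrals in the identification of $I(u)$, $J(u)$ and the overall prefactor: one must carefully track the branch of $\log$ across $\gamma_f$, the contribution of the pole at $v=1$ (which is genuinely a principal value, not an ordinary integral), and several trigonometric simplifications using $\thet = \pi\ro - \tfrac\pi2$ — in particular rewriting $\Arg(v e^{2i\thet}+1) = \Arg(1 - v\cos(2\ro\pi) + iv\sin(2\ro\pi))$ and $|v e^{2i\thet}+1|^2 = 1 - 2v\cos(2\ro\pi) + v^2$. Everything else is a direct specialisation: once $r=1$ is fixed and the $r$-dependence is restored by $G_{f\ua}(r;x) = G_{f\ua}(rx)$, the formulas for $F_{f\ua}$ and $F_{f\da}$ (the latter by $\ro\mapsto 1-\ro$) drop out of Lemma~\ref{lem:kappa:f}. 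I would also double-check the sign conventions in $I(u)$ (the leading $-\tfrac1{2\pi}$ and the subtracted kernel $\sin(2\ro\pi)/(1-2v\cos(2\ro\pi)+v^2)$, which is the $\xi\to\infty$ regulariser ensuring convergence) against the normalisation $f_{[\zeta]}^\ua(0^+)$ used in~\eqref{eq:kappa:inv:ratio}.
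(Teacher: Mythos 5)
Your proposal follows the same route as the paper's proof: specialise Lemma~\ref{lem:kappa:f} to $f(\xi) = a\xi^\alpha$, use the integral representation of the Wiener--Hopf factors of $f_{[\zeta]}$ from Lemma~\ref{lem:rogers:curve} with the principal-value interpretation at $\xi_1 = -i\zeta$ to extract $\thet_{f\ua}(r)$, and recover $G_{f\ua}$ from the boundary values of the Stieltjes representation along the negative axis; the identification $J(u)$, $I(u)$ with the argument and modulus pieces of $\log f_{[\zeta]}$, the PV at $v=1$ coming from the pole of $f_{[\zeta]}$, and the trigonometric rewriting via $\thet = \pi\ro - \tfrac{\pi}{2}$ are exactly as in the paper. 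Two minor remarks: the explicit scaling reduction $G_{f\ua}(r;x)=G_{f\ua}(1;rx)$ you state up front is valid (it follows from $f_{[s\zeta]}(\xi) = s^{2-\alpha}f_{[\zeta]}(\xi/s)$) and is a clean organising step, although the paper instead keeps $r$ and performs the substitution $s=ru$ at the end with the same effect; more importantly, the step where you expect $\thet_{f\ua}(r)$ to ``collapse to an elementary expression'' is genuinely nontrivial — in the paper it requires substituting $s=ru$ and $s=r/u$ to fold the integral into $(0,1)$, a term-by-term series expansion of $\log(1/u^{2-\alpha})$, and recognising the resulting $\sum_n(1-\cos(2(n+1)(\pi+2\thet)))/(n+1)^2$ as the Fourier series of a quadratic in $\thet$ — so the consistency check via $\thet_{f\ua}-\thet_{f\da} = -\thet$ and $\ro\leftrightarrow 1-\ro$ symmetry that you propose confirms but does not by itself determine the closed form.
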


\begin{proof}
Recall that $f(\xi) = a \xi^\alpha$ for $\xi \in \C_\ra$. Let $c = |a|$ and $\thet = -\tfrac{1}{\alpha} \Arg a$, so that $\zeta_f(r) = r e^{i \thet}$ and $\lambda_f(r) = c r^\alpha$. As in the proof of Theorem~\ref{th:kappa:stable},
\formula{
 \log f_{[r e^{i \thet}]}(s e^{i \thet}) & = \log \frac{(s - r) |s e^{2 i \thet} + r|}{c (s^\alpha - r^\alpha)} + i \Arg (s e^{2 i \thet} + r) ,
}
and $\log f_{[r e^{i \thet}]}(-s e^{-i \thet})$ is the conjugate number. Furthermore, by Lemma~\ref{lem:rogers:curve} with $\xi_2 = \xi$ and $\xi_1 = -i \zeta_f(r) + \eps e^{i \thet} = -i (r + \eps i) e^{i \thet}$ as $\eps \searrow 0$,
\formula[eq:ee:stable:aux]{
 \log \frac{f_{[r e^{i \thet}]}^\ua(-i r e^{i \thet})}{f_{[r e^{i \thet}]}^\ua(\xi)} & = -\frac{1}{2 \pi i} \, \pvint_0^\infty \expr{\frac{e^{i \thet}}{r e^{i \thet} - s e^{i \thet}} - \frac{e^{i \thet}}{i \xi - s e^{i \thet}}} \log f_{[r e^{i \thet}]}(s e^{i \thet}) \D s \\
 & \hspace*{-7.8em} - \frac{1}{2 \pi i} \int_0^\infty \expr{\frac{e^{-i \thet}}{r e^{i \thet} + s e^{-i \thet}} - \frac{e^{-i \thet}}{i \xi + s e^{-i \thet}}} \log f_{[r e^{i \thet}]}(-s e^{-i \thet}) \D s + \frac{\log f_{[r e^{i \thet}]}(r e^{i \thet})}{2} \, ; 
}
the last term in the right-hand side corresponds to $-\tfrac{1}{2} \delta_r(\D s)$ being the limit of measures $\tfrac{1}{2 \pi} \im (e^{i \thet} / ((r + i \eps) e^{i \thet} - s e^{i \thet})) \D s$ as $\eps \searrow 0$ (we omit the details).

Recall that $\thet_{f\ua}(r) = -\im (\log f_{[r e^{i \thet}]}^\ua(-i r e^{i \thet}))$, and that $f_{[r e^{i \thet}]}^\ua(1) > 0$. By taking $\xi = 1$ in~\eqref{eq:ee:stable:aux} and comparing the imaginary parts of both sides,
\formula{
 \thet_{f\ua}(r) & = -\frac{1}{2 \pi} \, \pvint_0^\infty \re \expr{\frac{1}{r - s} \, \log f_{[r e^{i \thet}]}(s e^{i \thet}) + \frac{1}{r e^{2 i \thet} + s} \, \log f_{[r e^{i \thet}]}(-s e^{-i \thet})} \D s \\
 & \hspace*{26em} - \frac{\Arg f_{[r e^{i \thet}]}(r e^{i \thet})}{2} \, .
}
Since $f_{[r e^{i \thet}]}(r e^{i \thet}) = (2 r \cos \thet) / f'(r e^{i \thet}) = \tfrac{2}{\alpha c} r^{2 - \alpha} e^{i \thet} \cos \thet$, one has
\formula{
 \thet_{f\ua}(r) & = -\frac{1}{2 \pi} \, \pvint_0^\infty \biggl(\expr{\frac{1}{r - s} + \frac{r \cos(2 \thet) + s}{|r e^{2 i \thet} + s|^2}} \log \frac{(s - r) |s e^{2 i \thet} + r|}{c (s^\alpha - r^\alpha)} \\
 & \hspace*{15em} - \frac{r \sin(2 \thet)}{|r e^{2 i \thet} + s|^2} \, \Arg(s e^{2 i \thet} + r) \biggr) \D s - \frac{\thet}{2} \, .
}
Substituting $s = r u$ in the integral over $(0, r - \eps)$ and $s = r / u$ in the integral over $(r + \eps, \infty)$, and taking the limit as $\eps \searrow 0$, one obtains
\formula{
 \thet_{f\ua}(r) & = -\frac{1}{2 \pi} \int_0^1 \biggl(\expr{\frac{1}{1 - u} + \frac{\cos(2 \thet) + u}{|e^{2 i \thet} + u|^2}} \log \frac{(u - 1) |u e^{2 i \thet} + 1|}{c r^{\alpha - 2} (u^\alpha - 1)} \\
 & \hspace*{18em} - \frac{\sin(2 \thet)}{|e^{2 i \thet} + u|^2} \, \Arg(u e^{2 i \thet} + 1) \\
 & \hspace*{7em} + \frac{1}{u} \expr{\frac{1}{u - 1} + \frac{\cos(2 \thet) u + 1}{|e^{2 i \thet} u + 1|^2}} \log \frac{(1 - u) |e^{2 i \thet} + u|}{c r^{\alpha - 2} u^{2 - \alpha} (1 - u^\alpha)} \\
 & \hspace*{18em} - \frac{\sin(2 \thet)}{|e^{2 i \thet} u + 1|^2} \, \Arg(e^{2 i \thet} + u) \biggr) \D u - \frac{\thet}{2} \, .
}
Elementary simplification yields
\formula{
 \thet_{f\ua}(r) & = \frac{1}{4 \pi} \int_0^1 \expr{\frac{2}{1 - u} + \frac{1}{u + e^{2 i \thet}} + \frac{1}{u + e^{-2 i \thet}}} \log \frac{1}{u^{2 - \alpha}} \, \D u \\
 & \hspace*{15em} + \frac{1}{\pi} \int_0^1 \frac{\thet \sin(2 \thet)}{u^2 + 2 u \cos(2 \thet) + 1} \, \D u - \frac{\thet}{2} \, .
}
By a direct calculation,
\formula{
 \int_0^1 \frac{\thet \sin(2 \thet)}{u^2 + 2 u \cos(2 \thet) + 1} \, \D u & = \thet^2 .
}
As in~\cite[Example~6.1]{bib:k11}, by the series expansion, Fubini and integration by parts,
\formula{
 \hspace*{3em} & \hspace*{-3em} \int_0^1 \expr{\frac{2}{1 - u} + \frac{1}{u + e^{2 i \thet}} + \frac{1}{u + e^{-2 i \thet}}} \log \frac{1}{u^{2 - \alpha}} \, \D u \\
 & = -(2 - \alpha) \sum_{n = 0}^\infty (2 - (-e^{2 i \thet})^{n+1} - (-e^{-2 i \thet})^{n+1}) \int_0^1 u^n \log u \, \D u \\
 & = 2 (2 - \alpha) \sum_{n = 0}^\infty \frac{1 - \cos(2 (n+1) (\pi + 2 \thet))}{(n + 1)^2} = \frac{(2 - \alpha) (\pi^2 - 4 \thet^2)}{2} \, ;
}
the last equality follows by the Fourier series expansion of the right-hand side. Therefore,
\formula[eq:ee:stable:theta]{
 \thet_{f\ua}(r) & = \frac{(2 - \alpha) (\pi^2 - 4 \thet^2)}{8 \pi} + \frac{\thet^2}{\pi} - \frac{\thet}{2} = \frac{(1 - \ro) (1 - \alpha \ro) \pi}{2} \, ,
}
because $\ro = \tfrac{1}{2} + \tfrac{\thet}{\pi}$.

An explicit formula for $G_{f\ua}(r; x)$ is obtained by inverting the Laplace transform. Let $r > 0$ and $\zeta = \zeta_f(r) = r e^{i \thet}$. By Lemma~\ref{lem:kappa:f}, the Laplace transform of $G_{f\ua}(r; x)$ is a Stieltjes function, so that $G_{f\ua}(r; x)$ is the Laplace transform of a measure, which is described by the boundary values of the holomorphic extension of $\laplace G_{f\ua}(r; \xi)$ along $(-\infty, 0]$. By the definition of the Wiener--Hopf factors (Theorem~\ref{th:rogers:wh}), the holomorphic extension of $\laplace G_{f\ua}(r; \xi)$ satisfies (we omit the details)
\formula{
 \lim_{t \nearrow 0} \im(\laplace G_{f\ua}(r; -s + i t)) & = -\frac{\re \zeta}{|f_{[\zeta]}^\ua(-i \zeta)|} \, \frac{1}{f_{[\zeta]}^\da(s) (-s - i \bar{\zeta}) (-s + i \zeta)} \lim_{t \nearrow 0} \im(f_{[\zeta]}(-i s - t)) \\
 & = \frac{r \cos \thet}{c \, |f_{[\zeta]}^\ua(-i \zeta) f_{[\zeta]}^\da(s)|} \, \frac{\sin(\alpha \pi \ro) s^\alpha}{r^{2 \alpha} + 2 r^\alpha s^\alpha \cos(\alpha \pi \ro) + s^{2 \alpha}} \, .
}
Therefore (see~\cite[]{bib:ssv10}),
\formula{
 G_{f\ua}(r; x) & = \frac{1}{\pi} \int_0^\infty \frac{r \cos \thet}{c \, |f_{[r e^{i \thet}]}^\ua(-i r e^{i \thet}) f_{[r e^{i \thet}]}^\da(s)|} \, \frac{\sin(\alpha \pi \ro) s^\alpha}{r^{2 \alpha} + 2 r^\alpha s^\alpha \cos(\alpha \pi \ro) + s^{2 \alpha}} \, e^{-x s} \D s \\
 & = \frac{1}{\pi} \int_0^\infty \frac{r^{2-\alpha} \cos \thet}{c \, |f_{[r e^{i \thet}]}^\ua(-i r e^{i \thet}) f_{[r e^{i \thet}]}^\da(r u)|} \, \frac{\sin(\alpha \pi \ro) u^\alpha}{1 + 2 u^\alpha \cos(\alpha \pi \ro) + u^{2 \alpha}} \, e^{-r x u} \D u .
}
As in~\eqref{eq:ee:stable:aux}, for $\xi > 0$,
\formula{
 \log (f_{[r e^{i \thet}]}^\ua(-i r e^{i \thet}) f_{[r e^{i \thet}]}^\da(\xi)) & = -\frac{1}{2 \pi i} \, \pvint_0^\infty \expr{\frac{e^{i \thet}}{r e^{i \thet} - s e^{i \thet}} + \frac{e^{i \thet}}{i \xi + s e^{i \thet}}} \log f_{[r e^{i \thet}]}(s e^{i \thet}) \D s \\
 & \hspace*{-7.8em} - \frac{1}{2 \pi i} \int_0^\infty \expr{\frac{e^{-i \thet}}{r e^{i \thet} + s e^{-i \thet}} + \frac{e^{-i \thet}}{i \xi - s e^{-i \thet}}} \log f_{[r e^{i \thet}]}(-s e^{-i \thet}) \D s + \frac{\log f_{[r e^{i \thet}]}(r e^{i \thet})}{2} \, .
}
Hence, taking $\xi = r u$ and comparing real parts of both sides, in a similar way as in the calculation of $\thet_{f\ua}(r)$,
\formula{
 \log \frac{c |f_{[r e^{i \thet}]}^\ua(-i r e^{i \thet}) f_{[r e^{i \thet}]}^\da(r u)|}{r^{2 - \alpha}} & = \log \frac{c}{r^{2 - \alpha}} + \frac{\log |f_{[r e^{i \thet}]}(r e^{i \thet})|}{2} \\
 & \hspace*{-11em} - \frac{1}{2 \pi} \, \pvint_0^\infty \biggl(\expr{-\frac{r \sin(2 \thet)}{|r e^{2 i \thet} + s|^2} - 2 \, \frac{r u \cos \thet}{|i r u e^{-i \thet} + s|^2}} \log \frac{(s - r) |s e^{2 i \thet} + r|}{c (s^\alpha - r^\alpha)} \\
 & \hspace*{-5em} + \expr{\frac{1}{r - s} - \frac{r \cos(2 \thet) + s}{|r e^{2 i \thet} + s|^2} + 2 \, \frac{r u \sin \thet + s}{|i r u e^{-i \thet} + s|^2}} \Arg (s e^{2 i \thet} + r)\biggr) \D s \, .
}
By a substitution $s = r u$, simplification and direct calculation (we omit the details),
\formula{
 \log \frac{c |f_{[r e^{i \thet}]}^\ua(-i r e^{i \thet})f_{[r e^{i \thet}]}^\da(r u)|}{r^{2 - \alpha}} & = \frac{1}{2} \log \frac{2 \cos(\thet)}{\alpha} + \\
 & \hspace*{-9em} + \frac{1}{2 \pi} \int_0^\infty \expr{\frac{\sin(2 \thet)}{|e^{2 i \thet} + v|^2} + 2 \, \frac{u \cos \thet}{|i u e^{-i \thet} + v|^2}} \log \frac{(v - 1) |v e^{2 i \thet} + 1|}{v^\alpha - 1} \, \D v \\
 & \hspace*{-7em} + \frac{1}{2 \pi} \, \pvint_0^\infty \expr{\frac{1}{1 - v} - \frac{\cos(2 \thet) + v}{|e^{2 i \thet} + v|^2} + 2 \, \frac{u \sin \thet + v}{|i u e^{-i \thet} + v|^2}} \Arg(v e^{2 i \thet} + 1) \D v .
}
This completes the proof.
\end{proof}

\addtocontents{toc}{\SkipTocEntry}
\section*{}
\subsection*{Acknowledgements}

I thank Panki Kim, Alexey Kuznetsov, Pierre Patie, Ren\'e Schilling and Zoran Vondra\v{c}ek for inspiring discussions on the subject of the article.

%
%

%
%

\end{document}